\newtheorem{theorem}{Theorem}[subsection]
\newtheorem{lemma}[theorem]{Lemma}
\newtheorem{proposition}[theorem]{Proposition}
\newtheorem{corollary}[theorem]{Corollary}
\newtheorem{conjecture}[theorem]{Conjecture}
\theoremstyle{definition}
\newtheorem{definition}[theorem]{Definition}
\newtheorem{notation}[theorem]{Notation}
\newtheorem{example}[theorem]{Example}
\newtheorem{remark}[theorem]{Remark}
\newtheorem{construction}[theorem]{Construction}
\newtheorem{image}[theorem]{Figure}
\newcommand\nc{\newcommand}
\nc{\on}{\operatorname}
\nc{\eqn}{\begin{equation}}
\nc{\eqnd}{\end{equation}}
\nc{\sSet}{\on{sSet}}
\nc{\symp}{\on{Symp}}
\nc{\Sets}{\on{Sets}}
\nc{\SimpCat}{\on{SimpCat}}
\nc{\Cat}{\on{Cat}}
\nc{\Mod}{\on{Mod}}
\nc{\Lag}{\mathsf{Lag}}
\nc{\lag}{\mathsf{Lag}}
\nc{\Open}{Open}
\nc{\StCat}{StCat}
\nc{\Spectra}{\on{Spectra}}
\nc{\Micro}{\on{Micro}}
\nc{\module}{\on{-mod}}
\nc{\perf}{\on{-perf}}
\nc{\Shv}{\on{Shv}}
\nc{\Perf}{\on{Perf}}
\nc\SpLag{\on{SpLag}}
\nc\WFuk{\mathsf{WFuk}}
\nc{\zerolag}{\sL_\emptyset}
\nc{\prism}{\mathcal Pr}
\nc\es{\emptyset}
\nc{\Ob}{\on{Ob}}
\nc{\del}{\partial}
\nc\oo{\infty}
\nc{\hocolim}{\on{hocolim}}
\nc{\colim}{\on{colim}}
\nc{\dist}{\on{dist}}
\nc{\Hom}{\on{Hom}}
\nc{\End}{\on{End}}
\nc{\Sing}{\on{Sing}}
\nc{\id}{\on{id}}
\nc{\tensor}{\otimes}
\nc{\ol}{\overline}
\nc{\totHom}{{\mathcal Hom}}
\nc{\totEnd}{{\mathcal End}}
\nc{\Maps}{\on{Maps}}
\nc{\into}{\hookrightarrow}
\nc{\Vect}{\on{Vect}}
\nc{\Crit}{\on{Crit}}
\nc{\Diff}{\on{Diff}}
\nc{\righttriangle}{\rhd}
\nc{\cone}{\rhd}
\def\Aut{\on{Aut}}
\nc\dd{\diamond}
\nc{\br}{\mathsf{Br}}
\nc{\kernelfunctor}{ {F}}
\nc{\hiro}{\textcolor{blue}}
\nc{\david}{\textcolor{red}}
\nc{\Z}{\mathbb{Z}} \nc{\N}{\mathbb{N}} \nc{\Q}{\mathbb{Q}} \nc{\R}{\mathbb{R}} \nc{\C}{\mathbb{C}} 
\nc{\BN}{\mathbb N} \nc{\BQ}{\mathbb Q}
\nc{\QQ}{\mathbb Q}
\def\CC{\mathbb C}\def\RR{\mathbb R}\def\ZZ{\mathbb Z}
\nc{\cA}{\mathcal A} \nc{\cC}{\mathcal{C}} \nc{\cE}{\mathcal E} \nc{\cN}{\mathcal{N}} \nc{\cP}{\mathcal{P}} \nc{\cQ}{\mathcal{Q}} \nc{\cS}{\mathcal{S}}
\nc{\fC}{\mathfrak{C}} \nc{\fL}{\mathfrak{L}} \nc{\fP}{\mathfrak{P}} \nc{\fX}{\mathfrak{X}}
\nc{\sh}{\mathsf{h}} \nc{\sk}{\mathsf{k}} \nc{\sD}{\mathsf{D}} \nc{\sE}{\mathsf{E}} \nc{\sF}{\mathsf{F}} \nc{\sG}{\mathsf{G}} \nc{\sH}{\mathsf{H}} \nc{\sI}{\mathsf{I}} \nc{\sK}{\mathsf{K}} \nc{\sL}{\mathsf{L}} \def\sN{\mathsf N} \nc{\sP}{\mathsf{P}} \nc{\sQ}{\mathsf{Q}} \def\sR{\mathsf R} \def\sS{{\mathsf S}} \def\sT{\mathsf T} \def\sV{\mathsf{V}} \def\sW{{\mathsf W}} \def\sX{{\mathsf X}} \def\sY{{\mathsf Y}} 
\begin{document}

\title{A stable $\oo$-category of Lagrangian cobordisms}
\author{David Nadler and Hiro Lee Tanaka}

\begin{abstract}
Given an exact symplectic manifold $M$ and a support Lagrangian $\Lambda\subset M$, we construct an $\infty$-category 
$\Lag_\Lambda(M)$ which we
conjecture to be equivalent (after specialization of the coefficients) to the partially wrapped Fukaya category of $M$ relative to $\Lambda$.
Roughly speaking, the objects of $\Lag_\Lambda(M)$ are Lagrangian branes inside of $M\times T^*\R^n$, for large $n$, and the morphisms are Lagrangian cobordisms  that are non-characteristic with respect to $\Lambda$. The main theorem of this paper is that $\Lag_\Lambda(M)$ is a stable $\oo$-category, and in particular its homotopy category is triangulated, with mapping cones given by an elementary construction. The shift functor is equivalent to the familiar shift of grading for Lagrangian branes. 
\end{abstract}

\keywords{Lagrangian cobordisms, stable $\infty$-categories. }
\subjclass[2010]{53D05, 55P42}

\maketitle
\tableofcontents

\section{Introduction}\label{sect. intro}
This paper introduces a stable  theory of Lagrangian cobordisms in exact symplectic manifolds. 
Fukaya categories~\cite{FOOO, Seidel} provide a geometric (or analytic) approach to
quantum symplectic geometry,
while deformation quantization~\cite{DeWildeLecomte, Fedosov, Kontsevich} provides an algebraic approach. We hope  (see Section~\ref{})
that Lagrangian cobordisms will provide
a missing topological (or homotopical) viewpoint.
By their  nature, Lagrangian cobordisms  involve the intuitive geometry
of Lagrangian submanifolds and the strength of
Hamiltonian isotopies. They also
bypass  unnatural choices of coefficients and the challenging partial differential equations of pseudo-holomorphic disks.
Our inspirations include Floer's original vision~\cite{floer, floer1, floer2}
 of Morse theory of path spaces, the notion of non-characteristic propagation~\cite{KS, GM, N} and
the homotopical setting of stable $\oo$-categories~\cite{Joyal, LurieTopos, LurieStab}.

Given an exact symplectic target $M$ with a collared boundary, equipped with a (not necessarily smooth or compact) support Lagrangian $\Lambda \subset M$,
we construct an $\oo$-category $\Lag_\Lambda(M)$ 
whose objects are Lagrangian branes inside of $M\times T^*\R^n$, for large $n$, and whose morphisms are Lagrangian cobordisms that are non-characteristic with respect to $\Lambda$.
Our main result states that $\Lag_\Lambda(M)$ is a stable $\oo$-category 
(Theorem~\ref{thm. stable})  with concretely realized mapping cones (Theorem~\ref{thm. cones}). 
In the rest of this introduction, we outline the construction of $\Lag_\Lambda(M)$
and the proof of our main result.
We also discuss a sequence of motivating conjectures centered around the following topics: 
\begin{enumerate}
\item Comparing $\Lag_{\Lambda}(M)$ to the partially wrapped Fukaya category of $M$ with respect to $\Lambda$ (see Conjecture~\ref{conj: fukaya}).
\item Comparing $\Lag_X(T^*X)$ to modules over chains on the based loop space of $X$, and more generally,
 for conic $\Lambda\subset T^*X$, relation of $\Lag_{\Lambda}(T^*X)$ to constructible sheaves on $X$ (Conjectures of Section~\ref{sect. cotangent bundles}).
\item Changing geometric structures imposed on $\lag_\Lambda(M)$ to change the coefficient ring spectra of the theory. (Conjectures of Section~\ref{sect. conj point}).
\item Gluing together $\Lag_\Lambda(M)$ from the assignment 
$U \mapsto \Lag_{U \cap \Lambda}(U)$ 
for appropriate open sets $U\subset M$ (Conjecture~\ref{conj: cosheaf}); that is, studying $\lag$ as a cosheaf of $\infty$-categories on $\Lambda$.
\end{enumerate}

\begin{remark}
Since the first draft of the present work, it has been shown that $\lag_\Lambda(M)$ indeed pairs with the wrapped Fukaya category of $M$ for appropriate choices of $\Lambda$~\cite{tanaka-pairing}. In particular, the homotopy groups of the morphism spaces of $\lag_\Lambda(M)$ map to wrapped Floer cohomology groups. 

Moreover, this pairing respects the stability of the present work (Theorem~\ref{thm. stable}), and the usual stability of (modules over) the wrapped Fukaya category~\cite{tanaka-exact}. As a result, the group maps lift to maps of spectra (in the sense of stable homotopy theory). All told, for any pair $L_0,L_1$ of appropriate branes, the spectrum of non-characteristic Lagrangian cobordisms maps to the spectrum of Floer cochains from $L_0$ to $L_1$.

Finally, applying the results above to the object $L_0 = L_1 = \RR^0$ inside $M = \RR^0=\Lambda$, one obtains a map of ring spectra from endomorphisms of a point, to the coefficient ring for Floer theory. (One could take this coefficient ring to be $\ZZ/2\ZZ$, $\ZZ$, $\QQ$, or a periodic version of these, depending on the structures one imposes on branes). These results are consistent with Conjectures~\ref{conj: point endos} and~\ref{conj: fukaya} below. 
\end{remark}

\subsection{Main constructions and definitions} 
It is possible that Lagrangian cobordisms could  be profitably considered in a diverse range of settings. 
There is a long history of the subject~\cite{Arnold80, Audin85, Audin87, ArnoldNovikov, Chekanov, Eli84} with many recent advances~\cite{Oh, Cornea,biran-cornea,biran-cornea-2}. In this paper we decorate every idea in sight with the standard structures needed to define a $\ZZ$-graded Fukaya category over $\ZZ$, as we keep in mind a particular application to the Fukaya category. But the proof of the main theorem requires none of these auxiliary structures, and in particular one can define a version of the category of cobordisms for various structure groups. (See Section~\ref{section.structures}.)

\subsubsection{The symplectic manifold, the objects, and morphisms}
For further details on the assumptions below, we refer the reader to Section~\ref{section.setup}.

Throughout we will assume that $M$ is a symplectic manifold satisfying standard convexity conditions. For example $M$ is exact, and its Liouville flow collars the boundary. We will also fix a trivialization of $\det^2(TM) \cong M \times S^1$, assuming a metric (or almost complex structure) which is compatible with the collaring near $\del M$. 

By Lagrangian {\em branes}, we will mean closed (but not necessarily compact) exact Lagrangian submanifolds $L\subset M$ equipped with three decorations:
\begin{enumerate}
	\item a function $f{\colon\thinspace}L\to \R$ realizing the exactness in the sense that $\theta|_L = df$,
	\item a function $\alpha{\colon\thinspace} L \to \RR$ called the grading, lifting the phase map $L \to S^1$, and
	\item a relative Pin structure $q$.
\end{enumerate}
To control the possibly noncompact nature of $L$, we will always assume taming hypotheses as 
discussed in  Section~\ref{section.setup}. We sometimes abuse notation and denote the data of a brane $(L,f,\alpha,q)$ by $L$ alone.

Finally, our morphisms are Lagrangian cobordisms in the exact symplectic manifold $M\times T^*\R_t$. 
(Here we write $\R_t$ both to denote the real line with coordinate $t$, and to convey that we think of it as the timeline of evolution.) 
Specifically, a Lagrangian cobordism $P$ from $L_0$ to $L_1$ is a Lagrangian brane $P \subset M \times T^*\RR_t$ such that
$$
P|_{(-\infty,t_0]} = L_0 \times (-\infty,t_0]
\qquad \text{and} \qquad
P|_{[t_1,\infty)} = L_1 \times [t_1, \infty)
$$
for some choice of $t_0 \leq t_1 \in \RR_t$. Since $P$ may be non-compact, we will assume 
taming hypotheses as 
discussed in  Section~\ref{section.setup}. We require that the brane structure of $P$ agrees with that of the $L_i$ in the collared regions. For example, $P$ comes equipped with a primitive function $f_P{\colon\thinspace} P \to \RR, df_P = \theta|_P$, such that 
$$
f_P(x_0,t) = f_{L_0}(x_0)
\qquad \text{and} \qquad
f_P(x_1,t) = f_{L_1}(x_1)
$$ 
along the collared regions. This in particular ensures that a composition of exact cobordisms is still exact.

\subsubsection{Space of cobordisms}
To define an $(\infty,1)$-category, one must define a space of morphisms---in our case, a space of cobordisms. One familiar approach is to consider the space where paths between two cobordisms are given by isotopies of cobordisms. We will take a much more flexible approach---a path between two cobordisms is given by a higher cobordism between them, and so on for higher morphisms. One might attempt to organize such data
 into an ``$(\oo, \oo)$-category" (and in the process formulate what such a structure would entail), but our definition naturally yields the structure of an $(\infty,1)$-category. The intuition for this fact is as follows: While there are two natural notions of ``invertible'' morphism in an $(\infty,\infty)$-category, the existence of higher cobordisms implies a dualizability for our morphisms---so choosing a notion of invertibility that amounts to full dualizability, one sees that the invertibility of higher cobordisms is forced upon us, hence higher cobordisms act as higher homotopies. 
 
As it turns out, without restricting the types of cobordism we allow, the space of Lagrangian cobordisms would be contractible due to the non-compact nature of our cobordisms (even equipped with tameness conditions). This is one motivation for requiring all our cobordisms to be {\em non-characteristic}.

\subsubsection{Non-characteristic cobordisms}
An important notion in symplectic geometry is that of  {\em support Lagrangian}. It is the fundamental invariant in quantum algebra and in the microlocal study of sheaves and PDE. (The theory of polarizations 
and their metaplectic symmetries studies the implications of a choice of support Lagrangian. The theory of moduli of stability conditions can also be phrased in terms of moduli of support Lagrangians.)  
We will incorporate the notion of support Lagrangian into our study of Lagrangian cobordisms in the following way. 
  
Fix once and for all a closed subset $\Lambda \subset M$. ($\Lambda$ need not be compact, and need not be smooth; see Remark~\ref{remark: any subset can be Lambda}.)
To control the possibly noncompact nature of $\Lambda$, we will always assume taming hypotheses as discussed in  Section~\ref{section.setup}.

\begin{example} 
\begin{enumerate}
	\item {\em The base case.} One can take $M$ to be the zero-dimensional connected symplectic manifold, i.e., a point, and $\Lambda = M$ to be the unique non-empty Lagrangian.
	\item {\em Conical Lagrangians in cotangent bundles}: take $\Lambda \subset M = T^*X$ to be any closed conical Lagrangian subvariety. In particular, we could take $\Lambda$ to be the zero section alone. 
	\item{\em Conical Lagrangians in Weinstein manifolds}: let $\Lambda^{sk}\subset M$ be the ``Lagrangian skeleton" of stable manifolds associated to the exhausting Morse function $h{\colon\thinspace}M\to \R$ (for details, see~\cite{Biran}).
We can then take $\Lambda$ to be the union of $\Lambda^{sk}$ together with the ``cone" with respect to dilation over any Legendrian subvariety in $\del M = M^\oo$.
\end{enumerate}
\end{example}

We say that a cobordism $P \subset M \times T^*\RR_t$ is {\em non-characteristic} with respect to $\Lambda$ if $P$ avoids $\Lambda$ near $-\infty dt$. Precisely, let 
	\[
	\pi_{\RR^\vee}{\colon\thinspace}
	M \times T^*\RR_t \to T^*_{\{0\}} \RR_t \cong \RR^\vee
	\]
be the projection map to a vertical fiber. (The splitting $T^*\RR_t \cong \RR_t \times \RR^\vee$ is induced by the usual symplectic and complex structures of $T^*\RR_t \cong \CC$.) We demand the existence of a number $T \in \RR^\vee$ such that 
	\[
	\inf_{\substack{x \in P \\ \pi_{\RR^\vee}(x) < T}} \dist_M(\pi_M(x),\Lambda) > 0.
	\]
In other words, if $x$ is a point on $P$ with very negative $\RR^\vee$-coordinate, its projection to the $M$ component is guaranteed to be outside some fixed tubular neighborhood of $\Lambda$.
An analogous definition makes sense for higher cobordisms.

\subsubsection{The $\infty$-category $\lag^0_\Lambda(M)$}
Given a fixed support Lagrangian  $ \Lambda\subset M$, 
we define an $\oo$-category $\Lag^{\dd 0}_\Lambda(M)$ with objects Lagrangian branes $L\subset M$, and $1$-morphisms non-characteristic Lagrangian cobordisms $P\subset M \times T^*\R_t$. See Section~\ref{sect. lag cobs} for a precise version. For further details on $\infty$-categories, we refer the reader to the Appendix.

The following result is the first hint of the role of the support Lagrangian  $\Lambda\subset M$ in our story
(and lends some justification to its name).
Recall that a {\em zero object} in an $\oo$-category is an object which is both initial and terminal, and any two zero objects are canonically equivalent. By the zero category, we will mean the category with a single object $*$ with the single morphism given by the identity of $*$. The following is proven in Section~\ref{sect: non-characteristic}.

\begin{proposition}\label{prop: zero}
The empty Lagrangian $L_\emptyset\subset M$ is a zero object in the $\oo$-category $\Lag^{\dd 0}_{\Lambda}(M)$. 
More generally, let $N_\epsilon(\Lambda)$ be a tubular neighborhood of $\Lambda$.
If $L \subset M$ is a brane satisfying $L \cap N_\epsilon(\Lambda) = \emptyset$, then $L$ is a zero object in $\Lag^{\dd 0}_{\Lambda}(M).$
\end{proposition}
In particular, when the support Lagrangian $\Lambda$ is empty, the $\oo$-category $\Lag^{\dd 0}_\Lambda(M)$ is equivalent to the zero category.
Note this proposition is compatible with the discussion regarding the partially wrapped Fukaya category---see Conjecture~\ref{conj: fukaya}.

\subsubsection{Stabilization}
The following is an $\infty$-categorical enhancement of triangulated categories. In particular, when we mention commutativity, (co)kernels, or other categorical notions, we mean the higher-categorical notions of homotopy commutativity, homotopy (co)kernels, and so on.
\begin{definition}[\cite{LurieHigherAlgebra}]
 An $\oo$-category $\cC$ is said to be {\em stable} if it satisfies the following:
\begin{enumerate}
	\item There exists a zero object $0\in \cC$.
	\item Every morphism in $\cC$ admits a kernel and a cokernel.
	\item Any commutative diagram in $\cC$ of the form
	$$
	\xymatrix{
	a\ar[d] \ar[r] & b \ar[d]\\
	0 \ar[r] & c
	}
	$$
	is Cartesian if and only if it is coCartesian.
\end{enumerate}
\end{definition}
The third condition states that the diagram presents $a$ as a kernel of $b\to c$
if and only if it presents $c$ as a cokernel of $a\to b$. In particular, such diagrams represent exact triangles in the homotopy category of $\cC$, which inherits the structure of a triangulated category. (See Theorem 3.11 of~\cite{LurieStab}.)

\begin{remark}
Any stable $\oo$-category $\cC$ is linear over the sphere spectrum, and hence may be localized with respect to any $E_\oo$-ring spectrum. In particular, we can forget all positive characteristic information and consider the rational stable $\oo$-category $\cC\otimes \BQ$. The theories of rational stable $\oo$-categories,  pre-triangulated rational $A_\oo$-categories, and pre-triangulated rational differential graded categories are all equivalent. (For various notions of enhanced category, see for example the surveys of~\cite{Bergner} and \cite{Keller}.)
\end{remark}

Given a notion of cobordism, it is only natural to consider cobordisms embedded in higher and higher dimensions---i.e., to {\em stabilize} in the sense of topology. For instance, by considering framed cobordisms in $\RR^\infty$, one arrives at the sphere spectrum by the Pontrjagin-Thom construction. The main result of this paper is that the same topological stabilization process, modified to accommodate symplectic geometry, gives rise to the {\em algebraic} notion of stability defined above.

Consider the symplectic manifold $T^*\R^n_x$ with fixed support Lagrangian the zero section 
$\R^n_x \subset T^*\R^n_x$. We equip $T^*\R^n_x$ with canonical exact structure (arising from the fact that it is a cotangent bundle). 
For notational simplicity, we set 
$$\Lag^{\dd n}_{\Lambda}(M) = \Lag^{\dd 0}_{\Lambda \times \R^n}(M\times T^*\R^n),
$$
which for $n=0$, agrees with our previous notation.
The assignment
$$
L\mapsto L \times T^*_{\{0\}} \R_x \subset M \times T^*\R_x,
\qquad
P\mapsto P \times T^*_{\{0\}} \R_x \subset M \times T^*\R_x
$$ 
provides  a  tower of faithful functors
$$
\xymatrix{
\cdots\ar[r] &  \Lag^{\dd n}_{\Lambda}(M)
 \ar[r] & \Lag^{\dd n+1}_{\Lambda}(M )\ar[r] & \cdots
}.
$$

\begin{definition}
Given $ \Lambda\subset M$, 
we define the $\oo$-category $ \Lag_{\Lambda}(M)$ to be the increasing union
$$
\Lag_{\Lambda}(M) = \cup_{n \to \oo}  \Lag^{\dd n}_{\Lambda }(M ).
$$
\end{definition}

\begin{remark}
In the Joyal model structure, fibrant-cofibrant objects are quasi-categories, and cofibrations are level-wise injections. This means $\Lag_\Lambda(M)$ is equivalent to the the homotopy colimit. I.e.,
$$
\Lag_{\Lambda}(M) \simeq \colim_{n \to \oo}  \Lag^{\dd n}_{\Lambda}(M)
$$
in the $\oo$-category of $\oo$-categories.
\end{remark}

The following is our main result.

\begin{theorem} \label{thm. stable}
 The $\oo$-category $\Lag_\Lambda(M)$ is stable.
\end{theorem}

\begin{remark}
While we have used the term {\em stabilization} to refer to the construction of $\Lag_\Lambda(M)$, the same term is used in \cite{LurieHigherAlgebra} as a general procedure for producing stable $\infty$-categories from non-stable ones. We remark that 
$\Lag_\Lambda(M)$ is {\em not} Lurie's universal stabilization of $\Lag^{\dd 0}_\Lambda(M)$---for instance, we will prove that the suspension functor is the usual shift for Lagrangian branes, when branes are endowed with grading structures. Thus the shift functor already exists for each $\lag^{\dd i}$ at the level of objects. One may conjecture that $\lag_\Lambda(M)$ is the universal stabilization with respect to a class of kernel objects already specified, but tackling this question is beyond the scope of this paper.
\end{remark}

\subsubsection{Mapping cones}

One often refers to the cokernel of a morphism  in a stable $\oo$-category as a mapping cone. Part of the value of the above theorem is its proof, in which the mapping cone is concretely constructed.

We briefly summarize here the structure of the mapping cone $C(f)\in  \Lag_\Lambda(M)$ of a morphism $f{\colon\thinspace}L_0 \to L_1$. We will restrict our discussion to objects $L_0, L_1
$ and a morphism $f$ coming from $\Lag^{\dd 0}_\Lambda(M)$. So the objects $L_0, L_1$ are represented by Lagrangian branes in $M$,
and the morphism $f$ is represented by a Lagrangian cobordism $P\subset M\times T^*\R_t$
 living over say $[0,1] \subset \R$.

The mapping cone $C(f)$ will an object of $\Lag^{\dd 1}_{\Lambda}(M) \subset \Lag_\Lambda(M)$. The underlying Lagrangian submanifold is
$$
C(f) = (L_0 \times \Gamma_0) \cup P \cup (L_1 \times \Gamma_1) \subset M \times T^*\R
$$
where $\Gamma_0, \Gamma_1 \subset T^*\R$ are the graphs of closed one-forms $df_0, df_1$
defined
over $(-1, 0], [1, 2)$ respectively. Roughly speaking, $f_0$ is decreasing with $\lim_{x\to -1}f(x) = +\oo, f^{(k)}(0) = 0$,
for all $k\geq 0$,
and  $f_1$ is decreasing with $\lim_{x\to 2} f(x) = -\oo$, $f^{(k)}(1) = 0$ for all $k\geq 0$.

Since $C(f)$ is an extension of $P$ by contractible products of its collars, $C(f)$ admits the structure of a brane.

The following is the main step in the proof of Theorem~\ref{thm. stable}.

\begin{theorem} \label{thm. cones}
The object $C(f)$ is a mapping cone of $f$.
\end{theorem}

It turns out that analyzing the mapping cone for zero morphisms is enough to establish our main theorem. This is due to the following result, found in Theorem 3.11 and Corollary 8.28 of \cite{LurieStab}.

\begin{theorem}[\cite{LurieHigherAlgebra}]
An $\infty$-category $\cC$ is a stable $\infty$-category if and only if the following three properties are satisfied:
\begin{enumerate}
\item There exists a zero object $0\in \cC$.
\item Every morphism in $\cC$ admits a kernel.
\item Let $\Omega{\colon\thinspace} \cC \to \cC$ be the functor induced by sending every object $X$ to the kernel of the zero map $0 \to X$. Then $\Omega$ is an equivalence.
\end{enumerate}
\end{theorem}

In this work, it is the above three properties which we verify.

\subsection{Changing structures}
\label{section.structures}
The geometric set-up we have discussed so far involves all the familiar decoration from Fukaya categories, but our proof of stability only uses the exactness of $M$ and of its branes. Hence, as as in usual cobordism theory, one can choose to affix to $M$ and its branes a wide range of structures---stable framings, stable orientations, no gradings, no Pin structures, et cetera. Given some choice of structure $\cS$, one can define the corresponding category $\lag_\Lambda^\cS(M)$ whose objects are decorated with $\cS$ structures and whose cobordisms respect them.

The reason one can choose arbitrary $\cS$ in the proof of stability is that all our constructions add trivial topologies to our branes; so whatever we mean by structure, it is always respected by our proofs if $\cS$ is sufficiently homotopical in nature. As a particular example, if $\cS$ demands an $N$-fold lift of the Lagrangian phase map $L \to S^1$ (as opposed to the full $\ZZ$-fold lift, $\alpha{\colon\thinspace} L \to \RR$), one obtains an $N$-periodic structure on the shift functor. 

\subsection{The ground case}\label{sect. conj point}
Here we discuss the structure of $\Lag_\Lambda(M)$ when $M$ and $\Lambda$ are simply a point.
Because of our stabilization procedure, this is a highly non-trivial case (for instance, it can be shown to
contain the stable Pontrjagin-Thom theory of ordinary cobordisms).

In a stable $\oo$-category $\cC$, let $b[k]$ denote the $k$th shift of an object $b$. Since the shift functor is the kernel of the zero map, one has the equivalence of spaces
	\[
	\Omega^k \hom_\cC(a,b) \simeq
	\hom_\cC(a,b[k]).
	\]
for any pair $a,b$ of objects.
In other words, between any pair of objects one obtains an $\Omega$-spectrum which we denote by $\totHom_\cC(a,b)$. When $a=b$, composition induces the additional structure  of an $E_1$-algebra in spectra, which we denote by $\totEnd_\cC(a)$.

\begin{definition} Let $\cC$ be a stable $\oo$-category.
\begin{enumerate}
	\item An object $a\in\cC$ is {\em compact} if $\Hom_\cC(a, -)$ preserves coproducts.
	\item An object $a\in \cC$ is a {\em generator} if for any $c\in \cC$, ``$\Hom_\cC(a, c[k]) \simeq 0$ for all $k$'' implies $c \simeq 0$.
\end{enumerate}
\end{definition}

If $a\in \cC$ is a compact generator, then we can recover $\cC$ as the image of the Yoneda functor 
$$
\xymatrix{
\cC\ar[r] & {\totEnd_\cC(a)}\module
&
c\ar@{|->}[r] & \totHom_\cC(a, c)
}
$$
where $ {\totEnd_\cC(a)}\module$ denotes the stable, presentable $\oo$-category of ${\totEnd_\cC(a)}$-modules in spectra.

On the one hand, 
if $\cC$ is a stable, presentable $\oo$-category with a compact generator $a\in \cC$, then
the Yoneda functor is an equivalence $\cC\simeq {\totEnd_\cC(a)}\module$. 
On the other hand, suppose $\cC$ is a stable $\oo$-category with a generator $a\in \cC$ such that every object is a
{finite} colimit of  objects of the form $a[k]$, for $k\in \Z$. Then it is immediate that $a$ is compact,
and the Yoneda functor induces an equivalence 
$
{
\cC\simeq  {\totEnd_\cC(a)}\perf
}
$,
where $ {\totEnd_\cC(a)}\perf$ denotes the stable $\oo$-category of perfect ${\totEnd_\cC(a)}$-modules in spectra. (This is the full subcategory of modules which are finite colimits of free modules).

\begin{conjecture} \label{conj: point generates}
Take $M= \Lambda = pt$, consider
$
\Lag_{pt}(pt),
$
and set $\cE = \totEnd_{\Lag_{pt}(pt)}(pt)$.
\begin{enumerate}
	\item The object $pt\in \Lag_{pt}(pt)$ is a generator.
	\item Every object of $\Lag_{pt}(pt)$ is a finite colimit of objects of the form $pt[k]$, for $k\in \Z$.
\end{enumerate}
Consequently, the Yoneda functor induces an equivalence 
$$
\Lag_{pt}(pt)\simeq \cE\perf.
$$
\end{conjecture}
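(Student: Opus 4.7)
The plan is to combine a Morse-theoretic handle decomposition of Lagrangian branes with the explicit mapping cone construction of Theorem~\ref{thm: cones}. After stabilization, a representative of an object of $\Lag_{pt}(pt)$ at level $n$ is a Lagrangian brane $L \subset T^*\R^n$ that is disjoint from the zero section $\R^n \subset T^*\R^n$ at infinity, and the distinguished object $pt$ is represented at level $n$ by the cotangent fiber $T^*_0 \R^n$. The expected answer, namely that $\cE$ is the sphere spectrum, is a useful sanity check but is not logically needed for the conjecture itself.

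For part (1), suppose $L$ satisfies $\totHom(pt, L) \simeq 0$. Since the zero section $\R^n$ is contractible, any two cotangent fibers $T^*_x\R^n$ and $T^*_y\R^n$ are canonically equivalent in $\Lag_{pt}(pt)$, so the vanishing of $\totHom(pt, L)$ propagates to the vanishing of $\totHom(T^*_x\R^n, L)$ for every $x$. Morphisms $T^*_x\R^n \to L$ are represented by non-characteristic cobordisms and, after a small Hamiltonian perturbation, are detected by the finite set of transverse intersection points of $L$ with $T^*_x\R^n$ together with their gradings. I would argue that uniform vanishing of these local intersection spectra for all $x$ forces $L$ to be Hamiltonian displaceable from the zero section. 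Once $L$ is displaced from $\R^n$, and hence from $\Lambda = \R^n$ altogether, the second zero-object proposition of the introduction gives $L \simeq 0$, establishing that $pt$ is a generator.

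For part (2), I would construct the finite colimit presentation using Morse theory. Choose a generic affine linear function on the base $\R^n$ and lift it to a Morse function $h : L \to \R$; the taming hypotheses ensure $h$ has only finitely many critical points $x_1, \dots, x_N$ of Morse indices $k_1, \dots, k_N$. Filtering $L$ by sublevel sets $L_{\le c}$, at each critical value $c_j$ the brane is built from $L_{\le c_j - \eps}$ by a handle attachment which, on the symplectic side, is a standard local Weinstein handle. I claim this handle presents $L_{\le c_j + \eps}$ as the mapping cone of a morphism $T^*_{x_j}\R^n[k_j] \to L_{\le c_j - \eps}$ supplied by the ascending disk at $x_j$. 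Applying Theorem~\ref{thm: cones} inductively then realizes $L$ as an iterated cone of shifted copies of $pt$, and since $N$ is finite this is a finite colimit. Together with (1), the Yoneda argument recalled just before the conjecture yields the equivalence $\Lag_{pt}(pt) \simeq \cE\perf$.

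The main obstacle is the geometric content of step (2): promoting each Morse handle to an honest non-characteristic Lagrangian cobordism compatible with the brane structure. One must produce local Weinstein handle models whose graded, primitive, and pin data agree with the grading shift $[k_j]$ predicted by the Morse index, and one must verify that attaching these local models to the evolving $L_{\le c_j}$ produces a genuinely non-characteristic cobordism in the sense of Section~\ref{sect: lag cobs}. A further technical point, somewhat orthogonal, is extending the decomposition to branes that are merely tame but non-compact, where one must either exploit the taming conditions to bound the number of handles at infinity or pass to a limit along the colimit defining $\Lag_\Lambda(M)$.
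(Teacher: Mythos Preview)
The paper does not prove this statement: it is explicitly labeled a \emph{conjecture} (Conjecture~\ref{conj: point generates}) in the ``Conjectures and further directions'' portion of the introduction, and no proof or proof sketch is offered anywhere in the paper. So there is no proof to compare against; your proposal is an attempt to resolve an open problem.

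On the substance of your attempt, both parts rest on bridges that the paper does not build. In part~(1) you pass from ``$\totHom(pt,L)\simeq 0$'' to a statement about transverse intersection points of $L$ with cotangent fibers, and from vanishing of those ``local intersection spectra'' to Hamiltonian displaceability of $L$ off the zero section. The first passage presumes a Floer-type identification of morphism spectra in $\Lag$ with intersection data; in this paper that identification is the content of Conjecture~\ref{conj: fukaya}, not a theorem. The second passage is false as stated even in ordinary Floer theory: vanishing Floer homology does not force the geometric intersection to be empty, only that generators cancel in pairs, so you would still need an h-principle or displacement argument that is not supplied. In part~(2), the assertion that a Morse handle attachment on $L$ is realized in $\Lag_{pt}(pt)$ by the mapping cone of a morphism $T^*_{x_j}\R^n[k_j]\to L_{\le c_j-\eps}$ is exactly the heart of the matter; you flag it as ``the main obstacle'' and indeed it is. Producing that morphism as a $\Lambda$-non-characteristic Lagrangian cobordism, with the correct brane data, and then matching the cone of Theorem~\ref{thm: cones} with the glued sublevel set, is a substantial piece of geometry not available from anything proved in the paper.

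In short: your outline is a plausible strategy and is in the spirit of the remarks surrounding Conjectures~\ref{conj: point generates} and~\ref{conj: point endos}, but it is a research program rather than a proof, and its key steps invoke results (Floer/cobordism comparison, handle-to-cone correspondence) that are themselves conjectural here.
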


By the conjecture, the study of $\Lag_{pt}(pt)$ reduces to the calculation of the endomorphism algebra 
$\cE = \totEnd_{\Lag_{pt}(pt)}(pt)$.
By construction, $\cE$ is only an $E_1$-algebra, but in fact it is naturally an $E_\oo$-algebra. The intuition is that direct product induces a functor
$$
\xymatrix{
\Lag_{\Lambda_1}(M_1) \times \Lag_{\Lambda_2}(M_2) \ar[r] & 
\Lag_{\Lambda_1\times \Lambda_2}(M_1 \times M_2).
}
$$
When we take $M_1 = M_2 = \Lambda_1 =\Lambda_2 = pt$, this equips $\Lag_{pt}(pt)$
with a symmetric monoidal structure. (And when we only restrict $M_1 = \Lambda_1 = pt$, this equips  
any $\Lag_{\Lambda_2}(M_2)$ with the structure of $\Lag_{pt}(pt)$-module.) Furthermore, it is apparent that the object $pt \in \Lag_{pt}(pt)$ is the unit for this structure. Thus it follows that its endomorphism algebra $\cE$ is in fact an $E_\oo$-ring spectrum. Making all this precise, however, requires some extra care---we leave this for later work.

\begin{conjecture} \label{conj: point endos}
Take $M= \Lambda = pt$,
 consider
$
\Lag_{pt}(pt).
$
The $E_\oo$-ring spectrum $\cE = \totEnd_{\Lag_{pt}(pt)}(pt)$ 
is connective with 
	\[
	\pi_0(\cE) \cong \ZZ.
	\]
If one considers the structure $\cS$ which does not take into account Pin structures, we expect that $\cE^\cS$ is connective with $\pi_0(\cE^\cS) \cong \ZZ/2\ZZ$. 
\end{conjecture}

\begin{remark}
One import of the above conjecture is that we could then form the base change $\Lag_\Lambda(M) \otimes_{\cE} \Q$
to obtain a rational stable $\oo$-category, or equivalently a rational pre-triangulated differential graded or $A_\oo$-category. 
This then could be compared to well-known geometric differential graded or $A_\oo$-categories. See Conjecture~\ref{conj: fukaya} below. 
\end{remark}

\begin{remark}
We expect the analysis of the $E_\oo$-ring spectrum $\cE$ to follow along the lines of ``modern" Pontrjagin-Thom theory as developed in \cite{GMTW, Galatius, Ayala}.
\end{remark}

\subsection{Cotangent bundles}\label{sect. cotangent bundles}

Here we discuss the structure of $\Lag_\Lambda(M)$ when $M = T^*X$ and $\Lambda \subset T^*X$
is a conical Lagrangian.

Let us first discuss the distinguished case when $\Lambda = X$ is the zero section itself.
The above development for $M=\Lambda = pt$ admits a natural generalization
which should also be amenable to similar techniques.

\begin{conjecture} \label{conj: cotangent bundles}
\footnote{During the publication process of the present work, parts (1) and (2) of Conjecture~\ref{conj: cotangent bundles} were established in~\cite{tanaka-generation} for Weinstein manifolds generally (i.e., not just cotangent bundles).}
Take $M= T^*X$, $\Lambda = X$ the zero section, and consider $\Lag_X(T^*X)$. Assume for simplicity that $X$ is connected,
choose a point $x\in X$, and let $T^*_x X\subset T^*X$ be the corresponding conormal Lagrangian.

(1) The object $T^*_x X\in \Lag_{X}(T^*X)$ is a generator.

(2) Every object of $\Lag_{X}(T^*X)$ is a finite colimit of objects of the form $T^*_x X[k]$, for $k\in \Z$.

(3) Assume $X$ is given a Spin structure. Then the endomorphism algebra $\totEnd_{\Lag_X(T^*X)}(T^*_x X)$ is equivalent to the $E_1$-algebra 
$\Omega_x X_* \wedge \cE$ of $\cE$-chains on the based loop space $\Omega_x X$, 
where $\cE = \totEnd_{\Lag_{pt}(pt)}(pt)$, and $\Omega_x X_* = \Omega_x X \amalg \{*\}$.

Consequently, the Yoneda functor induces an equivalence 
$$
\Lag_{X}(T^*X)\simeq \Omega_x X_* \wedge \cE\perf
$$
\end{conjecture}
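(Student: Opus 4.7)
The plan is to prove the three enumerated assertions in order, treating (3) as the central geometric computation from which (1) and (2) will be leveraged, and then to deduce the stated equivalence formally from the Yoneda discussion preceding the conjecture. Throughout, the guiding picture is that a non-characteristic cobordism between cotangent fibers of $T^*X$ should deform to a ``parametrized conormal'' over a path in $X$, so that $\Omega_xX$ appears as the geometric shadow of $\totEnd_{\Lag_X(T^*X)}(T^*_xX)$, with $\cE$ accounting for the brane decorations and the stabilization by $T^*\R^n$.

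For assertion (3), I would construct the equivalence by a geometric straightening procedure. Given a non-characteristic cobordism $P \subset T^*X\times T^*\R_t\times T^*\R^n$ whose ends are $T^*_xX$, the non-characteristic condition says that $P + \Gamma_\eta$ is disjoint from the extended support $X\times \R_t\times \R^n$ for some closed $\eta$ agreeing with $dt$ outside a compact set. The plan is to use a Moser/Hamiltonian isotopy argument, exploiting the conical structure, to deform $P$ rel its ends to the total space of a smooth family $\{T^*_{\gamma(t)}X\}_{t\in\R_t}$ of conormal bundles along a loop $\gamma\colon \R_t\to X$ with $\gamma=x$ outside a compact set. Concatenation along $\R_t$ corresponds to concatenation of loops, producing the $E_1$-map to the based loop space $\Omega_xX$; the residual moduli of brane data (primitive, grading, relative pin structure) and of stabilizing conormals in the $T^*\R^n$ factors get organized by a parametrized Pontryagin--Thom construction into the $\cE$-factor, yielding a map of $E_1$-algebras $\Omega_xX_+\wedge \cE \to \totEnd(T^*_xX)$. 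Essential surjectivity is straightforward by pulling back loops in $X$ to parametrized conormals; the hard direction is injectivity/fullness, i.e., showing every non-characteristic cobordism is equivalent to one arising this way.

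For assertion (2), I would establish a handle-type decomposition of branes. Given $\sL=(L,f,\alpha,q)\in \Lag_X(T^*X)$, perturb the exactness primitive $f\colon L\to \R$ to be Morse-Smale with finitely many critical points in the relevant bounded region. The gradient flow stratifies $L$ into cells attached along smaller-dimensional skeleta, and each cell near a critical point of index $k$ looks symplectically like a conormal fiber with a grading shift. Realizing each handle attachment as a mapping-cone cobordism of the type constructed in Theorem~\ref{thm: cones}, one expresses $\sL$ as a finite iterated cone of shifts of conormal fibers $T^*_{x_i}X[k_i]$. Using part (3) (or more precisely its underlying rigidity result that any two conormal fibers are equivalent after choosing a path between them), one identifies all $T^*_{x_i}X$ with $T^*_xX$, finishing (2).

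For assertion (1), one could either deduce compactness and generation from (2) directly, or give an independent microlocal argument: if $\totHom(T^*_xX, \sL[k])\simeq 0$ for all $k$, then by the wrapping provided by non-characteristic isotopies along paths in $X$ one concludes $\totHom(T^*_yX,\sL[k])\simeq 0$ for all $y\in X$, whence $\sL$ has empty microlocal support and thus is equivalent to $L_\emptyset$. The Yoneda equivalence $\Lag_X(T^*X)\simeq \Omega_xX_+\wedge\cE\perf$ is then formal, exactly as in the paragraph preceding the conjecture. The main obstacle I expect is the rigidity step in (3): proving that every non-characteristic cobordism between conormal fibers, possibly after iterated stabilization, deforms canonically to a parametrized conormal bundle over a loop. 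This will require a careful Moser-type argument that controls the cobordism near both $X$ (the support) and the boundary at infinity of $\ol{T^*X}$, and it is at precisely this point that the stabilization colimit over $n$ is essential, since it provides the room needed to contract extraneous topology in the moduli of cobordisms down to loop-space level.
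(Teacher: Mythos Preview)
The statement you are attempting to prove is explicitly labeled a \emph{conjecture} in the paper and is presented in the section ``Conjectures and further directions''; the paper offers no proof of it. There is therefore nothing in the paper to compare your proposal against. The authors treat this as an open problem, noting only that it would be parallel to known results on the wrapped Fukaya category and compatible with their other conjectures.

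As for the content of your proposal itself: what you have written is a reasonable outline of the expected mechanism, but it is a strategy rather than a proof, and the hard step you yourself flag is genuinely hard. The ``straightening'' claim in (3)---that every non-characteristic cobordism between conormal fibers can, after stabilization, be deformed to a parametrized conormal over a loop in $X$---is essentially the entire content of the conjecture. A Moser-type argument will not produce this on its own: Moser arguments control isotopies of a single Lagrangian, not the homotopy type of the full space of cobordisms, and the non-characteristic condition relative to the zero section does not obviously force a cobordism to fiber over a path in $X$. Likewise, your argument for (2) presupposes that Morse cells of the primitive $f$ on $L$ correspond to cones on conormal fibers in $\Lag_X(T^*X)$; making this precise requires exactly the kind of local-to-global control (the cosheaf property of Conjecture~\ref{conj: cosheaf}, or something equivalent) that the paper also leaves open. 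In short, your plan identifies the right shape of the argument and the right obstacle, but does not supply the missing ingredient, and neither does the paper.
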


\begin{remark}
Suppose $X$ is Spin.
Together with Conjecture~\ref{conj: point endos}, this would imply that the base change $\Lag_{X}(T^*X)\otimes_{\cE}  \Q$ is equivalent to the differential graded category of perfect modules over the differential graded algebra of chains $C_{-*}(\Omega_x X, \BQ)$. 
This would be parallel to work ~\cite{AbbSchw1, AbbSchw2, Abouyoneda, Aboucrit, Abougenerate} on the wrapped Fukaya category. See Conjecture~\ref{conj: fukaya} below.
\end{remark}

Now we would like to generalize Conjecture~\ref{conj: cotangent bundles} when we take a more general 
conical Lagrangian $\Lambda\subset T^*X$.
It will be convenient to adopt the point of view on constructible topology developed in the appendix of~\cite{dag6}. Let us first reformulate Conjecture~\ref{conj: cotangent bundles} in these terms.

Let $\Sing(X)$ denote the Kan simplicial set of singular simplices on $X$. We can regard it as an $\oo$-categorical version of the familiar Poincar\'e groupoid. If $X$ is connected with base point $x\in X$, then $\Sing(X)$ is equivalent to the $\oo$-groupoid with a single object $x$ and endomorphisms $\Omega_x X$. 

\begin{definition} Let $\cA$ be an $E_\oo$-ring spectrum.
\begin{enumerate}
	\item The stable $\oo$-category of {\em constructible sheaves} of $\cA$-modules on $X$ is the continuous functor $\oo$-category
	$$
	\Shv(X, \cA) = \Hom(\Sing(X)^{op}, \cA\module).
	$$
	\item The full stable $\oo$-subcategory $\Perf(X) \subset \Shv(X)$ of {\em perfect constructible sheaves} of $\cA$-modules on $X$ is that generated under finite limits and colimits by representable functors.
\end{enumerate}
\end{definition}

\begin{remark}
A caution about terminology: the above notion of perfect constructible sheaves does not imply any finiteness on the fibers. For example, the pushforward of the constant sheaf along a map $x\to X$ is perfect
though its stalk at $x$ will be $\cA$-cochains on the based loop space $\Omega_x X$. In fact,  perfect constructible sheaves are precisely what one can obtain by finite limits and colimits of such sheaves.
\end{remark}

The following conjecture is equivalent to Conjecture~\ref{conj: cotangent bundles}.

\begin{conjecture} \label{conj: cotangent bundles reformulated}
Take $M= T^*X$, $\Lambda = X$ the zero section, and set 
 $\cE = \totEnd_{\Lag_{pt}(pt)}(pt)$
  
 There is a canonical equivalence of stable $\oo$-categories
 $$
\xymatrix{
 \Perf(X, \cE) \ar[r]^-\sim & \Lag_X(T^*X)
} $$
which sends the constant sheaf on a point $x\in X$ to the conormal Lagrangian $T^*_x X\subset T^*X$.
The above is true for any structure $\cS$.
\end{conjecture}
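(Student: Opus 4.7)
The plan is to construct an explicit geometric functor $\bar\Phi\colon\Perf(X,\cE)\to\Lag_X(T^*X)$ and verify it is an equivalence by comparing compact generators. Since the remark preceding the conjecture asserts this reformulation is equivalent to Conjecture~\ref{conj: cotangent bundles}, the strategy is to realize the equivalence of that conjecture through a geometrically natural functor that visibly sends the constant sheaf at $x\in X$ to the conormal fiber $T^*_x X$.

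First, I would construct a simplicial map $\Phi\colon\Sing(X)\to\Lag_X(T^*X)$ sending a point $x\in X$ to $T^*_x X$ and a path $\gamma\colon[0,1]\to X$ to a parallel-transport Lagrangian cobordism obtained by sweeping the conormal fiber along $\gamma$ and recording it as a Lagrangian in $T^*X\times T^*\R_t$: roughly the family $\{(\gamma(t),\xi,t,-\langle\xi,\dot\gamma(t)\rangle) : t\in[0,1],\ \xi\in T^*_{\gamma(t)}X\}$ with appropriate collaring. An $n$-simplex $\Delta^n\to X$ is promoted to an iterated Lagrangian cobordism in $T^*X\times T^*\R^n$, yielding an object of $\Lag^{\dd n}_X(T^*X)$. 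Two geometric properties must be checked: (a) each such cobordism is non-characteristic with respect to the zero section, since at each time it sits inside a single cotangent fiber and is disjoint from the zero section away from a compact set; (b) the assignment is coherent enough to extend to an $\oo$-functor. Standard cocompletion machinery then produces a colimit-preserving functor $\bar\Phi\colon\Perf(X,\cE)\to\Lag_X(T^*X)$ sending the representable at $x$ to $T^*_x X$.

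Next, I would verify full faithfulness on representables. For $X$ connected with basepoint $x$, $\Sing(X)$ is equivalent to the one-object Kan complex with endomorphisms $\Omega_x X$; continuity of sheaves of $\cE$-modules then identifies $\totEnd_{\Perf(X,\cE)}(\text{rep}_x)\simeq \Omega_x X_*\wedge\cE$. By Conjecture~\ref{conj: cotangent bundles}(3), $\totEnd_{\Lag_X(T^*X)}(T^*_x X)\simeq\Omega_x X_*\wedge\cE$ as well. The essential task is to check that the algebra map induced by $\bar\Phi$ agrees with this identification, which is natural from the parallel-transport construction. To conclude the equivalence, I would invoke Conjecture~\ref{conj: cotangent bundles}(1)-(2): the conormal fiber $T^*_x X$ generates $\Lag_X(T^*X)$ and every object is a finite colimit of its shifts, while by definition $\Perf(X,\cE)$ is generated under finite limits and colimits by the representables. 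A fully faithful exact functor matching generators is then automatically an equivalence.

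The main obstacle will be the coherent geometric construction of $\Phi$ together with the identification of $\totEnd(T^*_x X)$ with $\Omega_x X_*\wedge\cE$: one must produce, for every simplex in $\Sing(X)$, a genuine non-characteristic Lagrangian brane in $T^*X\times T^*\R^n$ with the correct boundary data and face/degeneracy compatibilities, and show that these parallel-transport branes account for \emph{all} self-cobordisms of $T^*_x X$ up to coherent higher equivalence. This essentially reduces the conjecture to Conjecture~\ref{conj: cotangent bundles}(3), which is expected to follow from the modern Pontryagin-Thom techniques cited in the paper together with a careful non-characteristic perturbation argument for conormal families.
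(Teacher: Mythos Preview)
The statement you are attempting to prove is a \emph{conjecture} in the paper, not a theorem; the paper offers no proof. It appears in the section ``Conjectures and further directions'' and is explicitly introduced with the sentence ``The following conjecture is equivalent to Conjecture~\ref{conj: cotangent bundles}.'' There is therefore no proof in the paper to compare against.

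Your proposal is not a proof either, and you essentially acknowledge this in your final paragraph. The argument you outline is a conditional reduction: assuming Conjecture~\ref{conj: cotangent bundles}(1)--(3), you sketch how one would build a functor $\bar\Phi$ realizing the equivalence. But parts (1)--(3) of Conjecture~\ref{conj: cotangent bundles} are themselves open in the paper, so invoking them gives you nothing beyond what the paper already asserts, namely that the two conjectures are equivalent. Your contribution over the paper's one-line equivalence claim is the parallel-transport sketch of $\Phi$ on $\Sing(X)$, but even this is not carried out: you do not verify that the sweeping construction produces a genuine factorized, collared, $\Lambda$-non-characteristic brane in the precise sense of Section~\ref{sect: lag cobs}, nor that the assignment respects the face and degeneracy maps of $\Lag^{\dd n}$, nor that the induced map on endomorphism spectra is the expected one. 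The hard content---that $T^*_x X$ generates, that every object is a finite colimit of its shifts, and that $\totEnd(T^*_x X)\simeq \Omega_x X_*\wedge\cE$---remains entirely conjectural, as the paper makes clear.
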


Now we are prepared to 
generalize Conjecture~\ref{conj: cotangent bundles} when we take conical Lagrangians 
$\Lambda \subset T^*X$ of the following general form. Suppose $X$ is Whitney stratified by strata $X_\alpha\subset X$, for $\alpha \in A$, and take $\Lambda\subset T^*X$
to be the union of the conormals
$$
\Lambda = \coprod_{\alpha\in A} T^*_{X_\alpha} X.
$$

Following~\cite{dag6}, let $\Sing^A(X)$ denote the $\oo$-category of singular simplices adapted to the strata $X_\alpha \subset X$, for $\alpha \in A$. We can regard it as an $\oo$-categorical version of the ``exit path category" of \cite{treumann} with morphisms paths exiting singular strata for less singular strata.

\begin{definition} Let  $X_\alpha \subset X$, for $\alpha \in A$, be a Whitney stratification, and
let $\cA$ be an $E_\oo$-ring spectrum.

(1) The stable $\oo$-category of {\em $A$-constructible sheaves} of $\cE$-modules on $X$  is the continuous functor $\oo$-category
$$
\Shv_A(X, \cA) = \Hom(\Sing^A(X)^{op}, \cA\module).
$$

(2) The full stable $\oo$-subcategory $\Perf_A(X) \subset \Shv_A(X)$ of {\em perfect $A$-constructible sheaves} of $\cE$-modules on $X$ 
is that generated under finite limits and colimits by representable functors.
\end{definition}

\begin{conjecture} \label{conj: stratified}
Take $M= T^*X$, $\Lambda = \coprod_{\alpha\in A} T^*_{X_\alpha} X$ the union of conormals to a Whitney stratification $X_\alpha \subset X$, for $\alpha \in A$.
Then there is a canonical equivalence of stable $\oo$-categories
	$$
	\xymatrix{
	\Perf_A(X, \cE) \ar[r]^-\sim & \Lag_\Lambda(T^*X).
	} 
	$$
\end{conjecture}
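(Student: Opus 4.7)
The plan is to construct a functor $\Psi : \Perf_A(X, \cE) \to \Lag_\Lambda(T^*X)$ and verify it is an equivalence by localizing on $X$ and reducing inductively to the unstratified case (Conjecture~\ref{conj: cotangent bundles reformulated}). Since $\Perf_A(X, \cE)$ is, by definition, generated under finite limits and colimits by the representable functors on $\Sing^A(X)$, it suffices to specify $\Psi$ on the exit path $\oo$-category $\Sing^A(X)$ and then to extend it by universality. To a point $x \in X_\alpha$, I would assign a Lagrangian brane $B_{x,\alpha} \subset T^*X$ modeled on the conormal to a small transverse slice to $X_\alpha$ at $x$, perturbed by a small Hamiltonian so as to be disjoint at infinity from $\Lambda = \coprod_\alpha T^*_{X_\alpha} X$. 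To an exit path $\gamma$ from $x \in X_\alpha$ to $y \in X_\beta$, with $X_\alpha \subset \overline{X_\beta}$, I would assign the trace in $T^*X \times T^* \R_t$ of a Hamiltonian isotopy supported near $\gamma$ that carries $B_{x,\alpha}$ to $B_{y,\beta}$; the non-characteristic condition is controlled by the local normal data along $\gamma$, which can be arranged so that the moving slice correctly exits $X_\alpha$. Higher simplices are handled by iterating this construction with families of Hamiltonians parametrized by simplices, taking advantage of the flexibility of the $\oo$-categorical framework.

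To prove $\Psi$ is an equivalence, I would argue by localization on the base. Using the conjectural cosheaf property (Conjecture~\ref{conj: cosheaf}) for $U \mapsto \Lag_{U \cap \Lambda}(U)$ on suitable open covers pulled back from $X$, together with the evident descent for $\Perf_A(-, \cE)$, the statement reduces to small open neighborhoods of points. Near $x \in X_\alpha$ in the minimal stratum through $x$, the Whitney stratification is locally trivialized as $N \times C(\mathrm{Lk})$, where $N$ is a neighborhood in $X_\alpha$ and $C(\mathrm{Lk})$ is the open cone on the link of $X_\alpha$ at $x$. The external product functor on both sides then reduces the local case to two factors: a smooth factor along $N$, governed by Conjecture~\ref{conj: cotangent bundles reformulated}; and a conical factor along $C(\mathrm{Lk})$, governed by the link, whose stratification has strictly smaller depth and so is accessible by induction. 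The base of the induction is exactly Conjectures~\ref{conj: cotangent bundles reformulated} and~\ref{conj: point endos}.

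The principal obstacle is the cosheaf property itself, which is the glue tying these local models together and will require a delicate noncharacteristic propagation argument, in the spirit of Kashiwara--Schapira, to deform a given cobordism so that it respects a chosen open cover while preserving its non-characteristic condition with respect to $\Lambda$. A secondary but essential point is verifying that the spectra $\totHom_{\Lag_\Lambda(T^*X)}(B_{x,\alpha}, B_{y,\beta})$, for $X_\alpha \subsetneq \overline{X_\beta}$, reproduce the directed, asymmetric structure of the exit path category rather than a fully wrapped symmetric answer; this requires a careful analysis of the Reeb chords at infinity between the perturbed transverse slices, together with control over which chords survive as non-characteristic cobordisms after stabilization. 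Once the cosheaf property and this local Hom-computation are established, the equivalence follows by the Yoneda argument sketched after Conjecture~\ref{conj: point generates}, applied fiberwise over the exit path category.
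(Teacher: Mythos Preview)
The statement you are attempting to prove is a \emph{Conjecture} in the paper, not a theorem; the paper provides no proof of it, so there is nothing to compare your argument to. It appears in the section ``Conjectures and further directions'' alongside several other open problems.

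Your proposal is not a proof but a conditional strategy: it explicitly invokes Conjecture~\ref{conj: cosheaf} (the cosheaf property), Conjecture~\ref{conj: cotangent bundles reformulated} (the unstratified case), and Conjecture~\ref{conj: point endos} (the ground case), all of which are themselves unproven in the paper. You acknowledge this yourself when you write that ``the principal obstacle is the cosheaf property itself.'' So what you have written is essentially a plausible reduction of one conjecture to several others, together with an outline of how the functor $\Psi$ might be built on exit paths. That outline is reasonable and in the spirit of the paper's discussion, but it is important to be clear that even granting all the auxiliary conjectures, substantial work remains: the construction of $\Psi$ on higher simplices of $\Sing^A(X)$, the verification that the external product reduction interacts correctly with the cone-on-link local model, and the ``secondary but essential'' Hom-computation you flag are each nontrivial and not addressed beyond a sentence. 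As written, this is a research program, not a proof.
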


\begin{remark}
Together with Conjecture~\ref{conj: point endos}, this would imply that the base change $\Lag_{\Lambda}(T^*X)\otimes_{\cE}  \BQ$ is equivalent to the differential graded category of perfect $A$-constructible sheaves of $\BQ$-vector spaces.
This would be parallel to work ~\cite{NZ, N} on the Fukaya category of the cotangent bundle. See Conjecture~\ref{conj: fukaya} below.
\end{remark}

Finally, before continuing, let us state a refined microlocal version of the above conjecture.
We continue with
$\Lambda = \coprod_{\alpha\in A} T^*_{X_\alpha} X$ the union of conormals to a Whitney stratification $X_\alpha \subset X$, for $\alpha \in A$.

Consider a conical open subset  $U\subset T^*X$.
Then we have two $\oo$-categories which reflect the local geometry of $U$ and are independent
of the ambient cotangent bundle.
On the one hand, we have the stable $\oo$-category $\Lag_{U\cap \Lambda} (U)$.
On the other hand, 
we have the  $\oo$-category $\Micro_\Lambda(U, \cE)$ of perfect microlocal sheaves of $\cE$-modules supported along $U\cap \Lambda$. It can be constructed by quotienting sheaves on $X$ with singular support in $\Lambda\cup (T^*X \setminus U)$ by those with singular support in $T^*X\setminus U$.
(See~\cite{KS, KSmicrolocal} for a comprehensive discussion.)
Then one must impose finiteness conditions akin to the representability in our notion of perfect
constructible sheaf.

\begin{conjecture} \label{conj: microlocal}
Take $M= T^*X$, $\Lambda = \coprod_{\alpha\in A} T^*_{X_\alpha} X$ the union of conormals to a Whitney stratification $X_\alpha \subset X$, for $\alpha \in A$. Let
$U\subset T^*X$ be a conical open subset,
and set 
 $\cE = \totEnd_{\Lag_{pt}(pt)}(pt)$. 
 
 There is a canonical equivalence of stable $\oo$-categories
 $$
\xymatrix{
 \Micro_\Lambda(U, \cE) \ar[r]^-\sim & \Lag_{U\cap \Lambda}(U).
} $$
\end{conjecture}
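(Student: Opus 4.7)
The plan is to deduce Conjecture~\ref{conj: microlocal} from Conjectures~\ref{conj: stratified} and~\ref{conj: cosheaf} by matching up Verdier quotient sequences on the two sides.

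First, I would promote Conjecture~\ref{conj: stratified} to handle a larger support Lagrangian adapted to $U$. After refining $\{X_\alpha\}_{\alpha\in A}$ to a Whitney stratification $A'$ compatible with the projection of $K = T^*X \setminus U$ to $X$, one can replace $K$ by a slightly larger conical Lagrangian $\Lambda_K = \coprod_\beta T^*_{X_\beta}X$ which contains $K$ and is a union of conormals for $A'$. The same methods that yield Conjecture~\ref{conj: stratified} should then produce a canonical equivalence
\[
\Perf_{A'}(X,\cE) \xrightarrow{\sim} \Lag_{\Lambda \cup \Lambda_K}(T^*X),
\]
natural in support conditions; in particular, the full subcategory of perfect $A'$-constructible sheaves with singular support in $\Lambda_K$ is identified with the full subcategory $\Lag_{\Lambda_K}(T^*X) \hookrightarrow \Lag_{\Lambda \cup \Lambda_K}(T^*X)$.

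Second, both sides of the desired equivalence arise as cofibers of this subcategory inclusion. On the microlocal side this is the definition: $\Micro_\Lambda(U,\cE)$ is the Verdier quotient of $\Perf_{\Lambda\cup\Lambda_K}(X,\cE)$ by $\Perf_{\Lambda_K}(X,\cE)$. On the Lagrangian side, I would apply Conjecture~\ref{conj: cosheaf} to the cover of $T^*X$ by $U$ together with a conical open neighborhood of $K$; this should exhibit the restriction functor
\[
\Lag_{\Lambda \cup \Lambda_K}(T^*X) \longrightarrow \Lag_{U \cap \Lambda}(U)
\]
as a localization whose kernel is precisely $\Lag_{\Lambda_K}(T^*X)$. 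Indeed, by the strengthened form of Proposition~\ref{prop: zero} (the unnumbered proposition immediately following it), any brane whose support at infinity is entirely captured by $\Lambda_K$ becomes a zero object after restriction to $U$, since it can be pushed off $U$ via the Liouville flow without creating non-characteristic intersections with $U \cap \Lambda$. Passing to cofibers in the $\infty$-category of stable $\infty$-categories then transports the equivalence of step one to the equivalence of the conjecture, and one checks on generators (conormals to points of strata lying inside $U$) that this matches the expected correspondence.

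The hardest step will be the second one, specifically the identification of the restriction functor as a Verdier quotient with exactly this kernel. This demands a geometric extension lemma: every non-characteristic Lagrangian cobordism in $U$ between restrictions of globally defined branes extends, after controlled Hamiltonian perturbation, to a non-characteristic cobordism in the full $T^*X$ with respect to $\Lambda \cup \Lambda_K$. The non-characteristic hypothesis at $K$ is precisely what permits such an extension, by pushing obstructing intersections off to infinity in $U$ along the Liouville flow; making this quantitative at the level of morphism spectra of the functor $V \mapsto \Lag_{V \cap \Lambda}(V)$ is essentially the content of Conjecture~\ref{conj: cosheaf} specialized to our two-element cover, and verifying it rigorously is the main remaining obstacle.
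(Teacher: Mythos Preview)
The paper does not prove this statement: it is Conjecture~\ref{conj: microlocal}, one of a sequence of motivating conjectures in Section~1.2, and no argument is offered beyond the informal description of $\Micro_\Lambda(U,\cE)$ as a quotient. There is therefore no proof in the paper to compare your proposal against, and any ``proof'' can at best be a reduction to the other unproven conjectures you invoke.

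Your reduction also has a genuine gap at the first step. You write that one can ``replace $K$ by a slightly larger conical Lagrangian $\Lambda_K = \coprod_\beta T^*_{X_\beta}X$ which contains $K$.'' But $K = T^*X \setminus U$ is the complement of an open set and typically has nonempty interior in $T^*X$; no half-dimensional subvariety can contain it. What is true is that for $A'$-constructible sheaves the singular support is automatically contained in $\coprod_{\beta\in A'} T^*_{X_\beta}X$, so the condition $\mathrm{ss}(F)\subset K$ is equivalent to $\mathrm{ss}(F)\subset \Lambda_K$ where $\Lambda_K$ is the union of those conormals $T^*_{X_\beta}X$ lying \emph{inside} $K$ (so $\Lambda_K \subset K$, not the reverse). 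With that correction the Verdier-quotient matching you sketch becomes plausible, but the identification of the restriction $\Lag_{\Lambda\cup\Lambda_K}(T^*X)\to \Lag_{U\cap\Lambda}(U)$ as a localization with kernel exactly $\Lag_{\Lambda_K}(T^*X)$ is not a consequence of Conjecture~\ref{conj: cosheaf} as stated: the cosheaf property gives a colimit description over a cover, not a fiber/cofiber sequence of the sort you need, and extracting the latter requires additional input (essentially that the inclusion functors in the Cech diagram are fully faithful with the expected images). You correctly flag this as the hard step; it is not merely hard but not obviously implied by the conjectures you cite.
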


\subsection{Locality}\label{sect. locality}
Here we discuss the possibility of recovering $\Lag_\Lambda(M)$ by gluing together 
$\Lag_{U\cap \Lambda}(U)$, for small open sets $U\subset M$. Such a local-to-global sheaf property is at the technical core of homological algebraic geometry but unfortunately unavailable in much of
homological symplectic geometry, in particular for the partially wrapped Fukaya category
(with the exception of some tantalizing developments~\cite{FO, Vitfunct, KO2, KO1, Kap, KW, NZ, N, Kontproposal, AbouSeidel, Seidsheaf, Nlocal} among others).
As we understand it, the main obstruction is the presence of instantons (or long distance interactions) in the form of pseudoholomorphic disks, or 
solutions to related elliptic PDEs.
While Lagrangian cobordisms can cover long distances, the idea that they may be assembled from short distance Lagrangian cobordisms is far more elementary than any similar statement for  global elliptic PDEs. For example, we expect to have
the following evident functoriality of which an analogue in Floer theory is a sophisticated question (for example, see
the recent progress~\cite{AbouSeidel}).

By construction, we have the following:
\begin{proposition}
Suppose $U\subset M$ is a conical open subset. Then inclusion induces a functor
$$
\xymatrix{
\Lag_{\Lambda\cap U}(U)\ar[r] & \Lag_\Lambda(M).
}
$$
\end{proposition}

Let $\Open^+(M)$ be the category of conical open subsets $U\subset M$, with morphisms given by  inclusions. Observe
that the intersection of two such open sets is again such an open set.

By a cover $\coprod_{\alpha\in I} U_\alpha \to M$, we mean a finite collection of 
 open sets $U_\alpha \subset M$ such that $M$
is the colimit of the corresponding \v{C}ech simplicial space with $k$-simplices
$
\coprod_{\underline \alpha\in I^k} U_{\underline\alpha},
$
where $U_{\underline\alpha} = U_{\alpha_1} \cap \cdots U_{\alpha_k}$, for $\underline\alpha = (\alpha_1, \ldots, \alpha_k)$.

\begin{conjecture} \label{conj: cosheaf}
Let $\StCat$ denote the $\oo$-category of stable $\oo$-categories.
Then the functor
$$
\xymatrix{
\Open^+(M) \ar[r] & \StCat & U \ar@{|->}[r] &  \Lag_{U\cap \Lambda}(U)
}
$$ 
forms a cosheaf. In particular, $\Lag_{ \Lambda}(M)$ can be calculated locally: for 
$\coprod_{\alpha\in I} U_\alpha \to M$ a cover by objects of $\Open^+(M)$,
$\Lag_{\Lambda}(M)$ is the colimit of the corresponding \v{C}ech simplicial stable $\oo$-category with $k$-simplices
$$
\xymatrix{
\coprod_{\underline \alpha\in I^k} \Lag_{U_{\underline\alpha}\cap \Lambda}(U_{\underline\alpha}),
}
$$
where $U_{\underline\alpha} = U_{\alpha_1} \cap \cdots U_{\alpha_k}$, for $\underline\alpha = (\alpha_1, \ldots, \alpha_k)$.
\end{conjecture}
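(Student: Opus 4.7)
The plan is to verify the descent condition by reducing to covers by two conical open sets and then iterating. Concretely, for $M = U \cup V$ with $U, V \in \Open^+(M)$, the goal is to show that the natural comparison
$$
\Lag_{U \cap \Lambda}(U) \sqcup_{\Lag_{U \cap V \cap \Lambda}(U \cap V)} \Lag_{V \cap \Lambda}(V) \longrightarrow \Lag_\Lambda(M),
$$
with the pushout taken in $\StCat$, is an equivalence. For a general finite cover $\coprod_\alpha U_\alpha \to M$, one then refines by two-set subcovers and iterates, using that the Čech colimit can be built from such pushouts and that colimits commute with colimits.

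The map from the pushout to $\Lag_\Lambda(M)$ is assembled from the restriction functors supplied by the preceding functoriality conjecture. To construct an inverse, I would geometrically cut objects and morphisms along a conical separating hypersurface $H \subset M$ with $U \supset M \setminus \ol V$ and $V \supset M \setminus \ol U$. Given a brane $L \in \Lag_\Lambda(M)$, a small Hamiltonian isotopy that is non-characteristic with respect to $\Lambda$ places $L$ in transverse general position with $H$. The slice $L \cap H$ then defines an object of $\Lag_{U \cap V \cap \Lambda}(U \cap V)$, and the halves $L \cap \ol U$, $L \cap \ol V$, after being capped off at $H$ using a collar into the stabilization direction $T^*\R_x$ (with the conormal $T^*_{\{0\}}\R_x$ absorbing the boundary), yield objects of $\Lag_{U \cap \Lambda}(U)$ and $\Lag_{V \cap \Lambda}(V)$. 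Theorem~\ref{thm: cones} then realizes $L$ as the mapping cone of the comparison $L \cap H \to (L \cap \ol U) \oplus (L \cap \ol V)$, producing a canonical preimage in the pushout. The same cutting construction, performed fiberwise in the $T^*\R_t$ factor, treats morphisms, and applied fiberwise in $T^*\R^k$ treats the higher morphisms required to witness full faithfulness.

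The main obstacle will be establishing homotopy coherence of this cutting operation. At each level the hypersurface $H$ and the Hamiltonian perturbation depend on auxiliary choices, so one must assemble the cuts of $k$-simplices (represented by Lagrangians in $M \times T^*\R^k$) compatibly with cuts already selected on their boundary faces. I would proceed by induction on simplicial dimension, invoking the contractibility of the space of admissible separating hypersurfaces and of non-characteristic Hamiltonian perturbations; the conical hypothesis on $U, V$ and $\Lambda$ is essential here to control behavior at infinity and keep the cut pieces within the allowed taming class. A secondary technical point is that the colimit is taken in $\StCat$ rather than in $\Cat$, so one must check that the pushout of stable categories agrees with the pushout of underlying $\infty$-categories; this should follow from Theorem~\ref{thm: stable} applied to each term, together with the fact that the restriction functors preserve the stabilization tower and hence commute with the cone construction used to detect stability.
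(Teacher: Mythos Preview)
The statement you are attempting to prove is labeled a \emph{Conjecture} in the paper, and the paper offers no proof of it; it is listed among the motivating open problems in Section~\ref{sect: locality}. So there is no paper proof to compare against, and your proposal should be read as a speculative strategy toward an open question rather than a reconstruction of an existing argument.

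That said, your sketch has genuine gaps beyond the acknowledged coherence issues. First, it rests on the preceding functoriality statement (that inclusion $U \hookrightarrow M$ induces a functor $\Lag_{\Lambda \cap U}(U) \to \Lag_\Lambda(M)$), which is itself an unproven conjecture in the paper; you cannot invoke it as input. Second, the cutting step is not well-posed as written: if $H \subset M$ is a real hypersurface and $L$ is an $n$-dimensional Lagrangian in the $2n$-dimensional $M$, then $L \cap H$ is generically $(n-1)$-dimensional and is not a Lagrangian brane in $U \cap V$, so it cannot define an object of $\Lag_{U \cap V \cap \Lambda}(U \cap V)$. What one presumably wants is the restriction $L|_{U \cap V}$, but then your ``capping off'' of $L \cap \ol U$ and $L \cap \ol V$ along $H$ into closed branes in $U$ and $V$ is not a construction the paper supplies, and it is far from clear that it can be made compatible with the brane structures and the non-characteristic condition relative to $\Lambda$. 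Third, the assertion that $L$ is the mapping cone of $L|_{U \cap V} \to L|_U \oplus L|_V$ is a Mayer--Vietoris statement inside $\Lag_\Lambda(M)$ that is essentially equivalent to the cosheaf property you are trying to prove; invoking Theorem~\ref{thm: cones} only tells you what mapping cones look like once you already have a morphism, not that this particular triangle is exact.
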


\subsection{Partially wrapped Fukaya categories}\label{sect. relation to fukaya}

Finally, we arrive at one of our primary motivations for considering Lagrangian cobordisms.

We continue with $M$ an exact symplectic manifold with support Lagrangian $\Lambda \subset M$.
We will proceed informally and write $WF_\Lambda(M)$ for the corresponding partially wrapped Fukaya category of $M$
(with say rational coefficients).
Its construction and basic properties are well-known to experts and explained in~\cite{Aur, AurICM}
building on the more well-known wrapped Fukaya category~\cite{FSSsurvey, AbouSeidel}.
It is a fundamental object in mirror symmetry and geometric representation theory but still
mysterious and technically challenging.

Returning to the starting point of Floer theory as Morse theory of path spaces, it appears quite reasonable that Lagrangian cobordisms should play a role in understanding wrapped Floer theory. Roughly speaking, we expect an analogy along the lines:
$$
\begin{array}{c}
\mbox{Witten's Morse complex: singular cochains ::}  \\
\mbox{ Floer's instanton complex: families of Lagrangian cobordisms}
\end{array}
$$
Independently of such vague ideas, if we fix a base ring $R$, then by equipping all branes and Lagrangian cobordisms with appropriate tangential structures, one can write a pairing
	\begin{equation}\label{eqn.pairing}
	\xymatrix{
	\Lag_\Lambda(M) \times F_\Lambda(M) \ar[r]
	& \on{Chain}_R
	}
	\end{equation}
with the Fukaya category, taking values in the category of chain complexes over $R$. (We refer to~\cite{tanaka-pairing, tanaka-exact} for details. A compact, monotone variant of this was written by Biran and Cornea in~\cite{biran-cornea,biran-cornea-2}.)
In fact, the idea that there should be such a functor is the origin of our study of non-characteristic Lagrangian cobordisms. We expect to be able to exhibit the pairing~\eqref{eqn.pairing} as arising from an honest functor to the wrapped Fukaya category: 

\begin{conjecture} \label{conj: fukaya}
Let $M$ be an exact symplectic manifold with Lagrangian support $\Lambda\subset M$. 
The functor given by continuation maps induces an equivalence of stable $\oo$-categories
$$
\xymatrix{
   \Lag_\Lambda(M) \otimes_{\cE} \BQ  \ar[r]^-\sim & WF_\Lambda(M)
}$$
\end{conjecture}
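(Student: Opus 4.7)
The plan is to construct the continuation functor explicitly and then verify the equivalence by a dual reduction using both locality and generators. As a first step, I would give a simplicial-level construction of the comparison functor: to a Lagrangian brane $L\subset M\times T^*\R^n$ one associates its image in $WF_\Lambda(M)$ obtained by a suitable Lagrangian surgery/reduction along the zero section of $T^*\R^n$ at infinity, and to a non-characteristic cobordism $P$ one associates the continuation cocycle defined by counting pseudo-holomorphic strips with moving boundary condition prescribed by $P$. Non-characteristicity of $P$ with respect to $\Lambda\times\R_t$ provides exactly the energy and compactness bounds needed to make the count well-defined modulo $\Lambda$. Higher simplices of $\Lag_\Lambda(M)$ would then map to higher $A_\infty$-operations by parametrizing continuations over simplicial families of the one-forms $\eta$ appearing in the non-characteristic condition. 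The $\cE$-linearity, and hence the well-definedness of the base change $\otimes_\cE\BQ$, would follow from naturality of continuation under external product with objects of $\Lag_{pt}(pt)$.

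To verify the equivalence, I would combine locality on both sides with a reduction to generators. The locality input on our side is Conjecture~\ref{conj: cosheaf}; on the Floer side one uses the analogous Viterbo-style local-to-global property for partially wrapped Fukaya categories developed in \cite{AbouSeidel}. Since both functors $U\mapsto \Lag_{U\cap\Lambda}(U)\otimes_\cE\BQ$ and $U\mapsto WF_{U\cap\Lambda}(U)$ would then be cosheaves on $\Open^+(M)$, it suffices to verify the equivalence on a cover by elementary pieces. For Weinstein $M$ one uses a cover adapted to a handle decomposition of a plurisubharmonic Morse function, reducing to neighborhoods of critical points which are cotangent-bundle-like.

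This in turn reduces to the case $M=T^*X$ with $\Lambda$ a union of conormals to a Whitney stratification, handled by Conjectures~\ref{conj: cotangent bundles reformulated} and \ref{conj: stratified}. On the Floer side the rational partially wrapped Fukaya category of $T^*X$ is equivalent to $A$-constructible sheaves on $X$ by \cite{NZ, N}, giving the required match. At the level of generators, the object $T^*_x X\in\Lag_X(T^*X)$ should map to the Floer-theoretic cotangent fiber, whose rational $A_\infty$-endomorphism algebra is $C_{-*}(\Omega_x X,\BQ)$ by \cite{Abouyoneda}; this matches the conjectural computation of rational endomorphisms of $T^*_x X$ in $\Lag_X(T^*X)$ via loop space chains (Conjecture~\ref{conj: cotangent bundles}(3)) after base change along Conjecture~\ref{conj: point endos}.

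The main obstacle will be full faithfulness. Essential surjectivity should follow from generation results for wrapped Fukaya categories combined with the concrete cone construction of Theorem~\ref{thm: cones}, which shows that the image is closed under mapping cones. However, matching morphism spectra demands that every rational Floer cochain between $L_0$ and $L_1$ is represented, up to equivalence of cobordisms between cobordisms, by a non-characteristic Lagrangian cobordism in $M\times T^*\R$, and that no additional relations are imposed beyond those visible geometrically. This is essentially a ``Morse theory of the path space'' statement in the spirit of Floer's original vision, and is the crux likely to require new techniques: a parametrized refinement of the continuation construction, together with a careful analysis of the moduli of non-characteristic cobordisms as a spectrum-level enrichment of the Floer complex.
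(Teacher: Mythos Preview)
This statement is a \emph{conjecture} in the paper, not a theorem: the paper provides no proof, only a brief informal discussion of why such a functor should exist and a remark that the conjecture is compatible with the other conjectures of Section~1.2 via known results on cotangent bundles and wrapped Fukaya categories. There is therefore no ``paper's own proof'' to compare against.

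Your proposal is not a proof either, and you should be clear-eyed about this. The argument you outline is a conditional reduction: it invokes Conjecture~\ref{conj: cosheaf} (cosheaf locality of $\Lag$), Conjecture~\ref{conj: cotangent bundles reformulated} and Conjecture~\ref{conj: stratified} (identification with constructible sheaves on cotangent bundles), Conjecture~\ref{conj: cotangent bundles}(3) (endomorphisms of a cotangent fiber), and Conjecture~\ref{conj: point endos} (structure of $\cE$). Every one of these is stated as an open conjecture in the same paper. What you have written is essentially a plausibility argument explaining why the conjectures of Section~1.2 form a coherent package---which is precisely what the authors themselves say in the remark following Conjecture~\ref{conj: fukaya}.

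Beyond the reliance on open conjectures, there are genuine technical gaps even in the unconditional parts. The construction of the functor itself is not as straightforward as you suggest: ``Lagrangian surgery/reduction along the zero section of $T^*\R^n$ at infinity'' is not a well-defined operation on arbitrary objects of $\Lag^{\dd n}_\Lambda(M)$, and the compactness/energy bounds for continuation maps attached to non-characteristic cobordisms require more than the disjointness condition in Definition~\ref{def: nonchar}---one needs control at infinity in $M$ as well, which interacts with the taming hypotheses in a way you have not addressed. You correctly identify full faithfulness as the crux, but your final paragraph is an honest admission that this step is wide open and ``likely to require new techniques.'' That is an accurate assessment of the state of the problem, but it means the proposal is a research program, not a proof.
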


\begin{remark}
Our Conjectures~~\ref{conj: point endos},~\ref{conj: cotangent bundles}, ~\ref{conj: stratified}, and~\ref{conj: fukaya} are compatible thanks to
the work~\cite{AbbSchw1, AbbSchw2, NZ, N, Abouyoneda, Aboucrit, Abougenerate}.
\end{remark}

\subsection{Relation to the work of Biran and Cornea}
During the preparation of this work, Paul Biran and Octav Cornea independently constructed a category of Lagrangians and cobordisms between them~\cite{biran-cornea, biran-cornea-2}. We note that there are several notable differences making our collective works of independent interest. 

First, Biran and Cornea work in a compact, monotone setting while the present work lives in the non-compact, exact setting. The compactness of Biran and Cornea's branes also accounts for the absence of $\Lambda$ in their work, which only appears here when regulating non-compact cobordisms.

Second, $\lag_\Lambda(M)$ detects the space of Lagrangian cobordisms and likewise detects higher homotopical data of the Fukaya category.  In contrast, the work of Biran and Cornea does not define any space of cobordisms, and instead detects formal sequences of mapping cones, essentially by constructing maps between non-derived version of Waldhausen's s-dot construction. But the constructions here are most likely compatible with the constructions from~\cite{biran-cornea}. For example, there is a functor from $\lag_\Lambda(M)$ to modules over the Fukaya category of $M$~\cite{tanaka-pairing}, and this functor is exact~\cite{tanaka-exact}---it is expected that the induced map on the s-dot constructions of each category reproduces the map from~\cite{biran-cornea-2}.

Third, the setting of this work is manifestly stable---both in the sense of topology and of category theory. Higher cobordisms, for instance, detect higher natural transformations between modules over the Fukaya category, and Lagrangians embedded in $M \times T^*\RR^N$ for large $N$ give geometric constructions of mapping cones. The work in~\cite{biran-cornea} does not stabilize---geometrically nor algebraically---so this higher geometry goes undetected. As an illustration of this difference: the techniques used in~\cite{tanaka-pairing} allow one to exhibit a map from the higher homotopy groups of $Ham$ to Hochschild cohomology groups of $\lag_\Lambda(M)$. In contrast,~\cite{charette-cornea} can only detect such a map at the 1-groupoid level, essentially only seeing $\pi_1$ of $Ham$.

\subsection{A word on exactness}

In the present work, we have chosen to work with exact symplectic manifolds and exact Lagrangians. Let us give a few motivations for this. First, our morphisms and our stabilizations naturally live in non-compact symplectic manifolds, and exactness imposes natural constraints on behaviors ``near infinity.'' A second reason is not internal to the theory of Lagrangian cobordisms: In the exact setting, it is technically simpler to compare the theory of non-compact Lagrangian cobordisms to Floer theory. 

A third reason is more categorical. Exactness arises in the study of symplectic geometry---without reference to bubbling of holomorphic disks---by looking for fixed points of a natural $\RR$-action on the category of symplectic manifolds (see the following paragraphs). Moreover, at least in Floer theory, these fixed points allow us to recover information about non-fixed points using sane amounts of data. For example, many symplectic manifolds $M$ have Fukaya categories arising as deformations of Fukaya categories of exact symplectic manifolds $M \setminus D$ obtained by removing an appropriate divisor $D \subset M$. And in many exact symplectic manifolds, arbitrary branes may be written as extensions built from exact branes. (We do not know if Lagrangian cobordism theory in the non-exact setting admits such deformation-theoretic descriptions.)

Let $\symp$ be the category whose objects are symplectic manifolds, and whose morphisms are Lagrangian correspondences. (That is, a morphism from $M$ to $M'$ is a Lagrangian in $(M \times M', \omega \oplus -\omega')$.) As usual, one must address transversality issues arising in composition, and whether or not to allow immersed Lagrangians as morphisms (as opposed to formal sequences of correspondences), but let us ignore this point for this heuristic discussion. We will simply say that, following Weinstein, some version of $\symp$ is expected to be a natural domain for expressing the functoriality of symplectic invariants.

There is an action of the discrete group $(\RR,+)$ on this category: An element $t \in \RR$ sends an object $(M,\omega)$ to the object $(M,e^t \omega)$. A morphism $L \subset M \times M'$ is sent to the {\em same} Lagrangian $L$. (A Lagrangian for $\omega \oplus -\omega'$ is Lagrangian for $e^t \omega \oplus -e^t \omega$.)

Natural examples of homotopy fixed points of this $\RR$-action arise as exact symplectic manifolds that are complete with respect to their Liouville flows. Indeed, a choice of Liouville vector field $X_\theta$ induces a flow on $M$, and the graph of the time $t$ flow is an invertible Lagrangian correspondence between $(M,\omega)$ and $(M,e^t\omega)$. That the composition of these correspondences respects the group operation of $\RR$ is straightforward. So, for example, Liouville manifolds are examples of homotopy fixed points of the $\RR$-action.

Let us choose a complete Liouville vector field on a symplectic manifold $M$. This homotopy fixed point data induces an action of $\RR$ on the collection of Lagrangians of $M$ as follows: $t \in \RR$ sends a Lagrangian in $M$ to the time $-t$ flow of that Lagrangian under the Liouville flow. Indeed, a Lagrangian in $M$ is a correspondence $L$ from the point $\ast$ to $(M,\omega)$. The action of $t \in \RR$ on $\symp$ sends this Lagrangian to the same subset $L$ of $\ast \times (M,e^t\omega)$. The homotopy fixed point data (the Liouville flow for time $-t$) composes with $L$ to yield the time $-t$ flow of $L$ as a new Lagrangian in $(M,\omega)$.

What does it mean to exhibit a homotopy fixed point structure on a Lagrangian?

To exhibit fixed point data on $L$, one must know what an equivalence of two Lagrangian correspondences is. At the very least, a Hamiltonian isotopy of Lagrangians should be such an thing. Moreover, a function $f: L \to \RR$ satisfying $df=\theta|_L$ precisely exhibits a Hamiltonian vector field matching the Liouville flow of $L$ as a subset of $M$. (Indeed, for any extension $\tilde f: M \to \RR$ of $f$, we have that $\omega(X_{\tilde f},-)$ and $-\omega(X_\theta,-)$ agree when restricted to $L$. This implies that the components of $X_{\tilde f}$ and $X_\theta$ normal to $L$ agree at all times.)

The above comprises an informal argument outlining why exact symplectic manifolds, and exact Lagrangians, arise naturally in symplectic geometry (without any consideration of holomorphic disks) as natural examples of homotopy fixed points.

\subsection{Outline of the paper}

We first take care of the geometric preliminaries and introduce Lagrangian branes in Section~\ref{section.setup}.  For any technical assumptions, the reader can consult this section.

In Section~\ref{sect. lag cobs}, we introduce the notions of {Lagrangian cobordism} and {higher Lagrangian cobordism}. In particular we explain the role of the reference Lagrangian $\Lambda \subset M$, defining the notion of {\em non-characteristic} Lagrangians. We also define the collaring notions needed to define the $\infty$-category $\Lag^{\dd 0}_\Lambda(M)$.

In Section~\ref{sect. lag}, we define the $\infty$-category $\Lag^{\dd 0}_\Lambda(M)$ organizing Lagrangian cobordisms, and prove it is a weak Kan complex with a zero object. We also explain the notion of compositions via time concatenation. We then construct the {\em stabilization} $\Lag_\Lambda(M)$.

We then construct a kernel $\sK$ for any morphism $\sP{\colon\thinspace} \sL_0 \to \sL_1$. The construction is carried out in Section~\ref{sect. kernel}, and the proof of its universal property is given in Section~\ref{sect: universal property}.

Finally, in Section~\ref{sect. loop}, we study the shift functor $\Omega{\colon\thinspace} \Lag_\Lambda(M) \to \Lag_\Lambda(M)$ to prove that $\Lag_\Lambda(M)$ is indeed stable.

In the appendix, we collect useful background material on $\infty$-categories.

\subsection{Acknowledgements}

We would like to thank Denis Auroux, Kevin Costello and Paul Seidel for their interest, and Paul Biran and Octav Cornea for fruitful correspondences and a productive week at the IAS.
We would also like to thank the anonymous referees for their thoughtful comments and suggestions.

The first author was supported by NSF grant DMS-0901114 and a Sloan Research Fellowship.
The second author was supported by an NSF
Graduate Research Fellowship and by Northwestern University's Presidential Fellowship.

\clearpage
\section{Geometric setup}\label{section.setup}

\subsection{Symplectic manifold}\label{section.symplectic-manifold}
We will restrict to the geometric background of convex symplectic manifolds, which we learned from the well-known references~\cite{EliashGromov, Eliash}.

Throughout this paper, $M$ will be an exact symplectic manifold with symplectic form $\omega_M$, fixed Liouville form $\theta_M$ so that $\omega_M = d\theta_M$, and Liouville vector field $Z_M$ defined by $\omega_M(Z_M, -) = \theta_M(-)$. We assume that this flow collars the boundary of $M$---this means the flow is transverse to the ``boundary at infinity'' $\del M$, and that the Liouville flow defines a diffeomorphism
	\[
	(0,1] \times \del M \cong U
	\]
on some open neighborhood of $\del M$. For the purposes of having a notion of distance, we will assume a Riemannian metric (or a compatible almost complex structure) which respects the collaring on $U$.

\begin{example}[Cotangent bundles]
Fix a smooth (not necessarily compact) base manifold $X$, and let $M = T^*X$ be its cotangent bundle equipped with $\theta=-pdq$ the canonical one-form. The Liouville vector field $Z_M$ generates fiberwise dilation.
More generally, we could take a conical open subset $U\subset M = T^*X$.
\end{example}
\begin{example}[Weinstein manifolds] 
Suppose  $M$ admits a proper, 
bounded-below Morse function $h{\colon\thinspace}M\to \R$ such that $Z_M$ is gradient-like with respect to $h$. Such data naturally arises
when $M\subset \C^n$ is a Stein or smooth affine complex variety: one can take $h$ to be a perturbation of the restriction of the standard unitary norm on  $\C^n$, and $\theta_M$ to correspond to the gradient of $h$ via the symplectic form. More generally, we
could consider a conical open subset $U\subset M$.
\end{example}

\begin{remark}
It is a useful level of generality to assume that $Z_M$ is complete, so the flow generated by $Z_M$ provides an $\R_{>0}$-action on $M$. It will also be useful to consider structures that are {\em conical} (i.e., $\R_{>0}$-invariant) when relating $\lag_\Lambda(M)$ to the Fukaya category~\cite{tanaka-pairing,tanaka-exact}, but we will not need such assumptions here.
\end{remark}

We will write $M^\oo = \ol M \setminus M$ for the boundary of $\ol M$, and given a subset $A\subset M$, we will write $\ol A$ for the closure of $A$ inside of $\ol M$, and $A^\oo = \ol A\setminus A = \ol A \cap M^\oo$ for the part of $\ol A$ lying in the boundary $M^\oo$.

\begin{remark}
It will be useful from a technical perspective to assume that $\ol M$ embeds as a subanalytic subset of some real analytic manifold $N$ (which without loss of generality we could take to be $\R^n$ for some large $n$). We will then only consider subsets of $M$ whose closures in $N$ are themselves subanalytic. This obviates the possibility of any ``wild" behavior and keeps us in the world of ``tame" topology~\cite{BM, vdDM}.
\end{remark}

Finally, in order to work with Lagrangian branes inside of $M$, one must consider the following standard background structures.
Given a compatible almost complex structure  $J\in\on{End}(TM)$, we can speak about the complex canonical line bundle 
$\kappa = (\wedge^{\dim M/2} T^{hol} M)^{-1}$. A bicanonical trivialization $\eta$ is an identification of the tensor-square
$\kappa^{\otimes 2}$ with the trivial complex line bundle. The obstruction to a bicanonical trivialization is twice the first Chern class $2c_1(M)\in H^2(M, \Z)$, and all bicanonical trivializations form a torsor over the group  $\on{Map}(M, S^1)$.
Note that there is a bundle over the space of all compatible $J$ whose fibers are given by bicanonical trivializations; we assume we have fixed a section.

One must also fix a background class $[c] \in H^2(M , \Z/2\Z)$
so as to be able to speak about pin structures relative to $[c]$.

\subsection{Support Lagrangian}\label{ssect: reference lagrangians}
 Throughout this paper, $\Lambda \subset M$ will be a (not necessarily compact, possibly singular) closed Lagrangian subvariety. To be more precise, we suppose the closure $\ol \Lambda\subset \ol M$ is subanalytic, and hence admits a Whitney stratification into smooth submanifolds. Then we insist that $\Lambda$ (though not necessarily irreducible) is pure of dimension $\dim M/2$,
 and each stratum of $\Lambda$ is isotropic with respect to the symplectic form $\omega_M$.

\begin{example}[Conical Lagrangians in cotangent bundles]
Take $\Lambda \subset M = T^*X$ to be any closed conical Lagrangian subvariety. In particular, we could take $\Lambda$ to be the zero section alone. More generally, if $U\subset M = T^*X$
is a conical open subset, we can take the restriction $\Lambda \cap U \subset U$ as support Lagrangian
inside of $U$.
\end{example}
\begin{example}[Conical Lagrangians in Weinstein manifolds]
Let $\Lambda^{sk}\subset M$ be the ``Lagrangian skeleton" of stable manifolds associated to the exhausting Morse function $h{\colon\thinspace}M\to \R$ (for details, see~\cite{Biran}).
We can then take $\Lambda$ to be the union of $\Lambda^{sk}$ together with the ``cone" with respect to dilation over any Legendrian subvariety in $M^\oo$.
More generally, if $U\subset M$
is a conical open subset, we can take the restriction $\Lambda \cap U \subset U$ as support Lagrangian
inside of $U$.
\end{example}

\begin{remark}\label{remark: any subset can be Lambda}
Although we have not discussed any non-conical examples, there is nothing to exclude such a choice. Likewise, the proof of stability does not rely on the dimension, or the Lagrangian-ness, of $\Lambda$. Any subset $\Lambda \subset M$ will do, including the case $\Lambda = M$.
\end{remark} 

\begin{remark}
In this work, $\Lambda$ only plays the role of restricting the types of cobordisms we allow.
\end{remark}

\subsection{Lagrangian branes}\label{ssect: lagrangian branes}
A Lagrangian brane inside of $M$ is an exact Lagrangian submanifold $L\subset M$ equipped with the following:
\begin{enumerate}
\item
 $f{\colon\thinspace}L\to \R$ is a function realizing the exactness of $L$ with respect to the given primitive $\theta_M$;
 \item 
$\alpha{\colon\thinspace}L\to \R$ is a grading with respect to the given trivialization of the complex bicanonical line bundle $\kappa^{\otimes 2}$;
\item
$q$ is a relative pin structure with respect to the given background class 
$[c]\in H^2(M, \Z/2\Z)$. 
\end{enumerate}
Each structure serves a purpose---$f$ allows for composition of exact cobordisms, $\alpha$ allows for a $\ZZ$-periodic shift functor (or, in the world of Fukaya categories, a $\ZZ$-graded Floer cohomology) and $q$ allows for one to orient moduli spaces of holomorphic polygons in Floer theory. We remind the reader that only the function $f$ is a necessary decoration for the proof of the main theorem, as Floer theory plays no role in this paper. (See Section~\ref{section.structures}.)

\begin{remark}
Note that
$f$ is determined
up to translation by an element of $H^0(L, \R)$;
$\alpha$ is determined up to translation by an element of $H^0(L, \Z)$;
and $q$ is determined up to translation by an element of $H^1(L, \Z/2\Z)$.
\end{remark}

\begin{remark}
Finally, one can also choose to consider Lagrangians with varying properties to define $\lag_\Lambda(M)$. One could require objects to have a compact projection to $\Lambda$, to be conical near the boundary of $M$, or only be asymptotically conical near $\del M$, and consistently define an $\infty$-category consisting of objects and (higher) morphisms with these properties. 
\end{remark}

\subsection{Product structures}\label{ssect: products} All of the structures we have introduced
in the preceding sections behave naturally with respect to Cartesian products. For $i=1,2$, given exact symplectic manifolds $M_i$, $\omega_i$, $\theta_i$, we have the exact symplectic manifold
$M_1 \times M_2$, $\omega_1 + \omega_2$, $\theta_1 + \theta_2$. Likewise, given exact Lagrangians $L_1, L_2$, one defines  a primitive on their product by $f(x_1,x_2) = f_1(x_1) + f_2(x_2)$. 

Likewise, given almost complex structures $J_i$, bicanonical trivializations $\tau_i$, and
background classes  $[c_i] \in H^2(M_i , \Z/2\Z)$, we take the 
almost complex structure $J_1 \oplus J_2$, bicanonical trivialization $\tau_1\wedge\tau_2 $, and
background class  $[c_1] + [c_2] \in H^2(M_1\times M_2 , \Z/2\Z)$.

Similarly, for $i=1,2$, given Lagrangian branes  $L_i \subset M_i$ with brane structures $(f_i , \alpha_i, q_i)$, we have the Lagrangian brane $L_1 \times L_2 \subset M_1 \times M_2$
with brane structure
$(f_1 + f_2, \alpha_1 + \alpha_2, q_1 + q_2)$.

\subsection{Euclidean space}\label{ssect: cotangent bundles}

We will often want to consider the symplectic manifold $T^*\R^n$, and so spell out here what structures 
we equip it with to fit into our setup.

Let $\pi{\colon\thinspace}T^*\R^n \to \R^n$ denote the canonical projection.
We have a canonical identification $T^*\R^n \simeq \R^n \times (\R^n)^\vee$
where $(\R^n)^\vee$ denotes the linear dual of $\R^n$.
We write $x_j$ for the standard coordinates on $\R^n$, and $\xi_j$ for the dual coordinates on $(\R^n)^\vee$. 

We work with the standard symplectic form 
on $T^*\R^n$ given in coordinates by $\omega = \sum_j d\xi_j dx_j$ with standard primitive
$
\theta = \sum_j \xi_i dx_j.
$
The Liouville vector field
$
Z =   \sum_j \xi_j \partial_{\xi_j} 
$
generates the standard fiberwise dilation.
We will usually take the zero section $\R^n\subset T^*\R^n$ as our support Lagrangian $\Lambda$.

To enrich $T^*\RR^n$ with the necessary background structures, we take the standard complex structure $\C^n \simeq T^*\R^n$ with holomorphic coordinates $z_j = \xi_j + ix_j$. We trivialize the bicanonical bundle $\kappa^{\otimes 2}$ by taking the real line subbundle of the canonical bundle $\kappa$ induced by the zero section $\R^n \subset T^*\R^n$.
In particular, we can identify possible gradings on the zero section with the integers $\Z$ (rather than some arbitrary $\Z$-torsor). There is no choice of background class
 $[c] \in H^2(T^*\R^n , \Z/2\Z)$ since $T^*\R^n$ is contractible.

Here are some of our favorite branes inside of $T^*\R^n$.

\begin{example}[Conormals]
Recall that the conormal $T^*_X Y$ to a smooth $X \subset Y$ is the set of all $(x,\xi)\in T^*Y$ for which $\xi|_{T_x X} = 0$.
The $\alpha$-graded {\em conormal brane} $\sN_{x}[\alpha] $ based at $x\in \R^n$ is the Lagrangian submanifold 
$$
T^*_{\{x\}} \R^n \subset T^*\R^n
$$ 
for some $x\in \R^n$, equipped with the brane structure
given by the primitive $f= 0$, grading $\alpha \in \Z + n/2$, and trivial relative pin structure.

More generally, 
the $\alpha$-graded {\em conormal brane} $\sN_X[\alpha] $ along a $k$-dimensional submanifold $X\subset \R^n$ is the Lagrangian submanifold 
$$
T^*_X \R^n \subset T^*\R^n
$$ 
equipped with the brane structure
given by the primitive $f= 0$, grading $\alpha \in \Z + (n-k)/2$, and trivial relative pin structure.

\end{example}

\begin{example}[Graphs]
Given a function $h{\colon\thinspace}\R^n\to \R$, the {\em graph} of its differential is the Lagrangian brane
$$
\Gamma_h \subset T^*\R^n
$$ 
with brane structure $(f_h, \alpha_h, [0])$,
where $f_h$ is
 the pullback of $h$, $\alpha_h$ is transported from the zero grading on the zero section via an isotopy through graphs, and 
 $[0]$ is the trivial relative pin structure.
\end{example}

\subsection{Constructions of branes}
Here we give various constructions of simple Lagrangian branes.

\subsubsection{Translation and carving}\label{section.carving}
Suppose
$h{\colon\thinspace}\R^n\to \R$ is a function with graph brane
$
\Gamma_h \subset T^*\R^n.
$ 
We can regard $\pi{\colon\thinspace}T^*\R^n \to \R^n$ as a relative group, and consider the fiberwise translation given by adding $\Gamma_h$.

Given another Lagrangian brane $\sL\subset T^*\R^n$, with brane structure $(f_\sL, \alpha_\sL, [c_\sL])$,
we can form the {\em translated} Lagrangian brane 
$$
\sL_h = \sL + \Gamma_h= \{ (x, \xi) \in T^*\R^n \, | \, (x, \xi - dh_x ) \in \sL\}
$$
 with brane structure $(f_{\sL, h}, \alpha_{\sL, h}, [c_\sL])$, where $f_{\sL, h}$ is the sum of $f_\sL$ and the pullback of $h$, and
 $\alpha_{\sL, h}$ is transported from the grading $\alpha_\sL$ via an isotopy through graphs.

More generally, given an exact symplectic manifold $M$, and a Lagrangian brane $\sL\subset M \times T^*\R^n$ with brane structure $(f_\sL, \alpha_\sL, [c_\sL])$,
we can form the translated Lagrangian brane 
$$
\sL_h = \sL + \Gamma_h= \{ (m, x, \xi) \in T^*\R^n \, | \, (m, x, \xi - dh_x ) \in \sL\}
$$
 with brane structure $(f_{\sL, h}, \alpha_{\sL, h}, [c_\sL])$,
 where $f_{\sL, h}$ is the sum of $f_\sL$ and the pullback of $h$, and
 $\alpha_{\sL, h}$ is transported from the grading $\alpha_\sL$ via an isotopy through graphs.
 
More generally still, we will often apply the above construction with a function $h{\colon\thinspace} U \to \RR$ defined only on an open subset $U\subset \R^n$.
Consider the associated  graph brane
$
\Gamma_h \subset T^*U.
$ 
Given  a Lagrangian brane $\sL\subset M \times T^*\R^n$, with brane structure $(f_\sL, \alpha_\sL, [c_\sL])$,
we can form the {\em carved} Lagrangian brane 
$$
\sL_h = \sL + \Gamma_h= \{ (m, x, \xi) \in M \times T^*U \, | \, (m, x, \xi - dh_x ) \in \sL|_U\}\subset M \times T^*U
$$
 with brane structure $(f_{\sL, h}, \alpha_{\sL, h}, [c_\sL])$,
 where $f_{\sL, h}$ is the sum of $f_\sL$ and the pullback of $h$, and
 $\alpha_{\sL, h}$ is transported from the grading $\alpha_\sL$ via an isotopy of $\Gamma_h$  to the zero section through graphs. We say that $\sL_h$ is carved from $\sL$ by the function $h$. 
 
Clearly the connectedness of the resulting brane depends on the behavior of $h$. In any event, we use the term ``carved" since we envision that we are using $h$ to remove any part of $\sL$
that does not live above $U\subset \R^n$---this will be apparent from our examples of $h$, which will have derivatives tending to $\infty$ near $\del U$.

Of course, one needs to be careful and impose conditions to ensure the carved  brane $\sL_h$  is reasonable. In particular, one needs to be careful that $h$, $\sL$, and their interaction  are reasonable at the boundary of $U \subset \R^n$. Rather than trying
to systematize this, we will comment as necessary   in particular cases.

\subsubsection{Pullbacks}
There are many general things to say about the functoriality of branes, but we will restrict our remarks
to what we specifically need.

First, suppose $U\subset \R^m, V\subset \R^n$ are open domains, and $p{\colon\thinspace}U \to V$ is a submersion.
This provides a correspondence
$$
\xymatrix{
T^*U & \ar[l]_-i T^*V \times_V U \ar[r]^-q & T^*V  
}
$$
in which $i$ is an embedding.

Given a Lagrangian brane $\sL\subset T^*V$, 
we can form the {\em pullback} Lagrangian brane 
$$
p^*\sL = i(q^{-1}(\sL)) \subset T^*U
$$
with the evident brane structure pulled back along $q$.

More generally, given an exact symplectic manifold $M$, and a Lagrangian brane $\sL\subset M \times T^*V$, 
we can similarly form the {\em pullback} Lagrangian brane 
$$
p^*\sL = i(q^{-1}(\sL))\subset M \times T^*U
$$
with the evident brane structure pulled back along $s$.

 Suppose in addition there is a section $s{\colon\thinspace}V \to U$ to the submersion $p$. Then we have the left-inverse $s^*$
 to the pullback $p^*$, and we refer to its result as the {\em restricted} Lagrangian brane
 $$
 s^*(p^*(\sL)) = \sL \subset M \times T^*V.
 $$
 In this case, we will often say that $p^*\sL$ is {\em collared} by $\sL$ along the subspace $s(V)\subset U$
 in the direction of the map $p$.

 \begin{example}[Orthogonal projection]
Suppose  $p{\colon\thinspace}\R^{m+k}\to  \R^m$ is the orthogonal projection.

Given an exact symplectic manifold $M$, and a Lagrangian brane $\sL\subset M \times T^*\R^m$, 
the {pullback} Lagrangian brane  is simply the product with the zero section
$$
p^*\sL = \sL \times T^*_{\R^k} \R^k \subset M \times T^*\R^{m+k}.
$$

 The restriction along the section  $s{\colon\thinspace}\R^{m}\to  \R^{m+k}$ is simply the original brane
 $$
s^*(\sL \times T^*_{\R^k} \R^k) = \sL \subset M \times T^*\R^{m}.
$$

 \end{example}
 
 \begin{example}[Rotation via angular projection]\label{example: rotation}
Let $\|\cdot\|{\colon\thinspace}\R^n\to \R$ be the Euclidean length function.
Fix $0<a< b$, and consider the annulus and interval 
$$
U = \{v\in \R^n \, |\, a< \|v\| < b\}
\qquad
V = (a, b)\subset \R
$$

Suppose $p{\colon\thinspace}U \to V$ is the restriction of $\|\cdot\|$.

Given an exact symplectic manifold $M$, and a Lagrangian brane $\sL\subset M \times T^*V$, 
we can form the  {\em rotated} Lagrangian brane
$$
p^*\sL =\sL \times T^*_{S^{n-1}}S^{n-1}  \subset M \times T^*U.
$$

Suppose in addition there is a small $\epsilon >0$ such that over the open intervals 
$$
\xymatrix{
V_a = (a, a+ \epsilon)
& V_b = (b-\epsilon, b)
}$$
we have identifications
$$
\xymatrix{
\sL|_{V_a} = \sL_a \times T^*_{V_a} V_a
&
\sL|_{V_b} = \sL_b \times T^*_{V_b} V_b
}$$
for some Lagrangian branes $\sL_a, \sL_b \subset M$.

Then we can form a single brane 
$$
\sR \subset M \times T^*\R^n
$$ by gluing together $p^*\sL$ over the annulus $U$
 with the product branes
$$
\xymatrix{
\sL_a \times T^*_{W_a}W_a
&
\sL_b \times T^*_{W_b}W_b
}$$
defined over the domains
$$
\xymatrix{
W_a = \{v\in \R^n \, |\,  \|v\| < a\}
&
W_b = \{v\in \R^n \, |\, b< \|v\| \}
}$$ 
\end{example}

Finally, there are myriad variations on this theme. For instance, a useful further construction is to consider the restriction
$\sR|_Q$ of the rotated Lagrangian brane $\sR$ to a quadrant  $Q\subset \R^n$. With a slight tweak of the original function $p$, we can achieve that $\sR|_Q$ is collared along the boundary of the quadrant in the normal directions. 

See Figure~\ref{fig. P_rot} in the case $n=2$ and $Q = \{(x, y) \in \R^2 \, | \, x, y > 0\}$. By a small perturbation, we have arranged so that there is $\epsilon > 0$ so that over the boxes
$$
\xymatrix{
(a, b) \times (0, \epsilon)
&
(0, \epsilon) \times (a, b)
}
$$
the rotated brane $\sR|_Q$ is the collared product
$$
\xymatrix{
\sL \times T^*_{(0, \epsilon)} (0, \epsilon)
&
T^*_{(0, \epsilon)} (0, \epsilon) \times \sL
}
$$
This particular example will arise in our constructions of kernels in Section~\ref{sect: kernel homotopy}.

\begin{figure}[ht]
	$$\xy
	\xyimport(10,10)(0,0){\includegraphics[height=3in]{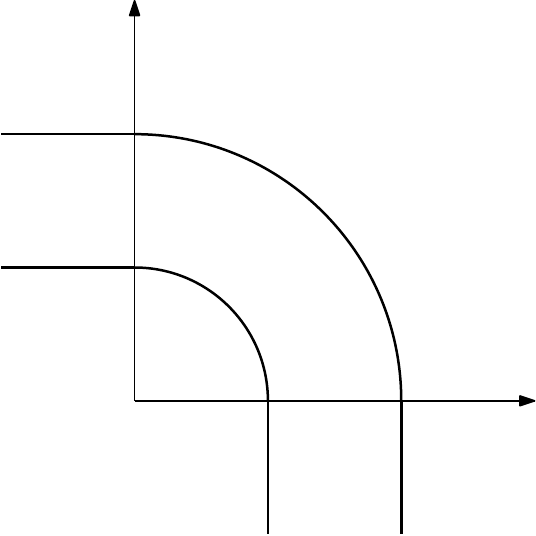}}
		,(5.2,5.2)*{\sL}
		,(1,1)*{\sL_a}
		,(7,7)*{\sL_b}
		,(10,2)*{x}
		,(3,10)*{y}
		,(1,6.2)*{\sL}
		,(6.2,1)*{\sL}
	\endxy$$
	\begin{image}\label{fig. P_rot} The brane $\sR|_Q$, depicted as living over the quadrant $Q = \{(x,y) \in \RR^2 \, | \, x,y > 0 \}$; we have arranged so that $\sR|_Q$ is collared along the axes of $Q$. In Section~\ref{sect: kernel homotopy}, we will apply the construction of $\sR|_Q$ when our brane $\sL$ happens to be a Lagrangian cobordism $\sP$.
		\end{image}
	\end{figure}

\subsection{Pictures of 1-forms} \label{ssect: graphs}
Via the usual metric on $\R^n$, we can transform one-forms into vector fields,
and exact one-forms into gradient vector fields.
In our initial proofs, we will write fairly explicit formulas  for one-forms,
while
in later proofs, we will often draw qualitatively meaningful vector fields and leave to the reader the choice of their precise realization.

Let us fix an auxiliary smooth, non-decreasing function $\tau{\colon\thinspace} \R \to \R$ such that 
	$$
	\xymatrix{
	\tau^{-1}(0) = (-\infty,0]
&
	\tau^{-1}(1) = [1,\infty).
	}$$
In Figure~\ref{figure.1-forms} we depict the 1-form $dh$ for various functions $h(t,x) $ of two variables. In the rest of this work, we will draw large arrows to denote places where a vector field shoots off to infinity in the indicated direction, and (when necessary) small arrows to highlight where the vector field is non-zero in the indicated direction.

\begin{figure}[ht]
\includegraphics[width=2in]{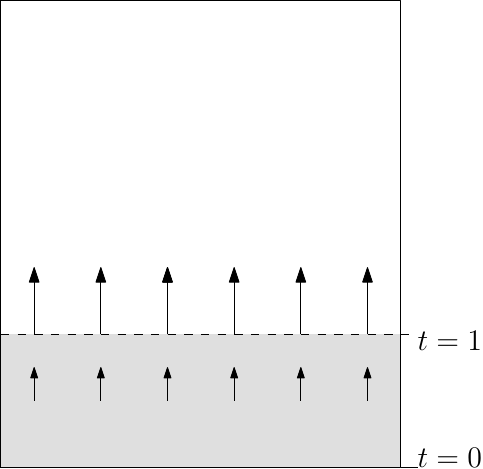}
\qquad
\includegraphics[width=2in]{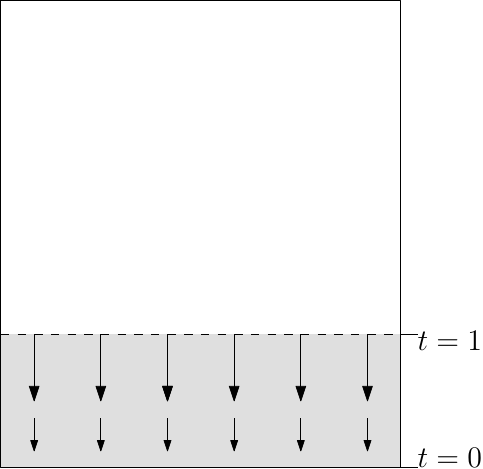}
\vspace{1cm}
\\
\includegraphics[width=2.3in]{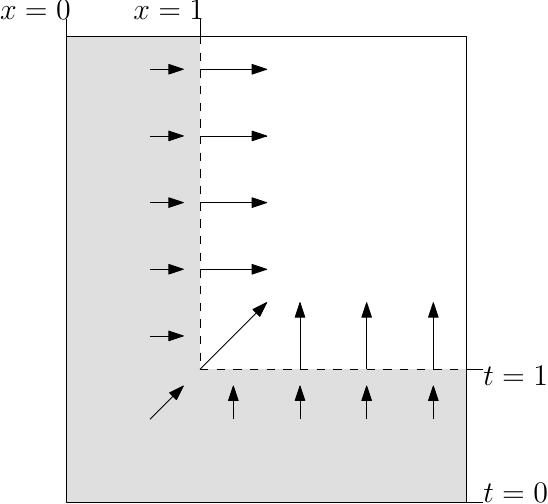}
\qquad
\includegraphics[width=2.3in]{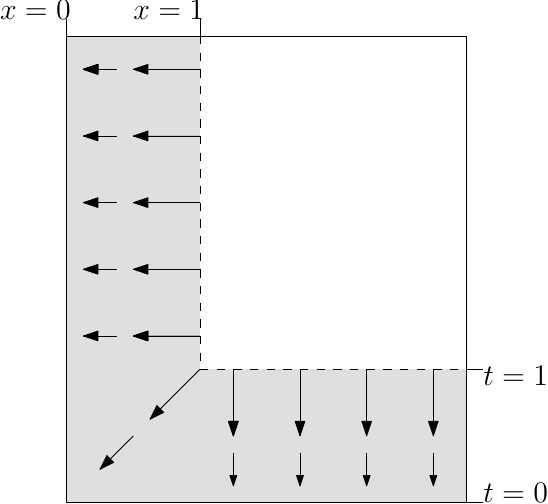}

\begin{image}\label{figure.1-forms}
On the upper-left, a drawing of $dh$, where $h(t, x) =-\log(1-\tau(t))$.
On the upper-right, a drawing of $dh$ for $h(t, x) =\log(1-\tau(t))$.
On the lower-left, a drawing of $dh$ for $h(t, x) =-\log(1-\tau(t)\tau(x))$
On the lower-right, a drawing of $dh$ for $h(t, x) =\log(1-\tau(t)\tau(x))$.
\end{image}
\end{figure}

\begin{remark}[Corners and rounding]\label{remark.corners}
Figure~\ref{figure.1-forms} has ``corners'' in the domains over which the Lagrangians are supported. One could just as easy taken a model where, rather than corners, one has a smooth boundary. This latter option is better suited for a version of $\lag_\Lambda(M)$ in which all objects and (higher) morphisms are eventually conical. 
\end{remark}

\def\fC{\mathfrak C}
\def\sC{\mathsf C}

\def\sSet{\on{\it sSet}}
\def\sCat{\on{\it sCat}}

\def\st{\mathsf t}
\def\sa{\mathsf a}
\def\sb{\mathsf b}
\def\sc{\mathsf c}

\clearpage
\section{Lagrangian cobordisms}\label{sect. lag cobs}

\subsection{Cubical collaring}

We introduce a notion of {\em collaring} which will allow us to define the face and degeneracy maps for the $\infty$-category $\Lag$. For a motivation of these collaring conditions, we refer the reader to Appendix~\ref{appendix. nerve} where we discuss the coherent nerve of a category, which inspired our definitions.

Fix $n$, a pair of vectors $\sa = (a_1, \ldots, a_n), \sb = (b_1, \ldots, b_n) \in \R^n$ with $a_i < b_i$ for all $i$. Consider the 
$n$-dimensional rectangular solid
$$
R = [a_1, b_1] \times\cdots\times [a_n, b_n] \subset \R^n.
$$
We will write $R^\circ\subset R$ for its interior, and $T^*R^\circ$ for the cotangent bundle of $R^\circ$.

The facets $R_\beta \subset R$
correspond to maps
$$
\xymatrix{
\beta{\colon\thinspace}\{1, \ldots, n\} \ar[r] & \{a, *, b\}
}
$$  
by the rule $x\in R$ lies in $R_\beta$ if and only if $x_i = \beta(i)_i$ when $\beta(i)$ is equal to $a$ or $b$,
or $x_i \in (a_i, b_i)$ when $\beta(i) = *$. For example, the interior $R^\circ$ is simply $R_\beta$ when $\beta$ is the constant map to $*$.

Given a facet $R_\beta \subset R$, by a {\em collaring neighborhood} $U_\beta \subset R$ we will mean an open subset containing 
$R_\beta$ such that $\pi_\beta(U_\beta) = R_\beta$, where $\pi_B$ is the orthogonal projection restricted to $U_\beta$.

\begin{definition}[Cubical collaring]
Suppose given a Lagrangian brane 
$$\sP \subset M \times T^* R^\circ.
$$

 We say that $\sP$ is {\em collared} if for each facet $R_\beta \subset R$, there is a 
  Lagrangian brane 
  $$
  \sP_\beta \subset M \times T^* R_\beta,
  $$
and  collaring neighborhood $U_\beta\subset R$
 such that we have an identity of branes
 $$
 \sP |_{R^\circ\cap U_\beta} = \pi_\beta^*\sP_\beta=
 \sP_\beta \times T^*_{N_\beta} N_\beta
 $$ 
  where $N_\beta$
is the affine space orthogonal to the facet $F_\beta$.
 \end{definition}

Observe that if a Lagrangian brane $\sP \subset M \times T^*R^\circ$ is collared, then the collaring branes 
$\sP_\beta \subset M \times T^*R_\beta$ are uniquely determined. When working with collared
Lagrangian branes, we will often abuse notation and write $\sP \subset T^*R$ rather than indicating the interior $R^\circ \subset R$.

\subsection{Combinatorics of cubes and simplices}
In order to refine the behavior of Lagrangian branes living over a rectangular solid, we will need to gather
some preliminaries.

Let $[n]$ denote the ordered set $\{0, 1, \ldots, n\}$. We will think of elements of $[n]$ as the vertices of the $n$-simplex $\Delta^n$.

Let $\cP(n)$ denote the power set of $[n]$. We will think of elements of $J \in \cP(n)$ as the vertices of a facet $\Delta(J)\subset \Delta^n$. We will write $J^{min}$ and $J^{max}$ for the minimum and
maximum elements of $J$ respectively.

Let $\cQ(n)\subset \cP(n)$ denote those subsets
$I \subset [n]$ such that $0, n\in I$.
Here are two alternative ways to think of an element $I\in \cQ(n)$.
On the one hand, $I$ encodes a path (directed sequence of vertices) in $\Delta^n$ from the initial vertex $0$ to the terminal vertex $n$. On the other hand, $I$ encodes a vertex $v(I)$ of the $(n-1)$-cube 
$$
c(n-1) = [0,1]^{n-1} \subset \R^{n-1}
$$
by setting $v(I)_j$ equal to $1$ if $j\in I$ and $0$ if $j\not \in I$. Note that any surjection $[n'] \to [n]$ of ordered sets give rise to a map $\cQ(n') \to \cQ(n)$, and in turn a continuous map $c(n'-1) \to c(n-1)$ which is a submersion on each facet. 

Consider the incidence set 
$$
\cQ(n)_* = \{ (I, k) \in \cQ(n) \times [n] \, | \, k\in I\}.
$$
On the one hand, an element of $\cQ(n)_*$ encodes a path $I$ in the $n$-simplex  $\Delta^n$ together with a marked vertex $k$ on the path.
On the other hand,  $\cQ(n)_*$ encodes a point $v(I, k) = (v(I), k)$ of the $n$-dimensional rectangular solid
$$
C(n) = c(n-1) \times [0,n] = [0,1]^{n-1}\times [0,n] \subset \R^{n} \cong \RR^{n-1} \times \RR_t.
$$
As the notation indicates, we will often think of the last coordinate, $t$, as the time direction. 
Let us next turn to the facets of the $(n-1)$-cube $c(n-1)$. 

We represent $I\in  \cQ(n)$ by a path of $1$-subsimplices
	$$
	\{0,i_1\}, \{i_1,i_2\},\ldots, \{i_{\ell-1}, n\} \in \cP(n),
	\quad
	\mbox{where $I = \{0<i_1< i_2<\cdots <i_{\ell-1}< n\}.$}
	$$
More generally, a {chain} $\beta$ of subsimplices is a sequence of subsimplices
	$$
	J_1, J_2, \ldots, J_\ell \in \cP(n)
	$$
satisfying the following:
\begin{enumerate}
	\item 
	$|J_a| > 1$ for any $a = 1,\ldots, \ell$,
	\item 
	$J_1^{min} = 0$, $J^{max}_\ell = n$,
	\item
	$J_{a}^{max} = J_{a+1}^{min}$, for any $a = 1,\ldots, \ell-1$,
	\item 
	if $a<b$, then $j_a \leq j_b$, for any $j_a \in J_a$, $j_b\in J_b$.
\end{enumerate}

The facets of the $(n-1)$-cube $c(n-1)$ are in bijection with such chains: given a chain $\beta$,
the corresponding facet $f(\beta) \subset c(n-1)$
is the convex hull of the vertices $v(I)$ corresponding to paths $I$ through the subsimplices in $\beta$. More precisely,
$v(I)$ is a vertex of $f(\beta)$ if
for each $i_a\in I$, there  is some subsimplex $J_b$ in the chain $\beta$
such that $i_a, i_{a+1}\in J_b$. The dimension of the facet $f(\beta)$ is the sum
$$
\dim f(\beta)  = \sum_{a=1}^\ell (|J_a|-1).
$$

By a vertical facet 
of the rectangular solid $C(n) =c(n-1) \times [0,n]$, we mean a facet of the form 
$$F(\beta) = f(\beta) \times [0,n].
$$
Given a subsimplex $J$ appearing in a chain $\beta$,
form the rectangular solid
$$
F(\beta, J) = \{ (x, t) \in F(\beta) \, |\, t\in [J^{min},J^{max}] \}.
$$
We will call $F(\beta, J)$ a refined facet of the rectangular solid $C(n)$. 
As we vary $J$, the collection of refined facets $F(\beta, J)$ form a refinement of the vertical facet $F(\beta)$.

Finally, we will introduce the trivial directions within a refined facet $F(\beta, J)$.
Let us write $C(J)$ for the $(|J|-1)$-dimensional rectangular solid
$$
C(J) =  [0,1]^{|J|-2}\times [J^{min}, J^{max}] \subset \R^{|J|-1}.
$$
By construction, we have a natural orthogonal projection
$$
\xymatrix{
p(\beta, J){\colon\thinspace}F(\beta, J) \ar[r] & C(J)
}$$
respecting projection onto the last coordinate direction. It forgets the variable directions $x_i$ for those indices $i \in\{1, \ldots, n-1\}$ such that $i\not \in J$.


\subsection{Factorization}

Now we are ready to refine the behavior of Lagrangian branes living over a rectangular solid.  
We work with the $(n-1)$-cube 
$$
c(n-1) = [0,1]^{n-1} \subset \R^{n-1}
$$
and the
$n$-dimensional rectangular solid
$$
C(n) = c(n-1) \times [0,n] = [0,1]^{n-1}\times [0,n] \subset \R^{n} \cong \RR^{n-1} \times \RR_t.
$$

Fix Lagrangian branes $\sL_0, \sL_n \subset M$.

\begin{definition}[Factorized Lagrangian $n$-cobordism]
A factorized Lagrangian $n$-cobordism from $\sL_0$ to $\sL_n$ is a collared Lagrangian brane
$$
\sP \subset M \times T^*C(n)
$$
satisfying the following. 

(1) Along the face 
$
c(n-1) \times \{0\} \subset C(n),
$
$\sP$ is collared by 
$$
\sL_0 \times T^*_{c(n-1)} c(n-1),$$
and along the face 
$
c(n-1) \times \{n\} \subset C(n),
$
$\sP$ is collared by $$\sL_n \times T^*_{c(n-1)} c(n-1).
$$

(2) For all $J\in \cP(n)$, there is a collared 
Lagrangian brane
$$
\sP_J \subset M \times T^*C(J)
$$
such that for all vertical facets $F(\beta)\subset C(n)$, the restriction of the collaring brane $\sP_\beta$
to the refined facet $F(\beta, J) \subset F(\beta)$ is equal to the pullback brane
$$
\sP_\beta|_{F(\beta, J)} = p(\beta, J)^*\sP_J.
$$
\end{definition}

A factorized Lagrangian $n$-cobordism $\sP$ from $\sL_0$ to $\sL_n$ contains a lot of information that may not be apparent at first glance. 
For instance, it contains the information of  intermediate
 Lagrangian branes 
 $$\sL_1, \ldots, \sL_{n-1} \subset M.
 $$ Namely, for $k\in \{1, \ldots, n-1\}$, 
 consider the vertex $v(I) \in c(n-1)$ corresponding to $I = \{0, k, n\} \subset [n]$. Then
 the intermediate brane $\sL_k\subset M$ appears as a collaring brane
 at the point $(v(I), k)$.
 
\begin{example}
For instance, when $n=4$, the facet defined by the function
\[
\beta(i) = \left\{ \begin{array}{ll} (0,1) & i=1,3 \\ \{1\} & i=2 \end{array}\right.
\]
can be split into two parts by $t \in [0,2]$ and $t \in [2,4]$. One sees that this facet is described completely by extending the Lagrangians associated to $J=[012] \subset \Delta^4$ and $J'=[234] \subset \Delta^4$, composing them appropriately.
\end{example}

Some low-dimensional examples are illustrated in Figures~\ref{image: 2-simplex-cobordism} and~\ref{image: 3-simplex}.

\begin{figure}
    \begin{tikzpicture}
		\draw (0,0) -- (4,0) -- (4,8) -- (0,8) -- cycle;
		\draw (0,0) circle (0.1 em);
			\draw[dashed](0,0) -- (-1,-1) node[anchor=north east] {$\sL_0$};
		\draw (4,0) circle (0.1 em);
			\draw[dashed](4,0) -- (5,-1) node[anchor=north west] {$\sL_0$};
		\draw (4,4) circle (0.1 em);
			\draw[dashed](4,4) -- (5,4) node[anchor=west] {$\sL_1$};
		\draw (4,8) circle (0.1 em);	
			\draw[dashed](4,8) -- (5,9) node[anchor=south west] {$\sL_2$};
		\draw (0,8) circle (0.1 em);		
			\draw[dashed](0,8) -- (-1,9) node[anchor= south east] {$\sL_2$};
		\draw (0,4) node[anchor = east] {$\sP_{02}$};
		\draw (4,2) node[anchor = west] {$\sP_{01}$};
		\draw (4,6) node[anchor = west] {$\sP_{12}$};
		\draw (2,4) node {$\sP$};
    \end{tikzpicture}
	
\begin{image}\label{image: 2-simplex-cobordism} 
A factorized 2-cobordism $\sP$ from $\sL_0$ to $\sL_2$, projected onto $C(2) \subset\R^2$. The indicated vertices are collared by the Lagrangians $\sL_i$ for $i=0,1,2$ and the indicated facets are collared by $P_J$ for $J=\{0,1\},\{0,2\},\{1,2\}$. One should think of $\sP$ as a homotopy between the ``composition'' $\sP_{12} \circ \sP_{01}$ and the Lagrangian $\sP_{02}$.
	\end{image}
\end{figure}

\begin{figure}
\includegraphics[height=3in]{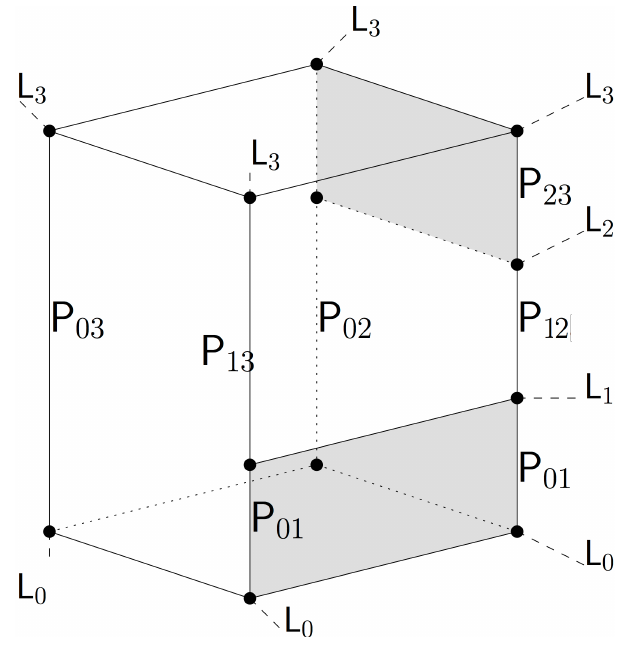}
	\begin{image}\label{image: 3-simplex}
	A factorized 3-cobordism $\sP$ from $\sL_0$ to $\sL_3$, projected on to $C(3) \subset \R^3$. We have indicated how the various edges and vertices are collared. Note that grey shaded regions are identical to $P_{01} \times \R$ and $P_{23} \times \R$.
	\end{image}
\end{figure}


\subsection{Time symmetries}
Set $\Diff(0)$ to be the trivial group, and let $\Diff(1)$ denote 
 the 
group  of orientation-preserving diffeomorphisms of the interval $[0,1]$ that are the identity
in some neighborhood of the end points $\{0, 1\} \subset [0,1]$.

Here is an inductive generalization for $n>1$. Let $\Diff(n)$ denote the 
group  of orientation-preserving diffeomorphisms of the $n$-dimensional
rectangular solid
$$
\xymatrix{
\varphi{\colon\thinspace} C(n)\ar[r]^-\sim &  C(n)
}
$$ 
that satisfy the following:

(1) {\em time symmetry}: $\varphi$ respects the orthogonal projection to the $(n-1)$-cube $c(n-1)$ in the sense that
there is a map
$$
\xymatrix{
\gamma{\colon\thinspace}C(n) \ar[r] & [0,n]
}
$$
such that we have
$$
\xymatrix{
\varphi(x, t) = (x, \gamma(x, t)),
& (x, t) \in C(n).
}$$
In particular, $\varphi$ restricts to a diffeomorphism of each vertical facet 
$
F(\beta) \subset C(n).
$

(2) {\em preserves collaring}: (a) for each vertical facet $F(\beta) \subset C(n)$, there is a 
   collaring neighborhood $U(\beta)\subset C(n)$
 such that $\varphi$ respects the orthogonal projection
 $$
\xymatrix{
\pi(\beta) {\colon\thinspace} U(\beta) \ar[r] &  F(\beta)
}
$$
in the sense that
$$
\xymatrix{
\pi(\beta)\circ \varphi = \varphi \circ \pi(\beta).
}$$ 

(b) there are open neighborhoods of the faces 
$$
c(n-1) \times \{0\}, c(n-1) \times \{n\} \subset C(n)
$$
on which $\varphi$ is the identity.

(3) {\em preserves refined facets}: $\varphi$ restricts to a diffeomorphism of each refined facet 
$$
F(\beta, J) \subset C(n).
$$

(4) {\em preserves trivial directions of refined facets}: for each refined strata 
$$
F(\beta, J) \subset C(n),
$$
there is an element 
$$\varphi(\beta, J)\in \Diff(|J|-1)
$$ 
such that we have
$$
\varphi|_{F(\beta,J)} = p(\beta, J)^*\varphi(\beta, J)
$$
under the projection 
$$
\xymatrix{
p(\beta, J){\colon\thinspace}F(\beta, J) \ar[r] & C(J).
}$$

We will refer to elements $\varphi\in \Diff(n)$ as {\em time reparametrizations}.
Any time reparametrization $\varphi \in \Diff(n)$ induces a symplectomorphism
$$
\xymatrix{
\varphi^*{\colon\thinspace}M \times T^*C(n) \ar[r]^-\sim & M \times T^*C(n)
}
$$
respecting all of the structures in play. 
Thus for $\sL_0, \sL_n \subset M$ fixed Lagrangian branes,
$\varphi^*$ induces a bijection on the set of 
factorized Lagrangian $n$-cobordisms from $\sL_0$ to $\sL_n$.

\begin{definition}[Lagrangian $n$-cobordism]
A {\em Lagrangian $n$-cobordism} is an equivalence class of factorized Lagrangian $n$-cobordisms
under the action of the group $\Diff(n)$.
\end{definition}

It follows that the notion of Lagrangian $n$-cobordism is independent of the particular choice of timescale.
More precisely, if we produce a factorized Lagrangian $n$-cobordism with some alternative ordered embedding
of the set $[n] = \{0, 1, \ldots, n\}$ inside of the real line $\R$, we can unambiguously transport it  to a 
 Lagrangian $n$-cobordism living over the standard rectangular solid $C(n)$.
As we will see in Section~\ref{sect. weak kan} below, one immediate consequence is that Lagrangian cobordisms naturally form simplicial sets.

\begin{remark}
It is possible to define a version of $\lag$ in which one does not mod out by time symmetries. Instead, one can define an $n$-simplex to be a concrete brane inside $M \times T^*\RR^{n}$, collared above some rectangle, and whose faces are equipped with specified smooth embeddings of $\RR^{n-1}$ into the various facets of the rectangle. 
\end{remark}



\subsection{Non-characteristic propagation}

\nc\Flow{\on{Flow}}

Now we introduce the Lagrangian support $\Lambda\subset M$ into the story. Identifying $T^*\RR_t \cong \RR_t \times \RR_t^\vee$, let 
	\[
	\pi_{\RR_t^\vee}{\colon\thinspace} M \times T^*C(n) \to \RR_t^\vee
	\qquad\text{and}\qquad
	\pi_M{\colon\thinspace} M \times T^*C(n) \to M
	\] 
be the projections to the cotangent coordinate of the $t$ variable, and to the $M$ coordinate, respectively. We assume we have fixed a metric on $M$, and denote by $\dist(-,-)$ the (minimum) distance between two sets.

\begin{definition}[Non-characteristic cobordisms]\label{def. nonchar}
A {\em $\Lambda$-non-characteristic} factorized Lagrangian $n$-cobordism
is a factorized Lagrangian $n$-cobordism 
$$\sP\subset M \times T^*C(n)
$$ 
for which there exists $T \in \RR_t^\vee$ such that
	\[
	\inf_{\substack{x \in P \\ \pi_{\RR_t^\vee}(x) < T}} \dist_M(\pi_M(x),\Lambda) > 0.
	\]
The notion is invariant under the action of $\Diff(n)$, and hence descends to provide the notion
of a {\em $\Lambda$-non-characteristic Lagrangian $n$-cobordism}. We will synonymously call such a $\sP$ {\em $\Lambda$-avoiding}.
\end{definition}



\subsection{Basic examples}

\begin{example}[The identity $1$-cobordism]\label{ex: identity}
For any Lagrangian brane $\sL\subset M$, we have the Lagrangian $1$-cobordism 
$$
 \sL \times T^*_\R\R \subset M \times T^*\R
$$  
obtained by taking the product of $\sL$ with the zero section equipped with its canonical brane structure.
We refer to $ \sL \times T^*_\R\R$ as the {\em identity cobordism} of the brane $\sL$.
\end{example}

\begin{example}[Zero morphisms]\label{example: zero maps}
 Choose  $0 < \tau_0 < \tau_1 < 1$, and a function
 $\tau{\colon\thinspace}\R\to \R$ that is smooth, non-decreasing, and satisfies
$$
\tau(t) = 
 \left\{
 \begin{array}{ll}
 0, & t\leq \tau_0 \\
 1, & t\geq \tau_1
\end{array}
\right.
$$
Consider the associated functions 
$$
\sigma_{\es\to}(t)  = \log \tau(t)
\qquad \sigma_{\to\es}(t)  = -\log (1-\tau(t)),
$$ and the corresponding graph branes
$$
\Gamma_{\sigma_{\es\to}}, \Gamma_{\sigma_{\to\es}}\subset T^*\R.
$$
Let $\zerolag$ denote the empty Lagrangian. We define the {\em zero morphisms} 
$$z_{\emptyset\to}{\colon\thinspace}\zerolag \to  \sL
\qquad
 z_{\to\es}{\colon\thinspace}  \sL\to \zerolag $$ 
 to be the equivalence classes under time reparametrizations of the product branes
$$
\sL \times \Gamma_{\sigma_{\es\to}}, \sL \times \Gamma_{\sigma_{\to\es}}  \subset M \times T^*C(1).
$$
Note that while these cobordisms are not compact subsets of $T^*\RR_t$, they do not approach $-\infty dt$, and are hence $\Lambda$-non-characteristic.
\end{example}

\begin{remark}
If one is interested in the version of $\lag_\Lambda(M)$ in which every brane and cobordism is eventually conical, one may replace $\Gamma$ with a brane which, outside some compact set, equals $(-\infty,t_0] \coprod \{t_1\} \times [\tau,\infty) \subset \RR_t \coprod T^*_{t_1} \RR_t$ for some $t_0 < t_1$. One can choose such a brane whose primitive equals zero outside a compact set.
\end{remark}

\begin{example}[Hamiltonian isotopies]
Fix a Lagrangian brane $\sL \subset M$. Let $H{\colon\thinspace} \RR \times M \to \RR$ be a time-dependent Hamiltonian on $M$ such that $H(t,x) = 0$ for $t \leq0$ and for $t\geq1$. Let $\phi_t^H$ be its time $t$ Hamiltonian flow. Then the (equivalence class of the) brane
	\[
	\{(\phi_t^H(x), t, -H_t(x))\} \subset M \times \RR_t \times \RR_t^\vee \cong M \times T^*\RR_t,
	\qquad
	x \in \sL
	\]
is a Lagrangian cobordism from $\sL$ to $\phi_{1}^H(\sL)$. One can compute that its primitive is given by
	\[
	f(x,t) = f_\sL(x) + \int_0^t H_t \circ \phi_t^H(x) dt.
	\]
\end{example}

\clearpage
\section{\texorpdfstring{$\oo$}{oo}-category of Lagrangian cobordisms}\label{sect. lag}
Let $\Delta$ be the category whose objects are finite, non-empty, totally ordered sets, and whose morphisms are order-preserving maps. Recall that a simplicial set is a functor $\Delta^{op} \to \on{Sets}$. 
In this section, we build a simplicial set whose $n$-simplices label Lagrangian $n$-cobordisms and prove that it is an $\infty$-category.
We conclude the section by verifying basic properties of this $\infty$-category and defining basic constructions.

\subsection{Defining a weak Kan complex}\label{sect. weak kan}
We continue with the geometric setup introduced in Section~\ref{section.setup}. 
We fix the symplectic manifold $M$ and support Lagrangian $\Lambda$, setting
$
\Lag^{\dd 0} = \Lag^{\dd 0}_{\Lambda}(M).
$

\begin{definition}[Simplices]
We define the set $ \Lag^{\dd 0}(0)$ of $0$-simplices to consist of Lagrangian branes $\sL \subset M$.
For $n> 0$, we define the set $ \Lag^{\dd 0}(n)$ of $n$-simplices to consist of $\Lambda$-non-characteristic Lagrangian $n$-cobordisms. 
\end{definition}
As is well known, to define a simplicial set one need only understand the face and degeneracy maps.
Fix $n$, and consider the order-preserving face inclusion
	$$
	d_i{\colon\thinspace}[n-1] \to [n],
	\qquad
	\mbox{for $i = 0, \ldots, n$,}  
	$$
for which $i\in [n]$ is not in the image,
and the order-preserving degeneracy surjection
	$$
	s_i{\colon\thinspace}[n] \to [n-1],
	\qquad
	\mbox{for $i = 0, \ldots, n-1$,}
	$$
for which $i\in[n-1]$ has two pre-image elements.

\begin{definition}[Face maps]
For $n=1$, we define the face maps 
$$
\xymatrix{
d^*_0,d^*_1 {\colon\thinspace} \Lag^{\dd 0}(1) \to \Lag^{\dd 0} (0)
&
d^*_0(\sP) = \sL_0, d^*_1(\sP) = \sL_1.
}
$$
where $\sL_0, \sL_1 \subset M$ are the collaring branes.

Fix $n > 1$, and any $i \in [n]$. Set $J = [n]\setminus i$. 
We define the face map
$$
\xymatrix{
d^*_i {\colon\thinspace} \Lag^{\dd 0}(n) \to \Lag^{\dd 0} (n-1)
&
d^*_i(\sP) = \sP_J
}
$$
where $\sP_J\subset M \times T^*C(J)$ is the collaring brane. By applying a time reparametrization, we may identify
$C(J)$ with the standard rectangular solid $C(|J| - 1)$, and hence consider $\sP_J$ as a 
 Lagrangian $(|J|-1)$-cobordism.

\end{definition}

\begin{remark}
In general, for an arbitrary monomorphism $\sigma{\colon\thinspace} J \hookrightarrow [n]$ in $\Delta$, we can take $\sigma^*(\sP)$ to be a time reparametrization
of the collaring brane $\sP_J \subset M \times T^* C(J)$.
\end{remark}

To define degeneracy maps $s_i^*$ for a $(n-1)$-cobordism $\cP \subset M \times T^*C(n-1)$, we first dilate {\em time} at $t=i$ to construct a cobordism $\cP_i'$ living over $c(n-1) \times[0,n]$. We then pull back along a map $c(n) \to c(n-1)$ to obtain the $s_i^*(\cP)$. Specifically:

\begin{enumerate}
	\item For $0<i<n-1$, choose a small $\epsilon >0$, and a smooth embedding of intervals
		$$
		\xymatrix{
		p_{i}{\colon\thinspace}[0,n] \ar@{^(->}[r] & [0, n-1]
		}
		$$
	such that  $p_i(t) = t$ for $t< i-\epsilon$, and $p_i(t) = t-1$ for $t> i+1 + \epsilon$. Then define
		\[
		\sP'_i = (\id_{c(n-1)} \times p_i)^*(\sP).
		\]
	When $i=0$, we can produce a similar brane 
		$$
		\sP'_0\subset M \times T^*(c(n-1) \times [0,n])
		$$
	by translating $\sP$ to the time interval $[1,n]$, and then gluing to it the constant brane
		$$
		\sL_0 \times T_{c(n-1) \times [0,1]}^*(c(n-1) \times [0,1]).
		$$
	When $i=n-1$, we can produce a similar brane 
		$$
		\sP'_{n-1}\subset M \times T^*(c(n-1) \times [0,n])
		$$
	by gluing to $\sP$ the constant brane
		$$
		\sL_{n-1} \times T_{c(n-1) \times [n-1,n]}^*(c(n-1) \times [n-1,n]).
		$$
	\item
	Recall the standard triangulation of the $(n-1)$-cube $c(n-1)$: The vertices
$v(I) \in c(n-1)$ correspond to subsets $I\subset [n]$ containing both $0, n\in [n]$. With this indexing, the $k$-simplices of the standard triangulation
of $c(n-1)$ have vertices indexed by increasing sequences of subsets $I_1 \subset I_2 \subset \cdots I_k \subset [n]$.

	Since a surjection $\sigma{\colon\thinspace}[m]\to [n]$ induces a map from these subsets of $[m]$ to $\{I \subset [n]\}$, one obtains a simplicial map 
	\eqn
	\nonumber
	\xymatrix{
	c(\sigma){\colon\thinspace}c(m-1) \ar[r] &  c(n-1)
	}
	\eqnd
with respect to the standard triangulations. In general, the map $c(\sigma)$ is not a simple orthogonal projection (and not even smooth), but linear on each simplex in the domain.

\end{enumerate}

\begin{definition}[Degeneracy maps]
Fix any $n > 0$, and any $i \in [n-1]$. 

We define the degeneracy map
$$
\xymatrix{
s^*_i {\colon\thinspace} \Lag^{\dd 0}(n-1) \to \Lag^{\dd 0} (n)
&
s^*_i(\sP) = c(s_i)^*(\sP'_i)
}
$$
where as introduced above the brane
$$
\sP_i' \subset M \times T^*(c(n-1) \times [0,n])
$$
is the $i$th stretched time reparametrization of $\sP$ for sufficiently small $\epsilon>0$, and the simplicial map
$$
\xymatrix{
c(s_i){\colon\thinspace} c(n) \times [0,n] \ar[r] &  c(n-1) \times [0,n]
}
$$ is that induced by $s_i$.
By the pullback $c(s_i)^*(\sP'_i)$, we mean the union of the pullbacks along each simplex on which $c(s_i)$ is linear.
The collaring conditions on $\sP$, and hence on $\sP'_i$, ensure that the union glues together to a well-defined Lagrangian $n$-cobordism.
\end{definition}

\begin{proposition}
\label{prop: simplicial-set}
With the face and degeneracy maps as above, $\Lag^{\dd 0}$ is a simplicial set.
\end{proposition}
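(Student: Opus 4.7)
My plan is to verify the five families of simplicial identities for the face and degeneracy maps constructed above: the face-face identities $d_i^* d_j^* = d_{j-1}^* d_i^*$ for $i<j$; the degeneracy-degeneracy identities $s_i^* s_j^* = s_{j+1}^* s_i^*$ for $i\le j$; and the three face-degeneracy identities, namely $d_i^* s_j^* = s_{j-1}^* d_i^*$ for $i<j$, the contraction $d_j^* s_j^* = d_{j+1}^* s_j^* = \id$, and $d_i^* s_j^* = s_j^* d_{i-1}^*$ for $i>j+1$. Throughout I will use the fact that $n$-simplices are already equivalence classes under the action of $\Diff(n)$, so equalities are only required to hold up to time reparametrization; this will absorb the various interval rescalings, stretching embeddings $p_i$, and auxiliary $\epsilon > 0$ that appear in the definitions.

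For the face-face identities, both $d_i^* d_j^* \sP$ and $d_{j-1}^* d_i^* \sP$ produce the collaring brane $\sP_J$ indexed by $J = [n]\setminus\{i,j\}$; the shift from $j$ to $j-1$ in the second composition is precisely the relabeling forced by first deleting $i$. The existence and uniqueness of the collaring branes $\sP_J$, built into the definition of a factorized Lagrangian $n$-cobordism, together with the compatibility relation $\sP|_{F(\beta,J)} = p(\beta,J)^*\sP_J$, guarantee that iterated collarings agree after the canonical time reparametrization identifying $C(J)$ with the standard rectangular solid $C(|J|-1)$.

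For the degeneracy-degeneracy identity $s_i^* s_j^* \sP = s_{j+1}^* s_i^* \sP$, both sides amount to doubling two distinct positions of $[n]$ in different orders; each side factors as an interval stretch composed with a pullback along the piecewise-linear map induced on the standardly triangulated cube. Since $c(s_i)\circ c(s_j) = c(s_j\circ s_i) = c(s_i\circ s_{j+1})$ on the nose, and the stretched interval data differ only by a diffeomorphism in $\Diff(n+1)$, the two sides represent the same equivalence class. The mixed face-degeneracy identities of the form $d_i^* s_j^* = s_{j-1}^* d_i^*$ (for $i<j$) and $d_i^* s_j^* = s_j^* d_{i-1}^*$ (for $i>j+1$) are handled by the same bookkeeping: extracting a collaring brane at an undoubled vertex commutes with the stretching-and-pullback construction, again modulo a time reparametrization. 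The two contraction identities $d_j^* s_j^* = d_{j+1}^* s_j^* = \id$ are proved by observing that restricting $c(s_j)^* \sP_i'$ along either of the two vertices that $c(s_j)$ collapses returns $\sP_i'$ itself on the corresponding slice, which is $\sP$ up to the time reparametrization built into the definition of $\sP_i'$.

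The main obstacle is bookkeeping rather than a single conceptual difficulty: one must simultaneously track time reparametrizations, pullbacks along the simplicial maps $c(\sigma)$ on the standardly triangulated cube, and the refined-facet factorization data $\sP_J$, then confirm that the resulting manipulations preserve the collaring and non-characteristic conditions. The hardest step will be the contraction $d_j^* s_j^* = \id$, where one must match a pullback along $c(s_j)$ restricted to the doubled vertex with a time-translated copy of the original $\sP$ and verify that the discrepancy is killed by the $\Diff(n)$-action. In each case, however, the underlying combinatorial identity on subsets of $[n]$ is the identity for $\Delta^\bullet$ itself, and the geometric verification reduces to the compatibility of collaring data already encoded in the definition of a factorized Lagrangian $n$-cobordism.
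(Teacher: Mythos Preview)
The paper does not give a proof of this proposition at all; it states explicitly ``We leave as an exercise for the reader the following easy proposition.'' Your proposal is the natural direct verification of the simplicial identities, and the outline you give is correct: the face--face identities reduce to the uniqueness of the collaring branes $\sP_J$ indexed by subsets $J\subset[n]$, and the remaining identities reduce to the functoriality $c(\sigma)\circ c(\sigma')=c(\sigma'\circ\sigma)$ together with the $\Diff(n)$-equivalence absorbing the interval stretchings. Since the paper offers no argument to compare against, your approach is exactly what the authors intend the reader to supply.
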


\begin{proof}[Proof of Proposition~\ref{prop: simplicial-set}]
There are four simplicial relations to check.
\begin{enumerate}	
	\item That $d_i^* d_j^* = d_{j-1}^*d_i^*$ for $i<j$ is obvious from the definition of removing the $i$th element from an ordered set. 
	\item The assignment $\sigma \mapsto c(\sigma)$ respects compositions---because $c(-)$ is induced by the map of the subsets, for instance. Further, expanding time at $t=i$, and then expanding at $t=j+1$, is equivalent to expanding time at $t=j$ and then at $t=i$ (so long as $i \leq j$). So it follows that $s_i^* s_j^* = s_{j+1}^* s_i^*$ for $i \leq j$.
	\item $d_i s_j = s_{j-1} d_i$ for $i<j$ and $d_i s_j = s_j d_{i-1}$ for $i> j+1$---this follows from the same reasons as above: The operations $d_i$ and $s_j$ respect the maps $c(s_i), c(d_i)$ induced on the $c(n)$ coordinates, and obviously respect the simplicial relations in the $t$ coordinates. 
	\item Finally, if $i=j$ or $i=j+1$, we see that $d_i^* s_j^*$ is the identity. 
\end{enumerate}
\end{proof}

Let $\Delta^n$ be the simplicial set represented by the object $[n]=\{0<1<\ldots<n\}$. Recall that for $k=1, \ldots, n-1$, the $k$th {\em inner horn} $\Lambda^n_k\subset \Delta^{n}$ is the subsimplicial set obtained by deleting the interior of $\Delta^{n}$ and the codimension one face opposite the $k$th vertex.
A simplicial set  $X$ is called a {\em weak Kan complex} if any map from an inner horn $\Lambda^n_k \to X$ extends to a map $\Delta^{n} \to X$.

\begin{proposition}[Weak Kan property]
\label{prop: kan}
The simplicial set $\Lag^{\dd 0}$ is a weak Kan complex.
\end{proposition}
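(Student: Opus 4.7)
The plan is to construct an explicit filling $n$-simplex for any inner horn. A map $\Lambda^n_k \to \sLag^{\dd 0}$ with $1 \leq k \leq n-1$ amounts to specifying $\Lambda$-non-characteristic factorized Lagrangian $(n-1)$-cobordisms $\sP^{(i)}$ for each $i \in [n] \setminus \{k\}$, subject to the compatibility $d_j^* \sP^{(i)} = d_i^* \sP^{(j)}$. Unpacking, this is a coherent collection of branes $\sP_L \subset M \times T^*C(L)$ for all proper subsets $L \subsetneq [n]$ with $L \neq [n] \setminus \{k\}$, satisfying the refined-facet factorization conditions on all overlaps. To fill the horn I must construct the missing face $\sP_{[n]\setminus\{k\}}$ and then the top simplex $\sP = \sP_{[n]}$.

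For each of the two extension steps I would proceed in two substages. In the first substage, I construct the underlying Lagrangian brane. The boundary data already prescribes $\sP_{[n]\setminus\{k\}}$ on a tubular neighborhood of $\partial C(n-1)$ because of the collaring condition: near each codimension-one facet, the brane is required to be a pullback along the orthogonal projection $\pi(\beta)$, and these product structures match up consistently on codimension-two corners precisely by the horn compatibility relations. I then extend this collared boundary brane inward across $C(n-1)$ using the cylinder $\sP^\partial \times (0, \epsilon)$ in the normal direction, interpolating to any smooth choice of brane over a smaller interior region; a suitable time reparametrization in $\Diff(n-1)$ smooths the transition while preserving collaring and refined-facet factorization. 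The identical procedure, one dimension up, then constructs $\sP_{[n]}$ from the now-complete boundary data on $\partial C(n)$.

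In the second substage, I equip each extended brane with a witnessing one-form in $\Flow(\cdot)$. The horn data comes with closed one-forms $\eta^{(i)}$ realizing $(\sP^{(i)} + \eta^{(i)}) \cap (\Lambda \times C([n]\setminus\{i\})) = \emptyset$; these agree on overlaps up to exact one-forms and hence patch, via a partition-of-unity argument compatible with the collaring condition, into a single $\eta^\partial$ defined in a collar of the extended boundary. An iterated application of Lemma~\ref{nonchar extension}, sweeping across the remaining interior one coordinate direction at a time, produces the desired global $\eta$ realizing the non-characteristic property for the filler.

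The main obstacle will be the combinatorial bookkeeping in the first substage. The refined facets $F(\beta, J)$ stratifying $\partial C(n)$ are indexed by chains $\beta$ together with subsimplices $J$, and each glues to neighboring facets via product structures along different normal directions. One must carefully choose the order in which these local product extensions are concatenated and smoothed, ensuring that the global extension still admits the required factorization $\sP|_{F(\beta,J)} = p(\beta, J)^* \sP_J$ against every inherited $\sP_J$. The analogous ordering issue in the non-characteristic substage is exactly what Lemma~\ref{nonchar extension} was engineered to dissolve: provided we traverse refined facets by increasing dimension so that the lemma's two-endpoint hypothesis is literally the previously constructed one-form rather than a merely homotopic representative, the one-form extension is essentially mechanical once the underlying brane is in place.
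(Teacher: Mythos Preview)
Your proposal is essentially the approach the paper has in mind. Note that the paper does not actually supply a proof of this proposition: it declares the verification ``elementary (but tedious)'' and points the reader to precisely the three ingredients you invoke --- collarings, refined facet factorizations, and Lemma~\ref{nonchar extension}. Your outline is therefore \emph{more} detailed than what appears in the paper, and it deploys those ingredients in the expected way.

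One phrase deserves tightening. When you write ``interpolating to any smooth choice of brane over a smaller interior region,'' it is not immediately clear what that arbitrary choice would be or why such a Lagrangian extension exists. The cleaner mechanism, and I suspect the one you actually have in mind, is this: for an inner horn $\Lambda^n_k$ the missing refined facet is $\{v_k=0\}\times[0,n]$, and the union of the remaining facets (with their collar neighborhoods) is a disk, hence diffeomorphic to $C(n)^\circ$ itself by a map equal to the identity near the present facets. Pulling back along that diffeomorphism produces the filler in one stroke, with the missing face read off afterward; this avoids having to name an auxiliary interior brane at all. Either way, your identification of the combinatorial bookkeeping (ordering the refined facets $F(\beta,J)$ by dimension so that the product extensions and the iterated applications of Lemma~\ref{nonchar extension} are coherent) as the genuine content of the argument matches the paper's characterization of the proof as tedious rather than deep.
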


\begin{proof}[Proof of Proposition~\ref{prop: kan}]
Let $0<i<n$ and consider the face $C(n)_i$ of $C(n)$ spanned by the facets for which $i \not \in J$. By definition, an inner horn $\Lambda^n_i \to \lag^{\dd 0}$ defines a Lagrangian living over every face of $(\del c(n) \times [0,n])$ except for $C(n)_i$---note that one must choose a coherent collection of space dilations to ensure that cobordisms defined by an $(n-1)$-simplex are pulled back to a cobordism living over $c(n-1) \times [0,n]$. One can then choose a retraction $C(n) \to (\del c(n) \times [0,n]) - C(n)_i$ which is a stratum-wise submersion, and pull back. The collaring condition ensures that the pull-back is smooth and collared appropriately.
\end{proof}

\subsection{Composition via concatenation}
In an $\oo$-category, there is no absolute notion of composition, but one is defined up to contractible choice. Specifically, given an $\oo$-category $\sC$, objects $x_0, x_1, x_2$, and morphisms $f_{01}{\colon\thinspace} x_0 \to x_1, f_{12}{\colon\thinspace}x_1\to x_2$, we are guaranteed by the weak Kan property a $2$-simplex 
	$$
	\xymatrix{
	\ar[dr]^-{f_{02}}\ar[d]_-{f_{01}} x_0  & \\
	x_1 \ar[r]_-{f_{12}} & x_2
	}
	$$
with the labeled boundary values. The ``composite" map $f_{02}{\colon\thinspace}x_0 \to x_2$ and the $2$-simplex are not unique, but the above $2$-simplex and any other such $2$-simplex will form the partial boundary of a $3$-simplex which we again will be able to fill, and so on. Thus one can show that the space of possible ``compositions" is contractible:

\begin{proposition}
Let $\Lambda^2_1 \subset \Delta^2$ be the simplicial set given by the union of the edge $0\to 1$ and $1 \to 2$. Let $X$ a simplicial set, and let $X^{\Lambda^2_1}$ and $X^{\Delta^2}$ be the mapping simplicial sets. Then the obvious map
		\[
		X^{\Delta^2} \to X^{\Lambda^2_1}
		\]
	is a trivial fibration if and only if $X$ is a quasi-category.
\end{proposition}
See for instance Joyal's notes from Barcelona~\cite{joyal-lectures}. 

In the $\oo$-category $\Lag^{\dd 0}$, the situation is more concrete.
Let $\sP_{01}{\colon\thinspace}\sL_0\to \sL_1$ and  $\sP_{12}{\colon\thinspace}\sL_1\to \sL_2$ be morphisms in $\Lag^{\dd 0}$.
Up to time reparametrization, we can view them as Lagrangian $1$-cobordisms inside the intervals
$$
\xymatrix{
\sP_{01} \subset M \times T^*C(1) = M \times T^*(0,1)
&
\sP_{12} \subset M \times T^*C(1) = M \times T^*(1,2)
}$$

\begin{definition}[Composition via concatenation]

  We define the {\em composition} 
  $$
  \sP_{02} = \sP_{12} \circ\sP_{01}{\colon\thinspace} \sL_0 \to \sL_2
  $$ to be the equivalence class under time reparametrization of the {\em concatenation}
  $$
  \sP_{01} \cup\sP_{12} \subset  M \times T^*(0,2).
  $$
\end{definition}

Returning to the language of $\oo$-categories, we can think of the composition $\sP_{02}$ as the third edge of the $2$-simplex
$$
\xymatrix{
\ar[dr]^-{\sP_{02}}\ar[d]_-{\sP_{01}} \sL_0  & \\
\sL_1 \ar[r]_-{\sP_{12}} & \sL_2
}
$$
represented by the Lagrangian $2$-cobordism 
$$
\sP \subset M \times T^*C(2) = M \times T^*(0,2) \times T^*(0,1)
$$ 
given simply by taking the product of the concatenation 
$$
 \sP_{01} \cup\sP_{12} \subset M \times T^*(0, 2)  
$$
with the zero section 
$$
T^*_{(0,1)} (0, 1) \subset T^*(0,1).
$$ 
We in fact used this technique implicitly in the proof of Proposition~\ref{prop: kan}.

More generally, let $\sQ'$ and $\sQ''$ be an $n'$-simplex and an $n''$-simplex respectively. We assume $n',n'' \geq 1$. Assume that the $n'$th vertex of $\sQ'$ and the 0th vertex of $\sQ''$ are the same object $\sL$. We define the notion of their {\em composition} $\sQ'' \circ \sQ'$, which is an $(n'+n''-1)$-simplex.

Take the splitting
\[
c(n' + n''-2)  = c(n'-1) \times c(n''-1)
\]
and consider the projections
\[
\pi' {\colon\thinspace} c(n'+n''-2) \to c(n'-1),
\qquad
\pi'' {\colon\thinspace} c(n'+n''-2) \to c(n''-1).
\]
For instance, $\pi''$ acts by forgetting the first $(n'-1)$ coordinates of a point in $c(n'+n''-2)$.
We extend $\sQ', \sQ''$ by zero sections by defining
    \[
    \bar \sQ ' 
    {\colon\thinspace}= \pi'^* \sQ' \cup \left(\sL \times \{n'\} \times c(n'+n''-2) \right) 
    \subset M \times T^*(0,n'] \times T^*c(n'-1) \times T^*c(n''-1)
    \]
and
    \[
    \bar \sQ'' 
    {\colon\thinspace}= \left(\sL \times \{0\} \times c(n'+n''-2)\right) \cup \pi''^* \sQ''
    \subset M \times T^*[0,n'') \times T^*c(n'-1) \times T^*c(n''-1).
    \]
By time reparametrization we can realize $\sQ''$ as a Lagrangian brane in the manifold
    \[
    M \times T^*[n',n'+n'') \times T^*c(n''-1).
    \]
Since $\sQ''$ and $\sQ'$ are both collared by $\sL$ at time $t = n'$, we concatenate:

\begin{definition}[Composition of higher simplices]\label{def. higher composition}
We define the composition 
\[
\sQ '' \circ \sQ'
\]
of $\sQ'$ with $\sQ''$ to be the equivalence class (under time reparametrization) of the Lagrangian brane
whose underlying Lagrangian submanifold is the concatenation
\[
\bar \sQ' \cup \bar \sQ'' \subset M \times T^*(0,n' + n'') \times c(n' + n'' -2).
\]
\end{definition}

\begin{proposition}
$\sQ'' \circ \sQ'$ is an $(n'+n''-1)$-simplex.
\end{proposition}

\begin{proof}
We proceed by induction. The base case is $n'=n''=1$, where the result is obvious, and we induct on the value $n'+n''-2$.

Consider the front $i$th face---i.e., the locus where the $i$th coordinate of $c(n-1)$ equals zero. This face of $c(n-1)$ consists of those $I\subset[n]$ such that $\{0,i,n\} \subset I$. If $i < n'$, the face of the rectangle $c(n-1) \times [0,n]$ is collared by $\sQ'' \circ \del_i \sQ'$ by definition. By induction, this is indeed a simplex. If $i\geq n'$, the face is collared by $\del_{i-n'} \sQ'' \circ \sQ'$, which is also a simplex by induction. A similar argument shows that the other facets of $C(n)$ are appropriately collared. 
\end{proof}

Just as there is a 2-simplex in $\Lag^{\dd 0}$ whose boundary contains $\sP_{12}$, $\sP_{01}$ and $\sP_{12} \circ \sP_{01}$, one can produce a $(k+l)$-simplex relating the composition $\sQ'' \circ \sQ'$ with the cobordisms $\sQ''$ and $\sQ'$:

\begin{proposition}
There exists an $(n'+n''+1)$-simplex $\sQ$ which has $\sQ'' \circ \sQ'$ as a codimension 1 face and $\sQ'$ and $\sQ''$ as faces.
\end{proposition}

That is, $\sQ'' \circ \sQ'$ has reason to be called a composition.

\begin{proof}
One simply extends $\sQ'' \circ \sQ'$ to a higher cube, for instance by taking $s_i^*(\sQ'' \circ \sQ')$ for some $i$. Then $\sQ''$ and $\sQ'$ collar the obvious subfacets of this cube.
\end{proof}

\subsection{Zero object}\label{ssect: zero object}
Recall that for $\sC$ an $\oo$-category and two objects $x,x'\in \sC(0)$, one can model the {\em mapping space} $\sC(x,x')$ as a simplicial set whose $n$-simplices are $(n+1)$-simplices in $\sC$ whose $0$th face is the degenerate face at $x'$ and whose $0$th vertex is $x$.

Recall that a zero object $z\in \sC(0)$ is an object such that $\sC(x, z)$ and $\sC(z, x)$ are contractible for all $x\in \sC(0)$.

\begin{definition}[Empty brane]
Let $\zerolag\in\Lag^{\dd 0}(0)$ denote the empty Lagrangian equipped with its unique brane structure.
\end{definition}

\begin{theorem}[Zero object]\label{thm. zero} The empty brane $\zerolag\in \Lag^{\dd 0}(0)$ is a zero object: for any object $\sL\in \Lag^{\dd 0}(0)$, the mapping spaces
$\Lag^{\dd 0}(\zerolag, \sL)$ and $\Lag^{\dd 0}(\sL, \zerolag)$ are contractible.
\end{theorem}

\begin{proof}
The proof for  $\Lag^{\dd 0}(\sL, \zerolag)$ is parallel to that for $\Lag^{\dd 0}(\zerolag, \sL)$, so we will focus on the former.
Since $\Lag^{\dd 0}(\sL, \zerolag)$ is a Kan complex, 
it suffices to show that
any map of simplicial sets 
$$
\xymatrix{
b{\colon\thinspace}\partial \Delta^k \ar[r] & \Lag^{\dd 0}(\sL, \zerolag)
}
$$ extends to a map
$$
\xymatrix{
\Delta^k\ar[r] &  \Lag^{\dd 0}(\sL, \zerolag).
}
$$ 
Note that Example~\ref{example: zero maps} handled the case $k=0$, showing that the mapping space is non-empty. 

For convenience, set $n =k+1$. The data of the map $b$ provides a compatible collection of Lagrangian branes $\sP_b$ over the boundary of the $n$-dimensional
rectangular solid 
$$C(n) = [0,1]^{n-1} \times [0,n].$$ 
In particular, since $L_\emptyset$ is the collaring brane along the face 
$$[0,1]^{n-1} \times \{n\} \subset C(n),
$$
there exists $\epsilon >0$ such that $\sP_b$ is empty over the open subset 
$$
[0,1]^{n-1} \times (n-\epsilon, n] \subset C(n).
$$

Using our collaring conventions, there exists a canonical extension of $\sP_b$ to a Lagrangian brane $\sP_B$
over an open neighborhood $B\subset C(n)$ of the boundary $\partial C(n)$.

We will construct an extension of $\sP_B$ to a Lagrangian $n$-cobordism $\sP$ over all of $C(n)$.
To this end, it is convenient to fix a small $\epsilon >0$, and choose  once and for all an elementary function
 $\tau{\colon\thinspace}\R\to \R$ that is smooth, non-decreasing, and satisfies
$$
\tau(t) = 
 \left\{
 \begin{array}{ll}
 0, & t\leq \epsilon/2\\
 1, & t\geq \epsilon
\end{array}
\right.
$$

Next define the function
$$
\xymatrix{
\sigma{\colon\thinspace}C(n) = [0,1]^{n-1} \times [0,n] \ar[r] & \R &
\sigma(x, t) = \tau(t)\prod_{i=1}^n \tau(x_i)\tau(1-x_i)
}$$
Consider the partially defined one form
$$
\kappa = -d\log(1-\sigma) = \frac{d\sigma}{1-\sigma}
$$
Finally, form the fiberwise translation
$$
\sP = \sP_B + \kappa \subset M \times T^*C(n)
$$

For $\epsilon$ small enough, it is straightforward to verify that $\sP$ is a well-defined 
 Lagrangian $n$-cobordism and provides the desired extension of $b$. In particular, 
  the fact that $\sP_b$ is $\Lambda$-non-characteristic implies the same for $\sP$.
\end{proof}


\subsection{Non-characteristic objects}\label{sect: non-characteristic}

\begin{definition}
An object $\sL \in \Lag^{\dd 0}(0)$ is said to be {\em non-characteristic} if 
$$
\sL\cap \Lambda = \emptyset.
$$
\end{definition}

\begin{proposition}
If $\sL \in \Lag^{\dd 0}(0)$ is non-characteristic, then it is a zero object.
\end{proposition}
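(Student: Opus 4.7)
The plan is to adapt the proof of Theorem~\ref{theorem: zero}. By symmetry it suffices to show $\sLag^{\dd 0}(\sL', \sL)$ is contractible for every $\sL' \in \sLag^{\dd 0}(0)$; the case of $\sLag^{\dd 0}(\sL, \sL')$ is parallel. Since this mapping space is a Kan complex, I reduce as in Theorem~\ref{theorem: zero} to extending any map $b : \partial \Delta^k \to \sLag^{\dd 0}(\sL', \sL)$ to $\Delta^k$. Unpacking, $b$ amounts to a compatible collection of Lagrangian branes $\sP_b$ on $\partial C(n)$ with $n = k+1$, where the bottom face $[0,1]^{n-1} \times \{0\}$ is collared by $\sL'$ and the top face $[0,1]^{n-1} \times \{n\}$ is collared by $\sL$; the collaring conventions extend this canonically to a brane $\sP_B$ on an open neighborhood $B \subset C(n)$ of $\partial C(n)$.

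The essential modification is to the translation function. In the proof of Theorem~\ref{theorem: zero}, the collaring $\zerolag$ at the top face allowed $\sigma$ to remain nonzero near $\{t = n\}$, since pushing the empty brane to infinity is harmless. Since $\sL$ may be nontrivial, the translation must now also vanish near the top face, so I replace $\sigma$ by the symmetrized version
\[
\sigma'(x,t) = \tau(t)\,\tau(n-t)\,\prod_{i=1}^{n-1} \tau(x_i)\,\tau(1-x_i),
\]
which vanishes in an open neighborhood of every face of $\partial C(n)$ and equals $1$ precisely on the strictly interior region $[\epsilon,1-\epsilon]^{n-1} \times [\epsilon, n-\epsilon]$. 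Setting $\kappa = -d\log(1 - \sigma')$ and forming $\sP = \sP_B + \kappa$, the same reasoning as in Theorem~\ref{theorem: zero} shows $\sP$ is a well-defined Lagrangian $n$-cobordism extending $\sP_b$: $\kappa$ vanishes on a collar of each face, so $\sP$ matches the boundary data there, while $\kappa$ blows up as $\sigma' \to 1$, so the brane escapes to infinity on the interior and the closure of $\sP$ in $M \times T^*C(n)$ is a closed extension.

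The key step is verifying that $\sP$ is $\Lambda$-non-characteristic, and this is precisely where the hypothesis $\sL \cap \Lambda = \emptyset$ enters. Along a collar of the top face $\{t = n\}$, the brane $\sP$ equals $\sL \times T^*_{c(n-1)}\,c(n-1)$, whose $M$-projection lies in $\sL$; since $\sL \cap \Lambda = \emptyset$ this brane is already disjoint from $\Lambda \times C(n)$, so no nontrivial fiber translation is required near the top, and we may take the flow to equal $dt$ on a full collar of $\{t=n\}$ as demanded by the definition of $\Flow(n)$. On the neighborhoods of the other faces, the non-characteristic flow $\eta_b$ witnessing that $\sP_b$ is $\Lambda$-non-characteristic extends to a flow $\eta_B$ on $B$, and the candidate $\eta_B - \kappa$ satisfies $\sP + (\eta_B - \kappa) = \sP_B + \eta_B$, which is disjoint from $\Lambda \times C(n)$ there.

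The main obstacle will be assembling these local flow data into a single global $\eta \in \Flow(n)$: one must interpolate between the trivial flow $dt$ valid near the top face, the adjusted flow $\eta_B - \kappa$ valid near the other faces, and an arbitrary choice across the interior region where $\sP$ is empty, all while preserving the refined-facet and collaring conditions defining $\Flow(n)$. This interpolation is exactly the kind of gluing afforded by Lemma~\ref{nonchar extension}, applied iteratively along the stratification of $C(n)$; the non-characteristic hypothesis $\sL \cap \Lambda = \emptyset$ is precisely what makes the top-face piece of this gluing compatible with the required boundary condition on the flow.
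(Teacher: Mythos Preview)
Your approach is plausible but takes a substantially more laborious route than the paper's, and leaves a real technical gap. The paper does not redo the Theorem~\ref{theorem: zero} argument at all: instead it shows, via a single explicit $2$-simplex, that the identity $\id_\sL$ factors through the zero object $\zerolag$. Concretely, one carves the constant cobordism $\sL \times T^*_{C(2)}C(2)$ by a bump function $q(s,t) = -\log(1 - \sigma(s)\tau(t))$ supported in the interior of $C(2)$, producing a $2$-cobordism $\sQ$ whose edges read $\sL \to \zerolag \to \sL$ along two sides and $\id_\sL$ along the third. The hypothesis $\sL \cap \Lambda = \emptyset$ enters only once, to check that $\sQ$ is $\Lambda$-non-characteristic (the carving pushes $\sL$ in the $-dt$ direction through the zero section, which would be forbidden if $\sL$ met $\Lambda$). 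Once $\id_\sL$ factors through $\zerolag$, general $\infty$-categorical nonsense gives that $\sL \simeq \zerolag$, hence is a zero object.

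Your direct approach of filling all spheres in $\sLag^{\dd 0}(\sL', \sL)$ is reasonable in principle, but the non-characteristic verification is not complete. Your candidate flow $\eta_B - \kappa$ is not an element of $\Flow(n)$: it blows up on $\{\sigma' = 1\}$. The interpolation you need is between a flow that equals $\eta_B - \kappa$ near the vertical faces, equals $dt$ near the top face, and is arbitrary on the interior region where $\sP$ is empty; Lemma~\ref{nonchar extension} as stated handles interpolation across a single product direction for a \emph{fixed} brane, not this kind of radial gluing for a brane that is itself escaping to infinity. This can likely be made to work with additional care (and the paper's own proof of Theorem~\ref{theorem: zero} is similarly terse on this point), but you have not actually done it. Separately, your claim that the case of $\sLag^{\dd 0}(\sL, \sL')$ is ``parallel'' deserves more scrutiny: your symmetric $\sigma'$ pushes the brane toward $-\infty$ near the \emph{top} face, which is collared by the \emph{target}. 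For $\sLag^{\dd 0}(\sL, \sL')$ the target is $\sL'$, which may meet $\Lambda$, so the same construction fails there; you would need a reflected version with the roles of top and bottom exchanged.
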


\begin{proof}
We will show that the identity morphism $\id_\sL$ (see Example~\ref{ex: identity}) factors through the zero object $\zerolag$.
More precisely, we will construct  a $2$-simplex in $\Lag^{\dd 0}$ as follows:
$$
\xymatrix{
\ar[dr]^-{\id_\sL}\ar[d]_-{} \sL  & \\
\zerolag \ar[r]_-{} & \sL.
}
$$
Choose  $0 < \tau_0 < \tau_1 < {\frac {1} {2}} < \tau_2 < \tau_3 < 1$, and a smooth bump function
 $\tau{\colon\thinspace}\R\to \R$ such that
$$
\tau(t) = 
 \left\{
 \begin{array}{ll}
 0, & t\leq \tau_0 \\
 1, & \tau_1 \leq t \leq \tau_2 \\
  0, & t\geq \tau_3
\end{array}
\right.
$$
with $\tau$ non-decreasing over $(\tau_0, \tau_1)$ and non-increasing over $(\tau_2, \tau_3)$.

Choose   $0<\epsilon_0 < \epsilon_1 < 1$, and a smooth, non-increasing function
 $\sigma{\colon\thinspace}\R\to \R$ such that
$$
\sigma(s) = 
 \left\{
 \begin{array}{ll}
 0, & s\leq \epsilon_0 \\
 1, & s\geq \epsilon_1
\end{array}
\right.
$$

Form the partially defined  function
$$
\xymatrix{
q{\colon\thinspace}C(2) = [0,1] \times [0,2] \ar[r] &   \R &
q(s, t) = -\log(1-\sigma(s) \tau(t))
}$$
and consider the corresponding graph brane
$$
\Gamma_q \subset T^*C(2) = T^*((0, 1) \times (0,2))
$$

Now we define the sought after $2$-simplex to be represented by the Lagrangian $2$-cobordism given by the translated brane
$$
\sQ = (\sL \times T^*_{C(2)} C(2)) + \Gamma_q \subset M \times T^*C(2)
$$

It is straightforward to verify that  $\sQ$ is a well-defined 
 Lagrangian $2$-cobordism and has the prescribed boundary behavior.
In particular, since $\sL$ is assumed non-characteristic,
$\sQ$ is non-characteristic.
\end{proof}

\begin{figure}[ht]
\includegraphics[height=3.3in]{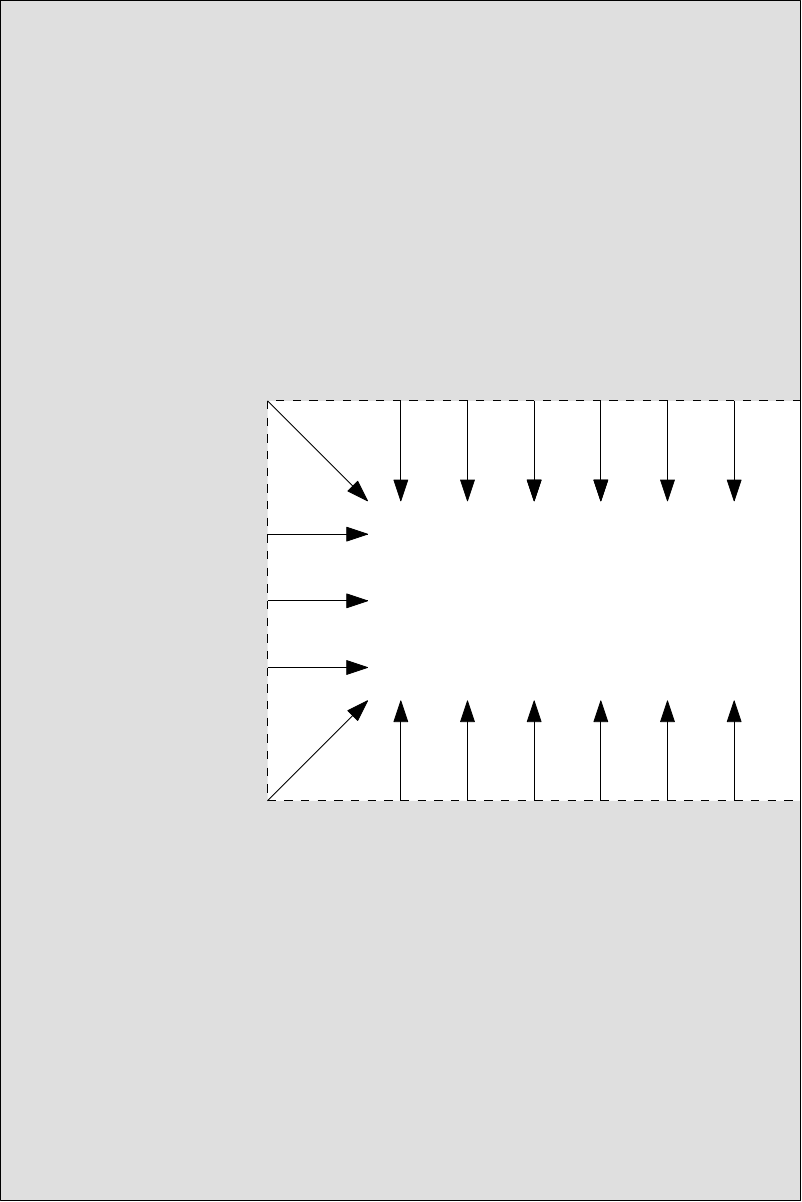}

\begin{image}\label{figure: gamma_q}
 The graph brane $\Gamma_q$ of the proposition.
 \end{image}
\end{figure}

There is a simple but  useful generalization of the above proposition. Fix a morphism $\sP{\colon\thinspace}\sL_0 \to \sL_1$ and a time $t_0\in (0,1)$.
Consider the natural correspondence
$$
\xymatrix{
M \times T^*C(1) & \ar@{_(->}[l]_-i M \times T^*_{\{t_0\}} C(1) \ar@{->>}[r]^-p & M 
}
$$
and the resulting subset
$$
\sP_{t_0} = p(i^{-1}(\sP)) \subset M.
$$
Note that $\sP_{t_0} \subset M$ is not the naive restriction $\sP|_{t_0} = i^{-1}(\sP) \subset M \times T^*_{\{t_0\}} C(1)$ 
but rather the Hamiltonian reduction along the equation $t= t_0$.

\begin{definition}
A morphism  $\sP{\colon\thinspace}\sL_0 \to \sL_1$ is said to admit a {\em non-characteristic cut} at time $t_0\in (0,1)$ 
if 
$$
\sP_{t_0} \cap \Lambda = \emptyset.
$$
\end{definition}

\begin{example}
If an object $\sL$ is non-characteristic, then its identity morphism $\id_\sL$ admits a non-characteristic cut at any time $t_0\in (0,1)$.
\end{example}

\begin{proposition}\label{prop. cuts}
If a morphism $\sP{\colon\thinspace}\sL_0 \to \sL_1$ admits a non-characteristic cut at some time $t_0 \in (0,1)$, then it factors through the zero object $\zerolag$: 
there is a $2$-simplex in $\Lag^{\dd 0}$ with boundary values
$$
\xymatrix{
\ar[dr]^-{\sP}\ar[d]_-{} \sL_0  & \\
\zerolag \ar[r]_-{} & \sL_1
}
$$
\end{proposition}

\begin{proof}
We will write $\sP'$ for the result of time reparametrizing $\sP$ so that it lies inside of $M\times T^*(0,2)$.
Without loss of generality, we may assume that $\sP'$ admits a non-characteristic cut at time $t_0 = 1$. 

Let us return to the proof of the preceding proposition, and in particular the graph brane
$$
\Gamma_q \subset T^*C(2) = T^*((0, 1) \times (0,2)).
$$
 Consider the Lagrangian $2$-cobordism represented by the translated brane
$$
\sQ = (\sP' \times T^*_{(0,1)} (0,1)) + \Gamma_q \subset M \times T^*C(2) = M \times T^*((0,1) \times (0,2))
$$

Observe that admitting a {non-characteristic cut} is an open condition in the time variable $t_0$.
Thus choosing all of the parameters
 $0 < \tau_0 < \tau_1 < {\frac {1} {2}} < \tau_2 < \tau_3 < 1$ in the construction of the graph brane $\Gamma_q$
 sufficiently close to ${\frac {1} {2}}$, the Lagrangian $2$-cobordism $\sQ$ is non-characteristic.
\end{proof}

\begin{remark}\label{remark: cuts}
We will eventually use not only the above proposition but also the specific $2$-simplex constructed in its proof.

Thanks to the weak Kan property and the fact that $\zerolag$ is a zero object, the above $2$-simplex is equivalent up to a sequence of contractible choices with a $2$-simplex of the more concrete form
$$
\xymatrix{
\ar[dr]^-{\sP}\ar[d]_-{z_{\to \es}} \sL_0  & \\
\zerolag \ar[r]_-{z_{\es\to}} & \sL_1
}
$$
where $\sP$ is factored into a composition of the zero morphisms of Example~\ref{example: zero maps}.
\end{remark}

\subsection{Stabilization}
The main construction of this paper is a concrete geometric stabilization. The following definition encompasses familiar phenomena from stable homotopy theory and homological algebra, lifting the notion of a triangulated category to the $\infty$-categorical setting: 

\begin{definition}[\cite{LurieStab}]\label{defn.stability}
 An $\oo$-category $\sC$ is said to be {\em stable} if it satisfies the following:
\begin{enumerate}
\item There exists a zero object $0\in \sC$.
\item Every morphism in $\sC$ admits a kernel and a cokernel.
\item Any commutative diagram in $\sC$ of the form
$$
\xymatrix{
a\ar[d] \ar[r] & b \ar[d]\\
0 \ar[r] & c
}
$$
is Cartesian if and only if it is cocartesian.
\end{enumerate}
\end{definition}

The third condition states that the diagram presents $a$ as a kernel of $b\to c$
if and only if it presents $c$ as a cokernel of $a\to b$.

We have seen that the empty Lagrangian $\zerolag$ is a zero object of $\Lag^{\ 0}_{\Lambda}(M)$.
To guarantee the existence of kernels and cokernels,
we consider the following symplectic version of a stabilization procedure familiar from algebraic topology.

Consider the symplectic manifold $T^*\R^n$ with fixed support Lagrangian the zero section 
$\R^n \subset T^*\R^n$. We also equip $T^*\R^n$ with its canonical background structures (arising from the fact that it is a cotangent bundle). 
For notational simplicity, we set 
$$\Lag^{\dd n}_{\Lambda}(M) = \Lag^{\dd 0}_{\Lambda \times \R^n}(M\times T^*\R^n),
$$
which for $n=0$ agrees with our previous notation.

Recall that for $x\in \R$, and $\alpha\in \Z + {\frac {1} {2}}$, we have the $\alpha$-graded conormal brane 
$$\sN_{x}[\alpha] \subset T^*\R.
$$ 
Let us focus on the ${\frac {1} {2}}$-graded conormal brane based at $x=0$.
Successively taking the product with it provides  a  tower of functors
\begin{equation}\label{eqn.lag-n}
\xymatrix{
\cdots\ar[r] &  \Lag^{\dd n}_{\Lambda}(M)
 \ar[r] & \Lag^{\dd n+1}_{\Lambda}(M )\ar[r] & \cdots\
 & \sL \ar@{|->}[r] & \sL \times \sN_0[{\frac {1} {2}}].
}
\end{equation}
Note that each functor is a monomorphism of simplicial sets (i.e., an injection at every dimension).

\begin{definition}
Given $ \Lambda\subset M$, 
we define the $\oo$-category $ \Lag_{\Lambda}(M)$ to be the union
$$
 \Lag_{\Lambda}(M) {\colon\thinspace}= \cup_{n \to \oo}  \Lag^{\dd n}_{\Lambda }(M ).
$$
of $\oo$-categories.
\end{definition}

\begin{remark}
Note that this union is again an $\infty$-category, because any inner horn to the union must factor through $\lag^{\dd n}_\Lambda(M)$ for some finite $n$.
\end{remark}

While the above definition is concrete, there is a more universal characterization:

\begin{lemma}
$\lag_\Lambda(M)$ is a homotopy colimit of the sequential diagram~\eqref{eqn.lag-n}. More specifically, the natural map from the homotopy colimit 
$$
	\hocolim_n \lag^{\dd n}_\Lambda(M) \to \Lag_{\Lambda}(M)
$$
is an equivalence of $\infty$-categories.
\end{lemma}

\begin{proof}
This follows from generalities about Reedy model structures and the Joyal model structure. Since sequential diagrams are cofibrant in the Reedy model structure, a sequential (honest) colimit of cofibrations is always a homotopy colimit in a left proper model category. One concludes by observing that monomorphisms are cofibrations in the Joyal model structure for $\infty$-categories, and that the Joyal model structure is left proper. (In fact, even if the Joyal model structure were not left proper, one would only need that the first object in the sequential diagram be cofibrant.)
\end{proof}

The following theorem is our main result. The remaining sections of the paper are devoted to its proof,
and in particular a construction of the required mapping cones.

\begin{theorem} \label{thm. main}
 The $\oo$-category $\Lag_\Lambda(M)$ is stable.
\end{theorem}

Strictly speaking, we do not prove properties (2) and (3) of Definition~\ref{defn.stability}. Rather, we verify three equivalent properties
given by the following theorem of Lurie.

\begin{theorem}[\cite{LurieHigherAlgebra}]\label{stablealternative}
An $\infty$-category $\sC$ is a stable $\infty$-category if and only if the following three properties are satisfied:
\begin{enumerate}
\item There exists a zero object $0\in \sC$.
\item Every morphism in $\sC$ admits a kernel.
\item Let $\Omega{\colon\thinspace} \sC \to \sC$ be the functor induced by sending every object $\sL$ to the kernel of the zero map $0 \to \sL$. Then $\Omega$ is an equivalence.
\end{enumerate}
\end{theorem}

\begin{remark}
While we have used the term {\em stabilization} to refer to the construction of $\Lag_\Lambda(M)$, the same term is used in \cite{LurieHigherAlgebra} as a general procedure for producing stable $\infty$-categories from non-stable ones. $\Lag_\Lambda(M)$ is {\em not} the universal stabilization of $\Lag^{\dd 0}_\Lambda(M)$; for instance, the shift functor already exists in $\lag^{\dd 0}_\Lambda(M)$. 
\end{remark}

\subsection{Space symmetries}\label{sect: isotopies of domain}
Before continuing on to the proof of theorem,
we state here an easily checked invariance of our constructions under isotopies.

Let $\Diff(\R^k)$ denote the topological group of compactly supported diffeomorphisms of $\R^k$. 
It is useful to recognize that $\Diff(\R^k)$ deformation retracts onto the general linear group.
In particular,  $\Diff(\R^k)$ consists of two components, depending on whether an element is orientable.

Let  $P(\R^k)$ denote the Poincar\'e $\oo$-groupoid of $\Diff(\R^k)$. The group structure on 
$\Diff(\R^k)$ turns $P(\R^k)$ into a Picard $\oo$-groupoid.
We will write  $P(\R^k)(n)$ for its set of $n$-simplices, and in particular $P(\R^k)(0)$ for its discrete group of $0$-simplices. 

By construction, 
$P(\R^k)(0)$ naturally acts on $\Lag^{\dd k}_\Lambda(M)$, or in other words, there is a morphism
$$
\xymatrix{
P(\R^k)(0) \ar[r] & \Aut(\Lag^{\dd k}_\Lambda(M))
}
$$
where we write $\Aut$ to denote the Picard $\oo$-groupoid of $\oo$-category automorphisms.

We leave the following 
assertion to the interested reader.

\begin{proposition}\label{prop: diff action}
The natural action of the discrete group $P(\R^k)(0)$ on the $\oo$-category $\Lag^{\dd k}(M)$ lifts to an action of 
the full Picard $\oo$-groupoid $P(\R^k)$.
\end{proposition}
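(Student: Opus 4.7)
The plan is to construct, for each $n \geq 0$, a map of sets
\[
\Phi_n : P_c(\R^k)(n) \longrightarrow \Aut(\sLag^{\dd k}_\Lambda(M))(n)
\]
which assembles into a simplicial map of Picard $\oo$-groupoids and restricts at $n = 0$ to the discrete action preceding the proposition. The geometric heart of the construction is the cotangent lift: a smooth $n$-parameter family of compactly supported diffeomorphisms of $\R^k$ lifts to an $n$-parameter family of symplectomorphisms of $T^*\R^k$ preserving the zero section, and the parametrized trace of any Lagrangian brane under this family will be a non-characteristic cobordism in $\sLag^{\dd k}_\Lambda(M)$.

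Concretely, a diffeomorphism $\varphi \in \Diff_c(\R^k)$ has cotangent lift $\tilde\varphi(x,\xi) = (\varphi(x),(d\varphi^{-1})^*_{\varphi(x)}\xi)$, a symplectomorphism of $T^*\R^k$ preserving $\R^k$ setwise, and compact support in the base ensures all taming hypotheses. This produces $\Phi_0(\varphi)$. For a path $\gamma : [0,1] \to \Diff_c(\R^k)$ generated by a time-dependent vector field $X_t$, the cotangent lift $\tilde\gamma_t$ is a Hamiltonian isotopy with fiberwise-linear Hamiltonian $H_t(x,\xi) = \xi(X_t(x))$. The required natural equivalence between $\Phi_0(\gamma(0))$ and $\Phi_0(\gamma(1))$ is exhibited, for each object $\sL$, by the trace
\[
\mathrm{Tr}(\gamma,\sL) = \{(m,\tilde\gamma_t(x,\xi),t,-H_t(\tilde\gamma_t(x,\xi))) : (m,x,\xi)\in\sL,\ t\in\R\} \subset M \times T^*\R^k \times T^*\R_t,
\]
with $\gamma$ cut off to be constant for $t < 0$ and $t > 1$, so that $\mathrm{Tr}(\gamma,\sL)$ is a Lagrangian cobordism from $\tilde\gamma_0^*\sL$ to $\tilde\gamma_1^*\sL$. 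For an $n$-simplex $\sigma : \Delta^n \to \Diff_c(\R^k)$ one forms an analogous parametrized trace in $M \times T^*\R^k \times T^*C(n)$, with the collaring conditions of Section~\ref{sect: lag cobs} ensured by insisting that the cut-off family restrict on each facet of $C(n)$ to the trace of the corresponding sub-simplex of $\sigma$.

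The main obstacle is verifying $\Lambda$-non-characteristic-ness of each trace. The crucial observation is that the fiberwise-linear Hamiltonian $H_t$ vanishes identically on the zero section $\R^k \subset T^*\R^k$, so each $\tilde\gamma_t$ preserves $\Lambda \times \R^k$ setwise. Consequently, if an exact one-form $\eta$ witnesses the non-characteristic property of $\sL$ in the sense of Definition~\ref{def: nonchar}, then $\eta + dt$ (suitably modified in the cut-off region) witnesses the same property for $\mathrm{Tr}(\gamma,\sL)$ with respect to $\Lambda \times \R^k \times \R_t$: the shifted trace remains disjoint from the preserved support Lagrangian because translation by $\eta$ still separates the fiber at each $t$. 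For $n$-parameter families the witnessing one-form is built facet-by-facet on $C(n)$ and glued together using the extension principle of Lemma~\ref{nonchar extension}.

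Finally, compatibility of $\{\Phi_n\}$ with the simplicial structure is essentially tautological: face maps correspond to restriction of the parametrizing simplex, which under the trace construction corresponds to reading off the cobordism along the appropriate facet of $C(n)$; degeneracies correspond to constant families, which yield trivially collared cobordisms. Compatibility with the Picard structure reduces to two facts: cotangent lift is a group homomorphism $\Diff_c(\R^k) \to \mathrm{Symp}(T^*\R^k)$, and the trace of a composed isotopy is the concatenation of traces, which matches the composition of Definition~\ref{definition: higher composition}. The notation-heavy but routine bookkeeping required for these compatibilities is left to the reader.
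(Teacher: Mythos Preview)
The paper does not prove this proposition: immediately before it, the authors write ``We leave the following assertion to the interested reader.'' So there is no paper proof to compare against, and your task was precisely to supply what the authors omitted. Your approach---cotangent lift of diffeomorphisms, trace construction for isotopies via the fiberwise-linear Hamiltonian $H_t(x,\xi) = \xi(X_t(x))$, and non-characteristic verification using the vanishing of $H_t$ on the zero section---is the natural one and aligns with the paper's own appendix on Hamiltonian isotopies as cobordisms, which develops exactly the trace-as-Lagrangian construction you invoke.

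One phrasing issue worth cleaning up: you write that ``an exact one-form $\eta$ witnesses the non-characteristic property of $\sL$,'' but objects carry no such witness; the witnessing $\eta \in \Flow(n)$ from Definition~\ref{def: nonchar} applies only to cobordisms. What your argument actually establishes is simpler and does not need any input from $\sL$: since $H_t$ vanishes on the zero section $\R^k \subset T^*\R^k$ and the cotangent lift preserves that zero section, any point of the trace projecting to $\Lambda \times \R^k$ has $dt$-component $-H_t = 0$; shifting by $\eta = dt$ then moves it off the zero section of $T^*\R_t$. So $\eta = dt$ itself works, with no reference to any prior witness. The argument is correct; only the sentence framing it was confused.
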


\begin{example}[Isotopy of Domain]\label{example: isotopy of domain}
Here is the simplest  general import of the previous proposition. 

Suppose we begin with an element $g\in P(\R^k)(0)$ 
which  for simplicity we assume represents an orientation-preserving diffeomorphism. Let $p \in P(\R^k)(1)$ represent a path in $\Diff(\R^k)$ from the identity  to the element $g$. 

Suppose we take an object $\sL \in \Lag^{\dd k}(0)$. Then the proposition asserts that there is an invertible morphism
$\sP \in \Lag^{\dd k}_\Lambda(M)(1)$ from $\sL$ to $g(\sL)$. 
\end{example}

\begin{example}
Here is a very special but useful application of the previous proposition (which could easily be deduced in an ad hoc manner). 

Recall that for $x\in \R$, and $\alpha\in \Z + {\frac {1} {2}}$, we have the $\alpha$-graded conormal brane 
$$\sN_{x}[\alpha] \subset T^*\R.
$$ 
In particular, in our stabilization tower, we worked with the ${\frac {1} {2}}$-graded conormal brane based at $x=0$.

The above proposition confirms that our constructions are independent of the choice of $x = 0$. The functor of taking the product with $\sN_{0}[{\frac {1} {2}}]$ is equivalent to taking the product with $\sN_{x}[{\frac {1} {2}}]$ for any other $x\in \R^k$.
\end{example}

We will not use the following assertion (and in fact it follows from our main theorem), but we state it here since it can be deduced rather
formally from
the above proposition.

Fix two objects $\sL_0, \sL_1\in \Lag^{\dd k}(M)(0)$. Using the above proposition, we can assume they are disjoint
$
\sL_0 \cap \sL_1 = \emptyset,
$
and hence their disjoint union provides a well defined object
$$
\sL_0 \coprod \sL_1 \in  \Lag^{\dd k}_\Lambda(M)(0)
$$

\begin{corollary}
Disjoint union of branes provides a natural $\cE_k$-structure on the $\oo$-category $\Lag^{\dd k}(M)$.
Taking $k\to \oo$, we obtain an $\cE_\oo$-structure on the $\oo$-category $\Lag_\Lambda(M)$.
\end{corollary}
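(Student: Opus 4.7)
The plan is to realize the $\mathcal{E}_k$-structure via the standard little $k$-disks model, combining the action of the Picard $\oo$-groupoid $P_c(\R^k)$ from Proposition~\ref{prop: diff action} with the disjoint-union operation. Fix $n\geq 1$ and a configuration of $n$ pairwise-disjoint rectilinear embeddings $\phi_1,\ldots,\phi_n:D^k\hookrightarrow D^k$ representing a point in $\mathcal{E}_k(n)$. Each $\phi_i$ extends, after radial damping, to an element of $P_c(\R^k)$. Given objects $\sL_1,\ldots,\sL_n\in\sLag^{\dd k}(M)$, apply the action of Proposition~\ref{prop: diff action} to obtain branes $\phi_i\cdot\sL_i\subset M\times T^*\R^k$ supported over the disjoint images $\phi_i(D^k)\subset\R^k$; their disjoint union
$$
\mu_n(\phi_1,\ldots,\phi_n)(\sL_1,\ldots,\sL_n)\ =\ \coprod_{i=1}^n\phi_i\cdot\sL_i
$$
is then a well-defined Lagrangian brane, and it remains $\Lambda$-admissible since each summand is.

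Next I would verify functoriality in the input branes and continuity in the configuration. Morphisms and higher simplices in $\sLag^{\dd k}(M)^n$ give Lagrangian cobordisms that, after translation by $\phi_i$, still live over pairwise-disjoint regions of $\R^k$, so their disjoint union is a well-defined $n$-cobordism; this promotes $\mu_n$ to a functor of $\oo$-categories. That $\mu_n$ extends to a functor out of $\mathrm{Sing}\,\mathcal{E}_k(n)$ (rather than just its discrete set of components) is the content of the lifted action to the full Picard $\oo$-groupoid $P_c(\R^k)$: an isotopy of the $\phi_i$ through disjoint configurations is a morphism in $P_c(\R^k)^{\times n}$ acting separately on each summand, and yields the required invertible cobordism.

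Operadic composition corresponds to nesting of rectilinear embeddings. Because disjoint union is strictly commutative and associative on the level of sets of disjoint branes, and because the $P_c(\R^k)$-action respects composition of diffeomorphisms, the $\mu_n$ satisfy the operadic axioms up to the coherent homotopies supplied by the higher simplices of $P_c(\R^k)$. The cleanest way to package all of this is to assemble the $\mu_n$ into a cocartesian fibration over the $\oo$-operad $\mathcal{E}_k^\otimes$ of \cite{LurieHigherAlgebra}, with fiber $\sLag^{\dd k}(M)^{\times n}$ over $\langle n\rangle\in\mathrm{Fin}_*$. For the second sentence of the corollary, note that the inclusion of rectilinear embeddings $\R^k\hookrightarrow\R^{k+1}$ (by a product with the identity in the new coordinate) induces a map of operads $\mathcal{E}_k\to\mathcal{E}_{k+1}$ compatible with the tower $\sLag^{\dd k}(M)\to\sLag^{\dd k+1}(M)$, since stabilization by $\sN_0[\tfrac12]$ intertwines disjoint union with the $P_c(\R^{k+1})$-action. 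Passing to the colimit and using $\mathcal{E}_\oo\simeq\colim_k\mathcal{E}_k$ transports the $\mathcal{E}_k$-structures to an $\mathcal{E}_\oo$-structure on $\sLag_\Lambda(M)$.

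The main obstacle is Step~3: upgrading the essentially pointwise construction of $\mu_n$ to a fully coherent operadic action rather than just an associative/commutative product up to isotopy. The Picard $\oo$-groupoid supplied by Proposition~\ref{prop: diff action} is exactly designed to provide the requisite higher simplices, but checking compatibility with operadic composition in full rigor is notationally heavy. An attractive alternative, which bypasses some of this bookkeeping, is to invoke Dunn additivity: the $k$ coordinate directions of $\R^k$ furnish $k$ manifestly commuting $\mathcal{E}_1$-structures by disjoint union in each direction separately, and $\mathcal{E}_1^{\otimes k}\simeq\mathcal{E}_k$ then delivers the full $\mathcal{E}_k$-structure.
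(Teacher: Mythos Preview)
The paper does not actually prove this corollary. Immediately preceding it, the authors write that the assertion ``can be deduced rather formally from the above proposition'' (the lifted $P_c(\R^k)$-action of Proposition~\ref{prop: diff action}) and also remark that it ``follows from our main theorem''---the latter presumably because a stable $\oo$-category has finite coproducts, and coproduct automatically furnishes a symmetric monoidal structure. No further argument is given.

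Your proposal is exactly an attempt to spell out the ``rather formal'' deduction the paper gestures toward, and it is the right shape: use the $P_c(\R^k)$-action to separate inputs, take disjoint union, and let the higher simplices of the Picard $\oo$-groupoid supply the operadic coherences. Your Dunn-additivity alternative is also entirely in the spirit of what the authors have in mind. So there is no meaningful divergence in approach---you are simply filling in what the paper leaves unsaid.

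One technical point deserves care. You write that the translated branes $\phi_i\cdot\sL_i$ are ``supported over the disjoint images $\phi_i(D^k)\subset\R^k$.'' This presumes the $\sL_i$ project to compact subsets of $\R^k$, which is not guaranteed for an arbitrary object of $\sLag^{\dd k}_\Lambda(M)=\sLag^{\dd 0}_{\Lambda\times\R^k}(M\times T^*\R^k)$. The paper itself asserts without justification that ``using the above proposition, we can assume they are disjoint,'' so it glosses over the same issue. For the $\cE_\oo$-structure on the colimit $\sLag_\Lambda(M)$ this is harmless: any finite collection of objects arises from some $\sLag^{\dd j}$, and after further stabilization each lives over a point in the new $\R^{k-j}$ directions, so separation by compactly supported diffeomorphisms is immediate. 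For the $\cE_k$-structure on $\sLag^{\dd k}$ at finite level, you should either restrict to branes with suitable support conditions in the $\R^k$-direction or argue more carefully that the $P_c(\R^k)$-action suffices to achieve disjointness for the branes actually under consideration.
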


Our proof that $\Lag_\Lambda(M)$ is stable shows in particular that disjoint union is the coproduct
in $\Lag_\Lambda(M)$.

\def\Tilt{\mathsf{Tilt}}
\subsection{Tilting}\label{ssec: tilt}
 We present here a simple but particularly useful variation
of Proposition~\ref{prop: diff action} which could be skipped until needed later. It can be thought of as the ``Fourier transform"
of the operation of translation.

Recall that for $x\in \R$, and $\alpha\in \Z + {\frac {1} {2}}$, we have the $\alpha$-graded conormal brane 
$$\sN_{x}[\alpha] \subset T^*\R.$$

More generally, consider an $n$-dimensional submanifold $X\subset \R^k$, a contractible open neighborhood $U\subset X$, and a smooth function $b{\colon\thinspace}X\to \R$ that is strictly positive on $U$ and zero elsewhere. 
Consider the logarithm $\log b{\colon\thinspace}U \to \R$, and for $\beta\in \Z + (k-n)/2$, the corresponding translated conormal brane 
$$
\sT_U[\beta] = \sN_X[\beta] + \Gamma_{\log b} \subset T^*\R^k
$$

\begin{definition}
For $\alpha\in \Z + {\frac {1} {2}}$, a {\em  tilt} of the conormal brane $\sN_x[\alpha]$ is any such brane $\sT_U[\beta] = \sN_X[\beta] + \Gamma_{\log b}$ with $\beta = \alpha - (k-n)/2$.
\end{definition}

The source of the name tilt is that one can obtain  $\sT_U[\beta] $ from the conormal brane $\sN_x[\alpha]$ by a time-dependent Hamiltonian symmetry of $T^*\R^k$.
We leave the following easy lemma to the reader.

\begin{lemma}\label{lemma: tilts}
Time-dependent Hamiltonian symmetries provide a contractible space of identifications of all  {tilts} of the conormal brane
$\sN_x[\alpha]$.
\end{lemma}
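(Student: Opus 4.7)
The plan is to exhibit each tilt as the image of $\sN_x[\alpha]$ under a time-dependent Hamiltonian flow that depends on the defining data up to a contractible space of choices, and then compose such flows to obtain identifications between arbitrary pairs of tilts.

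First I would construct a two-stage Hamiltonian flow carrying $\sN_x[\alpha]$ onto a given tilt $\sT_U[\beta] = \sN_X[\beta] + \Gamma_{\log b}$. Stage one is a Hamiltonian rotation of $T^*\R^k$ whose restriction to each normal plane of $X$ at $x$ is the standard quarter-turn; this carries the vertical fiber $T^*_x\R^k$ onto the conormal $T^*_X\R^k$ and shifts the grading by exactly $(k-n)/2$, matching the condition $\beta = \alpha - (k-n)/2$. Stage two is the fiberwise translation by $\Gamma_{\log b}$, realized as the Hamiltonian flow of the function $\log b$ extended suitably to $T^*\R^k$. Each stage depends on choices (cutoffs to achieve compact support, extensions of $\log b$, reparametrizations of the one-parameter family), and these choices form a contractible affine space.

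Next I would parametrize the tilt data $(X, U, b)$ together with its attendant Hamiltonian choices and show the resulting parameter space is contractible. For fixed $X$ of dimension $n$, the admissible bump data $(U, b)$ is convex. Different $X$'s of the same dimension are interchangeable by pre-composing stage one with a Hamiltonian rotation of tangent spaces, yielding a contractible space of such interchanges. Across different $n$'s, one links strata by sharpening $b$: as $d\log b$ grows unboundedly near $x$, the tilt collapses onto a tilt of smaller effective dimension, and ultimately onto the point conormal itself, providing a continuous passage between strata.

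To obtain the space of identifications between two tilts $\sT_0$ and $\sT_1$, I concatenate: compose the inverse of a flow $\sN_x[\alpha] \to \sT_0$ with a flow $\sN_x[\alpha] \to \sT_1$. Each factor is parametrized by the contractible space above, so the total space of identifications is contractible. The main obstacle will be the dimension-changing interpolation, where one must keep the family inside the class of graded exact Lagrangian branes with continuously varying primitive, grading, and relative pin structure; the grading relation $\beta = \alpha - (k-n)/2$ is engineered so that gradings match across strata, reducing the task to an analytic bookkeeping exercise in the spirit of Proposition~\ref{prop: diff action}.
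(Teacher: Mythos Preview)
The paper does not actually supply a proof of this lemma: immediately before its statement the authors write ``We leave the following easy lemma to the reader.'' So there is nothing to compare your argument against; you have written more than the paper does.

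Your strategy is sound in outline. Two comments. First, the ``quarter-turn in each normal plane'' description of stage one is correct when $X$ is a linear subspace through $x$, but for a general submanifold $X \subset \R^k$ you should first invoke a Hamiltonian isotopy straightening $X$ to its tangent plane near $x$ (such straightenings form a contractible space), and only then apply the linear rotation. Second, the dimension-changing interpolation is the step where your sketch is vaguest. A cleaner route that avoids the stratified argument: show directly that the \emph{parameter space} of tilt data $(X, U, b)$ (with the grading constraint $\beta = \alpha - (k-n)/2$ built in) deformation retracts onto a single tilt by sharpening the bump $b$ until the tilt coincides with the original conormal $\sN_x[\alpha]$. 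Then the space of Hamiltonian identifications between any two tilts is the space of paths between them in this contractible parameter space, which is itself contractible. This packages the ``collapse to smaller dimension'' idea you mention without having to track brane structures across strata by hand.
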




\clearpage

\section{Construction of kernel}\label{sect. kernel}
The remainder of the paper is devoted to the proof of Theorem~\ref{thm. main}. 
Fix once and for all a morphism $\sP{\colon\thinspace}\sL_0 \to \sL_1$ between objects $\sL_0, \sL_1\in \Lag_\Lambda(M)$.
We will construct a kernel object $\sK\in  \Lag_\Lambda(M)$ and a morphism $\sk {\colon\thinspace}\sK \to \sL_0$,
then show that the composition $\sP\circ \sk{\colon\thinspace}\sK \to \sL_1$ is homotopic to a zero map. We will refer to the 2-simplex of $\Lag_\Lambda(M)$ realizing this homotopy as the kernel homotopy. The construction of the kernel object, kernel morphism, and kernel homotopy are the subjects of Sections~\ref{sect: kernel object}, \ref{sect: kernel map}, and~\ref{sect: kernel homotopy}, respectively.
We verify the above data satisfies the universal property of the kernel in Section~\ref{sect: universal property}.

Recall that $\Lag_\Lambda(M)$ is the union of the $\oo$-subcategories 
$$
\Lag^{\dd k}_\Lambda(M) = \Lag^{\dd 0}_{\Lambda \times\R^k} (M \times T^*\R^k).
$$
Hence the morphism $\sP{\colon\thinspace}\sL_0 \to \sL_1$ comes from $\Lag^{\dd k}_\Lambda(M)$, for some $k$. It is convenient
 to reindex our objects once and for all, and write $M$ in place of $M \times T^*\R^k$, and $\Lambda$ in place of $\Lambda \times\R^k$. With this understanding, we view $\sP{\colon\thinspace}\sL_0 \to \sL_1$ as a morphism in $\Lag^{\dd 0}_\Lambda(M)$. The kernel $\sK$ will then be an object of $\Lag^{\dd 1}_\Lambda(M)$.

In any stable $\oo$-category, the kernel and cokernel of a morphism differ by a shift (properly speaking, the shift functor is deduced from the axiom that squares are Cartesian if and only if cocartesian). Thus our construction of a candidate kernel $\sK$ just as well
provides a candidate cokernel $co\sK = \sK[1]$, where as usual $[1]$ denotes the shift of grading. We will refer to the underlying Lagrangian submanifold 
$$
C(\sP) \subset M \times T^*\R
$$ of each of these branes as the {\em cone} of $\sP$. Often
in what follows we will focus on the kernel, though all of our arguments have precise analogues for the cokernel. 

\subsection{Kernel object}\label{sect: kernel object}
In this section, we give a specific model of $\sK$ depending on the choice of realization of the  cone $C(\sP)$.
In particular, we will fix once and for all a concrete representative brane
 $$
  \sP\subset M \times T^*C(1)
 $$ 
for the morphism $\sP$ (rather than an equivalence class under time reparametrization). We will make further choices to construct the model of the kernel---in verifying the universal property of the kernel, we will have shown that the space of such choices is contractible.

In what follows, the subscripts $\ell,r$ stand for left and right.

\begin{construction}[Cone]\label{construction: cone}

Fix once and for all a smooth non-decreasing function $\tau_\ell{\colon\thinspace}\R\to \R$ and a smooth non-increasing function $\tau_r{\colon\thinspace}\R\to \R$ 
such that
\[
\tau_\ell(t) = 
 \left\{
 \begin{array}{ll}
 0, & t\leq -1\\
 1, & t\geq 0
\end{array}
\right.
\qquad
\tau_r(t) = 
 \left\{
 \begin{array}{ll}
 1, & t\leq 1\\
 0, & t\geq 2
\end{array}
\right.
\]
Consider the corresponding exact one-forms
\[
\kappa_\ell = d\log \tau_\ell(t) \in \Omega^1(-1, \oo)
\qquad
\kappa_r = d\log \tau_r(t)\in \Omega^1(-\oo, 2).
\]
Finally, consider the exact one-form given by the linear combination
\[
\kappa = -\kappa_\ell + \kappa_r \in  \Omega^1(-1,2).
\]
We will only appeal to the fact that $\kappa$ has the qualitative behavior
\begin{enumerate}
 \item 
 $\kappa$ is identically zero over $[0,1]$,
 \item
$\kappa$ has no zeros outside of $[0,1]$,
 \item
 $\kappa(\partial_t) \to-\infty$ as $t\to -1$,
 \item
 $\kappa(\partial_t) \to-\infty$ as $t\to 2$.
 \end{enumerate}
Now let us return to the Lagrangian brane
 $$
  \sP\subset M \times T^*C(1).
 $$ 
Thanks to its collaring by $\sL_0, \sL_1 \subset M$ at the end points $\{0\}, \{1\}$ of the interval $C(1) = [0,1]$, we can extend $\sP$ to a Lagrangian brane over a longer interval
 \[
  \tilde \sP = (\sL_0 \times T^*_{(-1, 0)}(-1, 0) ) \cup \sP \cup  (\sL_1 \times T^*_{(1, 2)}(1, 2))\subset M \times T^*(-1,2).
 \]
We define the {\em cone} of $\sP$ to be the translated Lagrangian submanifold
$$
C(\sP) =   \tilde \sP + \Gamma_\kappa \subset M \times T^*(-1, 2).
$$
Alternatively, we could restrict $\kappa$ to the intervals $(-1, 0), (1, 2)$, and then form the glued Lagrangian submanifold
$$
C(\sP) =   (\sL_0 \times \Gamma_\kappa|_{(-1, 0)} ) \cup \sP \cup  (\sL_1 \times \Gamma_\kappa|_{(1, 2)} ) \subset M \times T^*(-1, 2).
$$
See Figure~\ref{fig:cone}.

 Observe that $C(\sP)$ deformation retracts onto the original exact brane 
 $$\sP = C(\sP)|_{[0,1]}.
 $$ Thus all homotopical aspects of the geometry
 of $C(\sP)$
 coincide with those of $\sP$. In particular,
it is evident that $C(\sP)$ is a closed exact Lagrangian submanifold of $M \times T^*\R$.
\end{construction}

	\begin{figure}[ht]
		$$
		\xy
		\xyimport(8,8)(0,0){\includegraphics[width=3.3in]{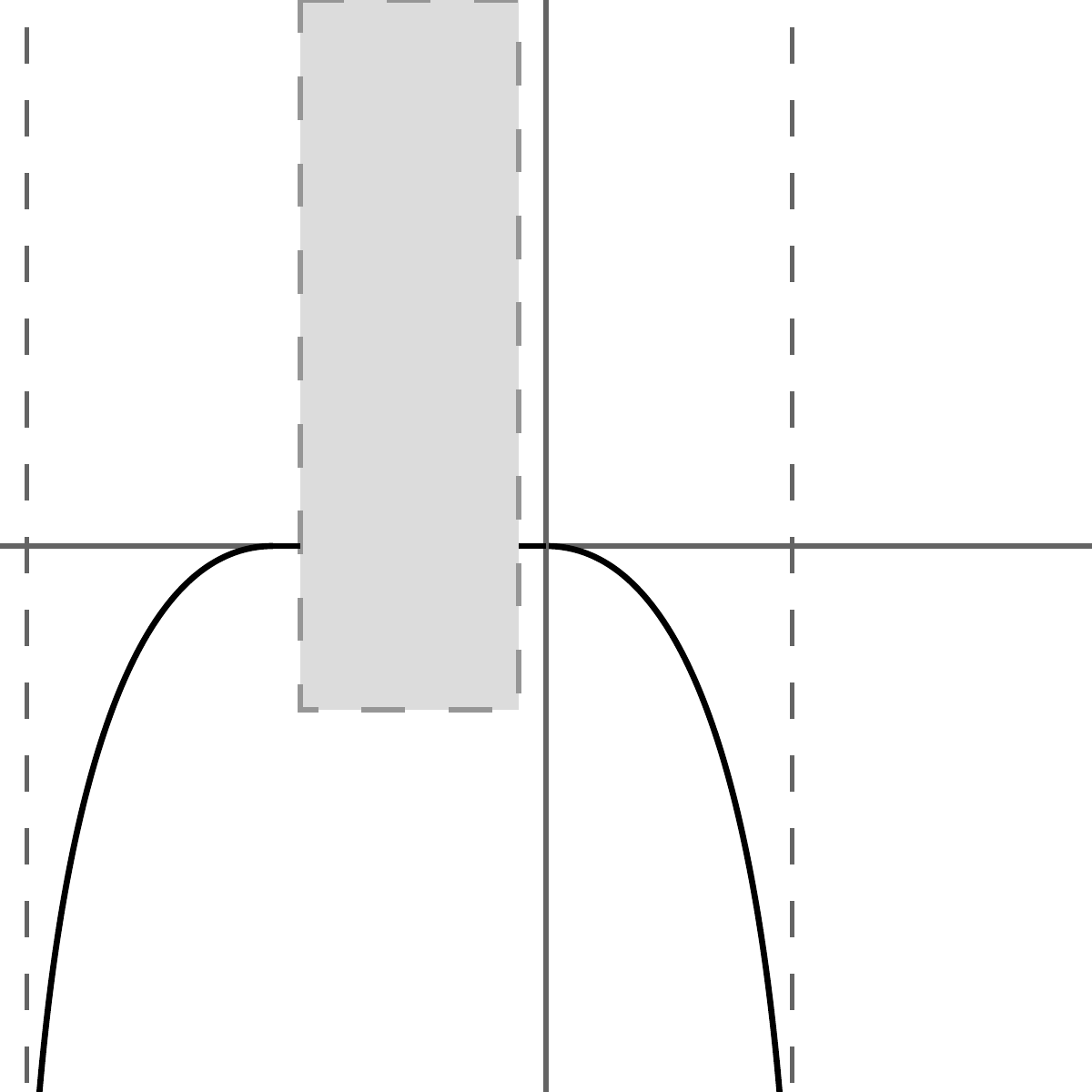}}
	,(4,8)*+!L{\R^\diamond}
	,(8,4)*+!U{\R_x}
	,(3,6)*+{\mbox{\huge $\sP$}}
	,(5.4,2)*+!L{\mbox{\LARGE $\sL_1 \times \kappa_r$}}
	,(0.5,2)*+!R{\mbox{\LARGE $\sL_0 \times (-\kappa_\ell)$}}
	,(0.1,7)*+!R{x=-2}
	,(5.9,7)*+!L{x=1}
		\endxy
		$$
		\begin{image}\label{fig:cone}
		The projection of the cone $C(\sP)$ onto $T^*\R$. 		\end{image}
	\end{figure}

\begin{definition}[Kernel and cokernel]\label{def. kernel}
We define the {\em kernel} 
$
\sK$ of the morphism $\sP$ to be the Lagrangian brane with underlying Lagrangian submanifold the cone 
$$
C(\sP) \subset M \times T^*\R,
$$ and brane structure $(f_{\sK}, \alpha_{\sK}, [c_{\sK}])$ uniquely determined by the fact that its restriction to $\sP \subset C(\sP)$ coincides with the brane structure $(f_{\sP}, \alpha_{\sP}, [c_{\sP}])$ of $\sP$.

The {\em cokernel} $co\sK$ is the shift $co\sK = \sK[1]$. Explicitly, $co\sK$ has the structure $(f_\sK, \alpha_\sK+1, [c_\sK]).$
\end{definition}

\subsection{Kernel morphism}\label{sect: kernel map}

Next we define the kernel morphism $\sk{\colon\thinspace} \sK \to \sL_0$.  

Let us begin with the identity morphism $\id_{\sK}$ represented by the Lagrangian $1$-cobordism
$$
\id_{\sK} = \sK \times T^*_{C(1)} C(1) \subset M \times T^*\R \times T^*C(1).
$$

To construct $\sk$, we will carve $\id_{\sK}$ in the sense of Section~\ref{section.carving} with a function 
$$
h{\colon\thinspace}\R \times C(1)\to \R.
$$
We define $h$ as follows: Fix once and for all a small $\epsilon >0$, and smooth non-decreasing functions $\tau_s{\colon\thinspace}\R\to \R, \tau_t{\colon\thinspace}C(1)\to \R$ 
such that
$$
\tau_s(s) = 
 \left\{
 \begin{array}{ll}
 0, & s\leq -1 + \epsilon\\
 1, & s\geq 0
\end{array}
\right.
\qquad
\tau_t(t) = 
 \left\{
 \begin{array}{ll}
 0, & t\leq \epsilon\\
 1, & t\geq {\frac {1} {2}}
\end{array}
\right.
$$
Form the product $\tau(s, t) = \tau_s(s)\tau_t(t){\colon\thinspace}\R\times C(1) \to \R$, and finally consider the function
$$
h(s, t) = - \log(1- \tau(s, t))
$$
over its natural domain of definition 
\[
D = ((-1, 2) \times (0,1)) \setminus ([0, 2) \times [{\frac {1} {2}}, 1)) \subset \R \times C(1).
\]

\begin{definition}[Kernel morphism]
We define the {\em kernel morphism} $\sk{\colon\thinspace}\sK\to \sL_0$ to be the carved Lagrangian brane
\[
\sk =   \id_{\sK} + \Gamma_h \subset M \times T^*\R \times T^* C(1).
\]
\end{definition}

\begin{figure}[ht]
		\[
		\xy
		\xyimport(8,8)(0,0){\includegraphics[height=3in]{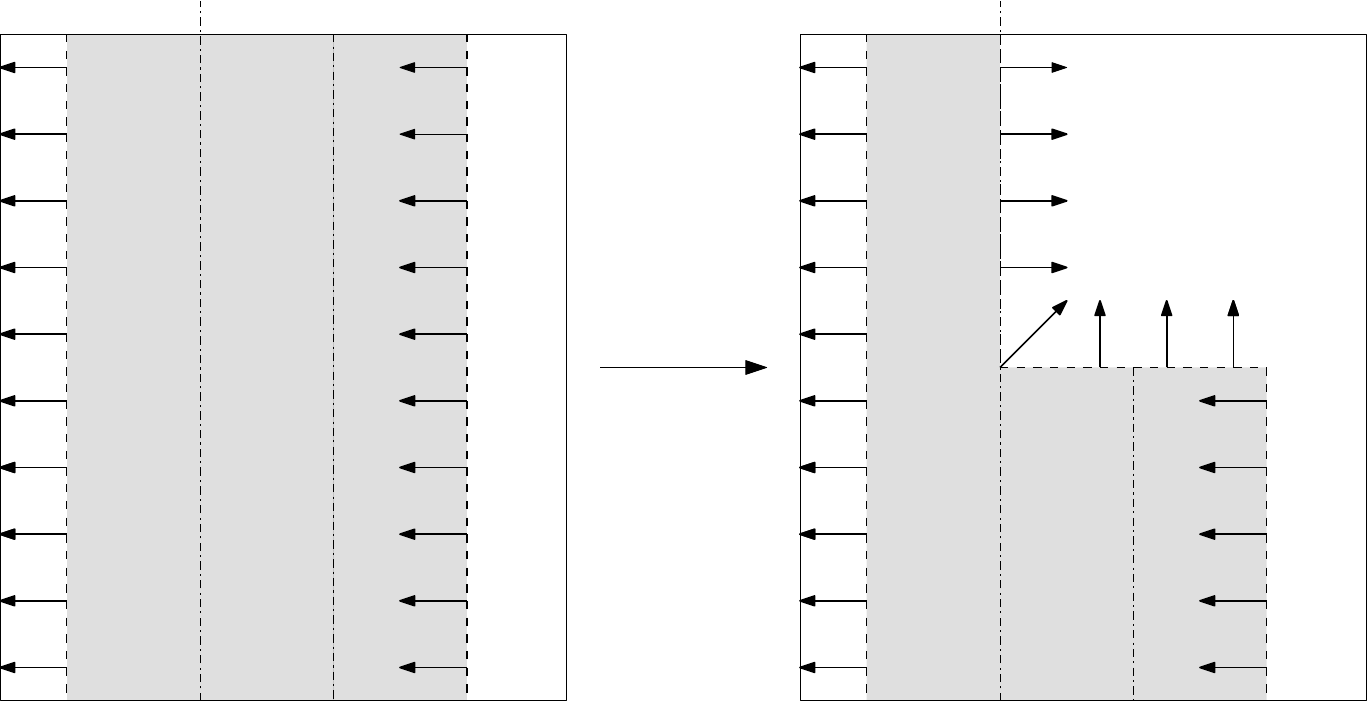}}
	,(1.8,-0.4)*+{(a)}
	,(6.4,-0.4)*+{(b)}
	,(0.8,4)*+{ \mbox{\huge $\sL_0$} }
	,(1.6,4)*+{\mbox{\huge $\sP$}}
	,(2.4,4)*+{\mbox{\huge $\sL_1$}}
	,(5.5,4)*+{\mbox{\huge $\sL_0$}}
	,(6.2,2.5)*+{\mbox{\huge $\sP$}}
	,(7,2.5)*+{\mbox{\huge $\sL_1$}}
	,(4,4.2)*+{\mbox{\huge $+\Gamma_h$}}
	,(1.2,8)*+!R{s=-1}
	,(5.9,8)*+!R{s=-1}
		\endxy
		\]
		\begin{image}\label{figure: kernel map}
		(a) The projection of $id_{\sK}$ onto $\R \times C(1)$. (b) shows the effect of carving by $dh$; the resulting cobordism $\sk = id_{\sK} + \Gamma_{dh}$ is a morphism from $\sK$ to (a tilt of) $\sL_0$.
		\end{image}
\end{figure}

\begin{remark}
The kernel morphism $k$ is indeed a morphism to a tilt (in the sense of Section~\ref{ssec: tilt}) of the product brane 
$$
\sL_0 \times \sN_0[{\frac {1} {2}}] \subset M \times T^*\R.
$$
To view this as a morphism to $\sL_0$ itself, note that $\sL_0 \times \sN_0[{\frac {1} {2}}]$  is the image of $\sL_0$ under the map
$$
\Lag^{\dd 0}_\Lambda(M) \to \Lag^{\dd 1}_\Lambda(M),
$$ and so equivalent to $\sL_0$ inside of $\Lag_\Lambda(M)$. On the other hand, thanks to Lemma~\ref{lemma: tilts}, there is a contractible space of such tilts, and they are all equivalent
to $\sL_0 \times \sN_0[{\frac {1} {2}}]$ inside of $\Lag^{\dd 1}_\Lambda(M)$. Thus we can unambiguously interpret $k$ as a morphism to $\sL_0$.
\end{remark}

\begin{remark}
As we noted in Remark~\ref{remark.corners}, one can choose instead to construct a model for $\sk$ whose domain does not contain a corner, and instead smooth it out. We have just chosen $\tau$ (and hence $h$) to produce a corner because the present formulas are easier to write down.  
\end{remark}

\subsection{Kernel homotopy}\label{sect: kernel homotopy}
Now that we have the kernel object $\sK$ and the kernel map $\sk{\colon\thinspace} \sK \to \sL_0$, we will construct a homotopy from the composition
\[
\xymatrix{
 \sK \ar[r]^-\sk & \sL_0 \ar[r]^-\sP &  \sL_1
}
\] 
to a zero map. Recall the  representative zero morphisms $z_{\to \es}, z_{\es\to}$ constructed in Example~\ref{example: zero maps}.

We will construct a diagram of $2$-simplices
$$
\xymatrix{
\ar[dr]\ar[d]_-{z_{\to \es}} \sK \ar[r]^-\sk & \sL_0\ar[d]^-\sP \\
\zerolag \ar[r]_-{z_{ \es\to}} & \sL_1
}
$$
extending our previous constructions. Thanks to Proposition~\ref{prop. cuts} (see also Remark~\ref{remark: cuts}), it suffices to find a $2$-simplex
$$
\xymatrix{
\ar[dr]_-\sh  \sK \ar[r]^-\sk & \sL_0\ar[d]^-\sP \\
 & \sL_1
}
$$
such that the boundary edge $\sh$ admits a non-characteristic cut. But our construction of $\sk$ is a map to a tilt of $\sL_0$, rather than to $\sL_0^\dd$. As such we will instead construct a 2-simplex with boundary edges
\[
\xymatrix{
 \sK \ar[r]^-\sk \ar[dr]_\sh& {\sL_0\times\Gamma_g}\ar[d]^-{\sP\times \Gamma_g} \\
 & \sL_1\times\Gamma_g.
}
\]

\begin{remark}
Here, $\Gamma_g \subset T^*\R$ is the graph brane associated to the tilting function
$$
g{\colon\thinspace}(-1, 0)\to \R
\qquad
g(s) = -\log(\tau_\ell(s) (1-\tau_s(s))).
$$
Note that by Lemma~\ref{lemma: tilts}, we lose no generality in replacing $\sP{\colon\thinspace} \sL_0 \to \sL_1$ by a tilt.
\end{remark}

\begin{proposition}[Kernel homotopy]\label{prop:kernel composes}
There is a Lagrangian $2$-cobordism
$$
\sH \subset M \times T^*\R \times T^*C(2) = M\times T^*\R \times T^*((0,2) \times (0,1))
$$
with collared boundaries over the facets
$$
\xymatrix{
\sH|_{ (0,1) \times \{0\}} =   k
&
\sH|_{ (1,2) \times \{0\}} =  (\sP\times \Gamma_g) 
&
\sH|_{ (0,2) \times \{1\}}  = \sh
}
$$
such that the Lagrangian $1$-cobordism $\sh$ admits a non-characteristic cut.
\end{proposition}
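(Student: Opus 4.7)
The plan is to construct $\sH$ by starting with the $1$-cobordism $\sh_0 = \sk \cup (\sP \times \Gamma_g)$, which is the concatenation over $(0,2)_t$ giving the composition $\sP \circ \sk$ (collared at $t=1$ by $\sL_0 \times \Gamma_g$, matching both $\sk$ and $\sP\times\Gamma_g$), then taking its trivial extension in the $u$-direction as a naive $2$-cobordism $\sh_0 \times T^*_{(0,1)_u}(0,1)_u$ over $C(2)$, and finally perturbing by a carving whose potential blows up so as to create a ``gap'' at one interior point of the top edge. The resulting $\sh := \sH|_{u=1}$ will then be a $1$-cobordism whose underlying Lagrangian escapes to infinity in the cotangent $\xi_t$-direction at some $t_0 \in (0,2)$, and hence has empty Hamiltonian reduction at $t_0$, yielding the non-characteristic cut for free.

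Concretely, I would fix $t_0 \in (0,2)$ (say in the interior of the $\sP \times \Gamma_g$ portion, so $t_0 \in (1,2)$) and choose smooth auxiliary functions $\rho : (0,2) \to [0,1]$ supported in a narrow neighborhood of $t_0$ with unique maximum $\rho(t_0)=1$, and $\tau : (0,1) \to [0,1]$ with $\tau(u)=0$ for $u \le \epsilon$ and $\tau(u)=1$ for $u \ge 1-\epsilon$. Set
\[
H(t,u) = -\log\bigl(1 - \tau(u)\rho(t)\bigr),
\]
defined off the singular slit $\{t_0\} \times [1-\epsilon, 1]$, and put
\[
\sH = \bigl(\sh_0 \times T^*_{(0,1)_u}(0,1)_u\bigr) + \Gamma_H.
\]
Because $\tau \equiv 0$ near $u=0$, we have $\sH|_{u=0} = \sh_0$, reproducing $\sk$ over $t\in(0,1)$ and $\sP \times \Gamma_g$ over $t\in(1,2)$. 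Because $\rho \equiv 0$ near $t=0,2$, the left and right edges are collared by $\sK$ and $\sL_1 \times \Gamma_g$ respectively. The constancy of $\tau$ near $u=1$ ensures the collaring condition at the top facet. Along the top facet, $\sh = \sh_0 + \Gamma_{-d\log(1-\rho(t))}$; since $\partial_t H(t,1) \sim -2/(t-t_0)$ diverges at $t_0$, the brane $\sh$ escapes to $\xi_t = \pm\infty$ as $t \to t_0^\pm$, and its Hamiltonian reduction at $t_0$ is empty. The non-characteristic cut property holds trivially.

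To verify that $\sH$ is globally $\Lambda$-non-characteristic, I would exhibit a closed $\eta \in \Flow(2)$ piece by piece and glue using Lemma~\ref{nonchar extension}. On the region where $H \equiv 0$ (a neighborhood of $\{u=0\}$ together with collars of $\{t=0,2\}$), the pullback of the $1$-form $\eta^{(0)} \in \Flow(1)$ witnessing $\Lambda$-non-characteristic for $\sh_0$ (which exists since $\sh_0$ is a composition of two non-characteristic morphisms in $\sLag^{\dd 1}_\Lambda(M)$) suffices, because $\sH$ coincides with the trivial extension there. In a neighborhood of the singular slit $\{t_0\}\times[1-\epsilon,1]$, the brane already sits at arbitrarily large $|\xi_t|$, so is tautologically disjoint from the zero section $\Lambda \times \R_s \times C(2) \times \{0\}$ for any bounded $\eta$.

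The main obstacle is the intermediate transition region, where $\partial_t H$ and $\partial_u H$ are bounded but nonzero: here the shifted fiber coordinate $\xi_t + \partial_t H + \eta^{(0)}$ could, a priori, accidentally hit the ``bad'' locus where $m \in \Lambda$ and $\xi_s = 0$. The solution is to shrink the support of $\rho$ so that the annulus where $H$ is nontrivial-but-finite lies in a thin slab around $\{t_0\}\times[1-\epsilon,1]$, and then apply Lemma~\ref{nonchar extension} (in its form for interpolating between two non-characteristic $1$-forms defined on overlapping regions of the parameter base) to produce a single closed $\eta \in \Flow(2)$ equal to $dt$ near the boundary of $C(2)$ and doing the job throughout $C(2)$. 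All other verifications (factorization along subfacets and cubical collaring) are routine given the product structure of the construction.
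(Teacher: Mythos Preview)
Your construction has a genuine gap at the step where you claim $\sh$ (and hence $\sH$) is $\Lambda$-non-characteristic. The carving potential $H(t,u) = -\log(1-\tau(u)\rho(t))$ sends $\xi_t \to +\infty$ as $t\to t_0^-$ but $\xi_t \to -\infty$ as $t\to t_0^+$. On the collar $u\in[1-\epsilon,1]$ you have $\tau'\equiv 0$, so $\partial_u H\equiv 0$ there and $\sH$ is the constant extension of $\sh$ with $\xi_u=0$. Any witness $\eta\in\Flow(2)$ must, by the collaring condition in the definition of $\Flow(n)$, have no $du$-component near $u=1$ and must restrict to a \emph{smooth} (hence bounded) one-form $\eta_t\,dt$ on that facet. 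Now observe that $\Gamma_g$ crosses the zero section: $g(s)=-\log(\tau_\ell(s)(1-\tau_s(s)))\to+\infty$ at both ends of $(-1,0)$, so $g'(s^*)=0$ for some interior $s^*$. Thus whenever $\sP$ (or its collaring branes $\sL_0,\sL_1$) meets $\Lambda$ in $M$, the brane $\sP\times\Gamma_g$ contains points with $m\in\Lambda$ and $\xi_s=0$ over every time in $(1,2)$. Along such a family, the shifted coordinate $\xi_t + \rho'(t)/(1-\rho(t)) + \eta_t(t)$ sweeps continuously from bounded values through $-\infty$ on $(t_0,t_0+\delta)$, so for any bounded $\eta_t$ it vanishes somewhere. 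Hence $(\sh+\eta)\cap(\Lambda\times\R_s\times (0,2)_t)\neq\emptyset$, and neither $\sh$ nor $\sH$ is a valid simplex of $\sLag^{\dd 1}_\Lambda(M)$.

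This is not a technicality that shrinking $\on{supp}(\rho)$ or invoking Lemma~\ref{nonchar extension} can fix: that lemma only interpolates between two one-forms that \emph{already} witness non-characteristicness, and no such one-form exists on the $u=1$ collar. Your carving is exactly the mechanism of Proposition~\ref{proposition: cuts}, which works there only \emph{because} the hypothesis guarantees $\sP_{t_0}\cap\Lambda=\emptyset$; you are trying to use it to manufacture that very hypothesis, which is circular. The paper's construction takes a different route: it first builds $\sh$ via the rotation construction applied to $\sP$ and then applies a \emph{bounded} tilt $dg$ in the $\xi_s$-direction. The non-characteristic cut arises because the Hamiltonian reduction of $\sh$ at the chosen time is genuinely displaced from $\Lambda\times\R_s$ in $M\times T^*\R_s$, not because the brane has been sent to infinity. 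Only afterwards does the paper produce $\sH$ as a carving homotopy from $\sh$ back to the composition $\sP\circ\sk$.
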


\begin{remark}
It will be important that we understand the precise shape of $\sH$ to later verify that the resulting kernel diagram
is universal. Therefore, while we formulate the following as a proposition, we will eventually
refer to properties established  during its proof.
\end{remark}

\begin{proof} We break up the proof into  the construction of $\sh$, then that of $\sH$.

\begin{construction}[of the morphism $\sh$]\label{construction: h}
The construction of $\sh$ is summarized in Figure~\ref{figure: kernel_homotopy_sh}. It proceeds in four steps.

\begin{figure}
		\[
		\xy
		\xyimport(8,8)(0,0){\includegraphics[width=6in]{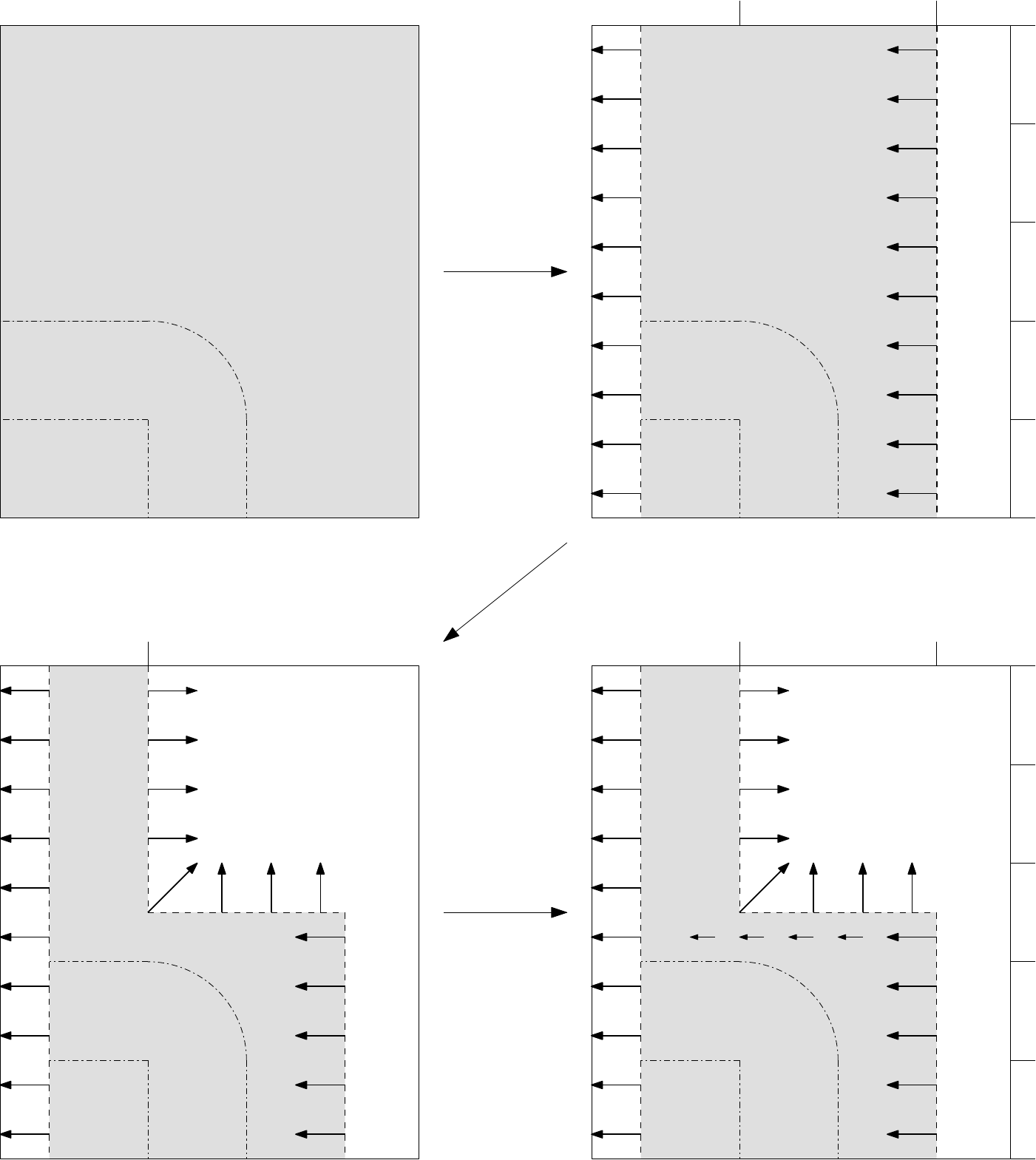}}
	,(1.5,4.4)*+!U{(a)}
	,(6.0,4.4)*+!U{(b)}
	,(1.5,0.0)*+!U{(c)}
	,(6.0,0.0)*+!U{(d)}
	,(3.9,6.2)*+!D{ \mbox{\huge $+\Gamma_{\kappa}$} }
	,(3.8,4.1)*+!D{ \mbox{\huge $+\Gamma_{h}$} }
	,(3.8,1.8)*+!D{ \mbox{\huge $+\Gamma_{g}$} }
	,(0.6,4.8)*+!L{ \mbox{\huge $\sL_0$} }
	,(0.6,5.4)*+!L{ \mbox{\huge $\sP$} }
	,(1.4,4.8)*+!L{ \mbox{\huge $\sP$} }
	,(2.2,6.6)*+!L{ \mbox{\huge $\sL_1$} }
	,(0.6,0.4)*+!L{ \mbox{\huge $\sL_0$} }
	,(0.6,1)*+!L{ \mbox{\huge $\sP$} }
	,(1.3,0.4)*+!L{ \mbox{\huge $\sP$} }
	,(2.1,1.3)*+!L{ \mbox{\huge $\sL_1$} }
	,(5.2,4.8)*+!L{ \mbox{\huge $\sL_0$} }
	,(5.2,5.4)*+!L{ \mbox{\huge $\sP$} }
	,(5.9,4.8)*+!L{ \mbox{\huge $\sP$} }
	,(6.3,6.6)*+!L{ \mbox{\huge $\sL_1$} }
	,(5.2,0.3)*+!L{ \mbox{\huge $\sL_0$} }
	,(5.2,1.0)*+!L{ \mbox{\huge $\sP$} }
	,(5.9,0.3)*+!L{ \mbox{\huge $\sP$} }
	,(6.6,1.3)*+!L{ \mbox{\huge $\sL_1$} }
	,(7.8,0)*+!D!L{ \mbox{$t=-2$} }
	,(7.8,0.65)*+!D!L{ \mbox{$t=-1$} }
	,(7.8,1.35)*+!D!L{ \mbox{$t=0$} }
	,(7.8,2.05)*+!D!L{ \mbox{$t=1$} }
	,(7.8,2.75)*+!D!L{ \mbox{$t=2$} }
	,(7.8,3.4)*+!D!L{ \mbox{$t=3$} }
	,(7.8,4.4)*+!D!L{ \mbox{$t=-2$} }
	,(7.8,5.05)*+!D!L{ \mbox{$t=-1$} }
	,(7.8,5.75)*+!D!L{ \mbox{$t=0$} }
	,(7.8,6.45)*+!D!L{ \mbox{$t=1$} }
	,(7.8,7.15)*+!D!L{ \mbox{$t=2$} }
	,(7.8,7.8)*+!D!L{ \mbox{$t=3$} }
	,(5.75,7.9)*+!D!R{ \mbox{$s=-1$} }
	,(7.3,7.9)*+!D!R{ \mbox{$s=1$} }
	,(5.75,3.5)*+!D!R{ \mbox{$s=-1$} }
	,(7.3,3.5)*+!D!R{ \mbox{$s=1$} }
	,(1.2,3.5)*+!D!R{ \mbox{$s=-1$} }
		\endxy
		\]
 \begin{image}\label{figure: kernel_homotopy_sh}
 A summary of the construction of $\sh$. (a) results from the rotation construction applied to $\sP$. (b) results from carving by the one-form $\kappa(s,t)$. (c) results from carving by the one-form $dh$. (d) is a drawing of $\sh$, where the addition of $dg$ ensures that $\sh$ admits a non-characteristic cut.
 \end{image}
\end{figure}

{\em Step (a)}. Recall the rotation construction from Example~\ref{example: rotation}. We perform this on the cobordism $\tilde \sP$ to obtain a Lagrangian in $M \times T^*\R \times T^*\R$. The result is depicted in Figure~\ref{figure: kernel_homotopy_sh}(a), and inherits a canonical brane structure from that of $\sP$.

{\em Step (b)}. In defining the cone $C(\sP)$, we utilized a 1-form $\kappa \in \Omega^1(\R)$ in Construction~\ref{construction: cone}. Let us consider $\kappa = \kappa(s, t) \in \Omega^1(\R \times \R)$
as a 1-form with no dependence on $t$, and carve the brane from (a) by the 1-form $\kappa= \kappa(s, t)$. The result is a brane in
$
M \times T^*(-2,1) \times T^*\R.
$

{\em Step (c).} Now we carve by the 1-form $dh$ utilized in Section~\ref{sect: kernel map}. Again we remember the resulting brane structure.

{\em Step (d).} Finally, 
to ensure that $\sh$ admits a non-characteristic cut, 
we translate by a 1-form $dg$ which we now define. Fix $\epsilon>0$, and define a function $g_1{\colon\thinspace} \R \to \R$ such that
\[
g_1(s) = \left\{
\begin{array}{ll}
1, & s \leq -2 + \epsilon \\
0, & s \geq 1-\epsilon
\end{array}
\right.
\]
with $g_1$ strictly decreasing on $(-2+\epsilon,1-\epsilon)$. Define  a bump function $g_2{\colon\thinspace} C(1) \to \R$ such that
\[
\xymatrix{
supp(g_2) \subset [{\frac {1} {2}} -3\epsilon, {\frac {1} {2}} - \epsilon] \qquad & \qquad \text{$g_2>0$ on $({\frac {1} {2}} -3\epsilon, {\frac {1} {2}} - \epsilon)$. }
}
\]
Now define $g{\colon\thinspace}\R \times C(1) \to \R$ by
\[
g(s,t) = g_1(s)g_2(t)
\]
and translate the brane from (c) by $dg$.

In principle we are done. But we will need some concrete notation for later on. Define the linear diffeomorphism
\[
\psi{\colon\thinspace} (-2,3) \to (0,1)
\]
and transport the brane from (d) to a brane in
$
M \times T^*\R \times C(1)
$
via the diffeomorphism
$$
\xymatrix{
M \times T^*\R \times (-2,3) \ar[r] & M \times T^*\R \times C(1)  &
   (p,s,t)  \ar@{|->}[r] & (p,s,\psi(t)).
}
$$
We call this Lagrangian brane $\sh$, with all the brane structure induced from the previous steps. It clearly admits a non-characteristic cut at $\psi({\frac {1} {2}} - 2\epsilon)$.
\end{construction}

%
%
\begin{construction}[of kernel homotopy $\sH$]\label{construction: H}
It is clear that $\sK \circ \sP$ can be obtained by carving a portion of $\sh$. The 2-cube
\[
\sH \subset M \times T^*\R \times C(2)
\]
will be a cobordism realizing this carving. We depict the carving in Figure~\ref{figure: kernel_homotopy_sH_2}.

\begin{figure}
\includegraphics[width=6in]{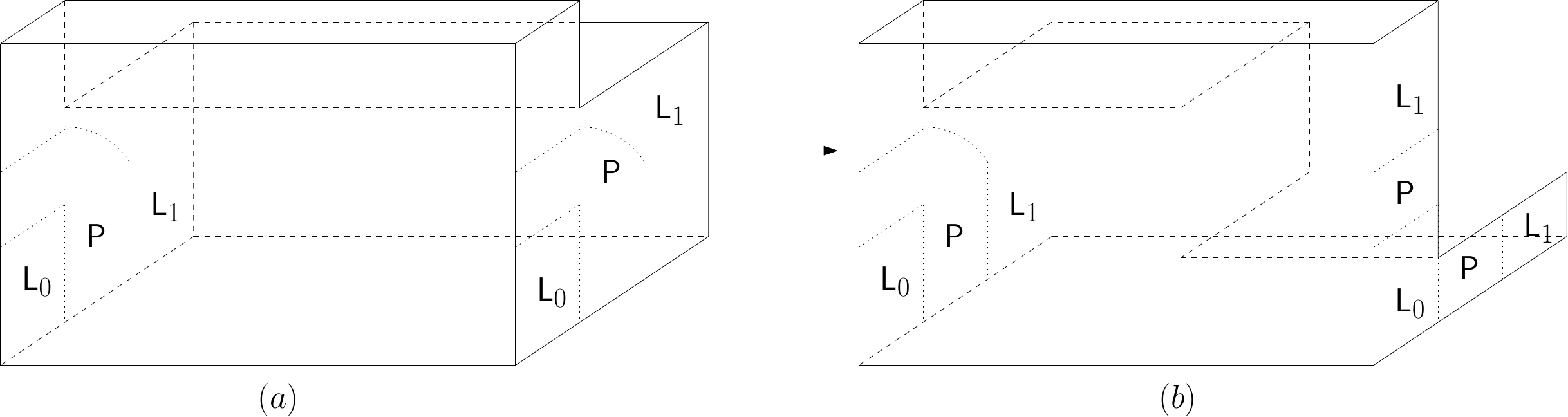}

 \begin{image}\label{figure: kernel_homotopy_sH_2}
 A summary of the construction of $\sH$. (a) is the Lagrangian $p^*(\sh)$. (b) is obtained by carving by the 1-form $d(\tau f)$. Note that the rightmost face of (b) is equal to the composition $\sP \circ \sK$, where $\sP$ has been tilted. The drawing is not to scale, and we have not drawn any vector fields to indicate the behavior of $\sH$ near infinity. \end{image}
\end{figure}

Explicitly, the linear diffeomorphism 
\[
\phi{\colon\thinspace} (0,2) \to (0,1) \subset \R.
\]
induces a map
\[
\begin{array}{ccc}
p{\colon\thinspace}M \times T^*\R \times C(2) &\to & M \times T^*\R \times C(1) \\
   (p,s,(x,t)) &\mapsto & (p,s,\phi(t))
\end{array}
\]
by which we can pull back $\sh$ to obtain
\[
p^*(\sh) \subset M \times T^*\R \times C(2).
\]
This Lagrangian is the identity 2-morphism of $\sh$---up to scaling in the time coordinate, one can think of it simply as
\[
\sh \times T^*_{(0,1)}(0,1).
\]

Recall the function
\[
h(s,t) = -\log(1 - \tau(s,t))
\]
used in Section~\ref{sect: kernel map}. Note that if we define the $x$-independent function
\[
f = h - \phi^*\psi_*h
\]
then the Lagrangian
\[
p^*(\sh) + \Gamma_{df}
\]
is the identity 2-morphism of $\sK \circ \sP$. The goal is thus to parametrize between $f$ and the function 0---we do this by introducing a function $\tau$.

Choose a smooth, non-decreasing function $\chi{\colon\thinspace} (0,1) \to \R$ with boundary conditions
\[
\chi(x) = \left\{ \begin{array}{ll}
0, & x \leq \epsilon \\
1, & x\geq {\frac {1} {2}}
\end{array}
\right.
\]
and define the product
\[
\chi f (x,s,t) = \chi(x) f(s,t).
\]

We define
\[
\sH =p^*(\sh) + \Gamma_{d(\chi f)}.
\]
As usual, the brane structure on $\sH$ is induced by the process of carving and by pull-backs. From the comments above it is clear that $\sH$ satisfies the boundary conditions of Proposition~\ref{prop:kernel composes}. 
\end{construction}

The above constructions complete the proof of Proposition~\ref{prop:kernel composes}.
\end{proof}

\begin{remark}[Stabilization commutes with kernels]\label{remark: stabilizing kernels}
Given a morphism $\sP\in \Lag^{\dd 0} (\sL_0,\sL_1)$, we have defined the kernel to be an object 
$
\sK(\sP) \in \Lag^{\dd 1}.
$
As constructed,  $\sK(\sP)^{\dd},\sK(\sP^{\dd}) \in \Lag^{\dd 2}$ are not identical objects. For example, setting
\[
\Lag^{\dd 2}(M) = \Lag^{\dd 0}(M \times T^*\R^2) = \Lag^{\dd 0}(M \times T^*\R_{s_1} \times T^*\R_{s_2}),
\] 
we see that $\sK(\sP^{\dd})$ is a product with $T^*_{\{0\}}\R_{s_1}$ while $\sK(\sP)^{\dd}$ is a product with $T^*_{\{0\}}\R_{s_2}$.

However, there is a Hamiltonian isotopy of $\R^2$ which rotates $(-2,1) \subset \R_{s_1}$ into $(-2,1) \subset \R_{s_2}$, showing that $\sK(\sP)^{\dd}$ and $\sK(\sP^{\dd})$  are indeed equivalent. In fact, this rotation produces a prism in $\Lag^{\dd 1}$ realizing a homotopy between the two diagrams
\[
\xymatrix{
\ar[dr]_{\sh(\sP)^{\dd}}  \sK(\sP)^{\dd} \ar[r]^{\sk(\sP)^\dd} & \sL_0^{\dd 2}\ar[d]^{\sP^{\dd 2}} \\
 & \sL_1^{\dd 2}
}
\hspace{5em}
\xymatrix{
\ar[dr]_{\sh(\sP^\dd)}  \sK(\sP^{\dd}) \ar[r]^{\sk(\sP^\dd)} & \sL_0^{\dd 2} \ar[d]^{\sP^{\dd 2}} \\
 & \sL_1^{\dd 2}.
}
\]
Here the rightmost edge $\sP^{\dd 2}$ is fixed, since we can choose a rotation which fixes the origin of $ \R^2$, and $\sP^{\dd2}$ lives over the origin.
\end{remark}

\clearpage
\section{Proof of universal property}\label{sect: universal property}

In this section, we prove that the kernel diagram (object, morphism, and homotopy) from the preceding section satisfies the ($\infty$-categorical) universal property of a kernel. In Section~\ref{sect: reduce universal property}, we describe the outline of the proof, reducing it to three key constructions given in Sections~\ref{sect: d'F}, \ref{sect: construction of {\kernelfunctor^i}}, and \ref{sect: I}. 
In Section~\ref{sect: anatomy}, we introduce a useful figure for reference throughout the constructions.
Throughout what follows, we continue with the setup of the preceding section. In particular, we assume $\sP$ is a morphism in $\Lag^{\dd 0}$ without loss of generality.

\subsection{Formal reductions}\label{sect: reduce universal property}

\begin{definition}
Fix an $\infty$-category $\sC$ and a simplicial set $\sD$. A {\em diagram in $\sC$ indexed by $\sD$} is a map of simplicial sets $\sD \to \sC$. Given a diagram $\sD\to \sC$,  the {\em overcategory $\sC_\sD$}  is the unique simplicial set such that 
	$$\sSet(\Delta^n, \sC_\sD) = \sSet_\sD(\Delta^n \star \sD , \sC)$$
where $\Delta^n \star \sD$ denotes the join of $\Delta^n$ and $\sD$, and  $sSet_\sD$ denotes the simplicial set of maps $\Delta^n \star \sD \to \sC$ that restrict to the diagram $\sD \to \sC$.
\end{definition}

\begin{example}
Let $\sD = \Delta^0$ be a point and let the diagram $\sD \to \sC$ specify an object $\sL$ of $\sC$. Then an $n$-simplex in $\sC_\sD$ is an $(n+1)$-simplex in $\sC$ whose final vertex is the object $\sL$.
\end{example}

For details on the join of two simplicial sets and diagram categories, we refer the reader to the Appendix.

\begin{notation}
We denote simplices of $\sC_\sD$ with a subscript $\sD$.  The map
$$
\xymatrix{
\sC_\sD \ar[r] & \sC
&
 \sX_\sD \ar@{|->}[r] &  \sX
}$$
denotes the forgetful map.
\end{notation}

\begin{definition}[$\sD$]
Given the morphism $\sP{\colon\thinspace} \sL_0 \to \sL_1$ let $\sD$ be the diagram
 in $\Lag$ given by
\[
	\xymatrix{
	&\sL_0 \ar[d]^{\sP}	\\
	0 \ar[r]_{z_{\es \to}}
		&\sL_1
	}
\]
\end{definition}

\begin{remark}\label{remark: Lag_D}
An object $\sX_\sD\in \Lag^{\dd i}_\sD$ represents a diagram in $\Lag^{\dd i}$ of the form
\[
	\xymatrix{
	\sX	\ar[r] \ar[d] \ar[rd]
			&\sL_0^{\dd i}	\ar[d]^{\sP^{\dd i}} \\
			0	\ar[r]	
		&\sL_1^{\dd i}.
	}
\]

A higher simplex $\sX_\sD{\colon\thinspace}\Delta^m \to  \Lag^{\dd i}_\sD$ also represents a diagram in $\Lag^{\dd i}$
of a similar form. 
For example, an edge $\sX_\sD{\colon\thinspace}\Delta^1\to  \Lag^{\dd i}_\sD$  with underlying vertices $\sX_0, \sX_1\in  \Lag^{\dd i}$
represents the union of two tetrahedra that heuristically represent compositions
$$
\xymatrix{
\sX_0 \to \sX_1 \to 0 \to \sL_1
&
 \sX_0 \to \sX_1 \to \sL_0 \to \sL_1
} $$ 
glued along their common triangle.  In general, an $m$-simplex $\sX_\sD{\colon\thinspace}\Delta^m \to \Lag^{\dd i}_\sD$ 
 with underlying vertices $\sX_0,\ldots,  \sX_m\in  \Lag^{\dd i}$
 represents a diagram in $\Lag^{\dd i}$ obtained by gluing two $(m+2)$-simplices along a common $(m+1)$-simplex.  
\end{remark}

In the preceding section, we constructed an object $\sK_\sD = \sK_\sD(\sP) \in \Lag_{\sD}^{\dd 1}$ given by the commutative square
\[
	\xymatrix{
	\sK \ar[r]^{\sk} \ar[d]_{z_{\to \es}} \ar[dr]_{\sh}	&\sL_0^{\dd 1} \ar[d]^{\sP^{\dd1}}	\\
	0 \ar[r]_{z_{\es \to}}
		&\sL_1^{\dd1}.
	}
\]
Note that for $i=0,1,2,\ldots,$ the objects 
\[
\sK_\sD(\sP^{\dd i}), \sK_\sD^{\dd i} = \sK_\sD(\sP)^{\dd i} \in \Lag_\sD^{\dd i+1}.
\]
are naturally equivalent by Remark~\ref{remark: stabilizing kernels}, so there is no ambiguity in defining the object $\sK_\sD \in \Lag_\sD$.

Our ultimate aim is to establish the following:

\begin{theorem}\label{thm. kernels exist}
$\sK_\sD$ is a terminal object of the overcategory $\Lag_\sD$. That is, $\sK_\sD$ is a kernel of the map $\sP$.
\end{theorem}

The theorem is a simple corollary of the following lemma:

\begin{lemma}\label{lemma: {\kernelfunctor^i}}
For every object $\sX_\sD \in \Lag_{\sD}^{\dd i}$, the stabilization map 
\[
\xymatrix{
\dd{\colon\thinspace} \Lag_{\sD}^{\dd i}(\sX_\sD,\sK_\sD^{\dd i}) \ar@{->}[r]& \Lag_\sD^{\dd i+1}(\sX_\sD,\sK_\sD^{\dd i+1})
}
\]
induces the zero map on homotopy groups.
\end{lemma}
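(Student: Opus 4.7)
The overall strategy is to leverage the extra stabilization direction $T^*\R_{s_{i+1}}$ appearing when passing from $\Lag_\sD^{\dd i}$ to $\Lag_\sD^{\dd i+1}$ as a free parameter along which any map into $\sK_\sD^{\dd i}$ can be explicitly contracted to a zero map. Since $\Lag_\sD \simeq \cup_i \Lag_\sD^{\dd i}$ is an increasing union of simplicial sets, and since any $n$-simplex of the colimit mapping space factors through some finite stage, this lemma formally implies $\pi_n(\Lag_\sD(\sX_\sD, \sK_\sD)) = 0$ for all $n$, whence $\sK_\sD$ is terminal and Theorem~\ref{theorem: kernels exist} follows.

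The proof proceeds in three constructions, matching the forward-referenced sections. Given a based $n$-simplex $\kernelfunctor$ representing a class in $\pi_n(\Lag_\sD^{\dd i}(\sX_\sD, \sK_\sD^{\dd i}))$, the first task (the content of Section~\ref{section: d'F}) is to produce a convenient geometric representative $d'\kernelfunctor$: a Lagrangian brane inside $M \times T^*\R^i \times T^*C(n+1)$ living over the simplex, whose collarings encode the $\sD$-diagram structure described in Remark~\ref{remark: Lag_D}, and which is arranged so that the forthcoming stabilization direction $T^*\R_{s_{i+1}}$ has been made explicit and decoupled from the rest of the geometry. The second task (Section~\ref{section: construction of {\kernelfunctor^i}}) is the heart of the proof: construct an explicit Lagrangian $(n+1)$-cobordism $\kernelfunctor^i$ in the stabilized category, parametrized by $s_{i+1} \in (-1,2)$ say, which at one end returns $d'\kernelfunctor$ stabilized by $\sN_0[\tfrac{1}{2}]$ and at the other end returns the constant zero-map basepoint. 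The recipe will mimic the kernel homotopy of Proposition~\ref{prop:kernel composes}: one carves by exact one-forms analogous to those in Constructions~\ref{construction: h} and~\ref{construction: H} which, as $s_{i+1}$ varies, slide the image of the cobordism along the cone $C(\sP)$ and off onto its collaring tail where it factors through the zero object using Proposition~\ref{proposition: cuts}. The third task (Section~\ref{section: I}) is to assemble $\kernelfunctor^i$ into an interpolation $I$ inside the overcategory: one must run the analogous sliding construction on both tetrahedra composing the diagram of Remark~\ref{remark: Lag_D}, namely on $\kernelfunctor$ composed with $\sk$ and with $z_{\to\es}$, and glue these deformations coherently along their common $(n+1)$-face to obtain an $(n+1)$-simplex of $\Lag_\sD^{\dd i+1}(\sX_\sD, \sK_\sD^{\dd i+1})$ witnessing the nullhomotopy of $\dd \kernelfunctor$.

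The main obstacle is not the existence of the interpolating cobordism but the preservation of $\Lambda$-non-characteristicity throughout the $(n{+}1)$-parameter family; this is exactly the setting for which the extension lemma (Lemma~\ref{nonchar extension}) was tailored, and the extra $s_{i+1}$-axis is what provides the necessary room to find the requisite one-form $\eta \in \Flow(n+2)$ on the enlarged rectangular solid. A secondary bookkeeping challenge is to verify that the collarings, refined facet factorizations, and time-symmetries demanded by the definition of a Lagrangian cobordism all survive the deformation --- this will follow by the same patterns of carving and glueing with cutoff functions used in the proofs of the weak Kan property (Proposition~\ref{prop: kan}) and of Theorem~\ref{theorem: zero}, now carried out one family-parameter at a time. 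Finally, to see why stabilization by a single direction suffices to produce a nullhomotopy of an arbitrary $n$-simplex, one observes that the contracting homotopy only requires one additional tilting direction (cf.\ Section~\ref{ssec: tilt}) to disentangle the image from $\Lambda$ along the cone; in particular the construction is independent of $n$, which is why a single application of $\dd$ kills all homotopy groups simultaneously.
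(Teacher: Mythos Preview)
Your plan misidentifies both the architecture of the argument and the roles of the three forward-referenced sections. The proof is not a direct construction of a nullhomotopy; it is a formal reduction to Propositions~\ref{proposition: {\kernelfunctor^i}} and~\ref{proposition: I}. In the paper, $F^i$ is not a cobordism interpolating between a given simplex and a ``zero-map basepoint''; it is a \emph{functor} $(\Lag_\sD^{\dd i})^\cone \to \Lag_\sD^{\dd i+1}$ from the terminal-cone category, restricting on the base to stabilization and sending the cone point $*$ to $\sK_\sD(\sP^{\dd i})$. Applied to the cone on a sphere $f:\partial\Delta^{m+1}\to\Lag_\sD^{\dd i}$, it produces not a nullhomotopy but an \emph{outer} horn $\Lambda_{m+2}^{m+2}$ in $\Lag_\sD^{\dd i+1}$. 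Filling that horn---and hence obtaining the extension $\tilde f$---requires the final edge $F^i(\sK_\sD^{\dd i-1}\to *)$ to be \emph{invertible}; establishing that invertibility is the entire content of Section~\ref{section: I}, not an ``assembly'' step. Likewise Section~\ref{section: d'F} does not produce a convenient representative of a given class: it constructs the natural map $\sX\to\sK$ for \emph{every} $\sX_\sD$, furnishing the boundary value of $F^i$ along the edge $x_{m+1}=1$ of Figure~\ref{figure: {\kernelfunctor^i} projection}.

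The gap in your approach is exactly the one the paper flags in Remark~\ref{remark: F^i is a lifting problem}: if $\Lag_\sD$ were a Kan complex, a coherent lift to $(\Lag_\sD)_\sK$ would suffice by standard topological arguments, but in a quasi-category an outer horn cannot be filled without knowing the terminal edge is an equivalence. Your sketch contains no analogue of this invertibility step. The phrase ``constant zero-map basepoint'' is also a symptom of the problem: there is no distinguished basepoint of $\Lag_\sD(\sX_\sD,\sK_\sD)$ to contract to, and sliding a morphism off the cone via Proposition~\ref{proposition: cuts} does not by itself extend the full over-diagram (both simplices $\sX\sK 0\sL_1$ and $\sX\sK\sL_0\sL_1$ glued along $\sX\sK\sL_1$) coherently. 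Finally, note that in the paper $F$ (your $\kernelfunctor$) denotes the functor of Proposition~\ref{proposition: {\kernelfunctor^i}}, not an arbitrary simplex representing a homotopy class.
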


\begin{proof}[Proof of Theorem~\ref{thm. kernels exist}]
To show the object  $\sK_\sD$ in the $\oo$-category $\Lag_\sD$ is terminal is equivalent 
to showing that the space of morphisms $\Lag_\sD(\sX_\sD,\sK_\sD)$ is contractible, for any object $\sX_\sD \in \Lag_\sD$. 
By definition of $\Lag$, we have
$
\Lag_\sD (\sX_\sD, \sK_\sD) = \colim_j \Lag^{\dd i+j}_\sD(\sX_\sD^{\dd j},\sK_\sD^{\diamond i+ j})
$
as Kan complexes. (We have fixed $i$ such that $\sX_\sD$ originates in $\Lag^{\dd i}_\sD$.)

Consider $f{\colon\thinspace} \partial \Delta^m \to \Lag_\sD(\sX_\sD,\sK_\sD)$. 
Since $\partial \Delta^m$ is a finite simplicial set, $f$ factors through $\Lag^{\dd i+j}_\sD$, for some $j$. Lemma~\ref{lemma: {\kernelfunctor^i}} shows $f$ can be filled by a map $\tilde f {\colon\thinspace} \Delta^m \to  \Lag_\sD^{\dd i+j+1}$.
\end{proof}

We will prove the lemma based on two propositions. To state the propositions, it is convenient to recall the following:

\begin{definition}
Given an $\oo$-category $\sC$, we denote by $\sC^\cone= \sC \star *$ the {\em terminal cone category} given by attaching a terminal vertex $*$ to $\sC$. 
We have the natural inclusion
$
\iota{\colon\thinspace} \sC \to \sC^\cone
$
of the {\em base category} into the terminal cone category.

Given $\sX$ a diagram in  $\sC$, we denote by $\sX^\cone$ the {\em terminal cone diagram} in $\sC^\cone$ obtained by taking the join of $\sX$ and $*$. 
\end{definition}

\begin{proposition}\label{prop. {\kernelfunctor^i}}
For each $i = 0,1,2,\ldots$, there exists a  functor
\[
\xymatrix{
{\kernelfunctor^i}{\colon\thinspace} (\Lag_\sD^{\dd i})^\cone \ar[r] & \Lag_\sD^{\dd i+1}
}
\] 
such that the composition
\[ 
\xymatrix{
  \Lag_{\sD}^{\dd i} \ar[r]^-\iota
  &(\Lag_\sD^{\dd i})^\cone \ar[r]^-{F^i} 
  & \Lag_\sD^{\dd i+1}   
}
\]
is the stabilization functor 
\[
\xymatrix{
\sX_\sD \ar@{|->}[r] & \sX_\sD^{\dd}.
}\]
\end{proposition}

\begin{proposition}\label{prop. I}
For each $i = 0,1,2,\ldots$, the functor $F^i$ of Proposition~\ref{prop. {\kernelfunctor^i}} can be chosen so that when
evaluated on the terminal cone point $*\in (\Lag_\sD^{\dd i})^\cone$,
$$
\xymatrix{
{\kernelfunctor^i}(*) = \sK_\sD(\sP^{\dd i}) \in \Lag_\sD^{\dd i+1},
}$$
and for each $i = 1,2,3,\ldots$, the following morphism is invertible
$$
\xymatrix{
{\kernelfunctor^i}(\sK_\sD(\sP)^{\dd i-1} \to *) \in \Lag_\sD^{\dd i+1}(F^i(\sK_\sD(\sP)^{\dd i-1}), \sK_\sD(\sP^{\dd i}))
}$$
where $\sK_\sD(\sP)^{\dd i-1} \to *$ is the unique morphism.
\end{proposition}	

The proofs of these propositions will occupy Sections~\ref{sect: d'F}, \ref{sect: construction of {\kernelfunctor^i}}, and \ref{sect: I}. For now we prove the lemma assuming the propositions.

\begin{proof}[Proof of Lemma~\ref{lemma: {\kernelfunctor^i}}.]
Recall that in Section~\ref{ssect: zero object} we defined the mapping space $\sC(x,x')$ for any $\infty$-category $\sC$. In the present proof, for convenience we will take a different (but weakly equivalent) model of the mapping space: Namely, we take the $m$-simplices of  $\sC (x, x')$  to be the $(m+1)$-simplices of $\sC$ whose face opposite the $(m+1)$st vertex is a degenerate face at $x$, and whose $(m+1)$st vertex is $x'$. 

Since $\Lag_\sD^{\dd i}(\sX_\sD,\sK^{\dd i}_\sD)$ and $\Lag_\sD^{\dd i+1}(\sX_\sD,\sK_\sD^{\dd i+1})$ are Kan complexes, we merely need to solve the extension problem
\[
\xymatrix{
\partial \Delta^m \ar[d] \ar[r]^-f & \Lag_{\sD}^{\dd i} (\sX_\sD, \sK_\sD^{\dd i}) \ar[d]^-{\diamond} \\
\Delta^m \ar@{-->}[r]^-{\tilde f}    & \Lag_\sD^{\dd i+1}(\sX_\sD,\sK_\sD^{\dd i+1}).
}
\]

By definition, $f$ yields a map $\partial \Delta^{m+1} \to \Lag^{\dd i}_\sD$, which in turn induces a canonical map $f^\cone {\colon\thinspace} (\partial \Delta^{m+1})^\cone \to (\Lag_\sD^{\dd i})^\cone$. 
Applying the functor $F^i$ of Proposition~\ref{prop. {\kernelfunctor^i}}, we observe that ${\kernelfunctor^i}(f^\cone)$ is an {\em outer} horn in $\Lag_\sD^{\dd i+1}$, with the face opposite the $(m+2)$nd vertex deleted. 

By Proposition~\ref{prop. I}, the edge between the $(m+1)$st and $(m+2)$nd vertices is invertible. Hence we can exchange the ordering of the two vertices, obtaining an {\em inner} horn corresponding to a $(m+2)$-simplex with the $(m+1)$st face deleted. (Strictly speaking, we must post-compose with the inverse of the edge to obtain an inner horn whose boundary agrees with the outer horn.) 

By the weak Kan property of $\Lag_\sD^{\dd i +1}$, this inner horn can be filled. The resulting $(m+1)$st face is a map $\tilde f{\colon\thinspace} \Delta^m \to \Lag_\sD^{\dd i+1}(\sX_\sD,\sK_\sD^{\dd i+1})$ whose boundary is $f$.
\end{proof}

\begin{remark}[The need for Proposition~\ref{prop. I}]\label{remark: F^i is a lifting problem}
Let $\sX_\sD {\colon\thinspace} \Delta^m \to \Lag_{\sD}^{\dd i}$ be an $m$-simplex. 
Then the functor ${\kernelfunctor^i}$ will map the  $(m+1)$-simplex given by the join $\sX_\sD^\cone
= \sX_\sD \star *$ to an $(m+1)$-simplex $F^i(\sX_\sD^\cone)$ with terminal vertex ${\kernelfunctor^i}(*) = \sK_\sD(\sP)^{\dd i+1}$ and initial opposite face $\sX_\sD^{\dd 1}$.

In other words,  to each $m$-simplex in $\Lag_{\sD}^{\dd i}$, the functor ${\kernelfunctor^i}$ compatibly produces an $m$-simplex in $(\Lag_\sD^{\dd i+1})_\sK$ solving the lifting problem
\[
\xymatrix{
 \Delta^m \ar[d]_{\sX_\sD} \ar@{-->}[r]  & (\Lag_\sD^{\dd i+1})_\sK	\ar[d]  \\
 \Lag_{\sD}^{\dd i} \ar[r]_{\dd} & \Lag_\sD^{\dd i+1}.
}
\]
If $\Lag_\sD$ and $(\Lag_\sD)_\sK$ were Kan complexes (rather than weak Kan complexes), such a lift would suffice to show that $\sK$ is a terminal object of $\Lag_\sD$ by standard topological arguments. However, since we are dealing with $\oo$-categories (rather than groupoids), we need Proposition~\ref{prop. I} regarding the invertibility of ${\kernelfunctor^i} (\sK_\sD(\sP)^{\dd i} \to *)$. This makes the proof of Lemma~\ref{lemma: {\kernelfunctor^i}} possible.
\end{remark}

%
%
%
%

\subsection{Anatomy of lift}\label{sect: anatomy}
In this interlude between arguments, we introduce some notation and conventions for what follows.
 Since many of our constructions will take place in $\R^4$, we hope these will ease the presentation of our proofs. 
 In particular, we introduce Figure~\ref{figure: {\kernelfunctor^i} projection} which will be frequently referenced during our constructions. We will proceed assuming we have constructed $F^i$ and develop some pictures of its structure. These
 will guide us in its construction.

Throughout what follows, we will adopt a notational shorthand: We will represent a simplex $\sX{\colon\thinspace} \Delta^m \to \Lag$ by concatenation of its vertices, $\sX_0 \sX_1 \cdots \sX_m$. The higher morphisms represented by $\sX$ are to be understood.

As discussed in Remark~\ref{remark: Lag_D}, an $m$-simplex $\sX_\sD{\colon\thinspace}\Delta^m\to \Lag^{\dd i}_\sD$ represents a diagram in $\Lag^{\dd i}$ of the form
\[
	\xymatrix{
	\sX	\ar[r] \ar[d] \ar[rd]
			&\sL_0^{\dd i}	\ar[d]^{\sP^{\dd i}} \\
			0	\ar[r]	
		&\sL_1^{\dd i}.
	}
\]
This commutative diagram should not be taken too ally: There are more vertices, edges, and higher simplices than naively drawn.  In this pictorial shorthand, any edge drawn from $\sX$ to another vertex should be interpreted as a terminal cone from $\sX$ to that vertex.

To construct the functor ${\kernelfunctor^i}$, we must compatibly construct  a diagram of the form
\[
	\xymatrix{
	\sX \ar[rrd] \ar[ddr] \ar[dr]
		&
		&
		\\
		&\sK \ar[r] \ar[d] \ar[rd]
		&\sL_0	^{\dd i+1}\ar[d]^{\sP^{\dd i+1}} \\
		&0	\ar[r]	
		&\sL_1^{\dd i+1}.
	}
\]

This diagram comprises a pair of $(m+3)$-simplices in $\Lag^{\dd i+1}$ glued along a common $(m+2)$-simplex.
If $\sX_0,\ldots,  \sX_m\in  \Lag^{\dd i}$ denote the underlying vertices of $\sX_\sD$, then
the two $(m+3)$-simplices are  built upon compositions 
 \[
 \xymatrix{
 \sX_0 \cdots \sX_m \sK  0  \sL_1 & 
 \sX_0 \cdots \sX_m\sK   \sL_0  \sL_1.
 }
 \]
The common $(m+2)$-simplex is
built upon a composition
 \[
 \xymatrix{
 \sX_0 \cdots \sX_m \sK   \sL_1.
 }
 \]
As with our pictures, we will denote such compositions by the respective expressions
 \[
 \xymatrix{
 \sX \sK  0  \sL_1 & 
 \sX \sK   \sL_0  \sL_1
 & 
  \sX \sK   \sL_1 
 }
 \]
with the understanding that $\sX$ is a general $m$-simplex.

 Let us now return to the geometry of $\Lag$  and examine what such simplices encode.
 Recall that an $(m+3)$-simplex of $\Lag^{\dd i+1}$ represents a cobordism
 in 
 \[
 M \times T^*\R^{i+1}_s \times T^*\R_t \times T^*\R^{m+2}_x
 \]
where $\R^{i+1}_s$,   $\R_t $, and $\R^{m+2}_x$ denote the stabilizing, time, and space directions.

In order to draw pictures, let us project onto the final two space coordinates
\[
\xymatrix{
M \times T^*\R^{i+1}_s \times T^*\R_t \times T^*\R^{m+2}_x \ar[r] &  \R_{x_{m+1}} \times \R_{x_{m+2}}.
}
\]
Then we can depict the cobordisms represented by the $(m+3)$-simplices $\sX \sK  0  \sL_1$ and  $\sX \sK   \sL_0  \sL_1$ as living over squares,
and that represented by the common $(m+2)$-simplex  $\sX \sK   \sL_1$ as living over a common edge. 
This is depicted in Figure~\ref{figure: {\kernelfunctor^i} projection}.

\begin{figure}[ht!]
\includegraphics[height=3in]{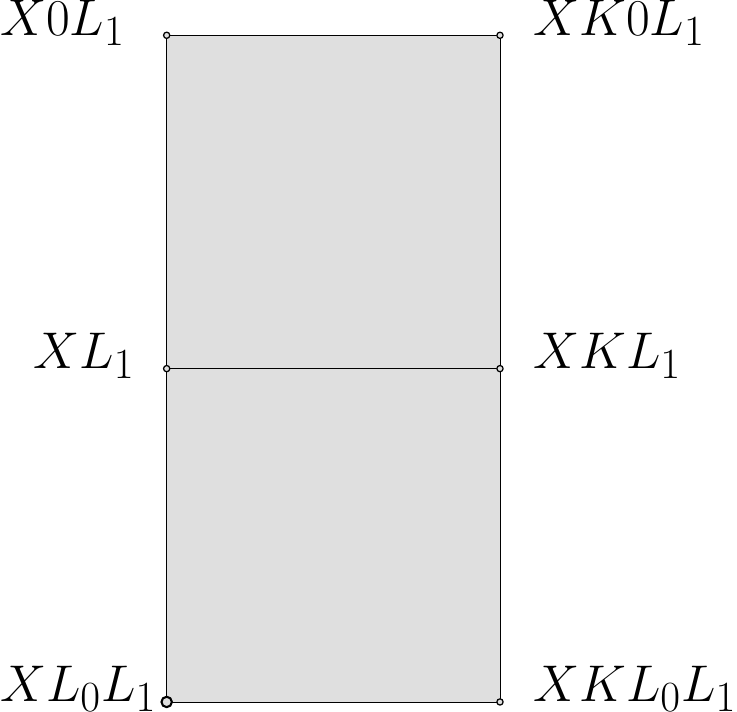}
\begin{image}\label{figure: {\kernelfunctor^i} projection}
A schematic picture of the cobordism comprising $F^i(\sX^\cone_\sD)$ projected onto the final two space coordinates $\R_{x_{m+1}} \times \R_{x_{m+2}}.$ 
The horizontal direction is the $x_{m+1}$-direction and the vertical is the $x_{m+2}$-direction; the lower left vertex $\sX \sL_0 \sL_1$ is placed at $(0,0)$, and the other vertices are at integral points.
\end{image}
\end{figure}

Looking at Figure~\ref{figure: {\kernelfunctor^i} projection}, 
along the lefthand edge $x_{m+1}=0$, we see that $\kernelfunctor^i(\sX_{\sD}^\cone)$ is collared by the original diagram $\sX_\sD$. 

Along the righthand edge $x_{m+1}=1$, we see that $\kernelfunctor^i(\sX_{\sD}^\cone)$ is collared by a composition:
it represents a map from $\sX$ to the kernel diagram $\sK_\sD$. To produce $\kernelfunctor^i(\sX_\sD^\cone)$, a cobordism living over this rectangle, is to produce a homotopy parametrizing between the two edges.

Throughout what follows, we will refer to Figure~\ref{figure: {\kernelfunctor^i} projection} as a guide. 
Our first step, taken in Section~\ref{sect: d'F}, will be to construct the map from $\sX$ to the kernel
diagram $\sK_\sD$, producing the righthand edge $x_{m+1}=1$.

\subsection{Map to kernel}\label{sect: d'F}
In what follows, we assume without loss of generality that $i=0$. Accordingly we will simply denote the functor $\kernelfunctor^i$ by $\kernelfunctor$.

Recall that an $m$-simplex $\sX_\sD{\colon\thinspace} \Delta^m \to \Lag^{\dd 0}_\sD$
with underlying $m$-simplex $\sX{\colon\thinspace}\Delta^m\to \Lag^{\dd 0}$ represents a diagram
\[
	\xymatrix{
	\sX	\ar[r] \ar[d] \ar[rd]
			&\sL_0^{\dd 0}	\ar[d]^{\sP^{\dd 0}} \\
			0	\ar[r]	
		&\sL_1^{\dd 0}.
	}
\]
Given the above data, we will construct here a natural map 
\[
\xymatrix{
\sX \ar[r] &  \sK
}
\]
or more precisely, a functor
\[
\xymatrix{
F_1{\colon\thinspace}\Lag_\sD^{\dd 0} \ar[r] & \Lag_\sK^{\dd 1}.
}\]
The notation $F_1$ reflects the fact that this will yield the boundary value of the functor $F$ along the edge 
\[
\xymatrix{
\sX\sK0\sL_1	\ar@{-}[d]\\
\sX\sK\sL_1	\ar@{-}[d]\\
\sX\sK\sL_0 \sL_1
}
\]
given by $x_{m+1} = 1$ in Figure~\ref{figure: {\kernelfunctor^i} projection}.
This should be thought of as composing $F_1(X) = (\sX\to \sK)$ with the kernel diagram
\[
	\xymatrix{
	\sX \ar[dr]
		&
		&
		\\
		&\sK \ar[r] \ar[d] \ar[rd]
		&\sL_0	^{\dd 1}\ar[d]^{\sP^{\dd 1}} \\
		&0	\ar[r]	
		&\sL_1^{\dd 1},
	}
\]
In alternative but equivalent terms, to construct the functor  $F_1$, we must functorially solve the lifting problem
\[
\xymatrix{
	\Delta^m \ar@{-->}[r]  \ar[d]_-{\sX_\sD} &	\Lag^{\dd 1}_\sK	\ar[d]	\\
 \Lag^{\dd 0}_\sD \ar[r]^-{\dd} & \Lag^{\dd 1}_\sD.
}
\]
	
\begin{construction}[of the functor $F_1$ on an $m$-simplex $\sX_\sD$]\label{construction: map to kernel}
The construction is summarized in Figure~\ref{figure: map to kernel}. We follow the figure's steps (a) through (d).

	\begin{figure}
	\includegraphics[width=5in]{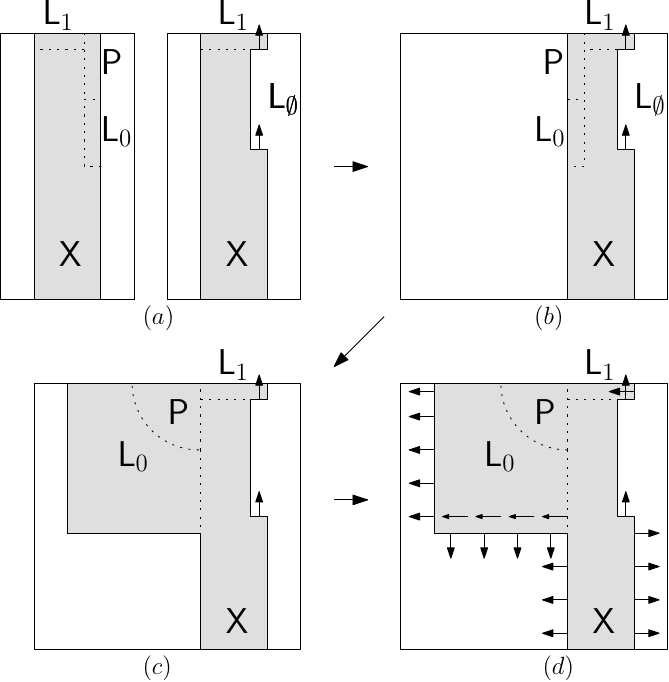}
	\begin{image}\label{figure: map to kernel}
	A depiction of the steps in the construction of $F_1(\sX_\sD)$. We begin in (a) with the two  cobordisms given by the diagram $\sX_\sD$. After gluing and rescaling (a), we obtain the cobordism in (b). We then obtain (c) by rotating the portion of (b) collared by $\sL_0$ and $\sP$. Finally, to obtain (d), we carve (c) by a one-form $d\eta$.
	\end{image}
	\end{figure}

(a) Let $\sX_\sD$ be a $m$-simplex in $\Lag^{\dd 0}_\sD$, and let the vertices of $f(\sX)$ be $\sX_0, \ldots, \sX_m \in \Lag^{\dd 0}$.  As noted in Remark~\ref{remark: Lag_D}, $\sX_\sD$ corresponds to a diagram in $\Lag^{\dd 0}$ obtained by gluing two $(m+2)$-simplices along a common face.  (One of these simplices contains the vertex $\sL_0$, while the other contains the vertex 0.)  Hence $\sX_\sD$ yields the data of two cubical Lagrangians living inside $M  \times T^*\R_t \times T^*\R^{m+1}$.  
We depict these two cubical cobordisms in Figure~\ref{figure: map to kernel}(a). 
The cubes have been projected onto the plane spanned by the coordinates $t$ and $x_{m+1}$. 

(b) The simplex of $\sX_\sD$ containing the vertex $\sL_0$ is collared along one facet by the cobordism $\sP$. The other cube has a face collared by a cobordism factoring through the empty Lagrangian $\sL_\emptyset$. The two cubes share a common face---the collaring condition says we can smoothly glue the two cubes along this common face. This yields the cube depicted in (b), which we have re-scaled to make the cube narrower. We now think of this as a Lagrangian living in 
\[
M \times T^*\R_s \times T^*\R_t \times T^*\R^m_{x},
\]
treating the $x_{m+1}$ direction as the stabilizing $s$ direction.

(c) To make the construction explicit, assume we have chosen a model so that the cobordism depicted in (b) lives above the open rectangle
\[
(0,1) \times (-4,0) \subset \R_s \times \R_t.
\]
Further assume that this model is collared by the object $\sL_0$ along the open edge
\[
\{0\} \times (-2,-1)
\]
and by the morphism $\sP$ along the open edge
\[
\{0 \} \times (-1,0).
\]
The cube is labeled accordingly in Figure~\ref{figure: map to kernel}(b).

To obtain the cube in (c), we perform the rotation construction taking the edge
\[
\{0\} \times (-2,0)
\]
to the edge
\[
(-2,0) \times \{0\}
\]
by rotating clockwise about the origin.

(d) To obtain the cube in (d), we translate by a 1-form $d\eta=d \eta_0 + d\eta_1.$ To define $\eta_0$, let $E$ be the union of three edges
\[
(\{-2\} \times (-2,\infty)) \cup ((-2,0)\times\{-2\}) \cup (\{0\} \times (-2,-\infty)) \subset \R^2 = \R_s \times \R_t.
\]
Let $\epsilon>0$ and define $N_+$ to be the closed set
\[
N_+ {\colon\thinspace}= \{ (e_s + s, e_t + t) \,\, | \,\, (e_s,e_t) \in E \text{ and } s,t \in [0,\epsilon].\}
\]
Choose a smooth function $f{\colon\thinspace} \R_s \times \R_t \to \R$ such that
\begin{enumerate}
\item
	$
	f|_E = 1, \qquad f|_{\partial N_+ \backslash E} = 0
	$, 
\item
	$f$ is locally constant on $\R^2 \backslash N_+$,
\item
	$f$ has no critical points on $N_+ \backslash \partial N_+$, and
\item
	${ \frac {\partial f} {\partial s}} < 0$ on $N_+ \backslash \partial N_+$.
\end{enumerate}
Treating $f$ as a function on $\R_s \times \R_t \times \R^{m}_x$ with dependence only on $t$ and $s,$ we define
\[
\eta_0 = -\log (1-f)
\]

Now we define $\eta_1$. Let $u(t){\colon\thinspace} \R \to \R$ be a $C^\infty$ function such that 
\[
u|_{(-\infty,1-\epsilon]} = 0, \qquad u|_{[1,\infty)}=1
\]
and
\[
\on{Crit}(u) \cap (1-\epsilon,1) = \emptyset.
\]
Then we define $\eta_1{\colon\thinspace} \R_s \times \R_t \times \R^{m}_x \to \R$ to be the function
\[
\eta_1 = \log { \frac {1 - u(s) u(t + 2-\epsilon)} {1 - u(s) u(-t-\epsilon)}}.
\]
with dependence only on $t$ and $s$. Defining $\eta {\colon\thinspace}= \eta_0 + \eta_1$, we obtain the cobordism in step (d) by translating the cobordism in (c) by $\Gamma_\eta$.

We have chosen a model in which the edge $s=1$ is collared by the empty Lagrangian $L_\emptyset$ at $t=-1$, so that this translation does not yield a singular Lagrangian. The resulting brane structure of (d) is the obvious induced brane structure. 

The careful reader will notice that for $i >0$, the same construction yields a map to $\sK(\sP^{\dd i})$, not to $\sK(\sP)^{\dd i}$. 
But the observation from Remark~\ref{remark: stabilizing kernels} tells us we can always post-compose with a rotation taking the $s_{i+1}$-axis to the $s_1$-axis, and this is how we define $\kernelfunctor^i$ for $i>0$.

This ends the construction.
\end{construction}

Finally, as we have not affected the directions $x_1,\ldots,x_m$ in the simplex $\sX$, the cobordism $\kernelfunctor(\sX_\sD^\cone)$ is collared in the $x_1,\ldots,x_m$ coordinates by the appropriate facets. So we have

\begin{lemma}
The construction above is compatible with face and degeneracy maps.
\end{lemma}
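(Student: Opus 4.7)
The plan is to verify the lemma by checking, step by step, that each operation in Construction~\ref{construction: map to kernel} is natural with respect to the simplicial structure of $\Lag^{\dd 0}_\sD$. The central observation is that the entire construction acts only on the $s$ and $t$ coordinates of $M \times T^*\R_s \times T^*\R_t \times T^*\R^m_x$, leaving the simplex-parametrizing coordinates $x_1, \ldots, x_m$ untouched. Since the face and degeneracy maps on $\Lag_\sD^{\dd 0}$ act on the cubical cobordism representatives purely through collaring and orthogonal projection in the $x$-directions (see Section~\ref{sect: weak kan}), the two classes of operations commute.

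First I would check that the passage in step (a) from $\sX_\sD$ to the pair of cubical cobordisms $\sX\sK 0 \sL_1$ and $\sX\sK\sL_0 \sL_1$ is itself simplicially natural; this is essentially tautological from the description in Remark~\ref{remark: Lag_D} of $\sX_\sD$ as a pair of $(m+2)$-simplices glued along a common face. Next I would verify that the gluing in step (b) is natural: the collaring hypothesis guarantees that the gluing in the $(s,t)$-plane is performed uniformly across $x_1,\ldots,x_m$, so restricting to a refined facet of $C(m+1)$ before or after gluing gives the same result, and analogously for stretched time reparametrizations appearing in degeneracy maps.

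The rotation of step (c) is an application of the rotation construction of Example~\ref{example: rotation} to the $(s,t)$-plane factor alone; it is therefore insensitive to the $x$-directions. The critical step is (d): here one translates by $\Gamma_\eta$ with $\eta = \eta_0 + \eta_1$. Both $\eta_0$ and $\eta_1$ are defined as functions on $\R_s \times \R_t \times \R^m_x$ with explicit dependence only on $s$ and $t$, so fiberwise translation by $d\eta$ commutes with the projection $\pi_\beta$ defining the collaring behavior in the $x$-directions. Consequently the translated brane satisfies the factorization and collaring axioms compatibly with any $d_i^*$ and $s_i^*$. Finally, the induced brane structures (primitive, grading, relative pin) are transported through each step by pullback and restriction, and therefore agree on overlaps of facets.

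The only genuine bookkeeping concern is the compatibility of the face $d_{m+1}^*$ that passes to the kernel map itself (as opposed to faces acting on $\sX$): for this one must confirm that restricting $F_1(\sX_\sD)$ to the facet in which $\sX$ degenerates recovers the identity $2$-morphism on the kernel cobordism, which follows directly from the collaring of $\sP$ at the interface used in step (b). I do not expect any substantive obstacle here; everything reduces to the tautology that operations on disjoint coordinate factors commute, and the lemma is best left as a verification to the reader as originally intended.
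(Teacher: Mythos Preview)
Your proposal is correct and follows exactly the paper's reasoning: the paper's entire justification is the single sentence preceding the lemma, namely that the construction does not affect the directions $x_1,\ldots,x_m$, so the cobordism remains collared by the appropriate facets in those coordinates. Your step-by-step walkthrough is a faithful elaboration of this observation; the only minor slip is the labeling in your step (a), where the two cubes are $\sX 0 \sL_1$ and $\sX \sL_0 \sL_1$ (no $\sK$ yet), and your closing remark about a face ``$d_{m+1}^*$'' is not needed, since on an $m$-simplex of $\Lag_\sD^{\dd 0}$ the face maps are $d_0^*,\ldots,d_m^*$.
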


\subsection{Proof of Proposition~\ref{prop. {\kernelfunctor^i}}}\label{sect: construction of {\kernelfunctor^i}}
Let us return to the organizational framework of Figure~\ref{figure: {\kernelfunctor^i} projection}.
It depicts that we must  construct a cobordism
\[
\kernelfunctor^1(\sX_\sD^\cone) \subset M \times T^*\R_s \times T^*\R_t \times T^*\R^{m+2}_x
\]
living over the open rectangle
\[
R = (0,1) \times (0,2) \subset \R_{x_{m+1}} \times \R_{x_{m+2}}.
\]

Furthermore, we have the following prescribed boundary values.
First, along the labelled edge
\[
\xymatrix{
\sX0\sL_1	\ar@{-}[d]\\
\sX\sL_1	\ar@{-}[d]\\
\sX\sL_0 \sL_1
}
\]
given by $x_{m+1} = 0$, we have the cobordism given by the initial data $\sX_\sD$ itself.

Along the labelled edge
\[
\xymatrix{
\sX\sK 0\sL_1	\ar@{-}[d]\\
\sX\sK\sL_1	\ar@{-}[d]\\
\sX\sK\sL_0 \sL_1
}
\]
given by $x_{m+1} = 1$, we have the cobordism $F_1(\sX_\sD)$ constructed in the previous section.

Here is a rough summary of our approach to the above challenge. First, we construct a cobordism living
over the triangle 
\begin{equation}\label{triangle T}\tag{$T$}
\xymatrix{
\sX0\sL_1	\ar@{-}[d] & \\
\sX\sL_1	\ar@{-}[d] & \sX\sK\sL_1 \ar@{-}[ul]\ar@{-}[dl]\\
\sX\sL_0 \sL_1 &
}
\end{equation}
with vertices given by the coordinates $(0,0), (0,2), (1,1)$.
Then we extend it over the two missing triangles
\begin{equation}\label{other triangle 1}\tag{$T_1$}
\xymatrix{
\sX 0 \sL_1 \ar@{-}[r] &\sX\sK 0 \sL_1    \\
& \sX\sK\sL_1 \ar@{-}[u]\ar@{-}[ul]\\
}
\end{equation}
\begin{equation}\label{other triangle 2}\tag{$T_2$}
\xymatrix{
& \sX\sK\sL_1 \ar@{-}[d]\ar@{-}[dl]\\
\sX\sL_0 \sL_1 \ar@{-}[r]& \sX\sK \sL_0 \sL_1
}
\end{equation}
with vertices given by the coordinates $(0,2), (1,2), (1,1)$ and $(0,0), (1,0), (1,1)$ respectively.

Finally, it will be straightforward to see that the above constructions yield the desired boundary values and are compatible with face and degeneracy maps.

\subsubsection{Cobordism over triangle~\ref{triangle T}}

Recall the rectangle $R = (0,1) \times (0,2)$, and the triangle $T\subset R$ with 
vertices given by the coordinates $(0,0), (0,2), (1,1)$.

We will construct a cobordism 
\[
\sW \subset M \times T^*\R_s \times T^*\R_t \times T^*(\R^{m}_x \times R)
\]
living over the rectangle $R$, and then pull back $\sW$ to a cobordism 
\[
\sW' = \phi^*(\sW) \subset M \times T^*\R_s \times T^*\R_t \times T^*(\R^{m}_x \times T)
\]
living over the triangle $T$ by the diffeomorphism 
\[
\xymatrix{
\phi{\colon\thinspace} T \ar[r] & R
&
\phi(x_{m+1},x_{m+2}) = (x_{m+1},{\frac {x_{m+2}-1} {1 - x_{m+1}}} + 1).
}\]

One might be concerned about the complicated behavior of $\phi$ near the boundary, but this will be of no consequence: $\sW$ will be collared by the zero section near the boundary. 

\begin{construction}[of cobordism $\sW$]\label{construction: sW}

We will construct $\sW$ as the union of three Lagrangians
\[
\sW = \sW_{[0,{\frac {1} {3}}]} \cup \sW_{[{\frac {1} {3}},{\frac {2} {3}}]} \cup \sW_{[{\frac {2} {3}},1]}.
\]
The subscripts $[a,b]$ refer to the fact that we will construct $\sW$ over the intervals $[a,b] \subset \R_{x_{m+1}}$ piece by piece. We will glue the pieces together to obtain one cobordism $\sW$ living over $[0,1] \subset \R_{x_{m+1}}$.

We describe the construction of each of these pieces, stating lemmas along the way and leaving the proof of these lemmas til afterward.

\subsubsection{Set-up for construction of $\sW$. The cobordism $\sS(\sX_\sD)$.}

Let $\sX \to \sK$ be the map given in construction~\ref{construction: map to kernel}, and let $\sh{\colon\thinspace} \sK \to \sL_1$ be the simplex constructed in Construction~\ref{construction: h}.
Consider the composition 
\[
\xymatrix{
\sX \ar[r] & \sK \ar[r]^{\sh} & \sL_1
}
\]
as given in Definition~\ref{def. higher composition}. For example if $\sX_{\sD}$ is an $m$-simplex in $\Lag^{\dd 0}_D$ (so it yields a pair of $(m+2)$-simplices in $\Lag^{\dd 0}$), we extend $\sh$ by the zero section
\[
T^*_{\R^{m+1}_x}\R^{m+1}_x
\]
before concatenating with the cobordism $\sX \to \sK$.

For concreteness we fix a model of $\sX \to \sK \to \sL_1$ as follows. We assume that the cobordism $\sX \to \sK$ lives above the time interval $(-4,0) \subset \R_t$ as in Construction~\ref{construction: map to kernel}, and that $\sh$ lives above the interval $(0,5)$. (That is, we choose a model such that $\sh$ has been translated by two units in the time direction. For instance it allows a non-characteristic cut at $t= 2.5 - \epsilon$.) 
This composition is depicted in Figure~\ref{figure: SXD}(a). 

\begin{construction}[of $\sS(\sX_\sD)$]
Now we note that there is an isotopy of domain, depicted in Figure~\ref{figure: SXD}, which takes $(\sX \to \sK) \circ \sh$ and produces a morphism $\sX \to \sL_1$ living over a rectangular domain in $\R_s \times \R_t$. (Compare to Figure~\ref{figure: map to kernel}(b).) 
\begin{figure} 
$$
\xy
\xyimport(8,8)(0,0){\includegraphics[height=3in]{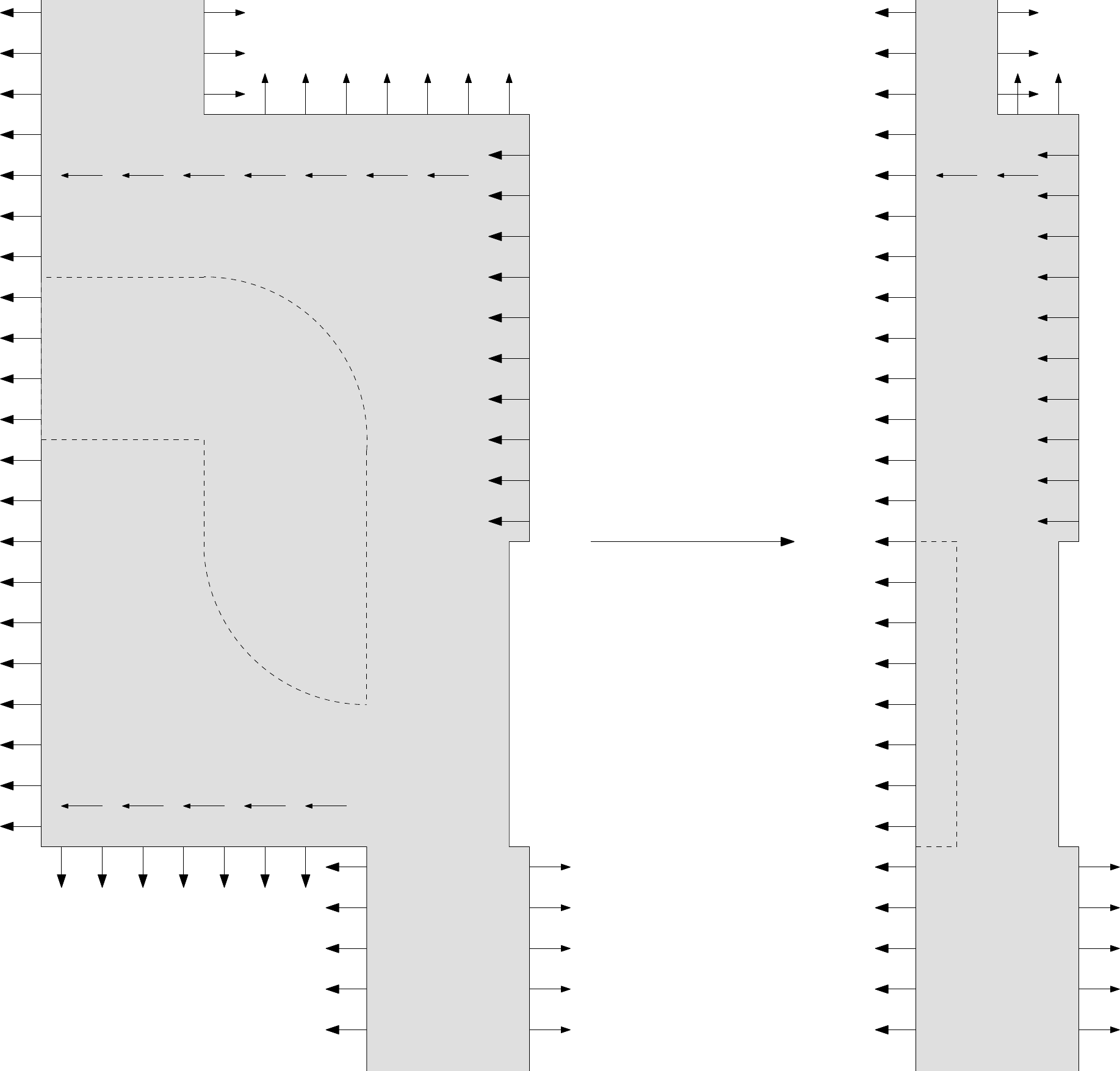}}
	,(2,0)*+!U{(a)}
	,(7,0)*+!U{(b)}
	,(2,4.5)*+{\sP}
	,(6.7,3)*+{\sP}
	,(1,7.5)*+{\sL_1}
	,(3.1,5)*+{\sL_1}
	,(1,3)*+{\sL_0}
	,(4,3)*+{\zerolag}
	,(7.8,3)*+{\zerolag}
	,(7.2,0.5)*+{\sL_0}
	,(3.2,0.5)*+{\sL_0}
	,(6.9,7.5)*+{\sL_1}
\endxy
$$
	\begin{image}\label{figure: SXD}
	An isotopy taking (a) the cobordism $(\sX \to \sK) \circ \sH$ to (b) the cobordism $\sS(\sX_\sD)$. The horizontal coordinate is the $s$ direction, and the vertical coordinate is the $t$ direction.
	\end{image}
\end{figure}
We denote the resulting morphism $\sX \to \sL_1$ by $\sS(\sX_\sD)$.
\end{construction}

\begin{remark}\label{remark: the essence of W}
Note that a copy of $\sX_{\sD}$ is contained in the Lagrangian $\sS(\sX_{\sD})$ by construction: As we travel in the $s$ direction of $\sS(\sX_{\sD})$, we witness exactly the evolution of $\sX_{\sD}$. The issue is that $s$ is a {\em stabilizing} direction; hence we want $\sW$ to be a cobordism which takes this stabilizing direction and `turns' it into a direction in which morphisms are allowed to evolve, namely the direction $x_{m+2}$. If there were no inherent difference between a stabilizing $s$ direction and a morphism $x$ direction in our set-up, the construction of $\sW$ would be trivial.
\end{remark}

\subsubsection{Construction of $\sW_{[{\frac {2} {3}},1]}$}
We parametrize the isotopy from Figure~\ref{figure: SXD} by the interval
\[
[{\frac {2} {3}},1] \subset \R_{x_{m+1}}.
\]
By Example~\ref{example: isotopy of domain}, we can realize the isotopy by a cobordism
\[
\underline{\sW}_{[{\frac {2} {3}}, 1]} \subset (M \times T^*\R_s) \times T^*\R_t \times T^*\R_{x}^{m+1}.
\]
Setting
\[
\pi_{m+2}{\colon\thinspace} \R_{s} \times \R_t \times \R_x^{m+2} \to \R_{x_{m+2}}
\]
to be projection onto the last coordinate, we define $\sW_{[{\frac {2} {3}}, 1]}$ by the pullback
\[
\sW_{[{\frac {2} {3}}, 1]} 
{\colon\thinspace}= 
\pi_{m+2}^* \underline{\sW}_{[{\frac {2} {3}},1]}
=
\underline{\sW}_{[{\frac {2} {3}},1]} \times T^*_{\R_{x_{m+2}}} \R_{x_{m+2}}
.
\]
We direct the isotopy so that $\sW_{[{\frac {2} {3}},1]}$ is collared by 
\[
\sX \sK \sL_1 \times T^*_{\R_{x_{m+2}}}\R_{x_{m+2}}
\]
along $x_{m+1} = 1$, and by
\[
\sW_{{\frac {2} {3}}} {\colon\thinspace}= \sS(\sX_{\sD}) \times T^*_{\R_{x_{m+2}}} \R_{x_{m+2}}
\]
along $x_{m+1} = {\frac {2} {3}}$.

\subsubsection{Construction of $\sW_{[{\frac {1} {3}},{\frac {2} {3}}]}$}\label{sect: sW1323}
This is done by carving $\sW_{[{\frac 2 3},1]}$. First we define a subset
\[
A \subset \R_t \times \R_s \times \R_{x_{m+1}} \times \R_{x_{m+2}}
\]
by $A = A' \coprod A''$, where
\[
A' {\colon\thinspace}= 
[-4+\epsilon, 5-\epsilon]_t 
\times
(-\infty, 1-\epsilon)_s 
\times
(-\infty, {\frac 1 3} + \epsilon]_{x_{m+1}}
\times
[2-\epsilon,\infty)_{x_{m+2}}
\]
and
\[
A''
{\colon\thinspace}=
[-4+\epsilon, 5-\epsilon]_t 
\times
[\epsilon,\infty)_s
\times
(-\infty, {\frac 1 3} + \epsilon]_{x_{m+1}}
\times
(-\infty,\epsilon]_{x_{m+2}}.
\]
We let $N_{\epsilon/2}(A)$ be its open ${\epsilon/2}$-neighborhood. 

Then we let 
\[
f{\colon\thinspace} 
\R^4 = 
\R_t \times \R_s \times \R_{x_{m+1}} \times \R_{x_{m+2}}
\to
\R
\]
be a function such that
\[
f^{-1}(1) = A,
\qquad
f^{-1}(0) = \R^4 \backslash N_{\epsilon/2}(A),
\qquad
\Crit(f) = f^{-1}(1) \cup f^{-1}(0).
\]
We then declare 
\[
\sW_{[{\frac 1 3}, {\frac 2 3}]} 
\subset
(\sW_{{\frac 2 3}} \times T^*_{\R_{x_{m+1}}} \R_{x_{m+1}})
+
\Gamma_{-\log(1-f)}
\]
to be the portion living above $[{\frac 1 3}, {\frac 2 3}] \subset \R_{x_{m+1}}$. It is easily seen that this cobordism is collared at $x_{m+1} = {\frac 1 3}$; we denote by $\sW_{{\frac 1 3}}$ the brane which collars $\sW_{[{\frac 1 3},{\frac 2 3}]}$ there.

\begin{remark}[An explanation of $A$]
The interval $[4+\epsilon, 5-\epsilon]_t$ is chosen in the definition of $A$ so that the resulting carving by $f$ is still collared by $\sL$ after time $t=5$, and by $\sX$ before time $t=-4$. 

The interval $(-\infty, {\frac 1 3} + \epsilon]_s$ is chosen so that $\sW_{[{\frac 1 3},{\frac 2 3}]}$ is indeed collared by $\sW_{{\frac 2 3}}$ for $s >> {\frac 1 3}$, and by the Lagrangian $\sW_{{\frac 1 3}}$ along $s \leq {\frac 1 3}$. 

\begin{figure}
$$
\xy
\xyimport(8,8)(0,0){\includegraphics[height=3in]{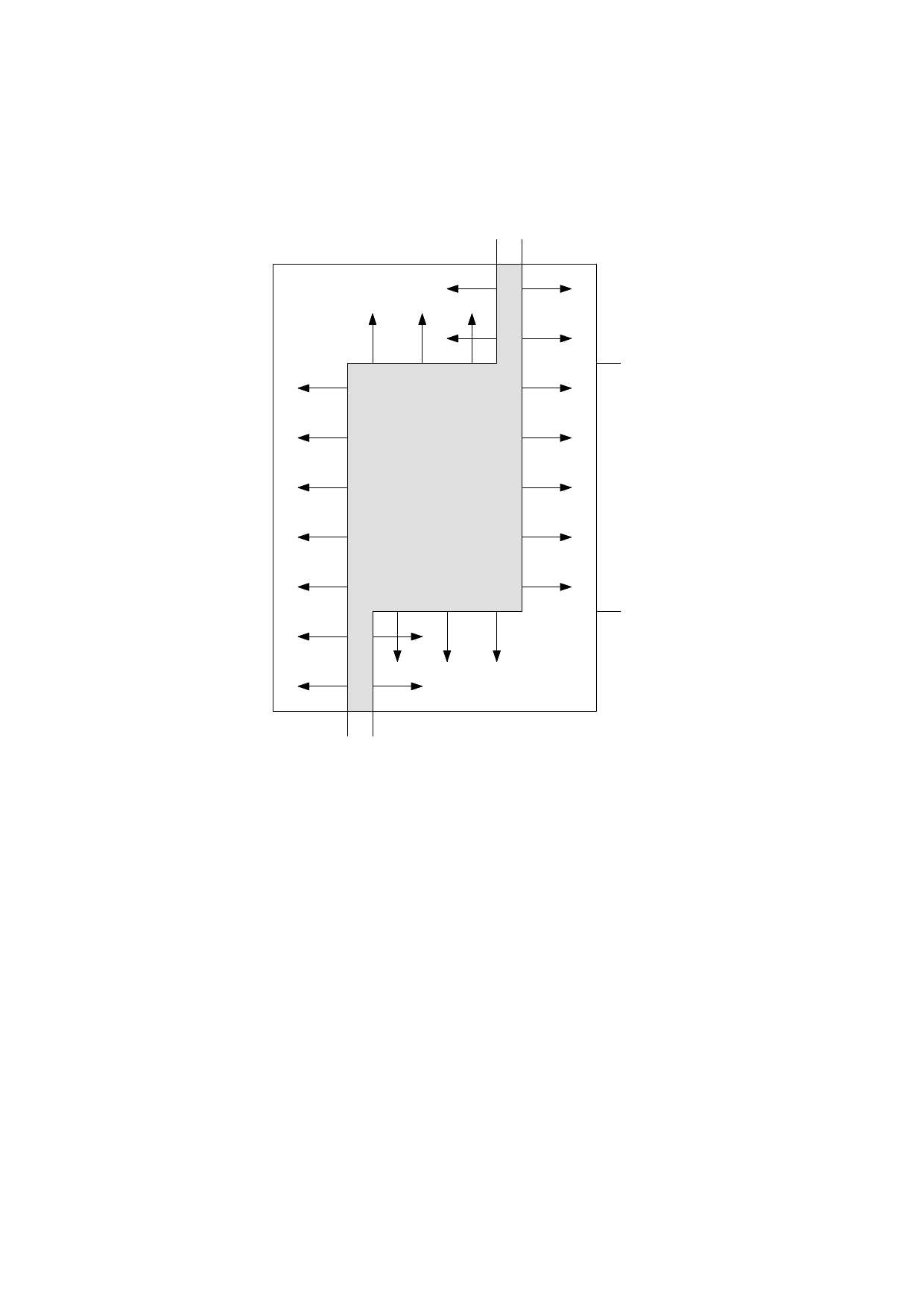}}
	 ,(5.2,8)*+!R{1-\epsilon}
	 ,(5.6,8)*+!L{1}
	 ,(1.8,0)*+!R{0}
	 ,(2.2,0)*+!L{\epsilon}
	 ,(8,2)*+!U{\epsilon}
	 ,(8.3,6)*+!U{2-\epsilon}
\endxy
$$
	\begin{image}\label{figure: sW 1/3}
	The portion of $\sW_{{\frac 1 3}}$ living over the the time interval
	$
	(-4 + \epsilon, 5-\epsilon) \subset \R_t.
	$
	Depicted is the projection onto $\R_s \times \R_{x_{m+2}}$. The horizontal direction is the $s$ direction, and the vertical direction is the $x_{m+2}$ direction.
	\end{image}
\end{figure}

Finally, depicted in Figure~\ref{figure: sW 1/3} is the portion of $\sW_{{\frac 1 3}}$ living over the time interval
\[
(-4 + \epsilon, 5-\epsilon) \subset \R_t.
\]
Note that for $x_{m+2} < \epsilon$, $\sW_{{\frac 1 3}}$ is collared by (a tilt of) the morphism $\sX \sL_0 \sL_1$, and for $x_{m+2}> 2 - \epsilon$, $\sW_{{\frac 1 3}}$ is collared by (a tilt of) $\sX \zerolag \sL_1$.
\end{remark}

\subsubsection{Construction of $\sW_{[0,{\frac {1} {3}}]}$}
Assume the following lemma:

\begin{lemma}\label{lemma: 0 to 1/3}
There is a Lagrangian cobordism from $\sW_{{\frac 1 3}}$ to (a tilt of) $\sX_{\sD}$.
\end{lemma}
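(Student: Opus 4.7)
The plan is to construct a specific Lagrangian $\sW_{[0, 1/3]}$ living over $[0, 1/3] \times (0, 2) \subset \R_{x_{m+1}} \times \R_{x_{m+2}}$, collared along $x_{m+1} = 1/3$ by $\sW_{1/3}$ and along $x_{m+1} = 0$ by a target tilt of the form $\sX_\sD \times \sT$, where $\sT \subset T^*\R_s$ is a tilt brane (in the sense of Section~\ref{ssec: tilt}) determined by compatibility with the tilting functions fixed in Constructions~\ref{construction: map to kernel} and~\ref{construction: h}. The key observation, visible in Figure~\ref{figure: sW 1/3}, is that $\sW_{1/3}$ and $\sX_\sD \times \sT$ already agree in the $x_{m+2}$-asymptotic regions $x_{m+2} < \epsilon$ and $x_{m+2} > 2-\epsilon$: both are collared there by tilts of the two $(m+2)$-simplices $\sX\sL_0\sL_1$ and $\sX\zerolag\sL_1$ comprising $\sX_\sD$. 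The whole work of the interpolation therefore concerns the middle range $\epsilon < x_{m+2} < 2-\epsilon$, where $\sW_{1/3}$ retains the nontrivial $s$-evolution inherited from $\sS(\sX_\sD)$ (which encodes the composition $\sX \to \sK \to \sL_1$) while $\sX_\sD \times \sT$ has only the simpler common-face structure $\sX\sL_1$ tilted by $\sT$.

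I will construct $\sW_{[0, 1/3]}$ by carving the product $\sW_{1/3} \times T^*_{[0, 1/3]}[0, 1/3]$ with the exact one-form $-d\log(1 - f)$, where $f(s, x_{m+1}, x_{m+2}) = f_1(x_{m+1}) f_2(s) f_3(x_{m+2})$ is a product bump function: $f_1$ is smooth and non-increasing with $f_1(1/3) = 0$, $f_1(0) = 1$; $f_2$ is a bump supported in the interior subinterval of $[0, 1] \subset \R_s$ on which $\sS(\sX_\sD)$ differs from $\sT$; and $f_3$ is a bump on $(\epsilon, 2-\epsilon) \subset \R_{x_{m+2}}$. At $x_{m+1} = 1/3$ the carving is trivial, so $\sW_{[0, 1/3]}|_{x_{m+1} = 1/3} = \sW_{1/3}$. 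As $x_{m+1}$ decreases toward $0$, the carving progressively pushes to infinity the kernel-interior portion of $\sW_{1/3}$ in the middle $x_{m+2}$-range, so that at $x_{m+1} = 0$ only the structure compatible with the tilt $\sT$ in $s$ and the common face $\sX\sL_1$ in $x_{m+2}$ survives; by design this coincides with $\sX_\sD \times \sT$. The cubical collaring in the remaining directions $x_1, \ldots, x_m, t$ is inherited automatically from the product structure of $f$ and the collarings of $\sW_{1/3}$ and $\sX_\sD$.

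Non-characteristic-ness is established by Lemma~\ref{nonchar extension}, applied to the one-forms witnessing non-characteristic-ness of $\sW_{1/3}$ (from the construction of $\sW_{[1/3, 2/3]}$) and of $\sX_\sD \times \sT$ (from the non-characteristic simplex $\sX_\sD$): these extend over $x_{m+1} \in [0, 1/3]$ because the carving is a fiberwise translation by an exact one-form and introduces no new intersections with $\Lambda$.

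The main obstacle is verifying that the carved Lagrangian at $x_{m+1} = 0$ agrees with $\sX_\sD \times \sT$ as a brane---matching not only the underlying submanifold but also the brane structure (grading and relative pin)---and, simultaneously, that the result glues smoothly with $\sW_{[1/3, 2/3]}$ at $x_{m+1} = 1/3$. This requires coordinating the choice of tilt $\sT$, the asymptotic shape of $\sS(\sX_\sD)$, and the bump functions $f_1, f_2, f_3$; the freedom provided by Lemma~\ref{lemma: tilts}, that tilts form a contractible family of Hamiltonian-isotopic branes, makes this coordination possible.
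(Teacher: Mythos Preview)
Your carving approach has a genuine gap: it cannot accomplish the direction-swap that is the entire content of this lemma. Recall from Remark~\ref{remark: the essence of W} that the point of $\sW$ is to take the $\sX_\sD$-evolution---which in $\sW_{1/3}$ runs along the \emph{stabilizing} $s$-direction (inherited from $\sS(\sX_\sD)$)---and turn it into an evolution along the \emph{morphism} direction $x_{m+2}$. A fiberwise translation by $-d\log(1-f)$ with a product bump only pushes material off to infinity; it cannot exchange the roles of $s$ and $x_{m+2}$.

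Concretely, look at Figure~\ref{figure: sW 1/3}: in $\sW_{1/3}$ the piece collared by $\sX\sL_0\sL_1$ lives over $s\in(0,\epsilon)$ while the piece collared by $\sX\zerolag\sL_1$ lives over $s\in(1-\epsilon,1)$, and the nontrivial passage between them (the actual content of $\sX_\sD$) happens as $s$ varies across $(0,1)$. In the target $\sX_\sD\times\sT$, by contrast, both ends share the \emph{same} $s$-profile $\sT$, with the $\sX_\sD$-evolution running in $x_{m+2}$. Your carving of the middle $x_{m+2}$-strip just severs the bridge, leaving the two ends at different $s$-positions; what remains at $x_{m+1}=0$ is not a product with any single tilt brane, and the $x_{m+2}$-direction still carries no $\sX_\sD$-evolution (it was the zero-section direction in $\sW_{2/3}$).

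The paper's proof instead constructs an isotopy $\mathscr{S}_r$ of the $(s,x_{m+2})$-plane, parametrized by $r\in[0,1/3]$, which takes vertical $x_{m+2}$-lines and bends them into curves $\gamma_s$ that point primarily along $\partial_s$; pulling back along this isotopy physically rotates the $s$-evolution into an $x_{m+2}$-evolution. A carving step does appear in the paper's argument, but only at the end, to smooth over the singular gluing at the time-boundaries $t=-4,5$---not to perform the direction swap itself.
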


We let this cobordism be $\sW_{[0,{\frac 1 3}]}$, living over the interval
\[
[0,{\frac 1 3}] \subset \R_{x_{m+1}}
\]
and directed in such a way that $\sX_{\sD}$ collars $\sW_{[0,{\frac 1 3}]}$ at $x_{m+1}=0$. We are finished by declaring
\[
\sW
{\colon\thinspace}=
\sW_{[0,{\frac {1} {3}}]} \cup \sW_{[{\frac {1} {3}},{\frac {2} {3}}]} \cup \sW_{[{\frac {2} {3}},1]}.
\]

\begin{proof}[Proof of Lemma~\ref{lemma: 0 to 1/3}]
Let $U \subset \R_{s} \times \R_{x_{m+2}}$ be the region depicted in Figure~\ref{figure: sW 1/3}. Explicitly, $U$ is the union of three rectangles:
\[
U
=
(1-\epsilon,1)_s \times [2-\epsilon,\infty)_{x_{m+2}}
\coprod
(0,1)_s \times (\epsilon,2-\epsilon)_{x_{m+2}}
\coprod
(0,\epsilon)_s \times (-\infty,\epsilon]_{x_{m+2}}.
\]
Fix an orientation-preserving diffeomorphism
\[
\psi{\colon\thinspace} (0,1)_s
\to
(0,\epsilon)_s
\]
and choose a smooth isotopy
\[
\mathscr{S}_r{\colon\thinspace}
[0,{\frac 1 3}]_r \times (0,1)_s \times \R_{x_{m+2}}
\to
(0,1)_s \times \R_{x_{m+2}}
\]
satisfying the following properties:
\begin{enumerate}
	\item
	$\mathscr{S}_0 = id_{(0,1) \times \R}$
	\item
	$\mathscr{S}_{{\frac 1 3}} ( (0,1) \times \R) = U$
	\item
	For all $s \in (0,1)$, $\mathscr{S}_{{\frac 1 3}}(s,x) ={\colon\thinspace} \gamma_s(x)$ is a smooth curve such that
	\begin{enumerate}
		\item
		$\gamma_s(x) = (\psi(s),x)$ for $x \not \in [\epsilon, 2- \epsilon]$, and
		\item
		writing the tangent vector to $\gamma_s(x)$ as 
		\[
		{\frac {\partial} {\partial x}} \gamma_s(x) = a \partial_s + b \partial_{x_{m+2}} \in T_{\gamma_s(x)}( (0,1) \times \R),
		\] 
		we require that one has $a, b \geq 0$ and $a + b > 0$ for all $x$.
	\end{enumerate}
\end{enumerate}
The point of the isotopy is that as $r$ moves from $0$ to ${\frac 1 3}$, a vertical line in the $x_{m+2}$ direction is isotoped into a curve which points primarily in the $s$ direction. This is the essence of the $\sW$ as explained in Remark~\ref{remark: the essence of W}.

One can clearly choose such an isotopy $\mathscr{S}_r$ such that the isotopy takes the tilt
\[
(\sX_{\sD} \times \Gamma_{\log b}) \cap \pi_t^{-1}(-4,5)
\]
at $r=0$ to the Lagrangian
\[
\sW_{{\frac 1 3}} \cap \pi_t^{-1}(-4,5)
\]
at $r={\frac 1 3}$. Here, $b{\colon\thinspace} \R_s \to \R$ is a tilting function, and $\pi_t$ is the projection to the zero section in the time direction:
\[
\pi_t{\colon\thinspace} M \times T^*\R_{s} \times T^*\R_t \times T^*\R^{m}_x \times T^*\R_{x_{m+2}} \to \R_t.
\]
We are also imagining that $\sX_{\sD}$, as it moves from 0 to 2 in the $x_{m+2}$ direction, is evolving from the cobordism $\sX \sL_0 \sL_1$ to the cobordism $\sX \zerolag \sL_1$.

By Example~\ref{example: isotopy of domain} one can realize the isotopy $\mathscr{S}$ as a cobordism evolving in the $x_{m+1}$ direction---we have replaced by $x_{m+1}$ the coordinate $r$. All that remains, then, is to extend the cobordism associated to $\mathscr{S}_{x_{m+1}}$, which lives above the time interval $(-4,5) \subset \R_t$, to a cobordism living over the whole of $\R_t$. This is done by first constructing a non-smooth Lagrangian $\sV$, then creating a smooth Lagrangian $\sW_{[0,{\frac 1 3}]}$ by pushing the non-smooth portions to infinity via carving. That is, we will define
\[
\sW_{[0,{\frac 1 3}]} 
{\colon\thinspace}=
\sV + \Gamma_{f_S}
\]
for a suitable function $f_S$ and some singular Lagrangian $\sV$.

The non-smooth Lagrangian $\sV$ is defined as follows. Let
\[
\sV_{(-\infty,-4]} 
{\colon\thinspace}=
 \pi_t^{-1}(-\infty,-4] \bigcap 
 	\left( 
 	\sS(\sX_{\sD}) \times T_{(0,{\frac 1 3})}^*(0,{\frac 1 3})_{x_{m+1}} \times T^*_{\R_{x_{m+2}}} \R_{x_{m+2}}
	\right)
\]
\[
\sV_{[5,\infty)} 
{\colon\thinspace}=
 \pi_t^{-1}[5,\infty) \bigcap 
 	\left( 
	\sS(\sX_{\sD}) \times T_{(0,{\frac 1 3})}^*(0,{\frac 1 3})_{x_{m+1}} \times T^*_{\R_{x_{m+2}}} \R_{x_{m+2}}
	\right)
\]
and let $\sV_{[-4,5]}$ be the cobordism associated to $\mathscr{S}_{x_{m+1}}$. Clearly
\[
\sV{\colon\thinspace}= 
\sV_{(-\infty,-4]} 
\cup
\sV_{[-4,5]} 
\cup
\sV_{[5,\infty)} 
\]
is singular above the times $t=-4$ and $t=5$. However, note that at $x_{m+1}=0$, $\sV$ is collared by the tilt
\[
\sX_{\sD} \times \Gamma_{\log b}
\]
as desired.

We fix the non-smoothness of $\sV$ via carving by a function $f_S$, constructed as follows.

Consider the set
\begin{eqnarray}
S'
&= &
\bigcup_{x_{m+1}}
\{x_{m+1}\}\times \mathscr{S}_{x_{m+1}}( (0,1) \times \R) \nonumber \\
&\subset & 
\R_{x_{m+1}} \times \R_s \times \R_{x_{m+2}}. \nonumber
\end{eqnarray}
$S'$ is the projection of the graph of an isotopy. Setting
\[
S = S' \times [-4,5]_t \subset \R^4
\]
we can choose a function
\[
f_S {\colon\thinspace} \R^4 \to [0,1]
\]
such that
\[
f_S^{-1}(1) = \overline{\R^4 \backslash S},
\qquad
f_S^{-1}(0,1) = S \cap N_\epsilon( \partial S),
\qquad
\Crit(f_S) = f_S^{-1}(1) \cup f_S^{-1}(0).
\]
Here, $\overline{\R^4 \backslash S}$ is the closure of $\R^4 \backslash S$, and $N_\epsilon(\partial S)$ denotes the $\epsilon$-neighborhood of $\partial S$. The resulting Lagrangian
\[
\sW_{[0,{\frac 1 3}]} 
{\colon\thinspace}=
\sV + \Gamma_{f_S}
\]
is the cobordism we seek.
\end{proof}

This concludes the construction of $\sW$.
\end{construction}

\begin{remark}[Description of $\sW$.]
By construction, $\phi^* \sW$ is indeed collared along $x_{m+1}=0$ by a tilt of the diagram $\sX_{\sD}$, and is collared at the vertex
\[
(0,1) \in \R_{x_{m+1}} \times \R_{x_{m+2}}
\]
by the cobordism $\sX \sK \sL_1$.
\end{remark}

\begin{remark}
Each part of the decomposition
\[
\sW = \sW_{[0,{\frac {1} {3}}]} \cup \sW_{[{\frac {1} {3}},{\frac {2} {3}}]} \cup \sW_{[{\frac {2} {3}},1]}
\]
can be described as follows. We began by isotoping $\sX \sK \sL_1$ to $\sS(\sX_{\sD})$ along the interval $[{\frac 2 3},1] \subset \R_{x_{m+1}}$. Along the interval $[{\frac 1 3},{\frac 2 3}]$ we turned $\sS(\sX_{\sD})$ into a form where we could begin to change the $s$-evolution of $\sS(\sX_{\sD})$ into an evolution along the $x_{m+2}$ direction. We then `turned' the $s$ evolution into an $x_{m+2}$ evolution via an isotopy along the interval $[0,{\frac 1 3}]$.
\end{remark}

\subsubsection{Cobordisms over triangles~\ref{other triangle 1}, \ref{other triangle 2}}
In Construction~\ref{construction: sW} we constructed a cobordism 
\[
\sW' = \phi^*(\sW) \subset M \times T^*\R_s \times T^*\R_t \times T^*(\R^{m}_x \times T)
\]
living over the triangle $T$. We will now construct cobordisms 
\[
\sW_1  \subset M \times T^*\R_s \times T^*\R_t \times T^*(\R^{m}_x \times T_1)
\]
\[
\sW_2  \subset M \times T^*\R_s \times T^*\R_t \times T^*(\R^{m}_x \times T_2)
\]
living over the triangles \ref{other triangle 1} and \ref{other triangle 2}, respectively, extending the cobordism $\sW'$.
The two are similar in nature, and we will focus on the latter and leave the former to the reader.

\begin{construction}
Let $C \subset \R_{x_{m+1}} \times \R_{x_{m+2}}$ be the interior of the triangle with vertices
\[
v_0 = (0,0),  v_1 = (1,0),  v_2 = (1,1) \in \R_{x_{m+1}} \times \R_{x_{m+2}}
\]
and let $d_i C$ be the edge in $\partial C = \overline{C} \backslash C$ opposite the vertex $v_i$. We choose an orientation-preserving diffeomorphism
\[
\phi{\colon\thinspace} C \to (0,1)_{y_1} \times (0,1)_{y_2} \subset \R_{y_1} \times \R_{y_2} ={\colon\thinspace} \R_y^2
\]
satisfying the following:
\begin{enumerate}
\item
There is some open neighborhood $U \subset \R_{x_{m+1}} \times \R_{x_{m+2}}$ of the edge $d_2 C$ such that 
\[
\phi(C \cap U) = (0,\epsilon)_{y_1} \times (0,1)_{y_2}
\]
for some $\epsilon$, and 
\[
\lim_{x \to ({\frac 1 3},{\frac 1 3})} \phi(x) = (0, {\frac {1} {2}}) \in \R_y^2.
\]
\item
There is some open neighborhood $V \subset \R_{x_{m+1}} \times \R_{x_{m+2}}$ of the union $d_0 C \cup d_1 C$ such that
\[
\phi(C \cap V) = (1- \epsilon, 1)_{y_1} \times (0,1)_{y_2}
\]
and
\[
\lim_{x \to (1,0)} \phi(x) = (1, {\frac {1} {2}}) \in \R_y^2.
\]
\end{enumerate}
We will construct a Lagrangian cobordism
\[
\sY \subset M \times T^*(\R_s \times \R_t \times \R_x^m \times \R^2_y)
\]
and define
\[
\phi^* \sY \subset M \times T^*(\R_s \times \R_t \times \R_x^{m+2})
\]
to be the portion of $F(\sX_\sD^\cone)$ living above $C$.

First, note that $\sY$ must be collared by certain cobordisms $\sY_i$ along the edges $y_1 = i$ for $i=0,1$:

Along $y_1 = 0$, $\sY_0$ begins as the cobordism $\sX \sL_0 \sL_1$, and (a) as we move from $y_2 = 0$ toward $y_2 = {\frac {1} {2}}$, it is isotoped to the portion of $\sW_{{\frac 1 3}}$ living over the vertex $({\frac 1 3},{\frac 1 3}) \in \R_{x_{m+1}} \times \R_{x_{m+2}}$. (b) At $y_2 = {\frac {1} {2}}$, $\sY$ is carved into $\sW_{{\frac 2 3}}$, and (c) is isotoped thereafter to the cobordism $\sX \sK \sL_1$, which collars $(0,1) \in \R_y^2$.

The description along $y_1=1$ is analogous: (a) At $(1,0)$, $\sY_1$ begins as $\sX \sL_0 \sL_1$, then (b) at $y_2 = {\frac {1} {2}}$, $\sY$ is carved into the Lagrangian $\sX \sK \sL_0 \sL_1 = \sP \circ \sK \circ \sX$ from Construction~\ref{construction: H} and (c) it is then isotoped until $\sY$ equals $\sX \sK \sL_1$ at $(1,1) \in \R_y^2$. 

We have described the cobordisms $\sY_i$ which collar the edges $y_1 = i$. The key observation is

\begin{lemma}\label{lemma: Y}
There exists an isotopy of domain in $\R_s \times \R_t$ taking $\sY_0$ to $\sY_1$.
\end{lemma}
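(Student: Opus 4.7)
The plan is to recognize $\sY_0$ and $\sY_1$ as two factorizations of essentially the same $y_2$-evolution from $\sX\sL_0\sL_1$ to $\sX\sK\sL_1$, differing only in the order in which a carving by an $h$-type function and certain isotopies of the $(s,t)$-domain are performed. By construction, $\sY_0$ factors along $y_2$ as an isotopy $\Phi^{\mathrm{pre}}$ of the $(s,t)$-domain taking $\sX\sL_0\sL_1$ to the slice of $\sW_{1/3}$ at $(1/3,1/3)$, followed by the carving that produces $\sW_{2/3}$ (namely carving by $-\log(1-f)$ from Section~\ref{section: sW1323}), followed by a further isotopy $\Phi^{\mathrm{post}}$ which inverts the packing of Figure~\ref{figure: SXD}, turning $\sS(\sX_\sD)=\sW_{2/3}$ back into $\sX\sK\sL_1$. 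Meanwhile $\sY_1$ factors simply as the carving of Construction~\ref{construction: H} (by $d(\tau f)$, producing $\sX\sK\sL_0\sL_1 = \sP\circ\sk\circ\sX$ from $\sX\sL_0\sL_1$) followed by an isotopy $\Psi$ of the $(s,t)$-domain taking the result to $\sX\sK\sL_1$.

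I would then appeal to the elementary fact that carving and isotopy of domain commute up to pullback of the carving function: for any $\varphi\in P_c(\R_s\times\R_t)$ and any smooth $h$, one has $\varphi^*(\sL+\Gamma_{dh})=\varphi^*\sL+\Gamma_{d(h\circ\varphi)}$, which is immediate from the chain rule. Using this identity, the ``isotope, carve, isotope'' factorization of $\sY_0$ can be rewritten as ``carve, isotope,'' where the new carving is by the pullback of $-\log(1-f)$ along $(\Phi^{\mathrm{pre}})^{-1}$ and the new isotopy is $\Phi^{\mathrm{post}}\circ\Phi^{\mathrm{pre}}$. By construction, $\Phi^{\mathrm{pre}}$ is the packing isotopy that converts the $x_{m+2}$-evolution of $\sX_\sD$ into an $s$-evolution inside $\sS(\sX_\sD)$; its inverse therefore carries the $-\log(1-f)$-support of Section~\ref{section: sW1323} directly onto the $d(\tau f)$-support of Construction~\ref{construction: H}. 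Hence the rewritten carving of $\sY_0$ agrees with that of $\sY_1$, and the desired domain isotopy may be taken to be $\Psi := \Phi^{\mathrm{post}}\circ\Phi^{\mathrm{pre}}$.

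The main obstacle will be making the matching of the two carving functions $-\log(1-f)$ and $d(\tau f)$ fully precise, since they live on different explicit subsets of $\R_s\times\R_t$ with different cutoff behavior at infinity. However, both are built from the same function $h(s,t)=-\log(1-\tau(s,t))$ originally used to define the kernel morphism $\sk$ in Section~\ref{section: kernel map}, and both are designed so that their fiberwise translation of the appropriate ambient brane produces the kernel $\sK$. Once this is unwound, matching them reduces to a direct verification that the support of one is the image of the support of the other under the explicit packing isotopy, which is transparent from Figures~\ref{figure: sW 1/3} and~\ref{figure: SXD}. With this identification in hand, the composite $\Psi = \Phi^{\mathrm{post}}\circ\Phi^{\mathrm{pre}}$ is compactly supported in $\R_s\times\R_t$ (after possible adjustment at infinity using Proposition~\ref{prop: diff action}), completing the proof of the lemma.
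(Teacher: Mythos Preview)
Your approach is correct and is morally the same as the paper's, but you have organized it in a more elaborate way than necessary. The paper's proof is a two-line observation: for $i=0,1$, let $A_i' \subset \R_s \times \R_t$ be the domain of the brane collaring $\sY_i$ just before the carving at $y_2 = \tfrac12$, and $A_i$ the domain just after; then $A_i' \setminus A_i$ is exactly the region removed by the carving. The claim is simply that there is an isotopy of the pair $(A_0',A_0)$ to the pair $(A_1',A_1)$, and such an isotopy of domain pairs visibly transports $\sY_0$ to $\sY_1$.

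Your ``commute carving past isotopy via pullback of the carving function'' is the mechanism that makes this work, and you have identified it correctly. But the paper avoids your main technical obstacle (matching the explicit functions $-\log(1-f)$ and $d(\tau f)$) by never comparing the carving functions at all: only their \emph{supports} (the regions $A_i' \setminus A_i$) need to match under the isotopy, since any two carving functions with the same support and the same qualitative asymptotics at the boundary produce branes related by a further isotopy of domain. So where you propose to track $h$, $\tau$, the cutoffs, and the pullbacks explicitly, the paper just draws two nested regions in the plane and notes that one nested pair can be deformed into the other. Your decomposition $\Psi = \Phi^{\mathrm{post}}\circ\Phi^{\mathrm{pre}}$ is a fine way to exhibit this isotopy of pairs, but the function-level bookkeeping you flag as ``the main obstacle'' is not actually needed.
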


We parametrize this isotopy by the interval $[0,1] \subset \R_{y+1}$, and we let $\sY$ be the Lagrangian cobordism corresponding to this isotopy. This concludes the construction of $F(\sX_\sD^\cone)$.

\begin{proof}[Proof of Lemma~\ref{lemma: Y}]
Let $A_i \subset \R_s \times \R_t$ be the domain of the Lagrangian collaring $\sY_i$ at $y_2 = {\frac {1} {2}} + \epsilon$, and let $A_i '$ be the domain of the Lagrangian collaring $Y_i$ at $y_2 = {\frac {1} {2}} - \epsilon$. Then $A_i' \backslash A_i \subset \R_s \times \R_t$ is the region carved out of $Y_i$ at $y_2 = {\frac {1} {2}}$. 

There is an obvious isotopy taking $A_0'$ to $A_1'$ which restricts to an isotopy from $A_0$ to $A_1$; this induces the isotopy of domain taking $\sY_0$ to $\sY_1$.
\end{proof}

\end{construction}

\subsubsection{Compatibility with simplicial maps}
Given an $m$-simplex $\sX_\sD{\colon\thinspace}\Delta^m\to \Lag_\sD$, we have constructed $F(\sX_\sD^\cone)$. Note that all the constructions depended only on the coordinates $s, t, x_{m+1},$ and $x_{m+2}$, and did not depend on any spatial coordinate $x_j$ for $j\leq m$. In other words, the constructions restrict to the faces of $\sX_\sD^\cone$---note that the dependence on the time coordinate is remedied by the fact that all morphisms are identified up to time reparametrization. This completes the proof of Proposition~\ref{prop. {\kernelfunctor^i}}.

\subsection{Proof of Proposition~\ref{prop. I}}\label{sect: I}
Now that we have constructed the functor $\kernelfunctor$ as described in Proposition~\ref{prop. {\kernelfunctor^i}}, 
we turn to the proof of Proposition~\ref{prop. I}.

Without loss of generality,  we may assume that $i=1$ and study the functor $F^1$. To reduce clutter, let us introduce the notation 
\[
\sX_\sD = F^1(*) \in \Lag_\sD^{\dd 2}.
\]
Then we seek to show
 the natural morphism
\[
\xymatrix{
{\kernelfunctor^1}(\sK_\sD(\sP) \to *) \in \Lag_\sD^{\dd 2}(F^i(\sK_\sD(\sP)), \sX_\sD)
}
\]
is invertible. Here $\sK_\sD(\sP) \to *$ is the unique simplex from $\sK_\sD$ to $*$ in the cone category.

Our proof once again will be constructive: we will find a cobordism 
\[
\sI \subset M \times T^*\R_s \times T^*\R_t \times T^*\R^{m+3}_x 
\]
with prescribed boundary values. To approach this, following the pattern of Figure~\ref{figure: {\kernelfunctor^i} projection},
let us project onto the final three space coordinates
\[
\xymatrix{
M \times T^*\R_s \times T^*\R_t \times T^*\R^{m+2}_x \ar[r] &  \R_{x_{m+1}} \times \R_{x_{m+2}} \times \R_{x_{m+3}}.
}
\]
Then the cobordism $\sI$ will live over the following labelled cube
\[
\xymatrix{
& \sX \sX 0 \sL_1 \ar@{-}[rr] \ar@{-}[dd] && \sX \sK \sX 0 \sL_1 \ar@{-}[dd]
\\
\sX 0 \sL_1 \ar@{-}[rr]\ar@{-}[dd] \ar@{-}[ur] && \sX \sK 0 \sL_1 \ar@{-}[dd] \ar@{-}[ur] & 
\\
& \sX \sX \sL_1 \ar@{-}[rr] \ar@{-}[dd] && \sX \sK \sX \sL_1 \ar@{-}[dd] 
\\
\sX \sL_1\ar@{-}[ur]  \ar@{-}[dd] \ar@{-}[rr] && \sX \sK \sL_1 \ar@{-}[dd]\ar@{-}[ur]  & 
\\
& \sX \sX \sL_0 \sL_1 \ar@{-}[rr] && \sX \sK \sX \sL_0 \sL_1 
\\
\sX \sL_0 \sL_1 \ar@{-}[rr] \ar@{-}[ur] && \sX \sK \sL_0 \sL_1 \ar@{-}[ur] &
}
\]

We view this cube as a parameterized version of the rectangle of Figure~\ref{figure: {\kernelfunctor^i} projection}.
Namely, the new $x_{m+3}$-direction points into the paper so that along the face $x_{m+3} = 0$ we recover
precisely Figure~\ref{figure: {\kernelfunctor^i} projection}. The face is labelled to indicate that the cobordism $\sI$ will be collared
by $\sX_\sD = F^1(*)$ along it. Along the lefthand face given by $x_{m+1}= 0$, the cobordism $\sI$ will be collared by
the degenerate cobordism obtained by setting the morphism $\sX \to \sX$ to be the identity of $\sX$.

\subsubsection{Construction of $\sI|_{x_{m+3} = 1}$}\label{construction: x_3=1}
The point is to construct a diagram
\[
\xymatrix{
\sX \ar[dr]_{id_\sX} \ar[r]^{\kernelfunctor^1_1} & \sK \ar[d]^{\on{Rot}} \\
 & \sX
}
\]
at the level of objects in $\Lag$ (not at the level of objects in the over-category $\Lag_D$.) The map $\on{Rot}$ indicates the isotopy of domain described in Remark~\ref{remark: stabilizing kernels}, rotating the $s_2$ axis into the $s_1$ axis. We follow the construction as depicted in Figure~\ref{figure: Ix=3}.

\begin{figure}
$$
\xy
\xyimport(8,8)(0,0){\includegraphics[width=3in]{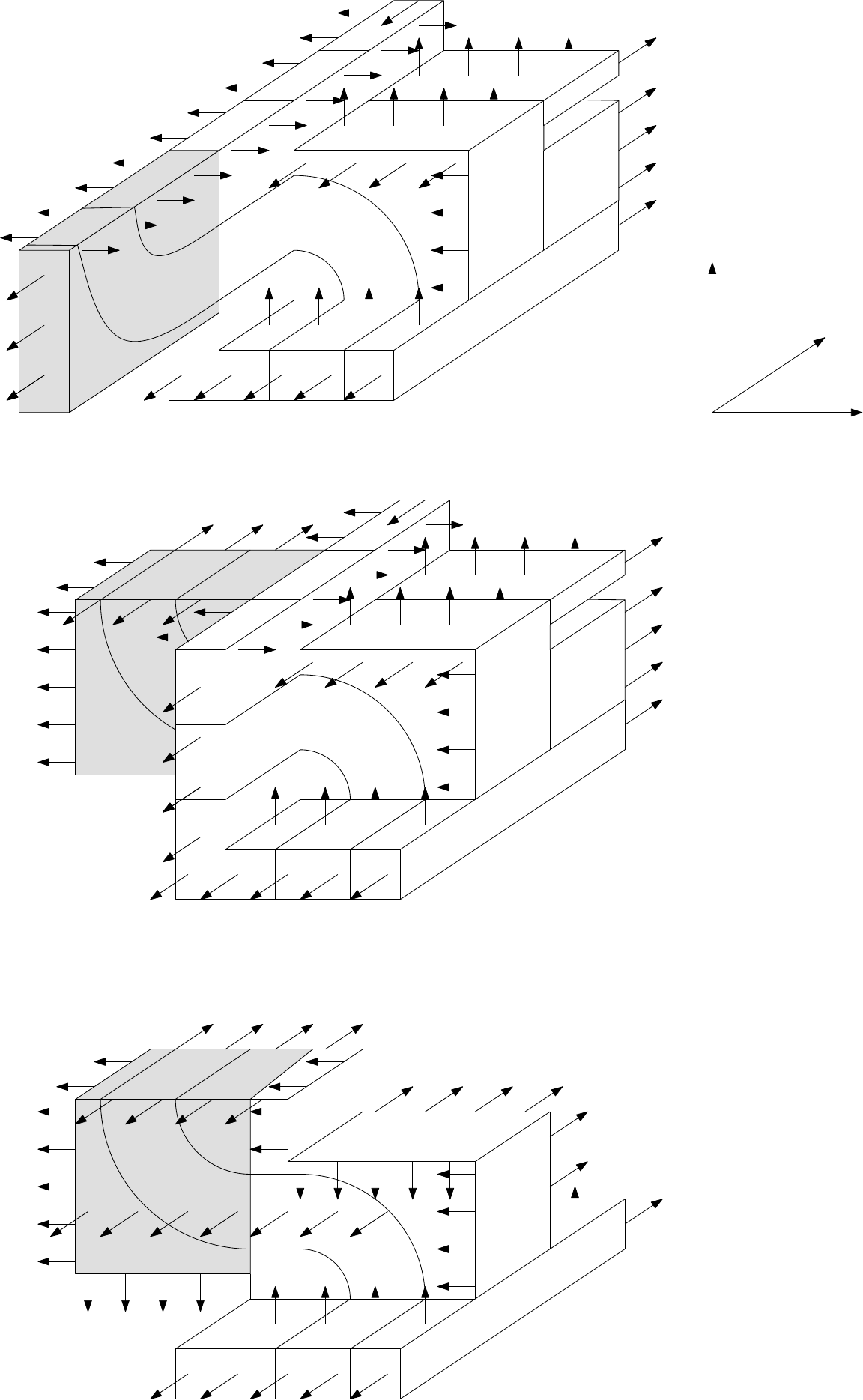}}
	 ,(-0.5,6.2)*!R+{(a)}
	 ,(-0.5,3.5)*!R+{(b)}
	 ,(-0.5,1)*!R+{(c)}
	 ,(6.8,6.5)*+{t}
	 ,(8,5.5)*+{s_1}
	 ,(7.9,6.2)*+{s_2}
\endxy
$$
	\begin{image}\label{figure: Ix=3}
	A depiction of the face $\sI|_{x_{m+3}=1}$. (a) is the morphism $\sX \to \sK$ given by $\kernelfunctor^1$. (b) is obtained by an isotopy of domain in the $s_1$-$s_2$-plane. The shaded portion indicates the part of the cobordism affected by the isotopy. Note the isotopy is possible because along $s_1=-1$ the morphism $\sX \to \sK$ is collared by $\sP$. We parametrize this isotopy by the coordinate $x_{m+2}$, though this is not indicated in the figure. We carve the cobordism from (b) to obtain the cobordism in (c). Clearly the morphism in (c) is isotopic to the identity morphism of $\sX$. Note that this figure is not to scale.
	\end{image}
\end{figure}

(a) Examine the morphism $\sX \to \sK$ given by $\kernelfunctor^1_1$; this is given by a Lagrangian cobordism living in $M \times T^*\R_t \times T^*\R^2_s$. Its projection onto $T^*_{\R^2_s}\R^2_s$ is the union of two rectangles:
\[
\pi(\kernelfunctor_1^1) = R_1 \cup R_2,
\qquad
R_1 = 
(-2,-1)_{s_1} \times (-2,0]_{s_2},
\qquad
R_2 = 
(-2,1)_{s_1} \times (0,1)_{s_2}.
\] 
The part of $\kernelfunctor_1^1$ living over $R_2$ encodes the data of the map $\sX \to \sD$, and the portion over $R_1$ is contained in Figure~\ref{figure: map to kernel}---it is the difference between part (b) and part (c) of the figure. It is also the shaded region in Figure~\ref{figure: Ix=3}(a).

(b) The main observation is that, since $\kernelfunctor^1_1$ is collared along the interval $s_1 = -2$ by the morphism $\sP$, we can perform an isotopy of domain $\phi$ moving $R_1 \cup R_2$ to a new domain
\[
R_1 ' \cup R_2
\]
where the set $R_1'$ is given by isotoping $R_1$ to the domain
\[
R_1 '
=
(-4,-2)_{s_1} \times ({\frac {1} {2}}-\epsilon,{\frac {1} {2}}+\epsilon)_{s_2}.
\]
We correspondingly induce an isotopy on the rotation morphism $\sK \to \sX$ (this is not depicted in the figure).

(c) We now carve (b) to obtain yield the cobordism described in Figure~\ref{figure: Ix=3}(c). Note that the carving yields a cobordism which, along the region of $\sh$ allowing a non-characteristic cut, approaches $-\infty$ in the $dt$ direction. Moreover, straightening the snake-like path that the morphism $\sP$ takes within the cobordism of (c), it is clear that we can isotope (c) into the identity morphism from $\sX$ to itself. This completes the construction of $\sI_{x_{m+3} = 1}$.

\subsubsection{Construction of $\sI$}
The key observation is that the cobordism from $\sX \to \sK$ to $\sD$ has two geometric copies of the diagram $\sX_\sD$ inside it; the construction of $\sI$ is simply an interpolation between the two. Specifically, there is a copy of $\sX_\sD$ in the $x_{m+2}$ direction, given by $\sD$ itself, and there is a second copy in the $s_2$ direction, as depicted in Figure~\ref{figure: map to kernel}(b). This was part of the definition of the morphism $\kernelfunctor^1_1{\colon\thinspace} \sX \to \sK$. 

The only difference between the faces we have constructed at $x_{m+3} = 0$ and $x_{m+3} = 1$ is that different copies of $\sD$ are isotoped from the $x_{m+1} = 1$ face to the $x_{m+1} = 0$ face. More explicitly, along the face $x_{m+3} = 0$, the cobordism from Construction~\ref{construction: x_3=1} isotopes the copy of $\sD$ in the $s_2$ direction to re-orient $\sD$ into the $x_{m+2}$ direction, and carves out the copy of $\sD$ originally living in the $x_{m+2}$ direction. Along the face $x_{m+3}=1$, we have done just the opposite: We carved out the copy of $\sD$ living in the $s_2$ direction, keeping only the copy living in the $x_{m+2}$ direction. 

(1) We first fill in the portion of $\sI$ along the interval $[{\frac 2 3},1]_{x_{m+2}}$. The simple observation is that $\sW_{{\frac 2 3}}$ and the cobordism from Figure~\ref{figure: Ix=3}(b) are both obtained by performing an isotopy on the basic morphism $\sX\sK \sL_1$. Further we can demand that the face $\sI_{x_{m+2} =1}$ be collared by the isotopy which isotopes $\sX$ into $\sK$ by performing a rotation on the coordinates $s_1$ and $s_2$. So we have three isotopies collaring three edges---$[{\frac 2 3},1]_{x_{m+2}} \times \{0\}_{x_{m+3}}$, $[{\frac 2 3},1]_{x_{m+2}} \times \{1\}_{x_{m+3}}$, and $\{1\}_{x_{m+2}} \times [0,1]_{x_{m+3}}$.  Clearly we can find a 2-simplex in the space of all isotopies of domain, parametrizing between these three isotopies. Realizing this 2-simplex as a cobordism, we have filled in the portion of $\sI$ living above the rectangle
\[
[{\frac 2 3},1]_{x_{m+2}} \times [0,1]_{x_{m+3}} \subset \R_{x_{m+2}} \times \R_{x_{m+3}}.
\]

(2) Next we observe that there is an isotopy $\psi$ of $\R_{s_2} \times \R_{x_{m+2}}$ which we depict in Figure~\ref{figure: I}. The salient feature is that it takes the domain of $\sW_{{\frac 1 3}}$ as described in Figure~\ref{figure: sW 1/3} and transports it to the rectangle in $\R_{s_2} \times \R_{x_{m+2}}$ given by
\[
({\frac {1} {2}} - \epsilon, {\frac {1} {2}} + \epsilon) \times \R_{x_{m+2}}.
\]
We parametrize this isotopy $\psi$ by the coordinate $x_{m+3}$. Note that the function $f$ (which we used to carve out $\sW_{{\frac 1 3}}$ in Section~\ref{sect: sW1323}) can be isotoped along $\psi$. Let us call this new function $\tilde f$ and carve the cobordism from (1) by $f$, realizing this carving as taking place along the interval $[{\frac 1 3},{\frac 2 3}]_{x_{m+2}}$. Then at $x_{m+2} = {\frac 1 3}$, we have a cobordism such that at $x_{m+3} = 1$ we have the cobordism from Figure~\ref{figure: Ix=3}(c), and at $x_{m+3}=0$ we have carved out the cobordism $\sW_{{\frac 1 3}}$ from Section~\ref{sect: sW1323}. 

\begin{figure}
$$
\xy
\xyimport(8,8)(0,0){\includegraphics[height=3in]{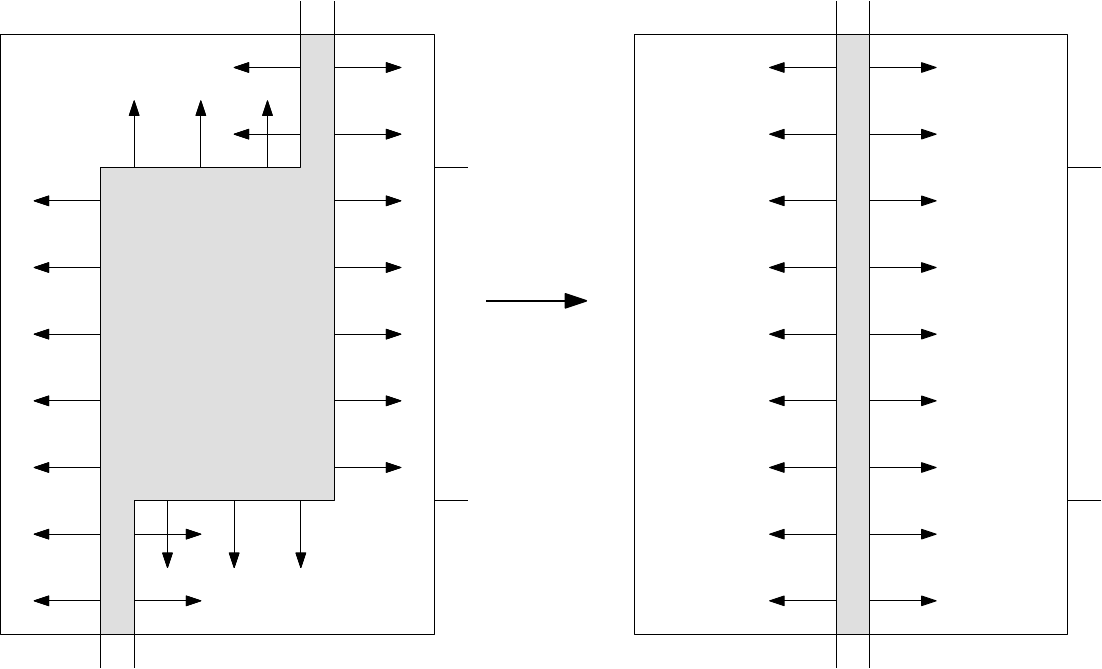}}
	 ,(2.2,8)*+!R{1-\epsilon}
	 ,(2.4,8)*+!L{1}
	 ,(0.7,0)*+!R{0}
	 ,(1,0)*+!L{\epsilon}
	 ,(8,2)*+!U{\epsilon}
	 ,(8.3,6)*+!U{2-\epsilon}
	 ,(6,8)*+!R{{\frac {1} {2}}-\epsilon}
	 ,(6.3,8)*+!L{{\frac {1} {2}}+\epsilon}
\endxy
$$
	\begin{image}\label{figure: I}
	The isotopy $\psi$ described in the construction of $\sI$. The horizontal coordinate is $s_2$ and the vertical coordinate is $x_{m+2}$. The indicated asymptotics are a result of the carving function $\tilde f$ which we induce using $\psi$.
	\end{image}
\end{figure}

(3) Now we can use the same trick we used in step (1). Note that the cobordisms given by $\sI|_{x_{m+3} =1}, \sI|_{x_{m+3}=0}$ are merely isotopies of domain along the interval $[0,{\frac 1 3}]_{x_{m+2}}$. We can again find a 2-simplex in the space of isotopies to fill in the three isotopies at play. (Namely, these are the isotopies realized along the edges $[0,{\frac 1 3}]_{x_{m+2}} \times \{1\}_{x_{m+3}}, [0,{\frac 1 3}]_{x_{m+2}} \times \{0\}_{x_{m+2}},$ and $\{{\frac 1 3}\}_{x_{m+2}} \times [0,1]_{x_{m+3}}$.) Realizing this 2-simplex as an actual cobordism living above the rectangle $[0,{\frac 1 3}]_{x_{m+2}} \times [0,1]_{x_{m+3}}$ and demanding that along $\{0\}_{x_{m+2}} \times [0,1]_{x_{m+3}}$ it be the identity, we have our desired $\sI$.

\clearpage
\section{The loop functor}\label{sect. loop}
Finally, with the preceding construction of kernels in hand, and with Theorem~\ref{stablealternative} in mind, we discuss properties of the {loop functor} of $\Lag$.

\subsection{Definition of loop functor}

Let $\sC$ be an $\oo$-category with a zero object $0\in \sC$.

Define the simplicial set $\sC^\Omega$ to consist of all maps 
$$
\xymatrix{
\Delta^1 \times \Delta^1 \ar[r] & \sC
}
$$
whose image is a limit diagram (Cartesian square) of the form 
$$\xymatrix{
	y \ar[r]\ar[d]	&	0\ar[d]	\\
	0	\ar[r]	&	x
	}$$

The obvious evaluations give a pair of maps $y, x{\colon\thinspace}\sC^\Omega \to \sC$, and if such Cartesian squares always exist, then $x{\colon\thinspace}\sC^\Omega \to \sC$ is a trivial fibration, and hence  there is a contractible space of sections
	$s{\colon\thinspace}\sC \to \sC^\Omega.$
	In this case,  the {\em loop functor} $\Omega{\colon\thinspace}\sC\to \sC$ is defined to be the composition 
	\[
	\xymatrix{
	\Omega{\colon\thinspace} \sC \ar[r]^-{s} & \sC^\Omega \ar[r]^-y & \sC.
	}
	\]
Similarly, if colimit diagrams (cocartesian squares) of the form
	$$\xymatrix{
	x	\ar[r]	\ar[d] &0\ar[d]		\\
	0	\ar[r] & y
	}$$
	always exist, then 
one can define the {\em suspension functor} $\Sigma{\colon\thinspace}\sC\to \sC$.

\subsection{Loop functor for \texorpdfstring{$\Lag$}{Lag}}
Let us begin by describing  the action of the loop functor of $\Lag$ on objects.
Then we will formally calculate its action on all simplices.

Fix an object $\sL \in \Lag$. By definition, the object $\Omega \sL \in \Lag$ is the kernel of the zero map $0{\colon\thinspace}\zerolag \to \sL$.
By construction, this is a cobordism $\Omega \sL \subset M \times T^*\R$ given by the product of $\sL$ with a graph.
The graph is Hamiltonian isotopic to the conormal to a point, and hence $\Omega \sL$ is equivalent 
 {\em up to a shift of grading} with the stabilization $\sL^{\dd 1}$.
 Deciding what the grading is comes down to comparing the gradings on the graph and the conormal to a point.
We refer to Figure~\ref{fig. gradings} to see that they will differ by one.

\begin{figure}\label{fig. gradings}
$$\xy
	\xyimport(15,23)(0,0){\includegraphics[height=3in]{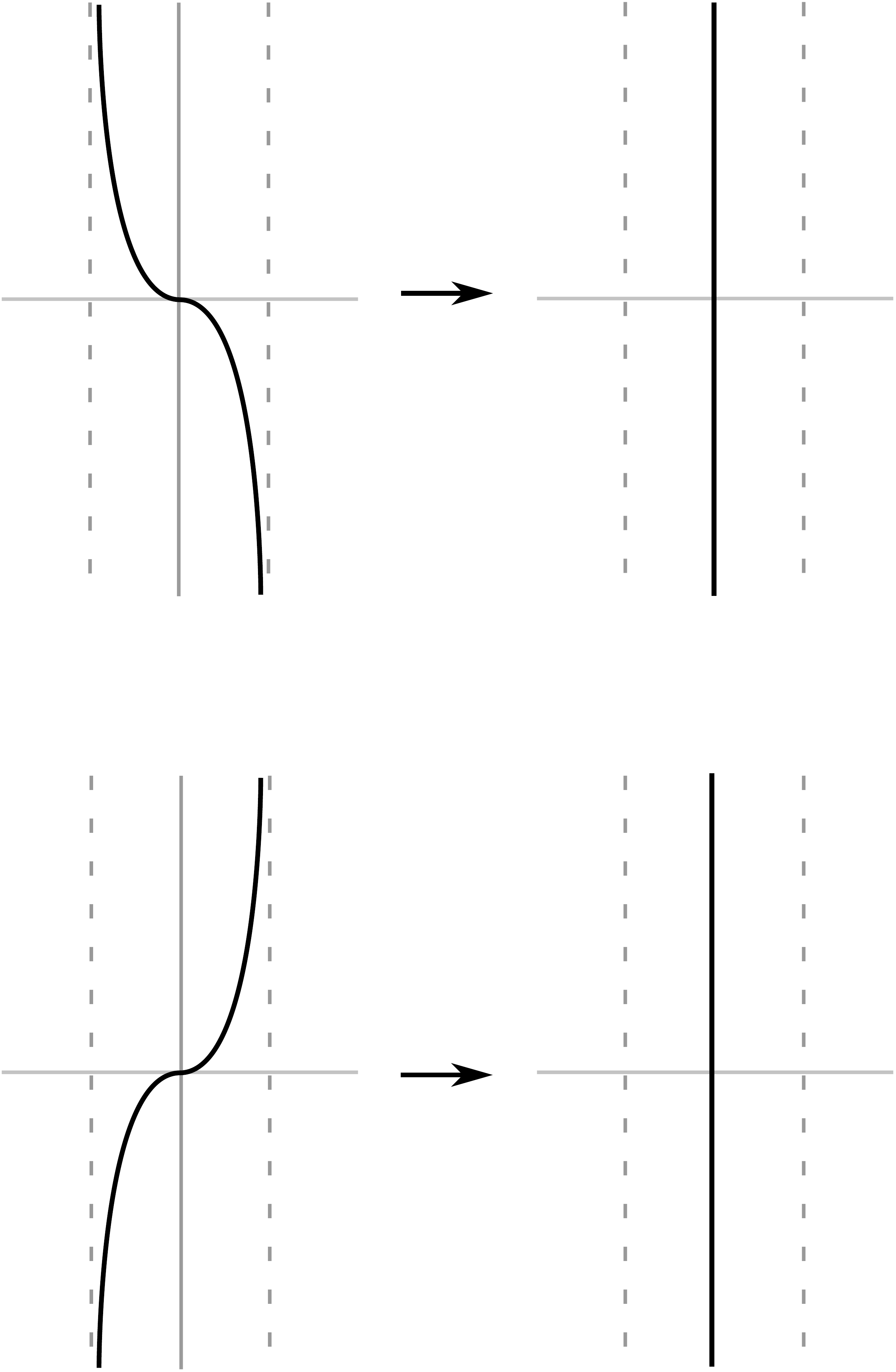}}
	,(0,5)*+!R{X}
	,(3,5)*+!L!U{\alpha}
	,(13,5)*+!L!U{\alpha+{\frac {1} {2}}}
	,(0,18)*+!R{\Omega X}
	,(3,18)*+!L!D{\alpha}
	,(13,18)*+!L!D{\alpha-{\frac {1} {2}}}
	\endxy$$
	\begin{image}\label{fig:loops and tilts} A comparison of $\Omega \sL$ and $\sL$.  At the origin, both $\Omega \sL$ and the tilt of $\sL$ have grading equal to $\alpha$.  Isotoping both $\Omega \sL$ and the tilt of $\sL$ to the product of $\sL$ with the conormal to a point, we see that the two objects are isomorphic up to a grading shift of one.\end{image}
\end{figure}

\begin{proposition}\label{prop:loop functor}
Fix an object $\sL\in \Lag$.
The kernel of the zero morphism $\zerolag \to \sL$, or in other words, the  limit to the diagram
	$$\xymatrix{
		&	\zerolag	\ar[d]	\\
	\zerolag	\ar[r]	&	\sL
	}$$
is the brane $\sL$ with grading shifted downward by one.  

More generally, given a $k$-simplex $\sP{\colon\thinspace}\Delta^k\to \Lag$ thought of as a brane, its looping $\Omega \sP{\colon\thinspace}\Delta^k \to \Lag$ is 
again the brane $\sP$  with grading shifted downward by one.  
\end{proposition}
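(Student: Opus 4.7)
The plan is to unpack the defining property of $\Omega$ using the concrete kernel construction from Section~\ref{sect: kernel}. By definition, $\Omega \sL$ is a kernel of the zero morphism $0:\zerolag \to \sL$, and by Theorem~\ref{theorem: kernels exist} such a kernel is represented, up to a contractible space of choices, by the brane $\sK(0)$ of Definition~\ref{def: kernel}. First I would represent the morphism $0:\zerolag\to \sL$ concretely by the zero morphism $z_{\es\to}$ of Example~\ref{example: zero maps}, whose underlying Lagrangian is the product $\sL \times \Gamma_{\sigma_{\es\to}}$ in $M \times T^*\R_t$. Then I would form the cone $C(0)$ from Construction~\ref{construction: cone}. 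Since the $\sL_0 = \zerolag$ piece is empty, the cone reduces to the union of $\sL\times \Gamma_{\sigma_{\es\to}}$ with the right-hand collar $\sL\times \Gamma_{\kappa}|_{(1,2)}$; altogether $C(0)$ has the form $\sL \times \Gamma$ where $\Gamma \subset T^*\R$ is a single graph of an exact one-form that escapes to $-\infty$ at two prescribed times.

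Next I would verify that as an object of $\sLag^{\dd 1}_\Lambda(M)$, this product is equivalent to the stabilization $\sL^{\dd 1} = \sL \times \sN_0[\alpha]$ for the appropriate grading $\alpha$. The curve $\Gamma$ is Hamiltonian isotopic through graphs of compactly supported exact one-forms to a vertical fiber, i.e.\ to the conormal $T^*_{\{0\}} \R$ underlying $\sN_0$. This is a standard straight-line homotopy of one-forms, taking place inside $T^*\R$, and by Lemma~\ref{lemma: tilts} (or directly by the ribbon/twist picture recalled in Figure~\ref{figure: twist}) it lifts to a Hamiltonian isotopy of branes in $M \times T^*\R$ which preserves the primitive and pin structure and transports the grading of $\Gamma$ continuously onto the zero section.

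The heart of the argument—and what I expect to be the only real content—is the grading calculation, which is exactly the content of Figure~\ref{fig: gradings}. One must compare the grading on $\Gamma$, which near the bottleneck of the cone is tilted upward by a half period, with the grading induced on the corresponding conormal $\sN_0[\tfrac{1}{2}]$ used in our stabilization tower. Tracking the Lagrangian phase along the isotopy through graphs, the tangent line rotates by a full half period more than the conormal tilt, so the grading on $\sL \times \Gamma$ differs from that on $\sL \times \sN_0[\tfrac{1}{2}]$ by $-1$. In brane-structure terms, $(f_\sK, \alpha_\sK, [c_\sK]) = (f_{\sL^{\dd 1}}, \alpha_{\sL^{\dd 1}} - 1, [c_{\sL^{\dd 1}}])$, so $\Omega \sL \simeq \sL[-1]$ as asserted.

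Finally, for the generalization to $k$-simplices $\sP:\Delta^k \to \sLag_\Lambda(M)$, nothing changes beyond bookkeeping. The kernel construction and the subsequent Hamiltonian isotopy above take place fiberwise in the $T^*\R$ factor and are manifestly compatible with face and degeneracy maps (cf.\ Section~\ref{section: construction of {\kernelfunctor^i}}, where the functor $F^i$ was defined simplex-by-simplex compatibly with simplicial maps), and the grading shift is a pointwise operation on the brane structure. Hence the loop functor $\Omega$ simply sends the $k$-simplex $\sP$ to the same underlying Lagrangian $k$-cobordism equipped with the shifted grading $\alpha_\sP - 1$, giving $\Omega \sP = \sP[-1]$.
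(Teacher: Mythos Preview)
Your proposal is correct and lands on the same essential point as the paper: the kernel of the zero morphism $\zerolag \to \sL$ factors as $\sL$ times a planar curve in $T^*\R$, and the grading calculation reduces to Figure~\ref{fig: gradings}. The paper organizes the argument slightly differently: rather than applying the kernel construction directly to $\zerolag \to \sL$ and then arguing simplicial compatibility by hand, it first reduces to the case $M' = \Lambda' = \sL' = pt$, constructs the kernel-diagram cobordism $\sh' \subset T^*\R^2$ there once and for all, and then for any $k$-simplex $\sP$ represented by a brane $\sQ$ takes the product $\sQ \times \sh'$. This product is a $(k+2)$-simplex that visibly defines a simplicial section $\Lag \to \Lag^\Omega$, so functoriality of $\Omega$ is automatic. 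Your direct approach is equally valid but requires the explicit appeal to the fiberwise nature of the carving to justify compatibility with face and degeneracy maps; the paper's product-with-$\sh'$ packaging buys that compatibility for free and makes transparent the $\Lag_{pt}(pt)$-module structure noted in the remark following the proposition.
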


\begin{proof}
Set $M' = \Lambda' = \sL' = pt$, and consider the zero map $\zerolag \to \sL'$. 
Recall the corresponding  cobordism $\sh' \subset T^*\R^2$ constructed as part of the kernel diagram.

Returning to the given $M$ and $\Lambda$, consider a $k$-simplex $\sP{\colon\thinspace}\Delta^k \to \Lag_\Lambda(M)$ representing
a cobordism $\sQ \subset M \times T^*\R \times T^* \R^k$.  Then the product $\sQ \times \sh' \subset M \times T^*\R \times T^* \R^k \times T^*\R^2$ is again a cobordism giving a $(k+2)$-simplex of $\Lag$.  

It is easily seen that $\sQ \times \sh'$ gives a simplicial section $\Lag\to \Lag^\Omega$.

Finally, observe that  the product with $\sh'$ gives a cobordism from the original $k$-simplex $\sP$ to a new $k$-simplex $\Omega \sP$ whose underlying brane has grading  shifted downward by one.  
\end{proof}

\begin{remark}
One can similarly prove that the suspension functor is given by a shift of grading upward by  one.
\end{remark}

\begin{remark}
The reader may wonder if the grading shift arose simply from grading conventions.  But the grading conventions are necessary---for the cobordism $\sH$ from the composition $\sP \circ \sk$ to the zero map to restrict to the appropriate gradings on its boundaries, a grading shift of one needed to appear.
\end{remark}

Note that shifting grading is clearly an autoequivalence. Hence we  can now conclude with our main result:

\begin{proof}[Proof of  Theorem~\ref{thm. stable}.]
Recall Theorem~\ref{stablealternative}). We note that its three hypotheses follow from Theorem~\ref{thm. zero} ($\Lag$ has a zero object), Theorem~\ref{thm. kernels exist} ($\Lag$ admits kernels), and Proposition~\ref{prop:loop functor} (the loop functor of $\Lag$ is invertible).
\end{proof}

\clearpage
\section{Appendix on \texorpdfstring{$\infty$}{oo}-categories}

Here we review $\infty$-categories, in particular collecting the results we need for our present work. In this appendix only, we will use the term {\em category} to mean a strict category -- that is, a category in the sense of Mac Lane~\cite{maclane} for example.  Outside the appendix, by a {\em category}, we mean an {\em $\infty$-category}.

\subsubsection{Background}
The theory of $\infty$-categories was developed by Joyal (a recent reference is \cite{Joyal}), who coined them {\em quasi-categories}. Boardman and Vogt \cite{BV73}, the first to name the idea, coined them {\em weak Kan complexes}. The notion of $\infty$-category is a direct generalization of ordinary categories, allowing spaces of morphisms between objects. Moreover, the notion of a functor between quasi-categories, in which homotopies between morphisms must be preserved, is easily defined.

There are many extant models for capturing the idea of an $(\infty,1)$-category, including complete Segal spaces and simplicial categories, all of which are equivalent. (See \cite{Bergner}.) 
We choose to use the language of quasi-categories since a formidable toolkit based on quasi-categories has been developed, as most notably evidenced in Lurie's \cite{LurieTopos} and \cite{LurieHigherAlgebra}. Moreover, since every aspect of the theory is based on the well-studied theory of simplicial sets, deep homotopical statements can be reduced to proofs requiring concrete objectives. The model is formal enough to capture the structures we take interest in, and down-to-earth enough that we can prove concrete, applicable results.

As the main references for quasi-categories are quite large, we hope that this appendix will save the reader some effort in digging through the archives. Nothing collected in the appendix is original, and we learned most of this material from~\cite{LurieTopos}. (This is also our source for the term {\em $\infty$-category}.)

\subsection{Simplicial sets, horns, quasi-categories}
Let $\Delta$ be the category whose objects are finite, ordered, non-empty sets, and whose morphisms are order-preserving maps. Every object in $\Delta$ is isomorphic to the object $[n] = \{0,1,\ldots,n\}$ with the standard ordering.
 
As usual the injective morphism $d_i{\colon\thinspace} [n] \into [n+1]$ skipping the $i$th element of $[n+1]$ is called the {\em $i$th face map}, and the surjective morphism $s_i{\colon\thinspace} [n+1] \to [n]$ whose fiber above $[i] \in [n]$ is the set $\{i,i+1\} \subset [n+1]$ is called the {\em $i$th degeneracy map}.  Every morphism in $\Delta$ is a composition of face and degeneracy maps.

\begin{definition}
 A {\em simplicial set} is a functor $X{\colon\thinspace} \Delta^{op} \to \Sets$.  A morphism between simplicial sets is a natural transformation.  The set $X([i])$ is called the {\em set of $i$-simplices of $X$}.
\end{definition}

\begin{example}
 The {\em $n$-simplex}, written $\Delta^n$, is the representable functor given by $\Delta^n([i]) = \Hom_\Delta([i],[n]).$  Given an $n$-simplex $\Delta^n \to X$, the {\em $j$th face} is the $(n-1)$-simplex opposite the $j$th vertex.
\end{example}

\begin{example}[The nerve of a category]
 Given any small category $Z$, one can define a simplicial set called the {\em nerve of $Z$}.  The 0-simplices are the objects of $Z$, the 1-simplices are the morphisms of $Z$, and the $k$-simplices are diagrams of the form
\[
 \xymatrix{
 Z_0 \ar[r]^{f_{0}} & Z_1 \ar[r] & \ldots \ar[r]^{f_{k-1}} &Z_k.
 }
\]
For $0<i<k$, the $i$th face map is given by composing $f_i$ and $f_{i-1}$.  The other face maps `forget' the $0$th or $(k-1)$st morphism.  For all $0 \leq i \leq k$, the $i$th degeneracy map is given by introducing the morphism $id_{Z_i}{\colon\thinspace} Z_i \to Z_i$.
\end{example}

\begin{example}
For $0 \leq i \leq n$ we let $\Lambda_i^n$ be the simplicial set obtained by deleting the face opposite the $i$th vertex of the $n$-simplex.  We call $\Lambda_i^n$ the {\em $i$th horn of $\Delta^n$}.  When $0<i<n$, we call $\Lambda_i^n$ an {\em inner horn}.
\end{example}

\begin{definition}
Let $\Lambda_i^n \to \sC$ be a map of simplicial sets.  If there exists a map $\Delta^n \to \sC$ making the following diagram commute
	$$\xymatrix{
	\Lambda_i^n	\ar[r]		\ar[d]	&\sC		\\
	\Delta^n	\ar[ur]	&	
	}$$
we say that the horn $\Lambda_i^n \to \sC$ can be {\em filled}.
\end{definition}

\begin{definition}\label{def. weak Kan}
If for all $n\geq0$, any horn $\Lambda_i^n \to \sC$ can be filled, we say that $\sC$ is a {\em Kan complex}.
If any horn can be filled for $0<i<n$, we say that $\sC$ is a {\em weak Kan complex}, an {\em $\infty$-category}, and a {\em quasi-category}. (The three terms are synonymous, and all in common usage.)
\end{definition}

\begin{example}
If $\sC$ is the nerve of a small category, we can fill $\Lambda_1^2$ because we can compose any two functions.  To be able to fill $\Lambda_0^2$ or $\Lambda_2^2$ means we can find inverses to any function.  Intuitively, the difference between a Kan complex and a weak Kan complex is that the former is a groupoid, while the latter may have non-invertible morphisms.
\end{example}

\begin{example}
Given any topological space $X$, one can define the simplicial set of {\em singular chains on $X$}, $\Sing(X)_\bullet$, whose $n$-simplices are continuous maps of the standard $n$-simplex into $X$. This is a Kan complex.
\end{example}

\begin{example}
 One can show that, if the horn-fillings in Definition~\ref{def. weak Kan} are unique for every inner horn, then $\sC$ is the nerve of a category.  Hence one already sees that a weak Kan complex (where horn-fillings are not necessarily unique) describes a flimsier, more general notion of ``category," in which compositions are not unique.
\end{example}

\begin{definition}
 Let $\sC$ be an $\infty$-category.  Then we call the 0-simplices of $\sC$ the {\em vertices}, or {\em objects}, of $\sC$.  We also call the 1-simplices the {\em edges}, or the {\em morphisms}, of $\sC$.  We call $k$-simplices {\em $k$-morphisms.}
 
A {\em functor} between two quasi-categories is simply a map of simplicial sets.
\end{definition}

\begin{example}
Let $\sC$ be the nerve of an ordinary category, and let $\mathsf{Spaces}$ be the $\infty$-category of topological spaces. Then a functor from $\sC$ to $\mathsf{Spaces}$ takes commutative diagrams in $\sC$ to {\em homotopy commutative} diagrams in $\mathsf{Spaces}$.
\end{example}

\subsection{Mapping spaces} For a quasi-category $\sC$, there are several equivalent models for the space $\Hom_\sC(X,Y)$, and we use the following.

\begin{definition}
 Given two objects $X,Y$ in $\sC$, let $\Hom_\sC(X,Y)$ be the simplicial set whose $k$-simplices are maps $\Delta^{k+1} \to \sC$ such that $(k+1)$st face of $\Delta^{k+1}$ is sent to the object $X$, and such that the $(k+1)$st vertex is sent to the object $Y$.  (The face and degeneracy maps are defined in the natural way.)  We call this the {\em mapping space from $X$ to $Y$.}
\end{definition}

It is not hard to justify the terminology mapping {\em space}:

\begin{proposition}
 If $\sC$ is a quasi-category, then for all objects $X, Y \in \sC$, $\Hom_\sC(X,Y)$ is a Kan complex.
\end{proposition}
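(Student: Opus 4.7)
The plan is to verify directly that every horn $\Lambda^n_i \to \Hom_\sC(X,Y)$ with $n \ge 1$ and $0 \le i \le n$ admits a filler, by translating the horn-filling problem into $\sC$ itself and combining the quasi-category property of $\sC$ with Joyal's theorem on filling outer horns whose bounding edge is an equivalence.

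First I would unpack the definition. An $n$-simplex of $\Hom_\sC(X,Y)$ is a map $\sigma : \Delta^{n+1} \to \sC$ whose face $d_{n+1}\sigma$ is degenerate at $X$ and whose terminal vertex is $Y$. A brief inspection of the induced face maps shows that giving a horn $\Lambda^n_i \to \Hom_\sC(X,Y)$ is the same data as giving a horn $\Lambda^{n+1}_i \to \sC$, with the same index $i$, whose $(n+1)$st face is degenerate at $X$ and whose last vertex is $Y$. Crucially, the face $d_{n+1}$ is already present in $\Lambda^{n+1}_i$ because $i \le n < n+1$, so the degeneracy constraint travels with the horn data. Any filling of this horn in $\sC$ will then automatically restrict to a well-defined $n$-simplex of $\Hom_\sC(X,Y)$, since the conditions on $d_{n+1}$ and on the terminal vertex are pinned down by the horn and are not touched by the filler.

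It therefore remains to fill $\Lambda^{n+1}_i \to \sC$. If $1 \le i \le n$, then $0 < i < n+1$, so this is an \emph{inner} horn and is filled directly by the defining weak Kan property of $\sC$; in particular the case $i = n$ falls here, since $n \ge 1$ forces $n < n+1$. The remaining case is $i = 0$: this produces an \emph{outer} horn $\Lambda^{n+1}_0 \to \sC$, but its initial edge, from vertex $0$ to vertex $1$, lies inside the face $d_{n+1}$, which is degenerate at $X$; hence this edge is $\mathrm{id}_X$ and in particular an equivalence in $\sC$. Joyal's theorem on outer horns with an invertible initial edge then supplies the filler in $\sC$.

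The principal obstacle is exactly this last case: the inner horn cases are immediate from the definition of a quasi-category, whereas the $i = 0$ case genuinely depends on Joyal's outer-horn-filling theorem, which is the only nontrivial input. A more conceptual alternative would be to identify $\Hom_\sC(X,Y)$ with the fiber over $X$ of the overcategory projection $\sC_{/Y} \to \sC$, prove that this projection is a right fibration, and quote the fact that fibers of right fibrations are Kan complexes; this would package all of the above horn-filling into a single structural statement about slices.
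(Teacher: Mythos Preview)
Your argument is correct. The paper does not give its own proof; it simply cites Proposition~1.2.2.3 of Lurie's \emph{Higher Topos Theory}. Your direct verification---translating a horn $\Lambda^n_i \to \Hom_\sC(X,Y)$ into a horn $\Lambda^{n+1}_i \to \sC$ with prescribed $(n{+}1)$st face, handling $1 \le i \le n$ by the inner-horn property, and handling $i=0$ via Joyal's special outer-horn theorem (the edge $0 \to 1$ being the identity of $X$)---is exactly the standard hands-on proof of this fact. The bookkeeping you do (checking that the degenerate face $d_{n+1}$ sits inside the horn since $i \le n$, and that the filler therefore automatically lands in $\Hom_\sC(X,Y)$) is the only delicate point, and you have it right.

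The alternative you sketch at the end, identifying $\Hom_\sC(X,Y)$ with a fiber of the right fibration $\sC_{/Y} \to \sC$, is closer in spirit to how Lurie organizes such results, and has the advantage of packaging the Joyal outer-horn input into a single structural statement about slice fibrations rather than invoking it ad hoc. Your direct approach, by contrast, is more self-contained and makes the role of the degeneracy condition completely transparent. Either route is perfectly adequate here.
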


\begin{proof}
See Proposition 1.2.2.3 of \cite{LurieTopos}.
\end{proof}

There is another model for the mapping space, given by labeling vertices dually. That is, we can define a Kan complex $\Hom^L_\sC(X,Y)$ to be simplicial set whose $k$-simplices are maps $\Delta^{k+1} \to \sC$ where the 0th vertex is $X$, but whose 0th face is the degenerate $n$-simplex at $Y$. The choice of models is inconsequential:

\begin{proposition}
$\Hom_{\sC}(X,Y)$ is homotopy equivalent to $\Hom^L_\sC(X,Y)$.
\end{proposition}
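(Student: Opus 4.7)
The plan is to introduce a common ``two-sided'' mapping space
$$
\Hom^H_\sC(X,Y)
$$
whose $n$-simplices are simplicial maps $\sigma: \Delta^n \times \Delta^1 \to \sC$ such that $\sigma|_{\Delta^n \times \{0\}}$ is the constant $n$-simplex at $X$ and $\sigma|_{\Delta^n \times \{1\}}$ is the constant $n$-simplex at $Y$. I would then construct natural simplicial maps
$$
\Hom^L_\sC(X,Y) \xrightarrow{\;p\;} \Hom^H_\sC(X,Y) \xleftarrow{\;q\;} \Hom_\sC(X,Y)
$$
and show that each is a trivial Kan fibration, yielding the desired weak equivalence in two steps.

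To construct $p$ and $q$, I would invoke the standard triangulation of the prism $\Delta^n \times \Delta^1$ as a union of $n+1$ copies of $\Delta^{n+1}$ glued along codimension-one faces, indexed by the $(n,1)$-shuffles. An $n$-simplex of $\Hom^L_\sC(X,Y)$, namely a map $\Delta^{n+1} \to \sC$ with initial vertex at $X$ and zeroth face constant at $Y$, extends uniquely to a prism map satisfying the defining boundary conditions of $\Hom^H_\sC(X,Y)$ by forcing the remaining top simplices to be degenerate in the $\Delta^1$ direction; this defines $p$. A symmetric assignment, using the last top simplex of the prism, defines $q$.

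The principal step is verifying that $p$ and $q$ are trivial Kan fibrations. I would handle this via a direct horn-filling argument: a lifting problem against $\partial \Delta^m \hookrightarrow \Delta^m$ into $\Hom^H_\sC(X,Y)$ that factors through $\Hom^L_\sC(X,Y)$ on $\partial \Delta^m$ unpacks, via the prism decomposition, into a sequence of $m+1$ horn-filling problems in $\sC$, to be solved one at a time using the weak Kan property of $\sC$. The constancy boundary conditions at $\Delta^m \times \{0\}$ and $\Delta^m \times \{1\}$ guarantee that every horn that arises is inner (the constant faces supply the missing outer faces), and the compatibility of the sequence of fillings is ensured by proceeding shuffle by shuffle. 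A symmetric argument, reading the prism from the opposite end, gives the same conclusion for $q$.

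The main obstacle is the combinatorial bookkeeping in the previous paragraph: verifying carefully that the sequential horn-fillings produce only inner horns, that the boundary data on each successive $\Delta^{m+1}$ is coherently accounted for, and that the procedure is functorial in the boundary $\partial \Delta^m \to \Hom^L_\sC(X,Y)$. Once this is in place, $p$ and $q$ are trivial Kan fibrations between Kan complexes, and in particular homotopy equivalences, so the zig-zag $\Hom^L_\sC(X,Y) \to \Hom^H_\sC(X,Y) \leftarrow \Hom_\sC(X,Y)$ witnesses the desired homotopy equivalence.
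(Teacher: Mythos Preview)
The paper does not prove this statement; it simply cites Lurie's \emph{Higher Topos Theory} (Remark~1.2.2.5 and Corollary~4.2.1.8). Your strategy of interpolating via the two-sided prism model $\Hom^H_\sC(X,Y)$ is exactly the standard approach that underlies Lurie's argument, so in spirit you are reproducing what the citation points to rather than doing something new.

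There is, however, a genuine error in the direction of your maps. The sections $p:\Hom^L_\sC(X,Y)\to\Hom^H_\sC(X,Y)$ and $q:\Hom_\sC(X,Y)\to\Hom^H_\sC(X,Y)$ that you describe (extending a single $(n{+}1)$-simplex to a prism by degenerating the remaining top simplices) are \emph{not} trivial Kan fibrations. Already at level $1$ they fail to be surjective: a general square $\Delta^1\times\Delta^1\to\sC$ with degenerate left and right edges need not have either of its two constituent triangles degenerate, so it does not lie in the image of $p$ or $q$. Hence the lifting problem you describe cannot be solved in general.

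What \emph{is} true, and what your horn-filling sketch actually establishes once the arrows are reversed, is that the \emph{restriction} maps
\[
\Hom^L_\sC(X,Y)\ \xleftarrow{\ r^L\ }\ \Hom^H_\sC(X,Y)\ \xrightarrow{\ r^R\ }\ \Hom_\sC(X,Y),
\]
given by restricting a prism map to its first (respectively last) maximal $(n{+}1)$-simplex, are trivial Kan fibrations. The lifting problem for $r^L$ against $\partial\Delta^m\hookrightarrow\Delta^m$ asks one to extend a map defined on $(\partial\Delta^m\times\Delta^1)\cup(\text{first top simplex of }\Delta^m\times\Delta^1)$ to all of $\Delta^m\times\Delta^1$; this is solved by filling the remaining top simplices one at a time, and the constancy conditions on $\Delta^m\times\{0\}$ and $\Delta^m\times\{1\}$ force each successive horn to be inner, exactly as you say. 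With this correction the zig-zag of trivial fibrations gives the desired homotopy equivalence. (Your maps $p,q$ then appear as sections of $r^L,r^R$, and are weak equivalences by two-out-of-three, but that is a consequence rather than the mechanism.)
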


\begin{proof}
See Remark 1.2.2.5 and Corollary 4.2.1.8 in \cite{LurieTopos}.
\end{proof}

\subsection{Limits and terminal objects}
Now we discuss the notion of (homotopy) limits and colimits in the language of $\infty$-categories. The homotopical flexibility is built into the $\infty$-category framework.

\begin{definition}\label{def. join}
Let $A$ and $B$ be two simplicial sets.  The {\em join from $A$ to $B$}, denoted $A \star B$, is the simplicial set defined as follows.  The set of $n$-simplices is given by 
\[
 (A \star B)[n] = A[n] \bigcup B[n] \bigcup_{i+j = n-1} A[i]\times B[j].
\]
The face and degeneracy maps for an $n$-simplex $(a,b) \in A[i] \times B[j]$ are given by
	\[
		d_k (a,b) = \begin{cases}
		(d_k a, b) & k \leq i \\
		(a, d_{k-i-1}b) & k > i.
		\end{cases}
		\qquad
		s_k (a,b) = \begin{cases}
		(s_k a, b) & k \leq i \\
		(a, s_{k-i-1}b) & k > i.
		\end{cases}
	\]
When $a$ (or $b$) is a zero-simplex, the corresponding boundary map merely forgets $a$ (or $b$). 
\end{definition}

Note that if $A$ is a point, then $A \star B$ is the cone over $B$.  So if $D{\colon\thinspace} B \to \sC$ is a diagram in $\sC$, a map $A \star B \to \sC$ restricting to $D$ along $B$ yields an object living over the diagram $D$.  Similarly, a map $\Delta^1 \star B \to \sC$ restricting to $D$ over $B$ would be a map between two objects living over $D$.  This prompts us to define

\begin{definition}
Let $D{\colon\thinspace} B \to \sC$ be a diagram in $\sC$, and let $\sSet_D( A \star B , \sC)$ be the simplicial set of all maps $\phi{\colon\thinspace} A \star B \to \sC$ such that $\phi|_B = D$.  We define a simplicial set $\sC_D$ by the identity
\[
 \sSet(\Delta^n, \sC_D) = \sSet_D(\Delta^n \star B, \sC)
\]
with the obvious face and degeneracy maps.  We call $\sC_D$ the {\em $\infty$-category of objects living over $D$}, or the {\em over-category} when the context is clear.
\end{definition}

\begin{definition}
A {\em terminal object $X$} in a quasi-category  $\sC$ is a vertex $X \in \sC$ such that the forgetful map $\sC_{X} \to \sC$ from the overcategory to $\sC$ is a trivial Kan fibration.  That is, if the right lifting problem
	$$\xymatrix{
		\partial \Delta^k	\ar[r]	\ar[d]	&	\sC_{X}	\ar[d]	\\
		\Delta^k		\ar[r]	\ar@{-->}[ur]&	\sC
	}$$
can always be solved.
\end{definition}

\begin{remark}
Let us restrict our attention to maps $\partial \Delta^k \to \sC_X$ such that the composition $\partial \Delta^k \to \sC_X \to \sC$ is constant, with image $X'$.  Then the condition of being a trivial Kan fibration guarantees that the mapping space $\Hom_\sC(X',X)$ is contractible. The following assures the converse:
\end{remark}

\begin{proposition}\label{prop. terminal objects}
If $\sC$ is a quasi-category, $X \in \sC$ is a terminal object if and only if $\Hom_\sC(X',X)$ is contractible for all objects $X' \in \sC$.
\end{proposition}
\begin{proof}
See Proposition 1.2.12.4 of \cite{LurieTopos}.
\end{proof}

\begin{definition}
Fix a diagram $D{\colon\thinspace} B \to \sC$.  We say that $X \in \Lag_D$ is a {\em limit} for $D$ if it is a terminal object in $\Lag_D$.
\end{definition}

\begin{remark}
In ordinary category theory, terminal objects are unique up to unique isomorphism.  In a quasi-category, any two terminal objects are unique up to a contractible choice of isomorphisms.
\end{remark}

\begin{example}[Kernels]
We define a {\em zero object} of $\sC$ is an object 0 which is both initial and terminal.  Then a {\em kernel} to a map $P{\colon\thinspace} L_0 \to L_1$ is a limit to the diagram $D$, which we depict as
\[
	\xymatrix{ 	&	L_0	\ar[d]^P		\\
				0	\ar[r]	&	L_1.
	}
\]
\end{example}

\subsection{Dual notions}
To speak of initial objects, and colimits, all we need is to define the opposite of a quasi-category.  In general, we can define the opposite of a simplicial set.

\begin{definition}
Let $\sC$ be a simplicial set.  Then the {\em opposite} of $\sC$, written $\sC^{op}$, is given on objects by $\sC^{op}([n]) = \sC([n])$.  On morphisms,
\[
 d_i{\colon\thinspace} \sC^{op}([n]) \to \sC^{op}([n-1])
\]
is given by
\[
 d_{n-i} {\colon\thinspace} \sC([n]) \to \sC([n-1])
\]
and likewise for the degeneracy maps $s_i$.
\end{definition}

\begin{example}
When $\sC = \Delta^1$, with non degenerate edge $X$ and vertices $d_0(X) = X_0$, $d_1(X) = X_1$, $\sC^{op}$ has the same simplices, but $d_0(X) = X_1$, and $d_1(X) = X_0$.  This shows that, in general (as one would expect) the directions of the morphisms are reversed.
\end{example}

\begin{example}
Given a 2-simplex
\[
\xymatrix{
 B \ar[r] & C \\
 A \ar[u] \ar[ur] &
}
\]
in $\sC$, the corresponding 2-simplex in $\sC^{op}$ can be drawn as
\[
\xymatrix{
 B \ar@{<-}[r] & C \\
 A \ar@{<-}[u] \ar@{<-}[ur] &
}
\]
which simply asserts the existence of a homotopy from the composite $C \to B \to A$ to $C \to A$.
\end{example}

\begin{definition}
Given an $\infty$-category $\sC$, an {\em initial object} $X$ is a terminal object of $\sC^{op}$.  Similarly, given a diagram $D$, a {\em colimit} to $D$ is a limit to $D^{op}$ in $\sC^{op}$.   
\end{definition}

\subsection{The simplicial nerve}\label{appendix. nerve}
This section is not necessary for reading the present work. However, the idea of the simplicial nerve inspired the definition of our quasi-category, so we include it here.

Given any fibrant simplicial category $\cC$ (a category enriched over Kan complexes) one can form a quasi-category $\sC$.  This is done by taking the {\em simplicial nerve}. This in fact gives an equivalence between the model category of quasi-categories and simplicial categories. (See \cite{Bergner}.)  One should consider the simplicial nerve and the original simplicial category equivalent.  For instance, one can prove:

\begin{proposition}
Let $\sC$ be the simplicial nerve of $\cC$. Then for all objects $X,Y$, the space $\Hom_{\sC}(X,Y)$ is homotopy equivalent to $\cC(X,Y)$.
\end{proposition}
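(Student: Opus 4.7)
The plan is to unpack both sides as simplicial sets and construct an explicit natural comparison. Recall that by the definition of the simplicial nerve, an $n$-simplex of $\sC = N(\cC)$ is a simplicial functor $F: \fC[\Delta^n] \to \cC$, where $\fC[\Delta^n]$ is the simplicial category with objects $\{0, \ldots, n\}$ and morphism spaces $\fC[\Delta^n](i,j)$ (for $i \leq j$) the nerve of the poset $P_{ij}$ of subsets $S \subseteq [i,j]$ containing $\{i,j\}$, ordered by inclusion, with composition given by union of subsets. Unpacking the appendix's definition, an $n$-simplex of $\Hom_\sC(X,Y)$ is then a simplicial functor $F: \fC[\Delta^{n+1}] \to \cC$ whose restriction to the full subcategory on $\{0,\ldots,n\}$ is constant at $X$ and with $F(n+1) = Y$.

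Under these constancy constraints, the composition axiom for $F$ forces every mapping-space map $F_i : \fC[\Delta^{n+1}](i, n+1) \to \cC(X,Y)$ to be determined by the single map $F_0: \fC[\Delta^{n+1}](0, n+1) \to \cC(X,Y)$, via the formula $F_i(S) = F_0(\{0\}\cup S)$ (coming from left-composition with $\id_X$). I would then construct the comparison $\psi: \cC(X,Y) \to \Hom_\sC(X,Y)$ by sending $\sigma: \Delta^n \to \cC(X,Y)$ to the functor $F_\sigma$ with $F_0 = \sigma \circ r$, where $r: N(P_{0,n+1}) \to \Delta^n$ is the canonical retraction of the $n$-cube onto the ``long spine''
\[
\{0,n+1\} \subsetneq \{0,n,n+1\} \subsetneq \cdots \subsetneq \{0,1,\ldots,n+1\};
\]
the other $F_i$ are then forced by the composition formula above. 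Naturality in $n$ and compatibility with face and degeneracy maps is straightforward.

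The main step is then proving $\psi$ is a homotopy equivalence, and here is where the real work lies. The core combinatorial claim is that the inclusion of the long spine $\Delta^n \hookrightarrow N(P_{0,n+1})$ into the nerve of the $n$-cube is anodyne (a weak categorical equivalence obtained by iteratively filling inner horns generated by the union structure). Granting this, the constancy-at-$X$ conditions make the data of an $n$-simplex $F \in \Hom_\sC(X,Y)_n$ equivalent up to contractible choice to the data of an $n$-simplex of $\cC(X,Y)$, exhibiting $\psi$ as a weak equivalence (in fact as a trivial Kan fibration onto its image). The hard part will be this combinatorial verification; I would proceed by induction on $n$, with the base case $n=0$ amounting to the tautological bijection between $\cC(X,Y)_0$ and the set of $1$-simplices of $N(\cC)$ with domain $X$ and codomain $Y$. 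Alternatively, one may sidestep the direct combinatorics by invoking the Quillen equivalence between the Bergner model structure on simplicial categories and the Joyal model structure on quasi-categories (see~\cite{LurieTopos}), under which $\psi$ is precisely the comparison of derived mapping spaces.
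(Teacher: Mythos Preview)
The paper does not supply a proof here; the proposition is stated as known background, with the surrounding text pointing to~\cite{Bergner} and~\cite{LurieTopos}. Your fallback to the Quillen equivalence between the Joyal and Bergner model structures is correct and is essentially how this is handled in~\cite{LurieTopos} (\S2.2), so that route is sound.

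Your direct construction of $\psi$ has a concrete error, though a fixable one. The composition axiom for a triple $i<j\le n$ demands $F_i(S\cup T)=F_j(T)$ for all $S\in P_{ij}$ and $T\in P_{j,n+1}$; with your formula $F_i(T)=F_0(\{0\}\cup T)$ this becomes $r(\{0\}\cup S\cup T)=r(\{0\}\cup T)$, and iterating forces $r(U)$ to depend only on $\max\bigl(U\cap(0,n]\bigr)$. But every nontrivial vertex of the spine you chose has this maximum equal to $n$, so no poset retraction onto it can satisfy the constraint, and hence $F_\sigma$ as you define it is not a simplicial functor. The fix is to use the opposite spine
\[
\{0,n+1\}\subsetneq\{0,1,n+1\}\subsetneq\{0,1,2,n+1\}\subsetneq\cdots\subsetneq\{0,1,\ldots,n+1\},
\]
with retraction $r(U)=\{0,1,\ldots,\max(U\cap(0,n]),\,n+1\}$; then $F_\sigma$ really is a simplicial functor and $\psi$ is a well-defined section. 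Even so, your weak-equivalence step is not handled by the anodyne-ness of the spine inclusion into the cube: an $n$-simplex of $\Hom_\sC(X,Y)$ corresponds to a map out of the \emph{quotient} $Q^n$ of $N(P_{0,n+1})$ by the composition relations, not out of the cube itself, so what you actually need is that $Q^n$ is weakly equivalent to $\Delta^n$. This is true but requires its own argument; absent that, the cleanest route remains the model-categorical one you already mention.
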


In this section we discuss the simplicial nerve construction in some detail. We hope this may motivate the factorization conditions placed on Lagrangian cobordisms, which were inspired by this construction.

\begin{remark}
This section closely follows Lurie ~\cite{LurieTopos}.  In other literature, what we call the simplicial nerve is also called the {\em homotopy coherent nerve}. It is also a special case of Boardman and Vogt's W construction for simplicial operads.
\end{remark}

Let $\sSet$ denote the category of simplicial sets and $\sCat$ that of simplicial categories
(an object of $\sCat$ is a category enriched over simplicial sets).
There is an adjunction
$$
\xymatrix{
\fC{\colon\thinspace}\sSet \ar@<0.25em>[r] & \ar@<0.25em>[l] \sCat{\colon\thinspace}N
}
$$
where $\fC$ is the rigidification functor and $N$ is the simplicial nerve or coherent nerve (see also \cite{DuggerSpivak}). 

If $\cC$ is a fibrant simplicial category (a simplicial category whose morphism simplicial sets are Kan complexes), then the simplicial 
nerve $N(\cC)$ is an $\oo$-category. 

We will not describe the rigidification $\fC$ in general, but just enough to recall the construction of the simplicial nerve $N$.

\begin{definition}
Given integers $0\leq i \leq j$, let $P_{ij}$ be the partially ordered set whose elements are subsets of $[i,j] \subset \mathbb{N}$ containing $i$ and $j$ and whose ordering is given by inclusion.  Viewing this partially ordered set as a category, we let $N(P_{ij})$ denote the nerve of $P_{ij}$.  
\end{definition}

\begin{example} Here are some small examples of $P_{ij}$.

\begin{enumerate}
\item If $i=j$ or $i+1 = j$, then $P_{ij}$ is a category with the single object $\{i\}$ or $\{i, i+1\}$, and no non-identity morphisms.  

\item  If $i+2 = j$, then $P_{ij}$ is a category with two objects $\{i, i+2\}$, $\{i, i+1, i+2\}$ and a single non-identity morphism
$$
\xymatrix{
\{i, i+2\} \ar[r] & \{i, i+1, i+2\}.
}
$$

\item
If $i+3=j$, then $P_{ij}$ can be depicted as two glued triangles
\[
 \xymatrix{
 \{i,i+3\} \ar[r] \ar[d] \ar[rd]
 & \{i,i+1,i+3\} \ar[d]
 \\ \{i,i+2,i+3\} \ar[r] 
 & \{i,i+1,i+2,i+3\}. 
 }
\]
\end{enumerate}
\end{example}

\begin{remark} 
In general for $i < j$, the category $P_{ij}$ can be depicted as a simplicial decomposition of a $k$-cube where $k = i-j- 1$.  The $2^k$ vertices of the cube are in bijection with the ways to choose  a subset of $[i,j] \subset \Z$ containing $i$ and $j$.
\end{remark}

Let $\Delta^{n}$ denote the $n$-simplex as a representable simplicial set.
One defines the simplicial category $\fC[\Delta^n]$ to consist of $n+1$ objects labeled $0,1,\ldots,n$, with morphisms
$$
\fC[\Delta^n] (i,j) = P_{ij}, 
\quad\mbox{ for $i\leq j$},
$$ 
and no morphisms from $i$ to $j$, for $i>j$.
Composition is induced by unions of sets where we regard an element of $P_{ij}$ as a subset of $\Z$.  

\begin{remark}
Here we are thinking of the category $P_{ij}$ as a simplicial set by taking its nerve.
\end{remark}

\begin{definition}[Simplicial Nerve]
Given a simplicial category $\cC$, its {\em simplicial nerve $\sC = N(\cC)$} is the simplicial set defined by the property
\[
 \Hom_{\sSet}(\Delta^n,\sC) = \Hom_{\sCat}(\mathfrak{C}[\Delta^n],\cC).
\]

In other words, an $n$-simplex in $\sC$  is a functor of simplicial categories $\mathfrak{C}[\Delta^n] \to \cC$.
\end{definition}

For any arbitrary simplicial category this gives an arbitrary simplicial set. It is not hard to verify the following. (See also Proposition~1.1.5.10 of~\cite{LurieTopos}.)

\begin{proposition}
When $\cC$ is a fibrant simplicial category, $\sC$ is a quasi-category.
\end{proposition}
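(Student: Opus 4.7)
The plan is to verify that for each $n \geq 2$ and each $0 < k < n$, any inner horn $\Lambda^n_k \to N(\cC)$ extends over $\Delta^n$. By the defining adjunction $\fC \dashv N$, this is equivalent to solving the lifting problem
\[
\xymatrix{
\fC[\Lambda^n_k] \ar[r] \ar@{^(->}[d] & \cC \\
\fC[\Delta^n] \ar@{-->}[ur] &
}
\]
in $\sCat$, so the proof is purely a question about the inclusion of simplicial categories $\fC[\Lambda^n_k] \hookrightarrow \fC[\Delta^n]$.

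First, I would verify that this inclusion is an isomorphism on the mapping space $(i,j)$ for every pair $(i,j) \neq (0,n)$. Indeed, $\fC[\Delta^n](i,j) = N(P_{ij})$ is built from subsets of $[i,j] \subsetneq [0,n]$, and any such subset is already a vertex of some face of $\Delta^n$ other than the $k$-th face (since the $k$-th face is the only face of $\Delta^n$ containing the full vertex set $[0,n]$). Thus the only non-trivial data to be extended is on the mapping space from $0$ to $n$, so the lift reduces to the lifting problem of simplicial sets
\[
\xymatrix{
\fC[\Lambda^n_k](0,n) \ar[r] \ar@{^(->}[d] & \cC(0,n) \\
\fC[\Delta^n](0,n) \ar@{-->}[ur] &
}
\]
whose target is a Kan complex by the fibrancy hypothesis on $\cC$.

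It therefore suffices to show the left vertical inclusion is an anodyne extension of simplicial sets, for then one can solve the lifting problem cell-by-cell, and the assembled simplicial functor $\fC[\Delta^n] \to \cC$ is automatically compatible with composition since composition in $\fC[\Delta^n]$ is determined by unions of subsets of $[0,n]$ and no new data is introduced in mapping spaces other than $(0,n)$. The target $\fC[\Delta^n](0,n) = N(P_{0n})$ is isomorphic to the cube $(\Delta^1)^{n-1}$, and the subobject $\fC[\Lambda^n_k](0,n)$ is the union of the sub-cubes obtained by deleting, for each $j \neq k$, the $j$-coordinate; these are precisely the data visible on the $j$-th faces of $\Delta^n$ for $j \neq k$.

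The main step, and the one I expect to be the chief obstacle, is the combinatorial production of an explicit anodyne filtration
\[
\fC[\Lambda^n_k](0,n) = X_0 \subset X_1 \subset \cdots \subset X_N = \fC[\Delta^n](0,n)
\]
in which each $X_m \hookrightarrow X_{m+1}$ is the pushout of an inner horn inclusion $\Lambda^p_q \hookrightarrow \Delta^p$ with $0 < q < p$. One orders the missing non-degenerate simplices of the cube by some lexicographic scheme and attaches each as an inner-horn filler; the condition $0 < k < n$ is precisely what ensures that the coordinate being ``added'' sits strictly between two existing coordinates in each such attachment, making every attachment inner rather than outer. Once this filtration is produced, the Kan property of $\cC(0,n)$ solves each attachment in turn, giving the sought extension.
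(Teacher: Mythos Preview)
The paper does not supply a proof; it states the proposition as a standard fact with the phrase ``It is not hard to verify,'' deferring implicitly to the literature (this is essentially Proposition~1.1.5.10 of \cite{LurieTopos}). Your overall strategy is the correct and standard one: reduce via the adjunction to a lifting problem in $\sCat$, observe that $\fC[\Lambda^n_k]\hookrightarrow\fC[\Delta^n]$ is an isomorphism on every mapping space except $(0,n)$, and show that the inclusion on that last mapping space is anodyne so that the Kan condition on $\cC(0,n)$ furnishes the lift.

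There is, however, a gap in your identification of $\fC[\Lambda^n_k](0,n)\subset(\Delta^1)^{n-1}$. You account only for the faces $d_j$ of $\Delta^n$, which for $0<j<n$ contribute the cubical faces $\{x_j=0\}$ (subsets omitting $j$); you have overlooked the contribution from \emph{composition}. For each $0<j<n$ the map $\fC[\Lambda^n_k](0,j)\times\fC[\Lambda^n_k](j,n)\to\fC[\Lambda^n_k](0,n)$ has image the face $\{x_j=1\}$ (subsets containing $j$), and both factors already agree with the full $\fC[\Delta^n]$ mapping spaces precisely because $d_0$ and $d_n$ lie in the horn when $0<k<n$. Thus $\fC[\Lambda^n_k](0,n)$ is the union of \emph{all} codimension-one faces of the cube except $\{x_k=0\}$: a cubical open box. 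Its inclusion into the cube is anodyne but not in general inner anodyne (already for $n=2$ it is $\{1\}\hookrightarrow\Delta^1$), so your claim that the filtration uses only inner horns, and the justification about $k$ sitting ``strictly between'' coordinates, are incorrect. This does not damage the argument, since $\cC(0,n)$ is Kan and ordinary anodyne suffices; and the extension is automatically a simplicial functor because all composition maps into the $(0,n)$-space land in the subcomplex where compatibility was already given.
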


 Here are some examples of low-dimensional simplices in the simplicial nerve $\sC= N(\cC)$.

\begin{example} [$0$-simplices and $1$-simplices]
The $0$-simplices (or vertices) of $\sC$ are the objects of $\cC$.  The $1$-simplices in $\sC$ are the $1$-morphisms in $\cC$.  
\end{example}

\begin{example}[$2$-simplices]
A $2$-simplex in $\sC$ is the following data: three objects $x_0, x_1, x_2$, three $0$-morphisms $f_{01}{\colon\thinspace}x_0 \to x_1$, $f_{12}{\colon\thinspace}x_1 \to x_2$, and $f_{02}{\colon\thinspace} x_0 \to x_2$, and a $1$-morphism $f_{02}\implies f_{12} \circ f_{01}$.  
We will alternatively depict this either as the segment
$$
\xymatrix{
\{0, 2\} \ar[r] & \{0, 1, 2\}
}
$$
inside of $\cC(x_0, x_2)$ given by the above data,
or as the solid rectangle of compositions and homotopies the segment parameterizes:
\[
 \xymatrix{
 x_2 \ar@{-}[r]
 & x_2
 \\ 
 &x_1 \ar[u]_{f_{12}}
 \\ x_0  \ar@{-}[r] \ar[uu]^{f_{02}}
 &x_0 \ar[u]_{f_{01}}
 }
\]
\end{example}

\begin{example}[$3$-simplices]
 Similarly, a $3$-simplex is a choice of objects $x_i$, for $0 \leq i\leq 3$, along with morphisms $f_{ij}$,  for $0 \leq i< j \leq 3$,
 and coherent homotopies among their compositions. 
 Again, we will  depict this in two alternative fashions. First, we can draw the triangulated square 
 inside of $\cC(x_0, x_3)$ given by the above data

\[
 \xymatrix{
\{0, 3\} \ar[r] \ar[d] \ar[dr]
& \{0, 2, 3\} \ar[d] 
\\ \{0, 1, 3 \} \ar[r]
& \{ 0, 1, 2, 3 \}
}
\]
Here each vertex represents a composition of morphisms (for instance, the vertex $\{0, 2, 3\}$ represents the composition $f_{23} \circ f_{02}$), and the higher simplices represent homotopies between the compositions. Alternatively,
we can extend this diagram vertically and draw the compositions it parameterizes
\[\xymatrix{
		\\x_3 	\ar@{-}[rdd]
				\ar@{-}[rrrrdd] 
				&&&&
		\\&&&&
		\\&x_3 \ar@{-}[rrr] &&&x_3
		\\&&&&
		\\&&&&x_2 \ar[uu]_{f_{23}}
		\\&&&&
		\\x_0 \ar@{-}[rdd] \ar@{-}[rrrrdd] \ar[uuuuuu]^{f_{03}} &x_1 \ar[uuuu]^{f_{13}} \ar@{-}[rrr] &&&x_1 \ar[uu]_{f_{12}}
		\\&&&&
		\\&x_0 \ar@{-}[rrr] \ar[uu]^{f_{01}} &&&x_0 \ar[uu]_{f_{01}}
	}
	\xymatrix{
			&x_3 \ar[rrrddd] \ar@{<-}[dd]^{f_{23}}&&&
		\\x_3 	\ar@{-}[ru] 
				\ar@{-}[rrrrdd] 
				\ar@{<-}[dddddd]^{f_{03}} 
				&&&&
		\\&x_2 \ar@{<-}[dddd]^{f_{02}} \ar@{-}[rrrddd] &&&
		\\&&&&x_3 \ar@{<-}[dd]^{f_{23}}
		\\&&&&
		\\&&&&x_2 \ar@{<-}[dd]^{f_{12}}
		\\&x_0 \ar@{-}[rrrddd]&&&
		\\x_0 \ar@{-}[ru] \ar@{-}[rrrrdd]&&&&x_1 \ar@{<-}[dd]^{f_{01}}
		\\&&&&
		\\&&&&x_0
		\\&&&&
	}
\]
For clarity, we have drawn the above picture as two disjoint prisms, but one should glue them together above the common rectangle living over the edge $\{0, 3\} \to \{0,1,2,3\}$.

One should  note that there are some rectangles in the picture whose horizontal edges are drawn without arrowheads; these correspond to the trivial homotopy between the morphisms along their vertical edges.
\end{example}

\subsection{Definitions of Stability}
In this paper we have cited two definitions of a stable $\infty$-category. Here we briefly recall why the two are equivalent. As there are two possible sources to cite (Lurie's \cite{LurieStab} and Chapter One of his \cite{LurieHigherAlgebra}), we will cite where the necessary statements are contained in both works. (The latter is a significant revision of the former.) Note also that Lurie writes (co)kernel in his earlier work, but in \cite{LurieHigherAlgebra} he writes (co)fiber for the same notions.

\begin{definition}[Definition 2.9 \cite{LurieStab}, Definition 1.1.1.9 \cite{LurieHigherAlgebra}]
 An $\oo$-category $\cC$ is said to be {\em stable} if it satisfies the following:
\begin{enumerate}
\item There exists a zero object $0\in \cC$.
\item Every morphism in $\cC$ admits a kernel and a cokernel.
\item Any commutative diagram in $\cC$ of the form
$$
\xymatrix{
a\ar[d] \ar[r] & b \ar[d]\\
0 \ar[r] & c
}
$$
is Cartesian if and only if it is cocartesian.
\end{enumerate}
\end{definition}
The following yields a second definition of a stable $\infty$-category, and we reproduce it here for convenience.
\begin{theorem}[Corollary 8.28, \cite{LurieStab}. Corollary 1.4.2.20, \cite{LurieHigherAlgebra}]
Let $\cC$ be an $\infty$-category with a zero object. The following are equivalent:
\begin{enumerate}
\item $\cC$ is stable.
\item $\cC$ admits finite colimits and the suspension functor $\Sigma{\colon\thinspace} \cC \to \cC$ is an equivalence.
\item $\cC$ admits finite limits and the loop functor $\Omega{\colon\thinspace} \cC \to \cC$ is an equivalence. 
\end{enumerate}
\end{theorem}
As Lurie points out in the remark following Theorem 3.11 of \cite{LurieStab} (which we cite as Theorem~\ref{stablealternative} in the present work) the existence of a cokernel and the shift functor gives enough `additive' structure to our category to guarantee all finite colimits. 

For instance, to show $\cC$ admits finite colimits, it suffices to show that $\cC$ admits coequalizers and finite coproducts. A coequalizer for a pair of maps $f, g {\colon\thinspace} X \to Y$ is simply a cokernel to the map $f - g$, and a coproduct of $X$ and $Y$ can be written as the cokernel of the zero map $\Sigma^{-1}X \to Y$. (The map $f-g$ can be defined so long as cokernels exist, and the suspension functor is an equivalence.)

\newpage


\begin{thebibliography}{99}

\bibitem[AS06a]{AbbSchw1}
C. Abbondandolo and M. Schwarz, On the Floer homology of cotangent bundles, Commun. Pure Appl. Math. 59:254-316, 2006.

\bibitem[AS06b]{AbbSchw2}
C. Abbondandolo and M. Schwarz, Notes on Floer homology and loop space homology, in Morse-theoretic methods in nonlinear analysis and in symplectic topology, p.75- 108, Springer, 2006.

\bibitem[A09]{Abouyoneda} M. Abouzaid, On the wrapped Fukaya category and based loops, arXiv:0907.5606.

\bibitem[A10a]{Aboucrit} M. Abouzaid, A geometric criterion for generating the Fukaya category, arXiv:1001.4593.

\bibitem[A10b]{Abougenerate} M. Abouzaid, A cotangent fibre generates the Fukaya category, 
arXiv:1003.4449.


\bibitem[AS10a]{AbouSeidel} M. Abouzaid and P. Seidel, An open string analogue of Viterbo functoriality.
 Geom. Topol.  14  (2010),  no. 2, 627--718.

\bibitem[AS10b]{AbouSeidrecomb} M. Abouzaid and P. Seidel, 
Altering symplectic manifolds by homologous recombination,
arXiv:1007.3281.

\bibitem[Ar80]{Arnold80}
V.I. Arnold, Cobordismes lagrangiens et legendriens, I et II, Funkt. Anal. Ego Prilozh., 14 no. 3 (1980), 1-13 and no. 4, pp. 8 - 17.

\bibitem[ArNo01]{ArnoldNovikov}
V.I. Arnold and S.P. Novikov (Eds)., Dynamical Systems IV: Symplectic Geometry and its Applications. Springer-Verlag 2001.

\bibitem[Au85]{Audin85}
M. Audin, Quelques calculs en cobordisme lagrangien, Ann. Inst. Fourier 35(1985) 159-194.

\bibitem[Au87]{Audin87}
M. Audin, Cobordisms of Lagrangian Immersions in the Space of the Cotangent Bundle of a Manifold, Funct. Anal. and App., Vol. 21 Issue 3 (1987).

\bibitem[Aur10]{Aur}
D. Auroux, Fukaya categories of symmetric products and bordered Heegaard-Floer
homology. J. G\"okova Geom. Topol. 4 (2010), 1--54.

\bibitem[AurICM]{AurICM}
D. Auroux, Fukaya categories and bordered Heegaard-Floer homology.
Proc. International Congress of Mathematicians (Hyderabad, 2010), Vol. II, Hindustan Book Agency, 2010, 917--941.


\bibitem[Ay08]{Ayala} D. Ayala, Geometric Cobordism Categories,
arXiv:0811.2280.

\bibitem[B06]{Bergner} J. Bergner, A survey of $(\infty, 1)$-categories.
arXiv:math/0610239.


\bibitem[BM88]{BM} E. Bierstone and P. Milman, ``Semianalytic and subanalytic sets,"
Inst. Hautes \'Etudes Sci. Publ. Math. {\bf 67} (1988), 5--42.

\bibitem[Bi01]{Biran} P. Biran,  Lagrangian barriers and symplectic embeddings.
 Geom. Funct. Anal.  11  (2001),  no. 3, 407--464.
 
\bibitem[BC13a]{biran-cornea} P. Biran and O. Cornea, Lagrangian Cobordism I.

\bibitem[BC13b]{biran-cornea-2} P. Biran and O. Cornea, Lagrangian Cobordism II.

\bibitem[BV73]{BV73}     J. M. Boardman and R. M. Vogt. Homotopy invariant algebraic structures on topological spaces . Springer-Verlag, Berlin, 1973. 

\bibitem[CC16]{charette-cornea} F. Charette and O. Cornea, Categorification of Seidel's representation. Israel J. Math. (2016), no. 1, 67--104. 

\bibitem[Ch97]{Chekanov} Y. V. Chekanov, Lagrangian embeddings and Lagrangian cobordism. Topics in singularity theory, 13Ð23, Amer. Math. Soc. Transl. Ser. 2, 180, Amer. Math. Soc., Providence, RI, 1997.

\bibitem[Co]{Cornea} O. Cornea, Lagrangian submanifolds: their fundamental group and cobordism. 
http://www.dms.umontreal.ca/\~\,cornea/AbstractOb10.pdf

\bibitem[DL83]{DeWildeLecomte} M. De Wilde and P. B. A.  Lecomte,  Existence of star-products and of formal deformations of the Poisson
 Lie algebra of arbitrary symplectic manifolds.
 Lett. Math. Phys.  7  (1983),  no. 6, 487--496.

\bibitem[DS09]{DuggerSpivak} D. Dugger and D. Spivak, Rigidification of Quasi-categories.
arXiv:0910.0814.

\bibitem[E84]{Eli84} Y. Eliashberg. Cobordisme des solutions de relations diff\'erentielles. In South Rhone seminar on geometry, I (Lyon, 1983) , Travaux en Cours, pages 17Ð31. Hermann, Paris, 1984. 

\bibitem[E97]{Eliash}
Ya. Eliashberg, Symplectic geometry of plurisubharmonic functions, Gauge theory and symplectic geometry, NATO Adv. Sci. Inst. Ser. C Math. Phys. Sci., vol. 488, Kluwer Acad. Publ., 1997, pp. 49Ð67.

\bibitem[EG91]{EliashGromov} Ya. Eliashberg and M. Gromov. Convex symplectic manifolds. In Several complex variables,
volume 52 of Proc. Symposia Pure Math., pages 135Ð162. Amer. Math. Soc., 1991.

\bibitem[EKMM]{EKMM} A. Elmendorf, I. Kriz, M. Mandell and J.P. May,
Rings, modules, and algebras in stable homotopy theory. With an appendix by M. Cole.
Mathematical Surveys and Monographs, 47.
American Mathematical Society, Providence, RI, 1997.


\bibitem[F94]{Fedosov} B. V. Fedosov,  A simple geometrical construction of deformation quantization.
 J. Differential Geom.  40  (1994),  no. 2, 213--238.


\bibitem[Fl88a]{floer} A. Floer,
Morse theory for Lagrangian intersections. 
J. Differential Geom. 28 (1988), no. 3, 513--547. 

\bibitem[Fl88b]{floer1} A. Floer, 
The unregularized gradient flow of the symplectic action. 
Comm. Pure Appl. Math. 41 (1988), no. 6, 775--813. 

\bibitem[Fl89]{floer2} A. Floer, 
Witten's complex and infinite-dimensional Morse theory. 
J. Differential Geom. 30 (1989), no. 1, 207--221. 



\bibitem[FOOO09]{FOOO} K. Fukaya, Y.-G. Oh, H. Ohta, and K. Ono,
``Lagrangian Intersection Floer Theory:
Anomaly and Obstruction,'' Part I \& II.
AMS/IP Studies in Advanced Mathematics, 46.1 \& 46.2. American Mathematical Society, Providence, RI; International Press,
 Somerville, MA,  2009.  
 

\bibitem[FO97]{FO} K. Fukaya and Y.-G. Oh, ``Zero-loop open strings in the cotangent
bundle and Morse homotopy,'' Asian. J. Math. {\bf 1} (1997) 96--180.


\bibitem[FSS08a]{FSS} K. Fukaya, P. Seidel, and I. Smith,
Exact Lagrangian submanifolds in simply-connected cotangent bundles.
 Invent. Math.  172  (2008),  no. 1, 1--27.



\bibitem[FSS08b]{FSSsurvey}
K. Fukaya, P. Seidel, and I. Smith. The symplectic geometry of cotangent bundles from a categorical viewpoint.
In Homological Mirror Symmetry: New Developments and Perspectives, volume 757 of Lecture Notes in Physics,
pages 1Ð26. Springer, 2008.

\bibitem[Ga06]{Galatius} S. Galatius, Stable homology of automorphism groups of free groups, 
arXiv:math/0610216, to appear in Ann. of Math.

\bibitem[GMTW09]{GMTW} S. Galatius, U. Tillmann, I. Madsen, M.  Weiss, The homotopy type of the cobordism category.
 Acta Math.  202  (2009),  no. 2, 195--239.


\bibitem[GM88]{GM}
M. Goresky and R. MacPherson,
{\sl Stratified Morse Theory}. Ergebnisse der Mathematik und ihrer Grenzgebiete (3), 14. 
Springer-Verlag (1988).

\bibitem[J02]{Joyal} A. Joyal, Quasi-categories and Kan complexes.
{\em J. Pure Appl. Algebra} {\bf 175} (2002) 207--222.

\bibitem[J08]{joyal-lectures} A. Joyal, The theory of quasi-categories and its applications, lectures at CRM Barcelona February 2008.

\bibitem[Kap05]{Kap} A. Kapustin, ``A-branes and noncommutative geometry,"
arXiv:hep-th/0502212.

\bibitem[KW07]{KW} A. Kapustin and E.  Witten,  Electric-magnetic duality and the geometric Langlands program.
 Commun. Number Theory Phys.  1  (2007),  no. 1, 1--236.


\bibitem[KS84]{KS}
M. Kashiwara and P. Schapira, 
{\sl Sheaves on manifolds}. Grundlehren der Mathematischen Wissenschaften 292, Springer-Verlag (1994).

\bibitem[KS85]{KSmicrolocal}
M. Kashiwara\ and\ P. Schapira, ``Microlocal study of sheaves," Ast\'erisque No. 128 (1985), 235 pp.

\bibitem[KO99]{KO2} R. Kasturirangan and Y.-G. Oh,
``Quantization of Eilenberg-Steenrod Axioms via Fary Functors,''
RIMS preprint (1999).


\bibitem[KO01]{KO1} R. Kasturirangan and Y.-G. Oh,
``Floer Homology of Open Sets and a Refinement of
Arnol'd's Conjecture,'' Math. Z. {\bf 236} (2001), 151-189.


\bibitem[Ke06]{Keller} B. Keller, On differential graded categories, International Congress of Mathematicians.  Vol. II, 151--190, Eur. math. Soc., Zurich, 2006.

\bibitem[K03]{Kontsevich} M. Kontsevich,  Deformation quantization of Poisson manifolds.
 Lett. Math. Phys.  66  (2003),  no. 3, 157--216.



\bibitem[K09]{Kontproposal} M. Kontsevich, Talk in Paris, 2009.


\bibitem[L10]{Losev} I. Losev, Quantized symplectic actions and $W$-algebras.
 J. Amer. Math. Soc.  23  (2010),  no. 1, 35--59.



\bibitem[LTop06]{LurieTopos} J. Lurie,
Higher Topos Theory.
Annals of Mathematics Studies, 170. Princeton University Press, Princeton, NJ,  2009.


\bibitem[DAGI]{LurieStab} J. Lurie,
Stable Infinity Categories, arXiv:math/0608228.

\bibitem[L11]{LurieHigherAlgebra} J. Lurie,
Higher Algebra, http://www.math.harvard.edu/~lurie/papers/higheralgebra.pdf.

\bibitem[L09TFT]{LurieCob} J. Lurie,
On the Classification of Topological Field Theories, 
Current developments in mathematics, 2008, 
 129--280, Int. Press, Somerville, MA,  2009. 

\bibitem[DAGVI]{dag6} J. Lurie,
Derived Algebraic Geometry VI: $E_k$ Algebras,
 arXiv:0911.0018

\bibitem[M]{maclane} MacLane, S., 1997, 
Categories for the Working Mathematician,
2nd edition, New York: Springer-Verlag.

\bibitem[N09]{N} D. Nadler, Microlocal branes are constructible sheaves, Selecta Math. 15 (2009), no. 4, 563--619.

\bibitem[N11]{Nlocal} D. Nadler, Fukaya categories as categorical Morse homology, in preparation, 35 pages,
draft available at
\begin{verbatim}
http://www.math.northwestern.edu/~nadler/
\end{verbatim}

\bibitem[NZ09]{NZ} D. Nadler and E. Zaslow, ``Constructible Sheaves
and the Fukaya Category,'' J. Amer. Math. Soc. 22 (2009), 233--286.

\bibitem[Oh]{Oh} Y.-G. Oh, ``Floer homology and its continuity for non-compact Lagrangian submanifolds,'' arXiv:math/0011116v1.

\bibitem[S08]{Seidel} P. Seidel, {\sl Fukaya Categories and Picard-Lefschetz Theory,}
Zurich Lectures in Advanced Mathematics. European Mathematical Society (EMS), Zurich,  2008. 

\bibitem[S10]{Seidsheaf} P. Seidel,
Some speculations on pairs-of-pants decompositions and Fukaya categories,
arXiv:1004.0906.

\bibitem[SL]{suarez-lopez} L.-S. Suarez Lopez, Exact Lagrangian cobordism and pseudo-isotopy, Ph.D. Thesis (2014), Universite de Montreal.

\bibitem[T14]{tanaka-h-cob} H. L. Tanaka, n simply-connected cotangent bundles, exact Lagrangian cobordisms are h-cobordisms. arXiv:1404.6273.

\bibitem[T16a]{tanaka-pairing} H. L. Tanaka, The Fukaya category pairs with Lagrangian cobordisms. 
arXiv:1607.04976.

\bibitem[T16b]{tanaka-exact} H. L. Tanaka, The Fukaya category pairs with Lagrangian cobordisms exactly. 
arXiv:1609.08400.

\bibitem[T18]{tanaka-generation} H. L. Tanaka, Generation for Lagrangian cobordisms in Weinstein manifolds. arXiv:1810.10605.

\bibitem[T09]{treumann} D. Treumann,  Exit paths and constructible stacks.
 Compos. Math.  145  (2009),  no. 6, 1504--1532.
 
\bibitem[VM96]{vdDM}
L. van den Dries and C. Miller, ``Geometric categories and o-minimal structures,"
Duke Math. J. {\bf 84}, no. 2 (1996), 497--539.

\bibitem[V94]{VitICM} C. Viterbo, ``Generating functions, symplectic geometry, and applications,"
 Proceedings of the International Congress of Mathematicians, Vol. 1,
 2 (Zurich, 1994), 
 537--547, Birkhauser, Basel,  1995.

\bibitem[V97]{Vit} C. Viterbo, ``Exact Lagrange submanifolds, periodic orbits and the cohomology of free
loop spaces,"
J. Differential Geom.  47  (1997),  no. 3, 420--468.

\bibitem[V99]{Vitfunct} C. Viterbo,  Functors and computations in Floer homology with applications. I.
 Geom. Funct. Anal.  9  (1999),  no. 5, 985--1033.





\end{thebibliography}
\end{document}